\newcommand{\op}{\ensuremath{^{\mathrm{op}}}}
\newcommand{\cat}[1]{\ensuremath{\mathbf{#1}}}
\newcommand\todo[1]{\textcolor{red}{#1}}
\newcommand{\Gpd}{{\bf Gpd}}
\newcommand{\Set}{{\bf Set}}
\newcommand{\Cat}{{\bf Cat}}
\newcommand{\Dbl}{{\bf Dbl}}
\newcommand{\CLift}{\cat{Lift}}
\newcommand{\A}{\ct{A}}
\newcommand{\B}{\ct{B}}
\newcommand{\C}{\ct{C}}
\newcommand{\D}{\ct{D}}
\newcommand{\E}{\ct{E}}
\newcommand{\J}{\mathcal{J}}
\newcommand{\K}{\mathcal{K}}
\renewcommand{\DJ}{\mathbb{J}}
\newcommand{\DK}{\mathbb{K}}
\DeclareMathOperator{\dom}{dom}
\DeclareMathOperator{\cod}{cod}
\newcommand{\DC}{\mathbb{C}}
\newcommand{\DD}{\mathbb{D}}
\newcommand{\Ar}{\cat{Ar}}
\newcommand{\Sq}{\mathbb{S}\cat{q}}
\newcommand{\pf}{\pitchfork}
\newcommand{\dpf}{{\pitchfork\mskip-9.7mu\pitchfork}}
\newcommand{\ldpf}{{^{\pitchfork\mskip-9.7mu\pitchfork}}}
\newcommand{\lpf}{{^\pf}}
\newcommand{\f}{\mathbf{f}}
\newcommand{\g}{\mathbf{g}}
\newcommand{\h}{\mathbf{h}}
\newcommand{\uv}{\mathbf{uv}}
\renewcommand{\i}{\mathbf{i}}
\renewcommand{\j}{\mathbf{j}}
\renewcommand{\u}{\mathbf{u}}
\renewcommand{\v}{\mathbf{v}}
\newcommand{\X}{\mathbf{X}}
\newcommand{\Y}{\mathbf{Y}}
\newcommand{\Q}{\mathbf{Q}}
\renewcommand{\P}{\mathbf{P}}
\newcommand{\w}{\mathbf{w}}
\newcommand{\To}{\Rightarrow}
\newcommand{\F}{\mathbf{F}}
\newcommand{\G}{\mathbf{G}}
\newcommand{\bu}{\mathbf{u}}
\newcommand{\ba}{\boldsymbol{\alpha}}
\newcommand{\be}{\boldsymbol{\varepsilon}}
\newcommand{\bb}{\boldsymbol{\beta}}
\newcommand{\bn}{\boldsymbol{\eta}}
\newcommand{\bmu}{\boldsymbol{\mu}}
\newcommand{\tladj}{\mathrm{ladj}}
\newcommand{\tradj}{\mathrm{radj}}
\newcommand{\Catladj}{\Cat_\tladj}
\newcommand{\Catradj}{\Cat_\tradj}
\newcommand{\cladj}{\Cat/\Ar(-_\tladj)}
\newcommand{\cradj}{\Cat/\Ar(-_\tradj)}
\newcommand{\dladj}{\Dbl/\Sq(-_\tladj)}
\newcommand{\dradj}{\Dbl/\Sq(-_\tradj)}
\newcommand{\lcan}{\lambda}
\newcommand{\rcan}{\rho}
\newcommand{\SplMono}{\cat{SplMono}}
\newcommand{\SplRef}{\cat{SplRef}}
\newcommand{\DSplRef}{\mathbb S\cat{plRef}}
\newcommand{\SplFib}{\cat{SplFib}}
\newcommand{\Fib}{\cat{Fib}}
\newcommand{\DSplFib}{\mathbb S\cat{plFib}}
\newcommand{\SplOpFib}{\cat{SplOpFib}}
\newcommand{\DSplOpFib}{\mathbb S\cat{plOpFib}}
\newcommand{\awfs}{\textsc{awfs}\xspace}
\newcommand{\sfpo}{\textsc{sfpo}}
\newcommand{\wfs}{\textsc{wfs}}
\newcommand{\R}{\mathsf{R}}
\renewcommand{\L}{\mathsf{L}}
\newcommand{\bR}{\mathbb{R}}
\newcommand{\bL}{\mathbb{L}}
\newcommand{\DR}{\mathbb{R}}
\newcommand{\DL}{\mathbb{L}}
\newcommand{\bC}{\mathbb{C}}
\newcommand{\DCoalg}[1]{\mathsf{#1}\text-\mathbb C\cat{oalg}}
\newcommand{\DAlg}[1]{\mathsf{#1}\text-\mathbb A\cat{lg}}
\newcommand{\DLcoalg}{\DCoalg{L}}
\newcommand{\DRalg}{\DAlg{R}}
\theoremstyle{plain}
\newtheorem{Thm}{Theorem}
\newtheorem{Prop}[Thm]{Proposition}
\newtheorem{Cor}[Thm]{Corollary}
\newtheorem{Lemma}[Thm]{Lemma}
\numberwithin{equation}{section}
\theoremstyle{definition}
\newtheorem{Defn}[Thm]{Definition}
\newtheorem{Ex}[Thm]{Example}
\newtheorem{Rk}[Thm]{Remark}
\date{\today}
\title[The Frobenius Equivalence and Beck--Chevalley Condition for AWFS]{The Frobenius equivalence and Beck--Chevalley condition for Algebraic Weak Factorisation Systems}
\author{Wijnand van Woerkom and Benno van den Berg}
\begin{document}

\begin{abstract}
    If a locally cartesian closed category carries a weak factorisation system, then the left maps are stable under pullback along right maps if and only if the right maps are closed under pushforward along right maps. We refer to this statement as the Frobenius equivalence and in this paper we state and prove an analogical statement for algebraic weak factorisation systems. These algebraic weak factorisation systems are an explicit variant of the more traditional weak factorisation systems in that the factorisation and the lifts are part of the structure of an algebraic weak factorisation system and are not merely required to exist. Our work has been motivated by the categorical semantics of type theory, where the Frobenius equivalence provides a useful tool for constructing dependent function types. We illustrate our ideas using split fibrations of groupoids, which are the backbone of the groupoid model of Hofmann and Streicher.
\end{abstract}

\maketitle

\section{Introduction}

In this paper we contribute to theory of \emph{algebraic weak factorisation systems} (or $\awfs$s, for short). The notion of an $\awfs$ is due to Garner \cite{garner2009UnderstandingSmall}, building on earlier ideas by Grandis and Tholen \cite{grandis2006NaturalWeak}. The notion of an $\awfs$ is a refinement of the well-known notion of a weak factorisation system ($\wfs$), which has become quite important in homotopy theory and category theory. In particular, Quillen's influential notion of a model category involves two weak factorisation systems which interact in a suitable manner \cite{hirschhorn2003ModelCategories}.

The notion of an algebraic weak factorisation system can be seen as a more explicit version of the more familiar notion of a $\wfs$. To explain this, let us recall that a $\wfs$ on a category \ct{C} consists of two classes of maps in \ct{C}, let us denote them by $\mathcal{L}$ and $\mathcal{R}$, which have the following properties:
\begin{enumerate}
    \item Any map in \ct{C} factors as a map in $\mathcal{L}$ followed by a map in $\mathcal{R}$.
    \item Any map in $\mathcal{L}$ lifts again any map in $\mathcal{R}$, in the sense that in any solid commutative square 
    \begin{displaymath}
        \begin{tikzcd}
            \bullet \ar[r] \ar[d, "\in {\mathcal{L}}"'] & \bullet \ar[d, "{\in \mathcal{R}}"] \\
            \bullet \ar[ur, dotted] \ar[r] & \bullet
        \end{tikzcd}
    \end{displaymath}
   in which the map on the left belongs to $\mathcal{L}$ and the map on the right to $\mathcal{R}$, there exists a dotted arrow (the \emph{lift}) as shown, making both triangles commute.
   \item Maps in $\mathcal{L}$ and $\mathcal{R}$ are closed under retracts.
\end{enumerate} 
The theory of $\awfs$s is more ``explicit'' in that when we are given an $\awfs$, we will have explicit witnesses for the truth of axioms (1) and (2): that is, for an $\awfs$ both the factorisation in (1) as well as the lift in (2) can be obtained in an explicit manner and are not just assumed to exist. However, in (2) the lift is given as a function which includes explicit witnessing information for the fact that the map on the left belongs to $\mathcal{L}$ and the map on the right to $\mathcal{R}$. Indeed, when one is given an $\awfs$, this involves both a monad $\R = (R, \eta, \mu)$ and a comonad $\L = (L, \varepsilon, \delta)$ on the category of arrows of \ct{C}. The \emph{property} of being an $\mathcal{L}$-map is then replaced by the \emph{structure} of being a coalgebra for the comonad $\L$ and the \emph{property} of being an $\mathcal{R}$-map is then replaced by the \emph{structure} of being an algebra for the monad $\R$; and it is terms of this structure that the lift in (2) can be constructed. 

The theory of algebraic weak factorisation systems has a number of advantages over the usual theory:
\begin{enumerate}
    \item Due to its explicit nature, it is more constructive than the usual theory.
    \item In an $\awfs$, the coalgebras are closed under colimits and the algebras under limits, in an appropriate sense.
    \item They can be used to describe situations that are not $\wfs$s, because algebras and coalgebras need not be closed under retracts. For example, split Grothendieck fibrations are the algebras for a monad in an $\awfs$, while not being closed under retracts.
\end{enumerate}
For the purposes of this paper, points (1) and (3) are the important ones. Indeed, we feel that point (1) has recently become especially important due to the influence of \emph{homotopy type theory} (HoTT) \cite{theunivalentfoundationsprogram2023HomotopyType}. Here, type theory refers to formal systems, like the Calculus of Constructions or Martin-L\"of's intuitionistic type theory, which are both constructive foundations for mathematics and functional programming languages. Recent developments have amply demonstrated how ideas from homotopy theory and higher category theory are highly relevant to type theory. In particular, Voevodsky has shown how one can exploit the Kan--Quillen model structure on simplicial sets to construct a model of type theory there which validates principles like his Univalence Axiom or Higher Inductive Types \cite{kapulkin2021SimplicialModel}. 

This paper focuses on an important aspect of model constructions like those of Voevodsky which make use of Quillen's model categories to obtain models of type theory: how they model $\Pi$-types. $\Pi$-types, or dependent function types, are arguably the most important type former in type theory and the argument for why these model categories model them is basically always as follows. The underlying category is a presheaf category and hence locally cartesian closed: this means that pullback functors have right adjoints, which we will call \emph{pushforward functors}. To show that we have a model of $\Pi$-types, we need that pushforward functors along $\mathcal{R}$-maps (fibrations) preserve $\mathcal{R}$-maps (fibrations). Using general facts about adjoints (as well as axiom (3) for a \wfs), this is equivalent to the pullback functors along $\mathcal{R}$-maps preserving $\mathcal{L}$-maps. We will refer to this equivalence as the \emph{Frobenius equivalence} and the latter property as the \emph{Frobenius property}. The Frobenius property is often easier to check than the original statement; for instance, the Frobenius property will hold in a model category if it is right proper and the cofibrations are stable under pullback.

The main contribution of this paper is to find suitable analogues of these statement for algebraic weak factorisation systems. That is, we will formulate appropriate forms of the statement that algebras are closed under pushforward along algebras and the Frobenius property for $\awfs$s, and we will prove that these are equivalent. In order to model type theory, one needs suitably ``stable'' versions of both the pushforward and the Frobenius property. This amounts to requiring an additional property, a version of the well-known Beck--Chevalley condition, and we will also formulate an appropriate version of the Beck--Chevalley conditions for $\awfs$s. 

To prove the various equivalences, the connection between $\awfs$s and double categories will be crucial. The way double categories come in is that as double categories the algebras and coalgebras in an $\awfs$ still determine each other. To explain this a bit better, recall that the algebras and coalgebras are not closed under retracts in an $\awfs$ as axiom (3) for a $\wfs$ would suggest. However, axiom (3) is equivalent to another axiom. To formulate it, let us say that $f$ has the right lifting property (RLP) against $g$ and $g$ the left lifting property (LLP) against $f$, denoted $f \pitchfork g$, if for any solid commutative square
\begin{displaymath}
    \begin{tikzcd}
        \bullet \ar[r] \ar[d, "g"'] & \bullet \ar[d, "f"] \\
        \bullet \ar[ur, dotted] \ar[r] & \bullet
    \end{tikzcd}
\end{displaymath}
there exists a dotted lift as shown. If $\mathcal{A}$ is some class of maps in a category \ct{C}, we will write:
\begin{eqnarray*}
    \mathcal{A}^\pitchfork & = & \{ f \, : \, (\forall g \in \mathcal{A}) \, g \pitchfork f \}, \\
    {}^\pitchfork\mathcal{A} & = & \{ g \, : \, (\forall f \in \mathcal{A}) \, g \pitchfork f \}.
\end{eqnarray*}
In those terms, axiom (3) is equivalent (over axioms (1) and (2)) to:
\begin{enumerate}
    \item[(3)$'$] ${}^\pitchfork \mathcal{R} \subseteq \mathcal{L}$ and $\mathcal{L}^\pitchfork \subseteq \mathcal{R}$.
\end{enumerate} 
(So axioms (2) and (3) together are equivalent to ${}^\pitchfork \mathcal{R} = \mathcal{L}$ and $\mathcal{L}^\pitchfork = \mathcal{R}$.) The fact that $\mathcal{L}$ and $\mathcal{R}$ determine each other through lifting properties is very useful and, indeed, this is what is used in the proof of the Frobenius equivalence. While we no longer have that algebras and coalgebras are closed under retracts, we do still have that they determine each other through lifting properties, but to state these we need the language of double categories. 

As it happens, in recent work \cite{bourke2023OrthogonalApproach}, John Bourke has shown that the notion of an \awfs can be formulated purely in terms of double categories and a double-categorical lifting property. It is this definition (called a lifting \awfs there) that is especially useful for this paper and we will use that definition, and not the equivalent one using monads and comonads, to prove the Frobenius equivalence for $\awfs$s.

In this paper we develop this Frobenius equivalence for {\awfs}s in a step-by-step manner as it allows us to gradually introduce the main technical and conceptual points of this paper. Indeed, it has been our experience that by conceptualising matters in the right way we can tell a clear story while also avoiding a lot of messy calculations. For that reason, we start this paper by going over the Frobenius equivalence for weak factorisation systems in some detail in Section 2. This material is well-known, but we will develop it in a particular way, as this allows us to smoothly transition to the more technical setting of an algebraic weak factorisation system. In particular, we emphasise that it is more convenient to think of pullback and pushforward as operations acting on the maps of a slice category (the ``arrow view'') rather than the objects of a slice category (the ``object view'). Before developing a Frobenius equivalence for double categories of maps, we first develop a Frobenius equivalence for categories of maps in Section 3. After that, we define double categories and (lifting) {\awfs}s in Section 4 and state and prove the Frobenius equivalence for double categories of maps and {\awfs}s in Section 5, building on the work in Section 3. For obtaining models of type theories with dependent function types, we need another stability condition for these $\Pi$-types: the Beck-Chevalley condition. We take a closer look at this condition in Sections 6 and formulate an equivalent condition that should make it easier to verify in concrete cases. After a digression in Section 7, where we discuss a strengthening of the Frobenius condition one also encounters in the literature, we show how our framework can be used to obtain a model of type theory with dependent function types, as well as other type formers, such as identity and dependent sum types. We illustrate these ideas using split fibrations of groupoids, the backbone of the groupoid model of Hofmann and Streicher~\cite{hofmann1998GroupoidInterpretation}.

This paper is based on the MSc thesis of the first author, which was supervised by the second author \cite{vanwoerkom2021AlgebraicModels}.
\section{The Frobenius equivalence for classes of maps}\label{sec:subclasses}

In this section we will go over the Frobenius equivalence for weak factorisation systems in some detail. We do this, not because this material is novel (it is not), but because we found that by thinking about this material in a certain way, the generalisation to settings like that of algebraic weak factorisation systems becomes much more natural. Therefore we start this section by recalling the definition of a weak factorisation system and outlining a perspective on the Frobenius equivalence. We will then go on to give a detailed proof of the Frobenius equivalence for weak factorisation systems following this outline.

\subsection{Weak factorisation systems}\label{sec:wfs}
In what follows we frequently work in the context of some (small) ambient category $\C$ which is (sufficiently) locally cartesian closed, and write $\C_1$ for the set (or class) of morphisms in $\C$. Of central importance will be various notions of \emph{lifting}. Given two morphisms $f, g \in \C_1$ we say $f$ has the left lifting property with respect to $g$, and $g$ the right lifting property with respect to $f$, denoted $f \pitchfork g$, if every commutative square $(u, v) : f \to g$ has a diagonal filler $\phi : \cod f \to \dom g$ making both of the induced triangles commute: 
% https://q.uiver.app/#q=WzAsNCxbMCwwLCJcXGJ1bGxldCJdLFswLDEsIlxcYnVsbGV0Il0sWzEsMCwiXFxidWxsZXQiXSxbMSwxLCJcXGJ1bGxldCBcXHJsYXB7IC59Il0sWzAsMSwiZiIsMl0sWzIsMywiZyJdLFswLDIsInUiXSxbMSwzLCJ2IiwyXSxbMSwyLCJcXHBoaSIsMCx7InN0eWxlIjp7ImJvZHkiOnsibmFtZSI6ImRvdHRlZCJ9fX1dXQ==
\begin{equation}\label{eq:phi}
    \begin{tikzcd}
        \bullet & \bullet \\
        \bullet & {\bullet \rlap{ .}}
        \arrow["f"', from=1-1, to=2-1]
        \arrow["g", from=1-2, to=2-2]
        \arrow["u", from=1-1, to=1-2]
        \arrow["v"', from=2-1, to=2-2]
        \arrow["\phi", dotted, from=2-1, to=1-2]
    \end{tikzcd}
\end{equation}
Given a subclass $J \subseteq \C_1$ of morphisms in $\C$ we define \begin{align*}
    {^\pf}J &= \{f \in \C_1 \mid f \pitchfork g \text{ for all $g \in J$}\}, \text{ and} \\
    J\lpf &= \{f \in \C_1 \mid g \pitchfork f \text{ for all $g \in J$}\}.
\end{align*}
\begin{Defn}\label{def:wfs}
    A \emph{weak factorisation system} (\wfs) on $\C$ is a pair $(L, R)$ of subclasses $L, R \subseteq \C_1$ satisfying two axioms:
    \begin{enumerate}
        \item \emph{Axiom of lifting}: $L = \lpf R$ and $R = L \lpf$.
        \item \emph{Axiom of factorisation}: every morphism $f \in \C_1$ admits a factorisation $f = r.l$ with $l \in L$ and $r \in R$. 
    \end{enumerate}
    The set $L$ is referred to as the \emph{left} class of the \wfs, and $R$ as the \emph{right} class. 
\end{Defn}
\begin{Rk}
    Throughout this work we follow the notational conventions of~\cite{bourke2016AlgebraicWeak,bourke2023OrthogonalApproach}, in using the letters $J$ and $K$ to denote arbitrary classes of morphisms in a category, and the letters $L$ and $R$ to denote the classes making up a weak factorisation system. However, we will often also consider general pairs $(J, K)$ of subclasses of $\C_1$, and also then we will speak of left and right maps to refer to morphisms from $J$ and $K$, respectively.
\end{Rk}

\subsection{The Frobenius equivalence in outline}\label{sec:outlineFrobenius}

When $\C$ is locally cartesian closed every morphism $f \in \C_1$ induces an adjunction $f^* \dashv f_*$ of pullback and pushforward along $f$, respectively. The two properties of a \wfs\ on such a category that we will be concerned with in this paper are the following. 
\begin{Defn}\label{def:wfsfrob}
    A \wfs\ $(L, R)$ on a locally cartesian closed category $\C$ satisfies the \emph{Frobenius property} if $L$ is closed under pullback along $R$, and the \emph{pushforward property} if $R$ is closed under pushforward along $R$. 
\end{Defn}

\begin{Prop}[Frobenius equivalence for a \wfs]\label{prop:wfsfrobequiv}
    A \wfs\ satisfies the Frobenius property if and only if it satisfies the pushforward property. 
\end{Prop}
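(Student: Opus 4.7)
The plan is to prove both directions using the adjunction $f^* \dashv f_*\colon \C/Y \to \C/A$ associated to a morphism $f\colon A \to Y$ in $\C$, together with its lift to an adjunction between the arrow categories of these slices. Concretely, this lift gives, for any arrow $\alpha$ in $\C/Y$ and any arrow $\beta$ in $\C/A$, a natural bijection between commuting squares $f^*(\alpha) \to \beta$ in $\C/A$ and commuting squares $\alpha \to f_*(\beta)$ in $\C/Y$, and this bijection sends diagonal fillers to diagonal fillers. The second essential ingredient is that in any \wfs\ the right class $R = L\lpf$ is automatically stable under pullback in $\C$, since classes defined by a right lifting property always are.

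For the direction pushforward $\Rightarrow$ Frobenius, take $l \in L$ and $f \in R$ with $\cod(f) = \cod(l) = Y$, and aim to show $f^*(l) \in \lpf R = L$. Given $r \in R$ and a square $(u,v)\colon f^*(l) \to r$ in $\C$, I would first pull $r$ back along $v\colon A \to \cod(r)$ to obtain $v^*(r) \in R$, which lives naturally in $\C/A$; the original square factors canonically through this pullback to produce a square $f^*(l) \to v^*(r)$ in the arrow category of $\C/A$. Transposing via the adjunction yields a square $l \to f_*(v^*(r))$ in $\C/Y$, whose right-hand arrow lies in $R$ by the pushforward hypothesis. Since $l \in L$, this transposed square admits a filler, which transposes back and then postcomposes with the projection $v^*(\dom r) \to \dom r$ to give a filler for the original square.

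For the converse, Frobenius $\Rightarrow$ pushforward, take $f\colon A \to Y$ in $R$ and $m \in R$ with $\cod(m) = A$, and aim to show $f_*(m) \in L\lpf = R$. Given $l \in L$ and a square $(u,v)\colon l \to f_*(m)$ in $\C$, the bottom map $v\colon \cod(l) \to Y$ already places the square inside $\C/Y$ with no auxiliary pullback needed. Transposing via the adjunction yields a square $f^*(l) \to m$ in $\C/A$, where $f^*(l)$ is the pullback of $l$ along the morphism $\cod(l) \times_Y A \to \cod(l)$, itself the pullback of $f \in R$ along $v$ and hence in $R$ by stability. Frobenius therefore gives $f^*(l) \in L$, and since $m \in R$ the transposed square admits a filler, which transposes back to solve the original lifting problem.

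The main obstacle I expect is the asymmetry between the two directions. Direction 2 drops out almost immediately once one views the square in $\C/Y$ via $v$, but Direction 1 requires the extra step of pulling $r$ back to $\C/A$ before the adjunction can even be applied; verifying that the resulting filler really does solve the original square requires carefully tracking through the projection $v^*(r) \to r$ and the commutativity of the factorisation. This asymmetry reflects the different roles of the left adjoint $f^*$ (which one can apply to any arrow sitting in the ambient category) and the right adjoint $f_*$ (which is only directly useful on arrows already in the source slice), and it motivates the ``arrow view'' on pullback and pushforward emphasised in the introduction.
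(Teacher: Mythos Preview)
Your argument is correct. Both directions work as you describe: the transposition across $f^* \dashv f_*$ at the level of arrow categories exchanges lifting problems, and the auxiliary pullback in direction~1 together with the observation $f^*(l) \cong \varepsilon_v^*(l)$ in direction~2 are exactly what is needed.

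Your route and the paper's are the same argument presented differently. The paper does not prove Proposition~\ref{prop:wfsfrobequiv} directly; instead it isolates two reusable lemmas---\emph{base change} (Proposition~\ref{prop:sc:basechange}: a left adjoint preserves left maps iff its right adjoint preserves right maps) and \emph{slicing} (Proposition~\ref{prop:sc:sliceclp}: $(J/A,K/A)$ is again a closed lifting pair)---and then obtains the Frobenius equivalence as a one-line corollary. Your transposition step is precisely base change, and your two ad~hoc manoeuvres (pulling $r$ back to $\C/A$ in direction~1, viewing $l$ in $\C/Y$ via $v$ in direction~2) are exactly the proofs of Lemmas~\ref{lem:inclslicingcomrlagain} and~\ref{lem:inclslicingcomrl} respectively, inlined. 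The asymmetry you identify between the two directions is the asymmetry between those two lemmas: the first needs pullback-stability of $R$, the second does not. What the paper's decomposition buys is that the same three modules (lifting, base change, slicing) are then re-proved verbatim for categories of maps and for double categories of maps, yielding the Frobenius equivalence for \awfs{}s with no further work; your direct argument is shorter for this one proposition but does not immediately suggest that generalisation.
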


The statement of the Frobenius equivalence of Proposition~\ref{prop:wfsfrobequiv} can be understood as the result of combining the following two well-known results on \wfs. 
\begin{Prop}\label{prop:wfsbasechange}
    Let $F \dashv G : \D \to \C$ be an adjunction, $(L, R)$ a \wfs\ on $\C$, and $(L', R')$ one on $\D$; then $F(L) \subseteq L'$ if and only if $G(R') \subseteq R$. 
\end{Prop}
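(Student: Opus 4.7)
The plan is to prove Proposition~\ref{prop:wfsbasechange} by exploiting the fact that in a \wfs, the left class and the right class mutually determine each other via the lifting relation $\pitchfork$. Concretely, by the axiom of lifting we have $L = {}^\pitchfork R$ and $R' = L'^\pitchfork$, so the statement $F(L) \subseteq L'$ unwinds to
\[
(\forall f \in L)(\forall g \in R')\, F(f) \pitchfork g,
\]
while $G(R') \subseteq R$ unwinds to
\[
(\forall f \in L)(\forall g \in R')\, f \pitchfork G(g).
\]
The task is therefore reduced to showing the two inner clauses are equivalent for each fixed pair $(f,g)$.

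The key observation to establish this is that the adjunction $F \dashv G$ induces a natural bijection not only between hom-sets but between commutative squares: a square $(u,v) : F(f) \to g$ in $\D$ transposes to a square $(\bar u, \bar v) : f \to G(g)$ in $\C$, where $\bar u$ and $\bar v$ are the mates of $u$ and $v$ under the adjunction. The first step I would carry out is to verify carefully (using naturality of the unit and counit) that this bijection is well-defined, i.e.\ that commutativity of the original square implies commutativity of the transposed one. Next, I would show that diagonal fillers correspond under the same transposition: a filler $\phi : \cod F(f) = F(\cod f) \to \dom g$ in $\D$ transposes to a filler $\bar \phi : \cod f \to \dom G(g) = G(\dom g)$ in $\C$, and the triangle identities needed for $\bar \phi$ to split both triangles follow from those for $\phi$ together with naturality. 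This yields the bijection-preserving-lifts statement $F(f) \pitchfork g \iff f \pitchfork G(g)$.

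With this equivalence in hand, the proof concludes in one line: $F(L) \subseteq L' = {}^\pitchfork R'$ iff $F(f) \pitchfork g$ for all $f \in L, g \in R'$, iff $f \pitchfork G(g)$ for all such $f, g$, iff $G(R') \subseteq L^\pitchfork = R$, where the first and last equivalence use the axiom of lifting (Definition~\ref{def:wfs}(1)) and the middle one uses the transposition bijection.

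The main technical content is essentially the square-transposition argument in the second step; everything else is a straightforward rewriting. I would not expect any real obstacle, but it is worth being slightly careful about which half of the adjunction data (unit vs.\ counit) gets used for each of the two commuting triangles of the filler, and about choosing notation that makes the symmetry between pushforward along $F$ and pullback along $G$ visible. Since this will later serve as the template for the more elaborate double-categorical version, I would also take care to present the argument in a form that foregrounds the ``transpose a square and its filler under an adjunction'' pattern rather than burying it in coordinates.
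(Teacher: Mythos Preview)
Your proposal is correct and follows essentially the same approach as the paper: the paper proves the slightly more general Proposition~\ref{prop:sc:basechange} (for arbitrary classes $J$ and $K$) by exactly the ``transpose a square and its diagonal filler under the adjunction'' argument you describe, and then obtains the \wfs\ version as the special case of closed lifting pairs (Corollary~\ref{cor:ssbasechange}). The only cosmetic difference is that the paper proves one direction and appeals to duality for the other, whereas you establish the biconditional $F(f)\pitchfork g \iff f\pitchfork G(g)$ symmetrically.
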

This statement roughly says that a left adjoint preserves left maps if and only if its right adjoint preserves right maps. Following the terminology of~\cite{bourke2016AlgebraicWeak} we will refer to this and similar results as \emph{change of base}.

\begin{Prop}\label{prop:slicewfs}
    Given a \wfs\ $(L, R)$ on a category $\C$ and an object $A \in \C$ there is a \wfs\ $(L/A, R/A)$ on the slice category $\C/A$. 
\end{Prop}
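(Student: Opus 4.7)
The plan is to define $L/A$ (respectively $R/A$) as the class of morphisms in $\C/A$ whose underlying morphism in $\C$ belongs to $L$ (respectively $R$), and then to verify the two axioms of Definition~\ref{def:wfs} by reducing them to the corresponding properties of $(L, R)$ in $\C$. The key observation is that the forgetful functor $U : \C/A \to \C$ is faithful and both preserves and reflects commuting squares, so most of the verification can be ``pushed down'' to $\C$, provided we are careful about equipping new objects with structure maps to $A$.

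For the factorisation axiom, given a morphism $f : (X, x) \to (Y, y)$ in $\C/A$, I would factor the underlying arrow in $\C$ as $f = r \circ l$ with $l : X \to Z$ in $L$ and $r : Z \to Y$ in $R$, and equip $Z$ with the structure map $y \circ r : Z \to A$. A direct check then shows that $l$ and $r$ become morphisms in $\C/A$ lying in $L/A$ and $R/A$ respectively. For the easy half of the lifting axiom, given $l \in L/A$, $r \in R/A$ and a commuting square between them, the underlying square in $\C$ has a diagonal filler $\phi$ by $l \pitchfork r$ in $\C$; the filler is automatically compatible with the structure maps to $A$, because if $u : U \to A$ and $v : V \to A$ are the structure maps on the codomain of $l$ and $r$, then $u \circ \phi = v \circ r \circ \phi$ equals the bottom edge composed with $v$, which by hypothesis equals the structure map on $\cod l$.

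The remaining (and really the only nontrivial) content is the reverse inclusions $\lpf(R/A) \subseteq L/A$ and $(L/A)\lpf \subseteq R/A$. For the first, suppose $f : (X,x) \to (Y,y)$ lies in $\lpf(R/A)$. Using the factorisation from Step~1, write $f = r \circ l$ inside $\C/A$ with $l \in L/A$ and $r \in R/A$. Applying $f \pitchfork r$ to the square with top edge $l$ and bottom edge $\mathrm{id}_{(Y,y)}$ produces a lift $s : (Y,y) \to (Z, yr)$ in $\C/A$ satisfying $s \circ f = l$ and $r \circ s = \mathrm{id}$, which exhibits $f$ as a retract of $l$ in the arrow category of $\C/A$, hence also of $\C$ via the forgetful functor. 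Since $L = \lpf R$ is closed under retracts in $\C$ (a direct consequence of the lifting characterisation), the underlying arrow of $f$ lies in $L$, so $f \in L/A$. The dual argument handles the inclusion $(L/A)\lpf \subseteq R/A$.

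The main technical subtlety is this retract step, which is why one cannot simply appeal to the lifting axiom of the original \wfs{} pointwise: the full axiom $L = \lpf R$ is genuinely stronger than $L \subseteq \lpf R$, and recovering it in $\C/A$ requires the factorisation-and-retract trick above. Once this is done, no further machinery is needed; in particular one does not need $\C$ to be locally cartesian closed or to possess any additional structure beyond the \wfs{} itself.
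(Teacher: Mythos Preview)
Your proof is correct, but it takes a genuinely different route from the paper's own argument. You use the classical \emph{retract argument}: factor $f$ inside $\C/A$, lift against the $R/A$-part to exhibit $f$ as a retract of an $L/A$-map, and conclude using retract-closure of $L$. This is the standard textbook proof and it works perfectly well; note in particular that it makes essential use of the factorisation axiom to produce the map of which $f$ is a retract.

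The paper deliberately avoids this. Instead, it proves the two ``hard'' inclusions by showing that slicing commutes with the lifting operators: $(J/A)\lpf = (J\lpf)/A$ is established directly (Lemma~\ref{lem:inclslicingcomrl}, by equipping $j$ with the extension $a.v$), and $\lpf(K/A) = (\lpf K)/A$ is established under the hypothesis that $K$ is closed under pullback (Lemma~\ref{lem:inclslicingcomrlagain}), which holds automatically for $K = L\lpf$. Neither step appeals to the factorisation axiom, so the paper's argument in fact proves the stronger statement that slicing preserves \emph{closed lifting pairs} (Proposition~\ref{prop:sc:sliceclp}), not just weak factorisation systems. Your remark that ``one cannot simply appeal to the lifting axiom of the original \wfs\ pointwise'' is therefore not quite right: one can, with the extension and pullback tricks just described.

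The trade-offs are as follows. Your argument is more elementary and requires no pullbacks in $\C$, but it needs factorisations. The paper's argument needs pullbacks (for Lemma~\ref{lem:inclslicingcomrlagain}) but not factorisations; more importantly, it is precisely this ``commute slicing with $(-)\lpf$'' strategy that generalises cleanly to categories and double categories of maps in Sections~\ref{sec:functions} and~\ref{sec:doublefunctors}, where there is no retract-closure statement available and the retract argument has no obvious analogue.
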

The \wfs\ produced by Proposition~\ref{prop:slicewfs} are often called \emph{slice} \wfs. The left and right maps of the slice \wfs\ on $\C/A$ are those maps in $\C/A$ whose underlying morphisms are left or right maps, respectively. 

Proposition~\ref{prop:wfsbasechange} now combines with Proposition~\ref{prop:slicewfs} to yield a slightly more general version of the Frobenius equivalence. 
\begin{Cor}\label{cor:wfsFE}
    Let $f : A \to B$ be a morphism in $\C$ and $(L, R)$ a \wfs\ on $\C$, then $f^*(L/B) \subseteq L/A$ if and only if $f_*(R/A) \subseteq R/B$. 
\end{Cor}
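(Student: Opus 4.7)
The plan is to obtain Corollary~\ref{cor:wfsFE} as a straightforward application of Proposition~\ref{prop:wfsbasechange} to the slice weak factorisation systems furnished by Proposition~\ref{prop:slicewfs}. The key observation is that for a morphism $f : A \to B$ in a locally cartesian closed category $\C$, the pullback and pushforward functors assemble into an adjunction
\[
f^* : \C/B \rightleftarrows \C/A : f_*,
\]
so that both ends of this adjunction carry a weak factorisation system to which change of base applies.

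First, I would invoke Proposition~\ref{prop:slicewfs} twice to produce the slice \wfs s $(L/A, R/A)$ on $\C/A$ and $(L/B, R/B)$ on $\C/B$. By definition of a slice \wfs, a morphism in $\C/A$ belongs to $L/A$ (respectively $R/A$) precisely when its underlying morphism in $\C$ belongs to $L$ (respectively $R$), and similarly for $\C/B$. Then I would apply Proposition~\ref{prop:wfsbasechange} to the adjunction $f^* \dashv f_*$ with the \wfs\ $(L/B, R/B)$ playing the role of $(L, R)$ on the domain side, and $(L/A, R/A)$ playing the role of $(L', R')$ on the codomain side. The proposition immediately yields
\[
f^*(L/B) \subseteq L/A \quad \Longleftrightarrow \quad f_*(R/A) \subseteq R/B,
\]
which is exactly the claim.

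I do not anticipate a serious obstacle here: everything is set up so that the corollary follows from instantiating the two cited propositions, and the only minor bookkeeping is to confirm that ``$f^*$ sends $L/B$ into $L/A$'' in the slice \wfs\ really encodes the statement that the pullback of an $L$-map over $B$ lies in $L$, and similarly for pushforward on the right-hand side; but this is immediate from the description of $L/A$ and $R/A$ given by Proposition~\ref{prop:slicewfs}. The only point worth flagging for the reader is that although Proposition~\ref{prop:wfsfrobequiv} concerns the classes $L$ and $R$ themselves, the natural home of the Frobenius equivalence is the slice categories, and Corollary~\ref{cor:wfsFE} is in fact strictly more general than Proposition~\ref{prop:wfsfrobequiv} (which is recovered by ranging over all $f \in \C_1$, viewed inside appropriate slices).
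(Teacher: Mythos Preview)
Your proposal is correct and matches the paper's proof essentially line for line: apply Proposition~\ref{prop:slicewfs} to obtain the slice weak factorisation systems $(L/A,R/A)$ and $(L/B,R/B)$, then feed the adjunction $f^* \dashv f_*$ into Proposition~\ref{prop:wfsbasechange} to conclude. The only extra content in your write-up is the closing commentary relating the corollary back to Proposition~\ref{prop:wfsfrobequiv}, which the paper defers to a separate remark.
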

\begin{proof}
    By Proposition~\ref{prop:slicewfs} we obtain two slice \wfs\ $(L/A, R/A)$ and $(L/B, R/B)$ on $\C/A$ and $\C/B$ respectively. We have an adjunction $f^* \dashv f_*: \C/A \to \C/B$ and so Proposition~\ref{prop:wfsbasechange} tells us $f^*(L/B) \subseteq L/A$ if and only if $f_*(R/A) \subseteq R/B$. 
\end{proof}
\begin{Rk}
    While Proposition~\ref{prop:wfsfrobequiv} and Corollary~\ref{cor:wfsFE} are clearly very similar, there are still some notable differences between them; for example, Proposition~\ref{prop:wfsfrobequiv} is about base change along right maps, whereas in Corollary~\ref{cor:wfsFE} the base change is performed along an arbitrary morphism. We will get back to these differences in Section~\ref{sec:sc:frobequiv}.
\end{Rk}

As a matter of fact, the proofs of Propositions~\ref{prop:wfsbasechange}~and~\ref{prop:slicewfs} do not rely on the axiom of factorisation of \wfs, but only on the axiom of lifting. They can therefore be stated for what are called \emph{pre}-\wfs\ in~\cite{bourke2023OrthogonalApproach, freyd1972CategoriesContinuous}, but which we will call \emph{closed lifting pairs}, i.e.\ pairs $(J, K)$ satisfying $J = \lpf K$ and $K = J\lpf$. We thus see that the Frobenius equivalence of Corollary~\ref{cor:wfsFE} consists of the following key components. 
\begin{enumerate}
    \item\label{item:lifting} \emph{Lifting}: A definition of lifting with respect to a subclass $J \subseteq \C_1$, giving rise to subclasses of left lifting $\lpf J \subseteq \C_1$ and right lifting $J \lpf \subseteq \C_1$. In turn, this yields the notion of a closed lifting pair $(J, K)$ satisfying $J = \lpf K$ and $K = J\lpf$. 
    \item \label{item:basechange} \emph{Base change}: A result relating closed lifting pairs along adjunctions. 
    \item\label{item:slicing} \emph{Slicing}: An extension of the slice category construction $\C/A$ for some object $A \in \C$ to a subclass $J \subseteq \C_1$, yielding a subclass $J/A \subseteq (\C/A)_1$. This construction should preserve closed lifting pairs, meaning that when $(J, K)$ is a closed lifting pair, then so is $(J/A, K/A)$. 
\end{enumerate}
Once these components have been established, definitions of the Frobenius and pushforward properties, as well as the equivalence between them, follow readily.

We will now consider components (\ref{item:lifting})--(\ref{item:slicing}) of lifting, base change, and slicing in some detail in Sections~\ref{sec:sslifting}, \ref{sec:ssbasechange}, and \ref{sec:ssslicing}, respectively. We conclude with the Frobenius equivalence for weak factorisation systems in Section~\ref{sec:sc:frobequiv}. 

\subsection{Lifting}\label{sec:sslifting}

As mentioned in Section~\ref{sec:wfs}, we write $f \pitchfork g$ for $f, g \in \C_1$ when every lifting problem $(u, v) : f \to g$ has a diagonal filler $\phi$ as in~(\ref{eq:phi}). We extend this notation to subclasses $J, K \subseteq \C_1$, by writing $J \pitchfork K$ when $f \pitchfork g$ for every $f \in J$ and $g \in K$; for singletons we omit brackets. We define, as before, $\lpf J = \{f \in \C_1 \mid f \pitchfork J\}$ and $J \lpf = \{f \in \C_1 \mid J \pitchfork f\}$. Let $\P(\C_1)$ denote the poset category of subclasses of $\C_1$ ordered by the inclusion relation, then $\lpf(-)$ and $(-)\lpf$ give a \emph{Galois connection}: 
\begin{equation}\label{eq:sc:galois}
% https://q.uiver.app/#q=WzAsMixbMCwwLCIoXFxQKFxcQ18xKSleXFx0ZXh0e29wfSJdLFsxLDAsIlxcUChcXENfMSkgXFxybGFweyAufSJdLFsxLDAsIlxcbHBmKC0pIiwyLHsib2Zmc2V0IjoyfV0sWzAsMSwiKC0pXFxscGYiLDIseyJvZmZzZXQiOjJ9XSxbMiwzLCIiLDAseyJsZXZlbCI6MSwic3R5bGUiOnsibmFtZSI6ImFkanVuY3Rpb24ifX1dXQ==
\begin{tikzcd}[ampersand replacement=\&]
	{(\P(\C_1))^\text{op}} \& {\P(\C_1) \rlap{ .}}
	\arrow[""{name=0, anchor=center, inner sep=0}, "{\lpf(-)}"', shift right=2, from=1-2, to=1-1]
	\arrow[""{name=1, anchor=center, inner sep=0}, "{(-)\lpf}"', shift right=2, from=1-1, to=1-2]
	\arrow["\dashv"{anchor=center, rotate=-90}, draw=none, from=0, to=1]
\end{tikzcd}
\end{equation}
In this notation we have a string of equivalences:
\begin{alignat}{10}\label{eq:ssJpfK}
    J \subseteq \lpf K &\quad\text{ iff }\quad& J \pitchfork K &\quad\text{ iff }\quad& K \subseteq J \lpf.
\end{alignat}
We are particularly interested in cases where the inclusions of~(\ref{eq:ssJpfK}) are equalities. 
\begin{Defn}\label{def:closedliftingpair}
    A pair $(J, K)$ of subclasses $J, K \subseteq \C_1$ is called a \emph{lifting pair} if $J \pitchfork K$. A lifting pair $(J, K)$ is called \emph{closed} if the induced inclusions of (\ref{eq:ssJpfK}) hold both ways, so that $J = \lpf K$ and $K = J \lpf$. 
\end{Defn}
\begin{Rk}
    In~\cite[Definition~1.1.3]{north2017TypeTheoretic} a lifting pair is defined as what we call a closed lifting pair in Definition~\ref{def:closedliftingpair}. We make this further distinction to align terminology with~\cite{bourke2023OrthogonalApproach}, cf.\ Definition~\ref{def:funcliftingstructure}. 
\end{Rk}

\begin{Ex}\label{ex:sscanliftingpairs}
    We remark, as in~\cite[Examples 5 \& 6]{bourke2023OrthogonalApproach}, that any subclass $J \subseteq \C_1$ induces two canonical lifting pairs $(\lpf J, J)$ and $(J, J \lpf)$, because from~(\ref{eq:ssJpfK}) we get:
    \begin{alignat*}{7}
        \lpf J &\subseteq \lpf J &\quad\text{ iff }\quad&& \lpf J &\pitchfork J &&\quad\text{ iff }\quad& J &\subseteq (\lpf J)\lpf, \text{ and}\\
        J &\subseteq \lpf(J \lpf) &\quad\text{ iff }\quad&& J &\pitchfork J \lpf &&\quad\text{ iff }\quad& J \lpf &\subseteq J \lpf.
    \end{alignat*}
    The inclusions $J \subseteq (\lpf J)\lpf$ and $J \subseteq \lpf(J \lpf)$ correspond to the unit and counit of~(\ref{eq:sc:galois}). The lifting pairs $(\lpf J, (\lpf J)\lpf)$ and $(\lpf(J\lpf), J\lpf)$ so obtained are called \emph{fibrantly generated} and \emph{cofibrantly generated} by $J$, respectively. In fact, since~(\mbox{\ref{eq:sc:galois}}) is a Galois connection, these fibrantly and cofibrantly generated pairs are always closed.
\end{Ex}

\subsection{Base change}\label{sec:ssbasechange}

The notion of lifting pair can be generalized relative to adjunctions of categories. This is because for $J \subseteq \C_1$ and $K \subseteq \D_1$, with an adjunction $F \vdash G$ between two categories $\C$ and $\D$, we have the following string of equivalences:
\begin{alignat}{100}\label{eq:ssJpfKadj}
    F(J) \subseteq \lpf K &\quad\text{ iff }\quad& F(J) \pitchfork K &\quad\text{ iff }\quad& J \pitchfork G(K) &\quad\text{ iff }\quad& G(K) \subseteq J \lpf.
\end{alignat}

\begin{Prop}\label{prop:sc:basechange}
    Let $F \dashv G : \D \to \C$ be an adjunction, and $J \subseteq \C_1$, $K \subseteq \D_1$ classes of maps in $\C$ and $\D$ respectively; then $F(J) \subseteq \lpf K$ if and only if $G(K) \subseteq J \lpf$.  
\end{Prop}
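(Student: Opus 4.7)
The plan is to observe that the proposition is essentially a repackaging of the chain of equivalences displayed in equation~(\ref{eq:ssJpfKadj}). The two outer equivalences $F(J) \subseteq \lpf K \iff F(J) \pitchfork K$ and $J \pitchfork G(K) \iff G(K) \subseteq J\lpf$ are immediate from the definitions and from~(\ref{eq:ssJpfK}), so the whole proof reduces to establishing the middle equivalence $F(J) \pitchfork K \iff J \pitchfork G(K)$.

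For that middle step, I would unfold the claim to the level of individual morphisms: it suffices to show that for $f \in \C_1$ and $g \in \D_1$, one has $Ff \pitchfork g$ in $\D$ if and only if $f \pitchfork Gg$ in $\C$. The proof is by transposition under the adjunction $F \dashv G$. Given a commutative square
\[
    (u, v) : Ff \to g \quad \text{in } \D,
\]
its adjoint transpose $(u^\sharp, v^\sharp) : f \to Gg$ is a commutative square in $\C$, and this transposition is a bijection between squares $Ff \to g$ and squares $f \to Gg$ (with the domain and codomain components transposed separately). Moreover, under this bijection, diagonal fillers correspond: a filler $\phi : \cod(Ff) \to \dom g$ for the first square transposes to a filler $\phi^\sharp : \cod f \to \dom(Gg)$ for the second, and conversely. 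Commutativity of the two triangles on either side follows from the naturality of the unit and counit of the adjunction, together with the triangle identities (applied to the morphisms obtained by transposing $u$ and $v$).

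Putting this pointwise correspondence together, $Ff \pitchfork g$ holds precisely when every square $Ff \to g$ admits a filler, which by the bijection above is precisely when every square $f \to Gg$ admits a filler, i.e.\ $f \pitchfork Gg$. Quantifying over $f \in J$ and $g \in K$ yields $F(J) \pitchfork K \iff J \pitchfork G(K)$, and chaining this with the outer equivalences of~(\ref{eq:ssJpfKadj}) gives the proposition.

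The main (minor) obstacle is just bookkeeping: making explicit what ``transpose'' means for a square rather than for a single morphism, and verifying that diagonal fillers really do correspond. This is standard adjunction yoga, with no hidden subtleties, and I would present it by drawing the two squares side by side, labelling the transposed data, and citing naturality. No use of the factorisation axiom is made, so the argument applies equally to the pre-\wfs{} (closed lifting pair) setting, as noted in the text preceding the statement.
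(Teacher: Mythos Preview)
Your proposal is correct and follows essentially the same approach as the paper: both reduce the statement to the adjoint transposition of lifting problems and their fillers. The only difference is presentational---the paper proves one implication explicitly and declares the other dual, whereas you set up the full bijection between squares $Ff \to g$ and $f \to Gg$ and their fillers in one go.
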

\begin{proof}
    We only prove left to right; the other direction is dual. To show $Gk \in J\lpf$ for $k : A \to B \in K$ we consider $j : C \to D \in J$ and $(u, v) : j \to Gk$ as on the left of: 
% https://q.uiver.app/#q=WzAsOCxbMSwwLCJHQSJdLFsxLDEsIkdCIFxccmxhcHsgLH0iXSxbMCwwLCJDIl0sWzAsMSwiRCJdLFszLDAsIkZDIl0sWzMsMSwiRkQiXSxbNCwwLCJBIl0sWzQsMSwiQiBcXHJsYXB7IC59Il0sWzAsMSwiR2siXSxbMiwzLCJqIiwyXSxbMiwwLCJ1Il0sWzQsNSwiRmoiLDJdLFs2LDcsImsiXSxbNCw2LCJcXGJhcnt1fSJdLFs1LDcsIlxcYmFye3Z9IiwyXSxbNSw2LCJcXHBoaSIsMCx7InN0eWxlIjp7ImJvZHkiOnsibmFtZSI6ImRvdHRlZCJ9fX1dLFszLDEsInYiLDJdLFszLDAsIlxcYmFye1xccGhpfSIsMCx7InN0eWxlIjp7ImJvZHkiOnsibmFtZSI6ImRvdHRlZCJ9fX1dXQ==
\[\begin{tikzcd}
	C & GA && FC & A \\
	D & {GB \rlap{ ,}} && FD & {B \rlap{ .}}
	\arrow["Gk", from=1-2, to=2-2]
	\arrow["j"', from=1-1, to=2-1]
	\arrow["u", from=1-1, to=1-2]
	\arrow["Fj"', from=1-4, to=2-4]
	\arrow["k", from=1-5, to=2-5]
	\arrow["{\bar{u}}", from=1-4, to=1-5]
	\arrow["{\bar{v}}"', from=2-4, to=2-5]
	\arrow["\phi", dotted, from=2-4, to=1-5]
	\arrow["v"', from=2-1, to=2-2]
	\arrow["{\bar{\phi}}", dotted, from=2-1, to=1-2]
\end{tikzcd}\]
Transposing gives a lifting problem $(\bar{u}, \bar{v})$ for $Fj$ and $k$ which has a solution $\phi$ by the assumption $F(J) \subseteq \lpf K$, and its transpose $\smash{\bar{\phi}}$ is a solution to $(u, v)$.
\end{proof}
In the context of closed lifting pairs Proposition~\ref{prop:sc:basechange} tells us that a left adjoint preserves left maps if and only if its right adjoint preserves right maps. 
\begin{Cor}\label{cor:ssbasechange}
    Let $F \dashv G : \D \to \C$, and $(J, K)$ and $(L, M)$ be closed lifting pairs on $\C$ and $\D$ respectively, then $F(J) \subseteq L$ if and only if $G(M) \subseteq K$.  
\end{Cor}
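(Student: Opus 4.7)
The plan is to derive this corollary as an immediate specialization of Proposition~\ref{prop:sc:basechange}, exploiting the defining equalities of closed lifting pairs to rewrite $L$ and $K$ in the lifting-property form needed by that proposition.

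Concretely, I would start by unpacking the hypothesis that $(J, K)$ and $(L, M)$ are closed lifting pairs, which by Definition~\ref{def:closedliftingpair} means $J = \lpf K$, $K = J\lpf$, $L = \lpf M$, and $M = L\lpf$. The key substitution is to use $L = \lpf M$ on the left-hand side and $K = J\lpf$ on the right-hand side of the claimed equivalence. Then $F(J) \subseteq L$ becomes $F(J) \subseteq \lpf M$, and $G(M) \subseteq K$ becomes $G(M) \subseteq J\lpf$. These two conditions are equivalent by Proposition~\ref{prop:sc:basechange} applied to the adjunction $F \dashv G$ and the classes $J \subseteq \C_1$ and $M \subseteq \D_1$.

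There is essentially no obstacle: the work was already done in Proposition~\ref{prop:sc:basechange}, whose proof proceeds by transposing a lifting problem across the adjunction. The only subtlety worth noting is that the proof uses \emph{only two} of the four defining equalities of the closed lifting pairs, namely the ones characterizing $L$ as a left-lift class and $K$ as a right-lift class; the other two equalities $J = \lpf K$ and $M = L\lpf$ are not needed, although of course they are part of the hypothesis. Accordingly, the proof I would write is just two lines of bi-implication, citing Proposition~\ref{prop:sc:basechange} once in the middle.
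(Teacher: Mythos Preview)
Your proposal is correct and is exactly the argument the paper intends: the corollary is stated immediately after Proposition~\ref{prop:sc:basechange} without proof, and your two-line derivation via the substitutions $L = \lpf M$ and $K = J\lpf$ is precisely the intended specialization.
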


\subsection{Slicing}\label{sec:ssslicing}

A large part of our work relates to slicing. Recall that if $\C$ is a category and $A$ an object in $\C$, then the \emph{slice category} $\C/A$ is the category whose objects are maps $a: X \to A$ with codomain $A$ and whose morphisms $(b: Y \to A) \to (a: X \to A)$ are maps $f: Y \to X$ in $\C$ making the triangle below commute:
% https://q.uiver.app/#q=WzAsMyxbMSwxLCJBIFxccmxhcHsgLn0iXSxbMCwwLCJZIl0sWzIsMCwiWCAiXSxbMSwyLCJmIl0sWzEsMCwiYiIsMl0sWzIsMCwiYSJdXQ==
\[\begin{tikzcd}
	Y && {X } \\
	& {A \rlap{ .}}
	\arrow["f", from=1-1, to=1-3]
	\arrow["b"', from=1-1, to=2-2]
	\arrow["a", from=1-3, to=2-2]
\end{tikzcd}\]
Note that $f$ (considered as a morphism in $\C$) does not uniquely determine a morphism in $\C/A$, as there could be other arrows besides $a$ in $\C(X, A)$. On the other hand, the domain $b$ of $f$ (considered as a morphism in $\C/A$) can be inferred from the data $f$ and $a$. This means an arrow in $\C/A$ is equivalently a pair $(f, a)$ of composable morphisms such that $\cod a = A$, and we will henceforth refer to them using this notation. In this setting we say $a$ is an \emph{extension} of $f$ to $A$. Two morphisms $(f, a)$ and $(g, b)$ in $\C/A$ are composable when $a = b.g$, and their composition is given by $(g.f, b)$. 

Next, we extend the slicing operation on categories to one on classes of maps.
\begin{Defn}\label{def:inclslicing}
    Given a subclass $J\subseteq \C_1$ and an object $A \in \C$ we define the \emph{slice} subclass $J/A \subseteq (\C/A)_1$ by $J/A = \{(f, a) \in (\C/A)_1 \mid f \in J\}$. 
\end{Defn} 

The goal will now be to prove the following proposition, which uses the above notion of slicing to create new closed lifting pairs from old. 
\begin{Prop}\label{prop:sc:sliceclp}
    If $(J, K)$ is a closed lifting pair on $\C$, and $A \in \C$ is some object, then $(J/A, K/A)$ is a closed lifting pair on $\C/A$. 
\end{Prop}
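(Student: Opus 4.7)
The plan is to verify both the lifting property $J/A \pitchfork K/A$ and the closure conditions $J/A = \lpf(K/A)$ and $K/A = (J/A)\lpf$, in order of increasing subtlety.

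\textbf{Lifting property.} A lifting problem $(u, v) : (f, a) \to (g, b)$ in $\C/A$ is the same data as a lifting problem $(u, v) : f \to g$ in $\C$ subject to the extra compatibility $b.v = a$. Since $f \in J$ and $g \in K$, the hypothesis $J \pitchfork K$ supplies a filler $\phi$ in $\C$, and the calculation $b.g.\phi = b.v = a$ shows that $\phi$ is also a morphism in $\C/A$, hence a filler there. By (\ref{eq:ssJpfK}) this already yields the easy inclusions $J/A \subseteq \lpf(K/A)$ and $K/A \subseteq (J/A)\lpf$.

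\textbf{The inclusion $(J/A)\lpf \subseteq K/A$.} Given $(g, b) \in (J/A)\lpf$, to conclude $g \in J\lpf = K$ I take an arbitrary $f \in J$ and a lifting problem $(u, v) : f \to g$ in $\C$. The key step is to choose an extension of $f$ to $A$ by setting $a := b.v$; this makes $(f, a) \in J/A$ and promotes $(u, v)$ to a lifting problem in $\C/A$ (the compatibility $b.v = a$ is now definitional), whose filler solves the original problem in $\C$.

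\textbf{The inclusion $\lpf(K/A) \subseteq J/A$.} This is the main obstacle. The symmetric move would require extending $g$ to $A$ via some $b$ satisfying $b.v = a$, but such a $b$ need not exist when $a$ does not factor through $v$. The remedy I would use is to replace $g$ by a different $K$-map with a more convenient codomain. Observing that any right lifting class $J\lpf$ is automatically stable under pullback (a routine calculation using the universal property), I would form the pullback $\tilde g : W \times_Z X \to X$ of $g$ along $v$, which still lies in $K$. Since $\cod \tilde g = X$, it is legitimate to extend $\tilde g$ by $a$; the original square factors through the pullback as a lifting problem $((u, f), 1_X) : (f, a) \to (\tilde g, a)$ in $\C/A$, whose right vertex $1_X$ trivially respects $a$. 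Applying the hypothesis $(f, a) \in \lpf(K/A)$ produces a filler, whose composition with the projection $W \times_Z X \to W$ yields a filler for the original square, proving $f \in J$.
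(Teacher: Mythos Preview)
Your proof is correct and follows essentially the same route as the paper. The paper packages the two hard inclusions as commutation statements $(J/A)\lpf = (J\lpf)/A$ (Corollary~\ref{cor:inclcomll}) and $\lpf(K/A) = (\lpf K)/A$ (Corollary~\ref{cor:sscomslicerl}), but the underlying arguments are exactly your ``extend by $b.v$'' trick (Lemma~\ref{lem:inclslicingcomrl}) and your pullback replacement using that $K = J\lpf$ is stable under pullback (Lemmas~\ref{lem:inclslicingcomrlagain} and~\ref{lem:sc:pb}).
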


We start by considering how lifting works in a slice category. A lifting problem for morphisms $(f, a)$ and $(g, b)$ in $\C/A$ is a square $((u, c), (v, d)) : (f, a) \to (g, b)$, as in:
% https://q.uiver.app/#q=WzAsOSxbNSwwLCJcXGJ1bGxldCJdLFs1LDIsIlxcYnVsbGV0Il0sWzcsMCwiXFxidWxsZXQiXSxbNywyLCJcXGJ1bGxldCJdLFs2LDMsIkEgXFxybGFweyAufSJdLFsyLDAsImIuZyJdLFsyLDIsImIgXFxybGFweyAsfSJdLFswLDAsImEuZiJdLFswLDIsImEiXSxbMCwxLCJmIiwyXSxbMiwzLCJnIl0sWzAsMiwidSJdLFsxLDMsInYiLDJdLFsxLDIsIlxccGhpIiwwLHsic3R5bGUiOnsiYm9keSI6eyJuYW1lIjoiZG90dGVkIn19fV0sWzEsNCwiYSIsMl0sWzMsNCwiYiJdLFs3LDgsIihmLCBhKSIsMl0sWzUsNiwiKGcsIGIpIl0sWzcsNSwiKHUsIGMpIl0sWzgsNiwiKHYsIGQpIiwyXSxbOCw1LCIoXFxwaGksIGUpIiwwLHsic3R5bGUiOnsiYm9keSI6eyJuYW1lIjoiZG90dGVkIn19fV1d
\begin{equation}\label{eq:slicelift}
    \begin{tikzcd}[sep=scriptsize]
        {a.f} && {b.g} &&& \bullet && \bullet \\
        \\
        a && {b \rlap{ ,}} &&& \bullet && \bullet \\
        &&&&&& {A \rlap{ .}}
        \arrow["f"', from=1-6, to=3-6]
        \arrow["g", from=1-8, to=3-8]
        \arrow["u", from=1-6, to=1-8]
        \arrow["v"', from=3-6, to=3-8]
        \arrow["\phi", dotted, from=3-6, to=1-8]
        \arrow["a"', from=3-6, to=4-7]
        \arrow["b", from=3-8, to=4-7]
        \arrow["{(f, a)}"', from=1-1, to=3-1]
        \arrow["{(g, b)}", from=1-3, to=3-3]
        \arrow["{(u, c)}", from=1-1, to=1-3]
        \arrow["{(v, d)}"', from=3-1, to=3-3]
        \arrow["{(\phi, e)}", dotted, from=3-1, to=1-3]
    \end{tikzcd}
\end{equation}
This boils down to a lifting problem $(u, v) : f \to g$ for $f$ and $g$ such that the induced triangle over the extensions to $A$ commutes, as illustrated on the right above. Moreover, a lifting solution $(\phi, e)$ to the problem $((u, c), (v, d))$ is just a solution $\phi$ to the problem $(u, v)$, as $e$ necessarily equals $b.g$. In other words, lifting in $\C/A$ is very similar to lifting in $\C$, with the difference being that in $\C/A$ a restricted set of lifting problems is considered---namely those commuting with the extensions to $A$.

\begin{Lemma}\label{lem:slicelift}
    Let $(f, a), (g, b) \in (\C/A)_1$ for $A \in \C$, then $f \pitchfork g$ implies $(f, a) \pitchfork (g, b)$. 
\end{Lemma}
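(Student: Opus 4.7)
The plan is to show that a solution to a lifting problem in $\C/A$ can be extracted from a solution to the underlying lifting problem in $\C$, using that the extensions to $A$ are essentially forced and that the diagonal filler in $\C$ automatically lies over $A$.

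First, I would unpack the data: a lifting problem $((u, c), (v, d)) : (f, a) \to (g, b)$ in $\C/A$ consists of underlying maps $u : \dom f \to \dom g$ and $v : \cod f \to \cod g$ in $\C$ forming a commuting square $(u, v) : f \to g$, together with extension data recording that $(u, c)$ is a slice morphism $a.f \to b.g$ (so $c = b.g$ and $b.g.u = a.f$) and $(v, d)$ is a slice morphism $a \to b$ (so $d = b$ and $b.v = a$). Applying the hypothesis $f \pitchfork g$ to the underlying square yields a diagonal filler $\phi : \cod f \to \dom g$ with $\phi.f = u$ and $g.\phi = v$.

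Next, I would package $\phi$ into a morphism in $\C/A$. The candidate is $(\phi, b.g) : a \to b.g$, whose well-definedness as a slice arrow requires $b.g.\phi = a$. But this is immediate, since $b.g.\phi = b.v = a$, using $g.\phi = v$ and then the fact that $(v, b) : a \to b$ is a slice arrow.

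Finally, I would verify that $(\phi, b.g)$ solves the original slice lifting problem by checking both triangles. The upper triangle is the identity $(\phi, b.g) \circ (f, a) = (\phi.f, b.g) = (u, b.g) = (u, c)$, which holds because $\phi.f = u$. The lower triangle is $(g, b) \circ (\phi, b.g) = (g.\phi, b) = (v, b) = (v, d)$, which holds because $g.\phi = v$. There is essentially no obstacle here; the content of the lemma is just the observation, already sketched in the discussion preceding it, that lifting in $\C/A$ imposes a constraint on which problems are considered but not on the solutions, so that a lift in $\C$ upgrades freely to a lift in $\C/A$.
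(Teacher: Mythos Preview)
Your proof is correct and follows exactly the same approach as the paper: extract the underlying lifting problem $(u,v) : f \to g$ in $\C$, apply $f \pitchfork g$ to obtain a filler $\phi$, and then observe that $(\phi, b.g)$ solves the slice problem. You have simply spelled out in full the verifications that the paper leaves implicit.
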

\begin{proof}
    A lifting problem $((u, c), (v, d))$ for $(f, a)$ and $(g, b)$ as on the left of~(\ref{eq:slicelift}) yields a problem $(u, v)$ for $f$ and $g$ as on the right of~(\ref{eq:slicelift}), which has a solution $\phi$ by assumption; now $(\phi, b.g)$ is a solution to $((u, c), (v, d))$. 
\end{proof}

\begin{Lemma}\label{lem:pf/A}
    Let $J, K \subseteq \C_1$ and $A \in \C$, then $J \pitchfork K$ implies $J/A \pitchfork K/A$. 
\end{Lemma}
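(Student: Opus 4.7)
The plan is to unfold the definitions and reduce everything directly to the single-morphism statement already established in Lemma~\ref{lem:slicelift}. The hypothesis $J \pitchfork K$ means $f \pitchfork g$ for every $f \in J$ and $g \in K$, and the conclusion $J/A \pitchfork K/A$ similarly means that $(f,a) \pitchfork (g,b)$ for every $(f,a) \in J/A$ and $(g,b) \in K/A$.

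So I would start by picking arbitrary $(f,a) \in J/A$ and $(g,b) \in K/A$. By Definition~\ref{def:inclslicing} this exactly means $f \in J$ and $g \in K$ (with $a$, $b$ arbitrary extensions to $A$). The hypothesis $J \pitchfork K$ then gives $f \pitchfork g$ in $\C$, and Lemma~\ref{lem:slicelift} immediately upgrades this to $(f,a) \pitchfork (g,b)$ in $\C/A$. Since $(f,a)$ and $(g,b)$ were arbitrary, this establishes $J/A \pitchfork K/A$.

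There is no real obstacle here; the lemma is essentially a bookkeeping step that promotes Lemma~\ref{lem:slicelift} from individual morphisms to whole classes, and it serves as the lifting half of the proof of Proposition~\ref{prop:sc:sliceclp}. The only thing worth being careful about is not reading additional content into the slice notation: $J/A$ does not require a specified extension $a$ to be chosen in advance, but rather collects all pairs $(f,a)$ with $f \in J$, so the quantification over extensions is already built into the definition and matches perfectly with the quantification implicit in $J/A \pitchfork K/A$.
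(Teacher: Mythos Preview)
Your proof is correct and follows exactly the same approach as the paper: pick arbitrary $(f,a)\in J/A$ and $(g,b)\in K/A$, extract $f\in J$ and $g\in K$, apply the hypothesis to get $f\pitchfork g$, and invoke Lemma~\ref{lem:slicelift}. The additional commentary you give about the role of the lemma and the quantification over extensions is accurate but not needed for the argument itself.
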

\begin{proof}
    Let $(f, a) \in J/A, (g, b) \in K/A$, then $f \in J$ and $g \in K$ so $f \pitchfork g$ by assumption, and so $(f, a) \pitchfork (g, b)$ by Lemma~\ref{lem:slicelift}.
\end{proof}

Lemma~\ref{lem:pf/A} tells us the slicing operation on subclasses of maps preserves lifting pairs. To see that it also preserves closed lifting pairs we appeal to a more general statement about commutativity between slicing and lifting. The first step towards proving commutativity of lifting is taken by the following consequence of Lemma~\ref{lem:pf/A}.
\begin{Lemma}\label{lem:subclass:sc}
    Let $J \subseteq \C_1$ and $A \in \C$, then $(J\lpf)/A \subseteq (J/A)\lpf$ and $(\lpf J)/A \subseteq \lpf(J/A)$.
\end{Lemma}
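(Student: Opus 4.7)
The plan is to unpack the two inclusions directly using the preceding Lemma~\ref{lem:pf/A} (or equivalently Lemma~\ref{lem:slicelift}), as both statements amount to checking that membership in the slice lifting class is witnessed by lifting downstairs in $\C$ together with the slice-lifting transfer.

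For the first inclusion, I would take an arbitrary $(g, b) \in (J\lpf)/A$. By Definition~\ref{def:inclslicing} this just means $g \in J\lpf$, with $b$ some extension of $g$ to $A$. To show $(g, b) \in (J/A)\lpf$, pick any $(f, a) \in J/A$; then $f \in J$ and $g \in J\lpf$ give $f \pitchfork g$. Lemma~\ref{lem:slicelift} then upgrades this to $(f, a) \pitchfork (g, b)$, which is exactly what is needed.

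The second inclusion is entirely dual: take $(f, a) \in (\lpf J)/A$, so $f \in \lpf J$, and for arbitrary $(g, b) \in J/A$ observe that $g \in J$ forces $f \pitchfork g$, whence $(f, a) \pitchfork (g, b)$ by Lemma~\ref{lem:slicelift} again. This yields $(f, a) \in \lpf(J/A)$.

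There is no real obstacle here: the key point is that Lemma~\ref{lem:slicelift} (and its class-level version Lemma~\ref{lem:pf/A}) already does all the work, since lifting in $\C/A$ is just a restricted version of lifting in $\C$ and the extensions to $A$ always admit a canonical witness ($e = b.g$ in the notation of~(\ref{eq:slicelift})). The only thing worth noting is that the inclusions are in general strict: elements of $(J/A)\lpf$ need not have their underlying morphism belong to $J\lpf$, since lifting problems in the slice are restricted to those compatible with extensions to $A$, and similarly for the left side. This asymmetry is precisely what motivates the need for a more refined analysis in the following sections.
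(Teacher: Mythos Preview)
Your proof is correct and is essentially the same argument as the paper's: the paper applies Lemma~\ref{lem:pf/A} to the canonical lifting pairs $(\lpf J, J)$ and $(J, J\lpf)$ to obtain $(\lpf J)/A \pitchfork J/A$ and $J/A \pitchfork (J\lpf)/A$, then converts these via~(\ref{eq:ssJpfK}) into the stated inclusions, which is exactly what you do elementwise using Lemma~\ref{lem:slicelift}. Your closing remark about strictness of the inclusions is also apt and anticipates the subsequent lemmas.
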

\begin{proof}
    By applying Lemma~\ref{lem:pf/A} to the canonical lifting pairs $(\lpf J, J)$ and $(J, J\lpf)$ we get $(\lpf J) / A \pitchfork J / A$ and $J / A \pitchfork (J\lpf) / A$, which is equivalent to the desired inclusions. 
\end{proof}
Only one of the converse inclusions of Lemma~\ref{lem:subclass:sc} holds in general.
\begin{Lemma}\label{lem:inclslicingcomrl}
    Let $J \subseteq \C_1$ and $A \in \C$, then $(J/A)\lpf \subseteq (J\lpf)/A$.
\end{Lemma}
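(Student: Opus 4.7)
The plan is to unfold both sides: $(J/A)\lpf \subseteq (J\lpf)/A$ says that whenever an object $(g,b)$ of $\C/A$ lifts on the right against every $(f,a) \in J/A$, the underlying morphism $g$ already lifts on the right against every $f \in J$ in $\C$. So fix $(g, b) \in (J/A)\lpf$, pick an arbitrary $f \in J$, and consider an arbitrary lifting problem $(u,v) : f \to g$ in $\C$; the task is to produce a filler $\phi$.

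The key move, and the reason this direction works while the analogous statement for $\lpf(J/A)$ does not, is that here the right-hand arrow $g$ already comes equipped with an extension $b : \cod g \to A$, and we can use it to manufacture an extension of $f$. Specifically, I would set $a := b \circ v : \cod f \to A$. Then $(f, a) \in J/A$ because $f \in J$, and by construction $a = b \circ v$, which is precisely the compatibility condition needed so that $(u, b.g)$ and $(v, b)$ assemble into a lifting square $((u, b.g),(v, b)) : (f, a) \to (g, b)$ in $\C/A$ (equivalently, both the underlying square in $\C$ and the triangle of extensions to $A$ commute, matching the picture in~(\ref{eq:slicelift})).

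Since $(g,b) \in (J/A)\lpf$ and $(f,a) \in J/A$, this lifted square admits a diagonal filler $(\phi, e)$ in $\C/A$. The discussion preceding Lemma~\ref{lem:slicelift} observed that such a filler is determined by its underlying morphism $\phi$ in $\C$ (with $e$ forced to be $b.g$), and that this $\phi$ is exactly a solution to the original problem $(u, v) : f \to g$. Since $f \in J$ was arbitrary, $g \in J\lpf$, and since $(g,b)$ was arbitrary in $(J/A)\lpf$, we conclude $(g,b) \in (J\lpf)/A$.

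There is no real obstacle here; the only point worth care is the asymmetry with the dual inclusion $\lpf(J/A) \subseteq (\lpf J)/A$. In that dual situation one would need to cook up an extension of the right map of a lifting problem from a given extension of the left map, and no canonical such extension exists --- precisely the reason that dual inclusion is not asserted in Lemma~\ref{lem:subclass:sc} and is not expected to hold in general.
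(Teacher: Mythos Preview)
Your proof is correct and follows exactly the same route as the paper: given a map in $(J/A)\lpf$, you manufacture an extension of the left map in an arbitrary lifting problem by precomposing the given extension with the bottom edge, then solve the resulting square in $\C/A$. The only difference is notation (you write $(g,b)$ and $f$ where the paper writes $(f,a)$ and $j$), and your closing paragraph on the asymmetry with the dual inclusion is precisely the observation the paper makes immediately after this lemma.
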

\begin{proof}
    Let $(f, a) \in (J/A)\lpf$, to show $f \in J\lpf$ we consider $(u, v) : j \to f$ for $j \in J$, as in:
    % https://q.uiver.app/#q=WzAsOSxbMCwwLCJcXGJ1bGxldCJdLFswLDIsIlxcYnVsbGV0Il0sWzIsMCwiXFxidWxsZXQiXSxbMiwyLCJcXGJ1bGxldCAiXSxbMSwzLCJBIFxccmxhcHsgLH0iXSxbNywwLCJhLmYiXSxbNywyLCJhIFxccmxhcHsgLn0iXSxbNSwwLCJhLnYuaiJdLFs1LDIsImEudiJdLFswLDEsImoiLDJdLFsyLDMsImYiXSxbMCwyLCJ1Il0sWzEsMywidiIsMl0sWzEsMiwiXFxwaGkiLDAseyJzdHlsZSI6eyJib2R5Ijp7Im5hbWUiOiJkb3R0ZWQifX19XSxbMSw0LCJhLnYiLDIseyJzdHlsZSI6eyJib2R5Ijp7Im5hbWUiOiJkb3R0ZWQifX19XSxbMyw0LCJhIl0sWzcsOCwiKGosYS52KSIsMl0sWzUsNiwiKGYsYSkiXSxbNyw1LCIodSwgYS5mKSJdLFs4LDYsIih2LCBhKSIsMl0sWzgsNSwiKFxccGhpLCBhLmYpIiwxLHsic3R5bGUiOnsiYm9keSI6eyJuYW1lIjoiZG90dGVkIn19fV1d
    \[\begin{tikzcd}[sep=scriptsize]
        \bullet && \bullet &&& {a.v.j} && {a.f} \\
        \\
        \bullet && {\bullet } &&& {a.v} && {a \rlap{ .}} \\
        & {A \rlap{ ,}}
        \arrow["j"', from=1-1, to=3-1]
        \arrow["f", from=1-3, to=3-3]
        \arrow["u", from=1-1, to=1-3]
        \arrow["v"', from=3-1, to=3-3]
        \arrow["\phi", dotted, from=3-1, to=1-3]
        \arrow["{a.v}"', dotted, from=3-1, to=4-2]
        \arrow["a", from=3-3, to=4-2]
        \arrow["{(j,a.v)}"', from=1-6, to=3-6]
        \arrow["{(f,a)}", from=1-8, to=3-8]
        \arrow["{(u, a.f)}", from=1-6, to=1-8]
        \arrow["{(v, a)}"', from=3-6, to=3-8]
        \arrow["{(\phi, a.f)}"{description}, dotted, from=3-6, to=1-8]
    \end{tikzcd}\]
    Now $(j, a.v) \in J/A$, and we have a commuting square $((u, a.f), (v, a)) : (j, a.v) \to (f, a)$ in $\C/A$ which has a solution $(\phi, a.f)$; and $\phi$ is a solution to the square $(u, v)$.
\end{proof}
\begin{Cor}\label{cor:inclcomll}
    Let $J \subseteq C_1$, then $(J/A) \lpf = (J\lpf)/A$.
\end{Cor}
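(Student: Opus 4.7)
The statement $(J/A)\lpf = (J\lpf)/A$ is the conjunction of two inclusions, and both have already been established in the lead-up. Specifically, Lemma~\ref{lem:subclass:sc} gives the inclusion $(J\lpf)/A \subseteq (J/A)\lpf$, while Lemma~\ref{lem:inclslicingcomrl} gives the reverse inclusion $(J/A)\lpf \subseteq (J\lpf)/A$. So my plan is simply to cite these two lemmas and conclude.

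There is essentially no obstacle here: the work has been done in the two preceding lemmas. The only thing worth noting is that the corollary records the commutativity of slicing with the right-lifting operation $(-)\lpf$, but not with the left-lifting operation $\lpf(-)$; indeed, only one of the inclusions for $\lpf(-)$ appeared in Lemma~\ref{lem:subclass:sc}, and (as the proof of Lemma~\ref{lem:inclslicingcomrl} makes essential use of the fact that $a.v$ serves as a canonical extension of $j$'s codomain to $A$, while there is no analogous canonical extension on the other side) the dual statement does not hold in general. I would therefore make the proof a single-line citation of the two lemmas, leaving any discussion of asymmetry for a surrounding remark rather than inside the proof itself.
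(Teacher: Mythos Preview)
Your proposal is correct and matches the paper's approach exactly: the corollary is stated without proof in the paper precisely because it follows immediately from combining Lemma~\ref{lem:subclass:sc} and Lemma~\ref{lem:inclslicingcomrl}. Your observation about the asymmetry with $\lpf(-)$ is also accurate and anticipates the paper's subsequent discussion leading to Lemma~\ref{lem:inclslicingcomrlagain}.
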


The trick in the proof of Lemma~\ref{lem:inclslicingcomrl} is to make an extension for $j$ by precomposing the extension of $f$ with the lower half of the lifting problem. This does not work when we try to prove the remaining inclusion $\lpf(J/A) \subseteq (\lpf J)/A$, which involves a square $(u, v) : f \to j$ in which the positions of $f$ and $j$ are swapped. To work around this, we place an additional requirement on $J$. 
\begin{Lemma}\label{lem:inclslicingcomrlagain}
    Let $J \subseteq \C_1$ be closed under pullbacks and $A \in \C$, then $\lpf(J/A) \subseteq (\lpf J)/A$.
\end{Lemma}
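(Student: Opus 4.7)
The plan is to mirror the argument of Lemma~\ref{lem:inclslicingcomrl}, but with source and target swapped in the lifting square. Now the map drawn from $J$ appears in the upper-right position of the square and carries no preferred extension to $A$; the hypothesis that $J$ is closed under pullback is exactly what will let us manufacture one.

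Concretely, given $(f, a) \in \lpf(J/A)$ together with $j : X \to Y$ in $J$ and a square $(u, v) : f \to j$, I would first pull $j$ back along $v : \cod f \to Y$, obtaining a map $j' : X \times_Y \cod f \to \cod f$ which lies in $J$ by hypothesis, together with the pullback projection $p : X \times_Y \cod f \to X$. Since $\cod j' = \cod f$, the extension $a$ promotes $j'$ to an object $(j', a) \in J/A$. The universal property of the pullback applied to $u$ and $f$, which commute under $j$ and $v$ by hypothesis, supplies a canonical map $\bar u : \dom f \to X \times_Y \cod f$ with $p.\bar u = u$ and $j'.\bar u = f$. These data assemble into a lifting square $(f, a) \to (j', a)$ in $\C/A$ whose top arrow is $\bar u$ and whose bottom arrow is $\mathrm{id}_{\cod f}$; commutativity in $\C/A$ is immediate from $j'.\bar u = f$, and both arrows respect the shared extension $a$ automatically.

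By assumption $(f, a) \in \lpf(J/A)$, so this square admits a solution, which by the discussion preceding Lemma~\ref{lem:slicelift} amounts to a single map $\phi : \cod f \to X \times_Y \cod f$ in $\C$ satisfying $\phi.f = \bar u$ and $j'.\phi = \mathrm{id}_{\cod f}$. I would then take $p.\phi$ as the desired filler for the original square $(u, v)$: the upper triangle reduces to $p.\bar u = u$, while the lower triangle follows from the pullback equation $j.p = v.j'$ combined with $j'.\phi = \mathrm{id}_{\cod f}$. The only real obstacle I anticipate is recognising at the outset that pullback-stability of $J$ is precisely what is needed to produce the replacement $(j', a) \in J/A$; once that replacement is in hand, everything else is a routine diagram chase.
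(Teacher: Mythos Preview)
Your proposal is correct and follows essentially the same route as the paper: pull $j$ back along $v$ to obtain a member of $J$ with codomain $\cod f$, extend it to $A$ via $a$, form the lifting problem in $\C/A$ with identity bottom arrow, solve it, and post-compose with the pullback projection to fill the original square. The only differences are notational (your $j'$, $p$, $\bar u$ are the paper's $v^*j$, $\varepsilon_v$, $\alpha$).
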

\begin{proof}
    Let $(f, a) \in \lpf(J/A)$, to show $f \in J\lpf$ we consider $(u, v) : f \to j$ for $j \in J$, as in:
% https://q.uiver.app/#q=WzAsMTEsWzAsMCwiXFxidWxsZXQiXSxbMCwyLCJcXGJ1bGxldCJdLFs0LDAsIlxcYnVsbGV0Il0sWzQsMiwiXFxidWxsZXQgXFxybGFweyAsfSJdLFsxLDMsIkEiXSxbOSwwLCJhLnZeKmoiXSxbOSwyLCJhIFxccmxhcHsgLn0iXSxbNywwLCJhLmYiXSxbNywyLCJhIl0sWzIsMiwiXFxidWxsZXQiXSxbMiwwLCJcXGJ1bGxldCJdLFswLDEsImYiLDJdLFsyLDMsImoiXSxbMCwyLCJ1IiwwLHsiY3VydmUiOi0zfV0sWzEsNCwiYSIsMl0sWzcsOCwiKGYsIGEpIiwyXSxbNSw2LCIodl4qaiwgYSkiXSxbNyw1LCIoXFxhbHBoYSwgYS52XipqKSJdLFs4LDYsIigxLCBhKSIsMl0sWzgsNSwiKFxccGhpLCBhLnZeKmopIiwxLHsic3R5bGUiOnsiYm9keSI6eyJuYW1lIjoiZG90dGVkIn19fV0sWzksMywidiIsMl0sWzEsOSwiMSIsMl0sWzksNCwiYSJdLFsxLDEwLCJcXHBoaSIsMCx7InN0eWxlIjp7ImJvZHkiOnsibmFtZSI6ImRvdHRlZCJ9fX1dLFswLDEwLCJcXGFscGhhIiwyLHsic3R5bGUiOnsiYm9keSI6eyJuYW1lIjoiZG90dGVkIn19fV0sWzEwLDIsIlxcdmFyZXBzaWxvbl92IiwyXSxbMTAsOSwidl4qaiJdLFsxMCwzLCIiLDAseyJzdHlsZSI6eyJuYW1lIjoiY29ybmVyIn19XV0=
\[\begin{tikzcd}[sep={.5cm}]
	\bullet && \bullet && \bullet &&& {a.f} && {a.v^*j} \\
	\\
	\bullet && \bullet && {\bullet \rlap{ ,}} &&& a && {a \rlap{ .}} \\
	& A && \phantom{A}
	\arrow["f"', from=1-1, to=3-1]
	\arrow["j", from=1-5, to=3-5]
	\arrow["u", curve={height=-18pt}, from=1-1, to=1-5]
	\arrow["a"', from=3-1, to=4-2]
	\arrow["{(f, a)}"', from=1-8, to=3-8]
	\arrow["{(v^*j, a)}", from=1-10, to=3-10]
	\arrow["{(\alpha, a.v^*j)}", from=1-8, to=1-10]
	\arrow["{(1, a)}"', from=3-8, to=3-10]
	\arrow["{(\phi, a.v^*j)}"{description}, dotted, from=3-8, to=1-10]
	\arrow["v"', from=3-3, to=3-5]
	\arrow["1"', from=3-1, to=3-3]
	\arrow["a", from=3-3, to=4-2]
	\arrow["\phi"{description}, dotted, from=3-1, to=1-3]
	\arrow["\alpha"', dotted, from=1-1, to=1-3]
	\arrow["{\varepsilon_v}"', from=1-3, to=1-5]
	\arrow["{v^*j}", from=1-3, to=3-3]
	\arrow["\lrcorner"{anchor=center, pos=0.125}, draw=none, from=1-3, to=3-5]
\end{tikzcd}\]
By assumption $v^*j \in J$ and so $(v^*j, a) \in J/A$. The morphism $\alpha : \dom f \to \dom v^*j$ induced by $v^*j$ gives rise to a lifting problem $((\alpha, a.v^*j), (1, a)) : (f, a) \to (v^*j, a)$ in $\C/A$, which has a solution $(\phi, a.v^*j)$; and $\varepsilon_v.\phi$ is a solution to $(u, v)$. 
\end{proof}
This additional assumption on $J$ is satisfied in the case we are interested in.
\begin{Lemma}\label{lem:sc:pb}
    For $J \subseteq \C_1$ the class $J\lpf$ is closed under pullbacks.
\end{Lemma}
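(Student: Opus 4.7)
The plan is a direct diagram chase using the universal property of the pullback. Suppose $f : X \to B$ lies in $J\lpf$, and let $g : A \to B$ be any morphism in $\C$. Form the pullback of $f$ along $g$, giving a square
\[
\begin{tikzcd}
P \ar[r, "\varepsilon_g"] \ar[d, "g^*f"'] & X \ar[d, "f"] \\
A \ar[r, "g"'] & B
\end{tikzcd}
\]
with the usual universal property. We must show $g^*f \in J\lpf$, so take $j \in J$ and a lifting problem $(u, v) : j \to g^*f$.

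The key step is to compose this lifting problem with the pullback square to obtain an outer lifting problem against $f$: the top edge becomes $\varepsilon_g \circ u$, the bottom edge becomes $g \circ v$, and commutativity follows from the pullback square together with the commutativity of $(u,v)$. Since $f \in J\lpf$ and $j \in J$, this outer lifting problem has a diagonal filler $\phi : \cod j \to X$ with $\phi \circ j = \varepsilon_g \circ u$ and $f \circ \phi = g \circ v$.

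Then I would use the universal property of the pullback applied to $\phi$ and $v$ (which satisfy $f \circ \phi = g \circ v$) to obtain a unique map $\psi : \cod j \to P$ with $\varepsilon_g \circ \psi = \phi$ and $g^*f \circ \psi = v$. The second equation immediately gives the lower triangle of the lifting problem $(u,v)$. For the upper triangle $\psi \circ j = u$, I would verify it using the uniqueness part of the pullback universal property: both $\psi \circ j$ and $u$ are maps into $P$ whose composites with $\varepsilon_g$ agree (equal to $\phi \circ j = \varepsilon_g \circ u$) and whose composites with $g^*f$ agree (equal to $v \circ j = g^*f \circ u$), hence they must coincide.

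There is no real obstacle here; this is a routine consequence of the universal property of pullbacks, and the only care needed is in checking that the lift produced by $J\lpf$ pairs compatibly with $v$ so that the pullback induces the desired map.
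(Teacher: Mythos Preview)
Your argument is correct and essentially identical to the paper's: paste the given lifting problem onto the pullback square, solve the outer problem against $f$ using $f\in J\lpf$, and then use the universal property of the pullback to obtain the desired filler. You spell out the verification of the upper triangle via uniqueness, which the paper leaves implicit, but otherwise the proofs coincide.
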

\begin{proof}
    Let $f \in J\lpf$ and $(u, v) : g \to f$ be a pullback. To show $g \in J\lpf$ we consider a lifting problem $(w,x) : j \to g$ for $j \in J$:
% https://q.uiver.app/#q=WzAsNixbMCwwLCJcXGJ1bGxldCJdLFswLDEsIlxcYnVsbGV0Il0sWzEsMCwiXFxidWxsZXQiXSxbMSwxLCJcXGJ1bGxldCJdLFsyLDAsIlxcYnVsbGV0Il0sWzIsMSwiXFxidWxsZXQiXSxbMCwxLCJqIiwyXSxbMiwzLCJnIiwyXSxbNCw1LCJmIl0sWzIsNCwidSJdLFszLDUsInYiLDJdLFsyLDUsIiIsMSx7InN0eWxlIjp7Im5hbWUiOiJjb3JuZXIifX1dLFswLDIsInciXSxbMSwzLCJ4IiwyXSxbMSw0LCJcXHBoaSIsMix7ImxhYmVsX3Bvc2l0aW9uIjo3MCwiY3VydmUiOjEsInN0eWxlIjp7ImJvZHkiOnsibmFtZSI6ImRvdHRlZCJ9fX1dLFsxLDIsIlxcYWxwaGEiLDAseyJzdHlsZSI6eyJib2R5Ijp7Im5hbWUiOiJkb3R0ZWQifX19XV0=
\[\begin{tikzcd}[sep=2.5em]
	\bullet & \bullet & \bullet \\
	\bullet & \bullet & \bullet \rlap{ .}
	\arrow["j"', from=1-1, to=2-1]
	\arrow["g"', from=1-2, to=2-2]
	\arrow["f", from=1-3, to=2-3]
	\arrow["u", from=1-2, to=1-3]
	\arrow["v"', from=2-2, to=2-3]
	\arrow["\lrcorner"{anchor=center, pos=0.125}, draw=none, from=1-2, to=2-3]
	\arrow["w", from=1-1, to=1-2]
	\arrow["x"', from=2-1, to=2-2]
	\arrow["\phi"'{pos=0.7}, curve={height=6pt}, dotted, from=2-1, to=1-3]
	\arrow["\alpha", dotted, from=2-1, to=1-2]
\end{tikzcd}\]
By assumption the outer rectangle $(u.w, v.x)$ has a solution $\phi$, by which the pullback induces a morphism $\alpha$ that solves $(w, x)$. 
\end{proof}
\begin{Cor}\label{cor:sscomslicerl}
    Let $J, K \subseteq \C_1$ with $K = J\lpf$, then $\lpf(K/A) = (\lpf K)/A$. 
\end{Cor}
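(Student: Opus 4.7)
The plan is to prove the two inclusions separately, invoking the lemmas just established. One direction, namely $(\lpf K)/A \subseteq \lpf(K/A)$, is the content of the second half of Lemma~\ref{lem:subclass:sc} applied to $K$ (which holds for an arbitrary subclass, no extra hypothesis needed).

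For the reverse inclusion $\lpf(K/A) \subseteq (\lpf K)/A$, I would appeal to Lemma~\ref{lem:inclslicingcomrlagain}, whose hypothesis is that the class on the right of the lifting relation be closed under pullback. The assumption $K = J\lpf$ is exactly what makes this hypothesis available: by Lemma~\ref{lem:sc:pb}, any class of the form $J\lpf$ is closed under pullback, so in particular $K$ is. Therefore Lemma~\ref{lem:inclslicingcomrlagain} applies with $K$ in place of $J$, yielding $\lpf(K/A) \subseteq (\lpf K)/A$.

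Combining the two inclusions gives the desired equality. There is no real obstacle here --- the corollary is just the act of chaining Lemma~\ref{lem:subclass:sc}, Lemma~\ref{lem:sc:pb}, and Lemma~\ref{lem:inclslicingcomrlagain}, where the hypothesis $K = J\lpf$ serves exclusively to feed Lemma~\ref{lem:sc:pb} into Lemma~\ref{lem:inclslicingcomrlagain}. The conceptual content (the slicing trick in the proof of Lemma~\ref{lem:inclslicingcomrlagain}, where one extends $j \in J$ to $A$ by pulling back along $v$ and using the extension of $f$) has already been absorbed into the earlier lemma, so the corollary itself is a one-line consequence.
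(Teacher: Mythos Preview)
Your argument is correct and matches the paper's intended reasoning: the corollary is placed immediately after Lemma~\ref{lem:sc:pb} precisely because it is obtained by feeding that lemma into Lemma~\ref{lem:inclslicingcomrlagain} (with $K$ in the role of the class closed under pullback) and combining with the inclusion from Lemma~\ref{lem:subclass:sc}.
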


We are now ready to prove that slicing preserves closed lifting pairs. 
\begin{proof}[Proof of Proposition~\ref{prop:sc:sliceclp}]
    Let $(J, K)$ be a closed lifting pair on $\C$; we want to show that $(J/A, K/A)$ is a closed lifting pair on $\C/A$. By Corollaries~\ref{cor:inclcomll} and~\ref{cor:sscomslicerl} we have:
    \[(J/A)\lpf = (J\lpf)/A = K/A, \quad\text{and}\quad \lpf(K/A) = (\lpf K) / A = J / A. \qedhere\]
\end{proof}

\subsection{Frobenius equivalence}\label{sec:sc:frobequiv}

In a locally cartesian closed category $\C$, any morphism $f : A \to B$ induces an adjunction $f^* \dashv f_* : \C/A \to \C/B$ of pullback and pushforward along $f$. The action of the pullback functor $f^*$ on an object $g$ in $\C/B$ is depicted on the left diagram below:
\begin{equation}\label{eq:pullbackfunctor}
    % https://q.uiver.app/#q=WzAsMTAsWzUsMSwiXFxidWxsZXQiXSxbNSwyLCJCIFxccmxhcHsgLn0iXSxbNSwwLCJcXGJ1bGxldCJdLFs0LDIsIkEiXSxbNCwxLCJcXGJ1bGxldCJdLFs0LDAsIlxcYnVsbGV0Il0sWzEsMSwiXFxidWxsZXQiXSxbMSwyLCJCIFxccmxhcHsgLH0iXSxbMCwxLCJcXGJ1bGxldCJdLFswLDIsIkEiXSxbMCwxLCJiIl0sWzIsMCwiZyJdLFszLDEsImYiLDJdLFs0LDMsImZeKmIiXSxbNSwzLCJmXiooYi5nKSIsMix7ImN1cnZlIjozfV0sWzQsMF0sWzUsMl0sWzUsNCwiZl4qZyIsMCx7InN0eWxlIjp7ImJvZHkiOnsibmFtZSI6ImRvdHRlZCJ9fX1dLFs2LDcsImciXSxbOCw5LCJmXipnIiwyXSxbOSw3LCJmIiwyXSxbOCw2XSxbOCw3LCIiLDAseyJzdHlsZSI6eyJuYW1lIjoiY29ybmVyIn19XV0=
    \begin{tikzcd}
        &&&& \bullet & \bullet \\
        \bullet & \bullet &&& \bullet & \bullet \\
        A & {B \rlap{ ,}} &&& A & {B \rlap{ .}}
        \arrow[from=2-5, to=2-6]
        \arrow["b", from=2-6, to=3-6]
        \arrow["g", from=1-6, to=2-6]
        \arrow["f"', from=3-5, to=3-6]
        \arrow["{f^*b}", from=2-5, to=3-5]
        \arrow["{f^*(b.g)}"', curve={height=18pt}, from=1-5, to=3-5]
        \arrow[from=2-5, to=2-6]
        \arrow[from=1-5, to=1-6]
        \arrow["{f^*g}", dotted, from=1-5, to=2-5]
        \arrow["g", from=2-2, to=3-2]
        \arrow["{f^*g}"', from=2-1, to=3-1]
        \arrow["f"', from=3-1, to=3-2]
        \arrow[from=2-1, to=2-2]
        \arrow["\lrcorner"{anchor=center, pos=0.125}, draw=none, from=2-1, to=3-2]
    \end{tikzcd}
\end{equation}
The action of $f^*$ on an arrow $(g, b)$ in $\C/B$ is depicted on the right above; it is defined as the morphism induced by the pullback $f^*b$ from the pullback $f^*(b.g)$ of the composition $b.g$. In this case we (abusively) write $f^*g$ for the first component of the resulting morphism in $\C/A$, so $f^*(g, b) = (f^*g, f^*b)$.

\begin{Defn}\label{def:sc:frob}
    A morphism $f : A \to B \in \C_1$ has the \emph{Frobenius property} with respect to a pair $(J, K)$ of subclasses $J, K \subseteq \C_1$ if $f^*(J/B) \subseteq J/A$, and the \emph{pushforward property} if $f_*(K/A) \subseteq K/B$. 
\end{Defn}
\begin{Thm}\label{thm:sc:frobequiv}
    A morphism $f : A \to B$ in $\C$ has the Frobenius property with respect to a closed lifting pair $(J, K)$ if and only if it has the pushforward property.
\end{Thm}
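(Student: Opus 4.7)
The plan is to mimic the proof of Corollary~\ref{cor:wfsFE}, but now working with the closed lifting pair $(J,K)$ rather than a full \wfs. The key point is that the two ingredients used there, namely slicing preserves closed lifting pairs (Proposition~\ref{prop:sc:sliceclp}) and base change across an adjunction (Corollary~\ref{cor:ssbasechange}), have both been stated in the language of closed lifting pairs and do not require the factorisation axiom.

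First I would invoke Proposition~\ref{prop:sc:sliceclp} twice, once for $A$ and once for $B$, to obtain closed lifting pairs $(J/A,K/A)$ on $\C/A$ and $(J/B,K/B)$ on $\C/B$. Next I would use the locally cartesian closed structure to produce the adjunction $f^*\dashv f_* : \C/A \to \C/B$ (in the direction $f^* : \C/B \to \C/A$, $f_* : \C/A \to \C/B$). Then I would apply Corollary~\ref{cor:ssbasechange} to this adjunction and the two closed lifting pairs, yielding the equivalence
\[
f^*(J/B) \subseteq J/A \ \iff\ f_*(K/A) \subseteq K/B,
\]
which by Definition~\ref{def:sc:frob} is exactly the Frobenius property iff the pushforward property for~$f$.

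There is no real obstacle: the whole statement has been set up to be a direct corollary of the two previously proved lemmas, and the observation made at the end of Section~\ref{sec:outlineFrobenius} (that both results hold already for closed lifting pairs) is precisely what allows the argument to go through here, without needing to pass through any \wfs structure. The only thing worth flagging is that one should not be tempted to parse the Frobenius property as a statement about objects of slices; as emphasised in Section~\ref{sec:outlineFrobenius}, it is the ``arrow view'' $f^*(J/B)\subseteq J/A$ that makes the adjoint reformulation immediate.
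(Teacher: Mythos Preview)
Your proposal is correct and follows essentially the same approach as the paper: invoke Proposition~\ref{prop:sc:sliceclp} to get the closed lifting pairs on the slices, then apply base change across $f^*\dashv f_*$. The only cosmetic difference is that the paper cites Proposition~\ref{prop:sc:basechange} directly and writes out the intermediate step $J/A = {}^\pf(K/A)$, $K/B = (J/B)^\pf$, whereas you package this into a single appeal to Corollary~\ref{cor:ssbasechange}.
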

\begin{proof}
    By Propositions~\ref{prop:sc:basechange}~and~\ref{prop:sc:sliceclp} we have:
    \begin{align*}
        f^*(J/B) \subseteq J/A &\text{ iff } f^*(J/B) \subseteq \lpf(K/A) \\
        &\text{ iff } f_*(K/A) \subseteq \smash{(J/B)\lpf}\\
        &\text{ iff } f_*(K/A) \subseteq K/B. \qedhere 
    \end{align*}
\end{proof}

The definition of the Frobenius property we give in Definition~\ref{def:sc:frob} differs from how it is usually phrased for \wfs\ (Definition~\ref{def:wfsfrob} above), in two ways. Firstly, Definition~\ref{def:wfsfrob} requires the \emph{object} component of the pullback functor $f^*$ to preserve left maps, while in Definition~\ref{def:sc:frob} it is the \emph{arrow} component of $f^*$ which is required to do so. This is illustrated in~(\ref{eq:pullbackfunctor})---both versions of the Frobenius property require that $g \in L$ implies $f^*g \in L$, but Definition~\ref{def:wfsfrob} is phrased with respect to the left diagram of~(\ref{eq:pullbackfunctor}) and Definition~\ref{def:sc:frob} with respect to the right diagram. The latter is slightly stronger. 
\begin{Prop}\label{prop:sc:arrtoobj}
    Let $(J, K)$ be classes of morphisms in $\C$ which are closed under isomorphisms, and $f$ a map in $\C$. If the arrow component of $f^*$ preserves $J$ maps then so does its object component; the same statement holds for $f_*$ w.r.t.\ $K$ maps. 
\end{Prop}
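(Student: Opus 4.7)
The plan is to realise the object component of $f^*$ (and of $f_*$) as, up to isomorphism in the arrow category of $\C$, a special instance of the arrow component. The key observation is that any object $g : X \to B$ of $\C/B$ can be viewed as the arrow $(g,1_B) : g \to 1_B$ in $\C/B$, whose underlying $\C$-morphism is $g$ itself; dually, any object $h : Z \to A$ of $\C/A$ can be viewed as the arrow $(h,1_A) : h \to 1_A$ in $\C/A$.

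For $f^*$, I would first note that if $g \in J$ has codomain $B$, then $(g,1_B)$ lies in $J/B$, so the hypothesis that the arrow component of $f^*$ preserves $J$-maps yields $f^*(g,1_B) = (f^*g, f^*1_B) \in J/A$; the underlying $\C$-morphism here is the induced map $A\times_B X \to A\times_B B$ between pullbacks. Since the pullback of an identity along $f$ is canonically identified with $1_A$, the projection $A\times_B B \to A$ gives an isomorphism sending $f^*1_B$ to $1_A$ in $\C/A$, and by the universal property of the pullback this same isomorphism identifies the arrow-component output with the object-component output $f^*g : A\times_B X \to A$. Closure of $J$ under isomorphisms then delivers the conclusion for the object component.

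For $f_*$ the argument is dual. If $h \in K$ has codomain $A$, then $(h,1_A)$ lies in $K/A$ and by hypothesis $(f_*h, f_*1_A) \in K/B$. Because $f_*$ is a right adjoint it preserves terminal objects; since $1_A$ is terminal in $\C/A$ and $1_B$ is terminal in $\C/B$, this forces $f_*1_A : f_*A \to B$ to be an isomorphism in $\C$. Composing the arrow-component output $f_*Z \to f_*A$ with this iso produces precisely the object-component output $f_*Z \to B$, and closure of $K$ under isomorphisms finishes the argument. The only step requiring any care is checking that these canonical isomorphisms fit into commuting triangles that genuinely exhibit the two outputs as isomorphic arrows; this follows directly from the universal properties of pullback and of $f_*$ as a right adjoint, and I do not anticipate it to be a serious obstacle.
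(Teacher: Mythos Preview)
Your proposal is correct and follows essentially the same approach as the paper: both view an object $g$ of the slice as the arrow $(g,1_B)$, apply the arrow hypothesis, and then use that $f^*1_B$ (resp.\ $f_*1_A$) is an isomorphism together with closure under isomorphisms. Your treatment of the $f_*$ case is slightly more explicit (invoking preservation of terminal objects by the right adjoint), whereas the paper simply calls it analogous.
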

\begin{proof}
    To avoid confusion we write $f^*$ for the action of the pullback functor along $f$ on objects, and $f^\star$ for its action on arrows. Let $g : C \to B$ be a $J$ map, we want to show that $f^*g \in J$. Note that $g$ can be considered a morphism $(g, 1_A)$ in $\C/B$, and so $f^\star g \in J$. Therefore, we have $f^*g \in J$ from $(1, f^*1) : f^\star g \cong f^*g$:
% https://q.uiver.app/#q=WzAsNixbMSwxLCJcXGJ1bGxldCJdLFsxLDIsIkIgXFxybGFweyAufSJdLFswLDEsIlxcYnVsbGV0Il0sWzAsMiwiQSJdLFsxLDAsIlxcYnVsbGV0Il0sWzAsMCwiXFxidWxsZXQiXSxbMCwxLCIxIl0sWzIsMywiZl4qMSJdLFszLDEsImYiLDJdLFs0LDAsImciXSxbNSwyLCJmXlxcYnVsbGV0IGciXSxbNSw0XSxbNSwzLCJmXipnIiwyLHsiY3VydmUiOjN9XSxbMiwwXV0=
\[\begin{tikzcd}[ampersand replacement=\&]
	\bullet \& \bullet \\
	\bullet \& \bullet \\
	A \& {B \rlap{ .}}
	\arrow["1", from=2-2, to=3-2]
	\arrow["{f^*1}", from=2-1, to=3-1]
	\arrow["f"', from=3-1, to=3-2]
	\arrow["g", from=1-2, to=2-2]
	\arrow["{f^\star g}", from=1-1, to=2-1]
	\arrow[from=1-1, to=1-2]
	\arrow["{f^*g}"', curve={height=18pt}, from=1-1, to=3-1]
	\arrow[from=2-1, to=2-2]
\end{tikzcd}\]
The argument for the pushforward functor $f_*$ is analogous. 
\end{proof}

The converse implications do not hold in general for an individual morphism $f$. To obtain something of a converse we should make the statement for a class of morphisms $K$ which is closed under pullbacks. 
\begin{Prop}\label{prop:sc:objtoarr}
    Let $(J, K)$ be closed under isomorphisms, $K$ closed under pullbacks, and $f$ a map in $K$. If the object component of $f^*$ preserves $J$ maps then so does its arrow component. Furthermore, if the object components of all pushforward functors of $K$ maps preserve $K$ maps, then the arrow component of $f_*$ preserves $K$ maps. 
\end{Prop}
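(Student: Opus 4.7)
The plan is to reduce each arrow-preservation claim to an object-preservation claim by recognising the arrow component in question as an object-level pullback (respectively pushforward) along a map that, crucially, still belongs to $K$. The closure of $K$ under pullbacks, together with $f \in K$, ensures that this auxiliary map is itself a $K$-map, so that the given hypothesis applies.

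For the first statement, fix $(g,b) \in J/B$, with $g : X \to Y$ in $J$ and $b : Y \to B$ arbitrary. Let $q : f^*Y \to Y$ be the top edge of the pullback square defining $f^*b$. By the pasting lemma for pullbacks, the arrow component $f^*g : f^*X \to f^*Y$ coincides with the object-level pullback $q^*g$. Since $q$ is the pullback of $f$ along $b$ and $K$ is closed under pullbacks with $f \in K$, we have $q \in K$. Reading the hypothesis uniformly across all maps of $K$ (as the parallel phrasing of the second half of the proposition statement suggests is intended, and which appears to be required since $b$ is arbitrary), we apply it to $q$ to obtain $q^*g \in J$, whence $f^*g \in J$.

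For the second statement, fix $(g,a) \in K/A$, with $g : X \to Y$ in $K$ and $a : Y \to A$. Form the pullback of $f_*a : f_*Y \to B$ along $f$, with top edge $\pi : f^*f_*Y \to f_*Y$, and let $\varepsilon : f^*f_*Y \to Y$ be the counit of $f^* \dashv f_*$ at $a$. Standard LCCC calculus identifies the arrow component $f_*g : f_*X \to f_*Y$ with the composite $\pi_*(\varepsilon^*g)$. Now $\pi$ is the pullback of $f \in K$ along $f_*a$, so $\pi \in K$; and $\varepsilon^*g$ is the pullback of $g \in K$ along $\varepsilon$, so $\varepsilon^*g \in K$ by closure of $K$ under pullbacks. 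Applying the hypothesis on object-level pushforwards along $K$-maps to $\pi$ then gives $\pi_*(\varepsilon^*g) \in K$, which is $f_*g$.

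The principal technical obstacle is the verification of the two structural decompositions $f^*g = q^*g$ and $f_*g = \pi_*(\varepsilon^*g)$. Both express a restriction-to-slices phenomenon: under the canonical equivalences $(\C/B)/b \simeq \C/Y$ and $(\C/B)/f_*a \simeq \C/f_*Y$, the functors induced on slices by the adjunction $f^* \dashv f_*$ are precisely pullback along $q$ and its right adjoint $\pi_* \circ \varepsilon^*$. Once these decompositions are granted, the membership of $q$ and $\pi$ in $K$ (as pullbacks of $f \in K$) combined with the respective hypotheses immediately closes the arguments.
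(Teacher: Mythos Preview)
Your proof is correct and matches the paper's approach exactly: the paper also identifies the arrow component $f^*g$ with $\varepsilon_b^*g$ (your $q^*g$) and $f_*g$ with $(\varepsilon_{f_*a})_*\nu_a^*g$ (your $\pi_*(\varepsilon^*g)$), up to a swap of notation for the two counits. Your observation that the first part tacitly needs the object-level hypothesis for all $K$-maps, not just the fixed $f$, is accurate---the paper makes the same implicit assumption---and note that both decompositions hold only up to isomorphism, which is precisely where the closure-under-isomorphisms hypothesis is invoked.
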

\begin{proof}
    Let $f : A \to B$ be a $K$ map, and $(g, b) \in \C/B$. Writing $\varepsilon$ for the counit of $f_! \dashv f^*$, we have that $\varepsilon_b \in K$ by assumption, and so $\varepsilon_b^*g \cong f^* g \in K$; as on the left of:
% https://q.uiver.app/#q=WzAsMTYsWzEsMiwiQSJdLFsyLDIsIkJcXHJsYXB7ICx9Il0sWzIsMSwiXFxidWxsZXQiXSxbMiwwLCJcXGJ1bGxldCJdLFsxLDEsIlxcYnVsbGV0Il0sWzEsMCwiXFxidWxsZXQiXSxbMCwwLCJcXGJ1bGxldCJdLFs0LDEsIlxcYnVsbGV0Il0sWzQsMiwiQSJdLFs2LDIsIkIgXFxybGFweyAufSJdLFs0LDAsIlxcYnVsbGV0Il0sWzYsMSwiXFxidWxsZXQiXSxbNSwxLCJcXGJ1bGxldCJdLFs1LDAsIlxcYnVsbGV0Il0sWzcsMCwiXFxidWxsZXQiXSxbNiwwLCJcXGJ1bGxldCJdLFswLDEsImYiLDJdLFsyLDEsImIiXSxbMywyLCJnIl0sWzQsMCwiZl4qYiJdLFs0LDIsIlxcdmFyZXBzaWxvbl9iIiwyXSxbNSw0LCJcXHZhcmVwc2lsb25fYl4qZyJdLFs1LDNdLFs2LDQsImZeKiBnIiwyXSxbNiw1XSxbNyw4LCJhIiwyXSxbOCw5LCJmIiwyXSxbMTAsNywiZyIsMl0sWzExLDksImZfKmEiXSxbMTIsOCwiZl4qZl8qYSJdLFsxMiwxMSwiXFx2YXJlcHNpbG9uX3tmXyphfSJdLFsxMiw3LCJcXG51X2EiLDJdLFsxMywxMiwiXFxudV9hXipnIiwyXSxbMTMsMTBdLFsxNCwxMSwiZl8qZyJdLFsxNSwxMSwiaCIsMl0sWzE1LDE0XSxbMjMsMjEsIlxcc2ltIiwwLHsib2Zmc2V0IjotMSwic2hvcnRlbiI6eyJzb3VyY2UiOjMwLCJ0YXJnZXQiOjMwfSwic3R5bGUiOnsiaGVhZCI6eyJuYW1lIjoibm9uZSJ9fX1dLFszNSwzNCwiXFxzaW0iLDAseyJvZmZzZXQiOi0xLCJzaG9ydGVuIjp7InNvdXJjZSI6MzAsInRhcmdldCI6MzB9LCJzdHlsZSI6eyJoZWFkIjp7Im5hbWUiOiJub25lIn19fV1d
\[\begin{tikzcd}[ampersand replacement=\&]
	\bullet \& \bullet \& \bullet \&\& \bullet \& \bullet \& \bullet \& \bullet \\
	\& \bullet \& \bullet \&\& \bullet \& \bullet \& \bullet \\
	\& A \& {B\rlap{ ,}} \&\& A \&\& {B \rlap{ .}}
	\arrow["f"', from=3-2, to=3-3]
	\arrow["b", from=2-3, to=3-3]
	\arrow["g", from=1-3, to=2-3]
	\arrow["{f^*b}", from=2-2, to=3-2]
	\arrow["{\varepsilon_b}"', from=2-2, to=2-3]
	\arrow[""{name=0, anchor=center, inner sep=0}, "{\varepsilon_b^*g}", from=1-2, to=2-2]
	\arrow[from=1-2, to=1-3]
	\arrow[""{name=1, anchor=center, inner sep=0}, "{f^* g}"', from=1-1, to=2-2]
	\arrow[from=1-1, to=1-2]
	\arrow["a"', from=2-5, to=3-5]
	\arrow["f"', from=3-5, to=3-7]
	\arrow["g"', from=1-5, to=2-5]
	\arrow["{f_*a}", from=2-7, to=3-7]
	\arrow["{f^*f_*a}", from=2-6, to=3-5]
	\arrow["{\varepsilon_{f_*a}}", from=2-6, to=2-7]
	\arrow["{\nu_a}"', from=2-6, to=2-5]
	\arrow["{\nu_a^*g}"', from=1-6, to=2-6]
	\arrow[from=1-6, to=1-5]
	\arrow[""{name=2, anchor=center, inner sep=0}, "{f_*g}", from=1-8, to=2-7]
	\arrow[""{name=3, anchor=center, inner sep=0}, "h"', from=1-7, to=2-7]
	\arrow[from=1-7, to=1-8]
	\arrow["\sim", shift left, shorten <=5pt, shorten >=5pt, Rightarrow, no head, from=1, to=0]
	\arrow["\sim", shift left, shorten <=5pt, shorten >=5pt, Rightarrow, no head, from=3, to=2]
\end{tikzcd}\]
Consider $(g, a) \in \C/A$, and let $\nu : f^*f_* \to 1$ denote the counit of $f^* \ladj f_*$. As $K$ is closed under pullbacks, we have that $\nu_a^*g$ and $\varepsilon_{f_*a}$ are both in $K$, and so $(\varepsilon_{f_*a})_*\nu_a^*g = h \in K$. It can be shown that $h$ has the universal property of $f_*g$, and so $h \cong f_*g \in K$.
\end{proof}

Despite the fact that under the assumptions of Proposition~\ref{prop:sc:objtoarr} the Frobenius and pushforward properties for arrows can be reduced to those for objects, we still regard the ones for arrows as being the most fundamental. Indeed, the versions for arrows are the ones that can most readily be generalised to the case of categories and double categories of maps, as we will see in the subsequent sections. 

Proposition~\ref{prop:sc:objtoarr} brings us to a second difference between Definitions~\ref{def:wfsfrob} and~\ref{def:sc:frob}, which is that in the former both the pullback and pushforward is performed along a $K$ map, whereas the latter is phrased with respect to an arbitrary morhpism in the ambient category. There are two reasons why Definition~\ref{def:wfsfrob} is phrased this way. Firstly, as Proposition~\ref{prop:sc:objtoarr} suggests, it is necessary for obtaining Proposition~\ref{prop:wfsfrobequiv} (as the right class of a \wfs\ is always closed under pullbacks). The second reason relates to type theory; we want to interpret dependent types as right maps of a \wfs, and the dependent product as pushforward, so that $R$ should be closed under pushforward. For this reason, we adapt Definition~\ref{def:sc:frob} of the Frobenius property to pairs of classes of maps. 

\begin{Defn}
    A pair $(J, K)$ of subclasses $J, K \subseteq \C_1$ satisfies the \emph{Frobenius property} if every $f \in K$ does, and likewise the \emph{pushforward property} when every $f \in K$ does. 
\end{Defn}

\section{The Frobenius equivalence for categories of maps}\label{sec:functions}

In this section we move from classes of maps to \emph{categories} of maps and show that the results from the previous section can be generalised to this setting. This generalisation stems from Garner's paper on understanding the small object argument \cite{garner2009UnderstandingSmall} and the extension of the Frobenius equivalence to this setting can be found in \cite{gambino2017FrobeniusCondition}. We give a different account of the Frobenius equivalence here, which shows that it can be derived rather cleanly from the work of Bourke and Garner in \cite{bourke2016AlgebraicWeak}, provided we follow the outline we sketched in Subsection \ref{sec:outlineFrobenius} and take the ``arrow view'' instead of the ``object view'', as discussed in Subsection \ref{sec:sc:frobequiv}; moreover, the extension of the Frobenius equivalence to algebraic weak factorisation systems that we will see in later sections builds on this particular way of developing the Frobenius equivalence for categories of maps.

In what follows we will write $\Ar(\C)$ for the \emph{arrow category} of a category $\C$, that is, the category whose objects are the arrows of $\C$ and whose morphisms are commutative squares in $\C$. Note that $\Ar$ can be considered as a functor $\Cat \to \Cat$, and there are natural transformations $\dom, \cod: \Ar \Rightarrow 1_\Cat$.

We formalise the idea of a category of maps as a functor $\J \to \Ar(\C)$: indeed, we think of $\J$ as consisting of a category of \emph{structured} morphisms in $\C$ and structure-preserving maps between those. The following examples illustrate this.

\begin{Ex}\label{ex:function:splitepi} Recall that a mono $i$ in a category $\C$ is \emph{split} if it has a retraction. One option would be to regard the split monos just as a class of maps, as in the previous section. However, we can also regard the splitting as additional structure, so that we are interested in pairs $(i, r)$ where $r$ is a retraction of $i$. Split monos can then be given categorical structure by defining a morphism between split monomorphisms $(i, r)$, $(i', r')$ as a serially commuting square of functors $(u, v)$:
	% https://q.uiver.app/#q=WzAsNCxbMCwwLCJcXGJ1bGxldCJdLFswLDEsIlxcYnVsbGV0Il0sWzEsMCwiXFxidWxsZXQiXSxbMSwxLCJcXGJ1bGxldCBcXHJsYXB7IC59Il0sWzAsMSwiZSIsMix7Im9mZnNldCI6MX1dLFsyLDMsImUnIiwyLHsib2Zmc2V0IjoxfV0sWzEsMCwicyIsMix7Im9mZnNldCI6MX1dLFszLDIsInMnIiwyLHsib2Zmc2V0IjoxfV0sWzAsMiwidSJdLFsxLDMsInYiLDJdXQ==
	\[\begin{tikzcd}
		\bullet & \bullet \\
		\bullet & {\bullet \rlap{ .}}
		\arrow["i"', shift right, from=1-1, to=2-1]
		\arrow["{i'}"', shift right, from=1-2, to=2-2]
		\arrow["r"', shift right, from=2-1, to=1-1]
		\arrow["{r'}"', shift right, from=2-2, to=1-2]
		\arrow["u", from=1-1, to=1-2]
		\arrow["v"', from=2-1, to=2-2]
	\end{tikzcd}\]
		We denote the category of split monos defined in this manner by $\SplMono$, and we have a (faithful) forgetful functor $\SplMono \to \Ar(\C)$ sending the pair $(i,r)$ to $i$.
\end{Ex}

\begin{Ex}\label{ex:function:lali}
    As a special case of the previous example, consider the case where $\C = \Cat$ and the split mono $(i, r)$ is a \emph{split reflection} in that $r \vdash i$ with identity counit (split reflections are better known as right adjoint right inverses or \emph{rari}, for short); see~\cite[Section~4.2]{bourke2016AlgebraicWeak}. We can then define $\SplRef$ as a full subcategory of $\SplMono$ containing the raris in $\Cat$, and we obtain a functor $\SplRef \to \Ar(\Cat)$. Note that a morphism in $\SplMono$ between split reflections is guaranteed to commute with the units of the adjunctions; the reader is referred to~\cite[Section~4.2]{bourke2016AlgebraicWeak} for more details. 
\end{Ex}

\begin{Ex}\label{ex:function:fib}
    A Grothendieck fibration is called split if it comes equipped with an explicit choice of cartesian lifts which preserves identities and compositions. A morphism of split fibrations $P$, $Q$ is defined as a commutative square $(U, V) : P \to Q$ in $\Cat$ that also commutes with the splitting. By this we mean that if $\underline{f}$ is the chosen lift of $f: b \to Pa$ in $\cod P$ and $\underline{Vf}$ the chosen lift of $Vf: Vb \to VPa$ in $\cod Q$, then we  have $\smash{U\underline{f} = \underline{Vf}}$. This yields a category $\SplFib$ together with a (faithful) forgetful functor $\SplFib \to \Ar(\Cat)$. The same definition applies to split opfibrations, so that we also have a forgetful functor $\SplOpFib \to \Ar(\Cat)$.
\end{Ex}

Before we proceed, we make some remarks on notation. In this setting, and in the double categorical setting that follows, we will use the name of the domain $\J$ of a morphism $\J \to \Ar(\C)$ to refer both to the domain and the morphism, rather than naming the morphism separately. Furthermore, following~\cite{bourke2016AlgebraicWeak,bourke2023OrthogonalApproach}, given such a functor $\J \to \Ar(\C)$ we use bold type to denote an element $\f \in J$, and italic type for the corresponding image $f$ of $\f$ under $\J$. Moreover, we use double letters for morphisms in $\J$: that is, if $\f$ and $\g$ are objects in $\J$, then a map $\f \to \g$ will usually be written $\textbf{uv}$ and the image of this map under $\J$ will be $(u,v): f \to g$.

\subsection{Lifting}

Lifting for categories of maps is defined as a more structured version of~(\ref{eq:phi}), demanding a specific and coherent choice of lifts.

\begin{Defn}
    Let $\J, \K \to \Ar(\C)$; a $(\J, \K)$-\emph{lifting operation} $\phi$ assigns to each $\f \in \J$, $\g \in \K$, and lifting problem $(u, v) : f \to g$, a solution $\phi_{\f, \g}(u, v)$:
% https://q.uiver.app/#q=WzAsNCxbMCwxLCJcXGJ1bGxldCJdLFswLDAsIlxcYnVsbGV0Il0sWzEsMSwiXFxidWxsZXQgXFxybGFweyAufSJdLFsxLDAsIlxcYnVsbGV0Il0sWzEsMCwiZiIsMl0sWzMsMiwiZyJdLFsxLDMsInUiXSxbMCwyLCJ2IiwyXSxbMCwzLCJcXHBoaV97XFxmLCBcXGd9KHUsIHYpIiwxLHsic3R5bGUiOnsiYm9keSI6eyJuYW1lIjoiZG90dGVkIn19fV1d
\begin{equation}\label{eq:horizontalcond}
	\begin{tikzcd}[sep=3em]
		\bullet & \bullet \\
		\bullet & {\bullet \rlap{ .}}
		\arrow["f"', from=1-1, to=2-1]
		\arrow["g", from=1-2, to=2-2]
		\arrow["u", from=1-1, to=1-2]
		\arrow["v"', from=2-1, to=2-2]
		\arrow["{\phi_{\f, \g}(u, v)}"{description}, dotted, from=2-1, to=1-2]
	\end{tikzcd}
\end{equation}
	This choice is required to respect morphisms of $\J$ and $\K$, in the sense that if $\textbf{wx}: \f' \to \f$ in $\J$, $(u, v) : f \to g$, and $\textbf{yz} : \g \to \g'$ in $\K$, then $y.\phi_{\f, \g}(u, v).x = \phi_{\f', \g'}(w.u.y, x.v.z)$:
% https://q.uiver.app/#q=WzAsMTMsWzIsMCwiXFxidWxsZXQiXSxbNCwwLCJcXGJ1bGxldCJdLFsyLDIsIlxcYnVsbGV0Il0sWzQsMiwiXFxidWxsZXQiXSxbMCwwLCJcXGJ1bGxldCJdLFswLDIsIlxcYnVsbGV0Il0sWzYsMCwiXFxidWxsZXQiXSxbNiwyLCJcXGJ1bGxldCJdLFs3LDEsIj0iXSxbOCwwLCJcXGJ1bGxldCJdLFs4LDIsIlxcYnVsbGV0Il0sWzEwLDAsIlxcYnVsbGV0Il0sWzEwLDIsIlxcYnVsbGV0IFxccmxhcHsgLn0iXSxbMCwxLCJ1Il0sWzIsMywidiIsMl0sWzEsMywiZyJdLFswLDIsImYiLDJdLFs0LDUsImYnIiwyXSxbNiw3LCJnJyJdLFs1LDIsIngiLDJdLFsxLDYsInkiXSxbMiwxLCJcXHBoaV97XFxmLCBcXGd9IiwxLHsic3R5bGUiOnsiYm9keSI6eyJuYW1lIjoiZG90dGVkIn19fV0sWzQsMCwidyJdLFszLDcsInoiLDJdLFs5LDEwLCJmJyIsMl0sWzExLDEyLCJnJyJdLFs5LDExLCJ5LnUudyJdLFsxMCwxMiwiei52LngiLDJdLFsxMCwxMSwiXFxwaGlfe1xcZicsIFxcZyd9IiwxLHsic3R5bGUiOnsiYm9keSI6eyJuYW1lIjoiZG90dGVkIn19fV1d
\[\begin{tikzcd}[sep=small]
	\bullet && \bullet && \bullet && \bullet && \bullet && \bullet \\
	&&&&&&& {=} \\
	\bullet && \bullet && \bullet && \bullet && \bullet && {\bullet \rlap{ .}}
	\arrow["u", from=1-3, to=1-5]
	\arrow["v"', from=3-3, to=3-5]
	\arrow["g", from=1-5, to=3-5]
	\arrow["f"', from=1-3, to=3-3]
	\arrow["{f'}"', from=1-1, to=3-1]
	\arrow["{g'}", from=1-7, to=3-7]
	\arrow["x"', from=3-1, to=3-3]
	\arrow["y", from=1-5, to=1-7]
	\arrow["{\smash{\phi_{\f, \g}}}"{description}, dotted, from=3-3, to=1-5]
	\arrow["w", from=1-1, to=1-3]
	\arrow["z"', from=3-5, to=3-7]
	\arrow["{f'}"', from=1-9, to=3-9]
	\arrow["{g'}", from=1-11, to=3-11]
	\arrow["{y.u.w}", from=1-9, to=1-11]
	\arrow["{z.v.x}"', from=3-9, to=3-11]
	\arrow["{\smash{\phi_{\f', \g'}}}"{description}, dotted, from=3-9, to=1-11]
\end{tikzcd}\]
\end{Defn}

\begin{Rk}
	Note that the lifting operation chooses the lift on the basis of $\f$ and $\g$ rather than $f$ and $g$. This means, for example, that if $\K$ is the category $\SplMono$ of split monos, as in Example \ref{ex:function:splitepi}, the choice of lift may depend on the retraction.
\end{Rk}

The assignment $(\J, \K) \mapsto \CLift(\J, \K)$ of functors $\J, \K$ to the class $\CLift(\J, \K)$ of $(\J, \K)$-lifting operations can be made the object part of a functor 
\begin{equation}\label{eq:funcliftfunctor}
    \CLift : (\Cat/\Ar(\C))\op \times (\Cat/\Ar(\C))\op \to \Set.
\end{equation}
A $(\J,\K)$-lifting operation is to functors $\J, \K \to \Ar(\C)$ what the statement $J \pitchfork K$ is to subclasses $J, K \subseteq \C_1$. Further extending this analogy, we have the following restriction of~\cite[Proposition 15]{bourke2016AlgebraicWeak} to the case of categories of maps. 
\begin{Prop}\label{prop:functor:galoisconnection}
    $\CLift$ is representable in both arguments, inducing an adjunction:
    \begin{equation}\label{eq:functor:galoisconnection}
% https://q.uiver.app/#q=WzAsMixbMCwwLCIoXFxDYXQvXFxBcihcXEMpKVxcb3AiXSxbMSwwLCJcXENhdC9cXEFyKFxcQykgXFxybGFweyAufSJdLFsxLDAsIlxcbHBmKC0pIiwyLHsib2Zmc2V0IjoyfV0sWzAsMSwiKC0pXFxscGYiLDIseyJvZmZzZXQiOjJ9XSxbMiwzLCIiLDAseyJsZXZlbCI6MSwic3R5bGUiOnsibmFtZSI6ImFkanVuY3Rpb24ifX1dXQ==
\begin{tikzcd}
	{(\Cat/\Ar(\C))\op} & {\Cat/\Ar(\C) \rlap{ .}}
	\arrow[""{name=0, anchor=center, inner sep=0}, "{\lpf(-)}"', shift right=2, from=1-2, to=1-1]
	\arrow[""{name=1, anchor=center, inner sep=0}, "{(-)\lpf}" {below, overlay}, shift right=2, from=1-1, to=1-2]
	\arrow["\dashv"{anchor=center, rotate=-90}, draw=none, from=0, to=1]
\end{tikzcd}
\end{equation}
\end{Prop}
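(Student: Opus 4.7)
The plan is to prove the representability of $\CLift$ in both arguments directly by constructing the representing objects $\J\lpf$ and $\lpf\K$, extending the classical constructions from Section~\ref{sec:subclasses}. Once these are in place, the claimed adjunction in~\eqref{eq:functor:galoisconnection} is immediate from the two natural bijections
\[\CLift(\J,\K)\;\cong\;(\Cat/\Ar(\C))(\K,\J\lpf)\;\cong\;(\Cat/\Ar(\C))(\J,\lpf\K).\]

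For the first representing object, I would define $\J\lpf$ to have objects the pairs $(g,\phi)$ consisting of an arrow $g$ of $\C$ together with an assignment sending each $\f\in\J$ and each lifting problem $(u,v):f\to g$ to a diagonal filler $\phi_\f(u,v)$, subject to naturality in $\f$: whenever $\textbf{wx}:\f'\to\f$ is a morphism of $\J$, the filler for the induced square is obtained from $\phi_\f(u,v)$ by precomposition with $x$. Morphisms $(g,\phi)\to(g',\phi')$ in $\J\lpf$ are those maps $(y,z):g\to g'$ in $\Ar(\C)$ that preserve chosen fillers, and the forgetful functor to $\Ar(\C)$ sends $(g,\phi)\mapsto g$. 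Dually, $\lpf\K$ is the category of arrows of $\C$ equipped with coherent fillers against every $\g\in\K$.

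By unwinding definitions, a morphism $\K\to\J\lpf$ over $\Ar(\C)$ is exactly an assignment of a filling structure $\phi_\g$ to each $\g\in\K$ that is natural in both $\f$ and $\g$, which is precisely the data of a $(\J,\K)$-lifting operation; the same unwinding works for $\lpf\K$. Both bijections are manifestly natural in $\J$ and $\K$, so representability holds and the adjunction~\eqref{eq:functor:galoisconnection} follows, with the action of $\lpf(-)$ and $(-)\lpf$ on morphisms determined by the resulting universal properties in the usual Yoneda-style manner.

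The main obstacle is the bookkeeping required to check that the naturality clauses in the three definitions line up exactly: those for objects and morphisms of $\J\lpf$ and $\lpf\K$, and the two-sided coherence built into a lifting operation. No genuinely new idea is needed beyond the subclass case; indeed, Proposition~15 of~\cite{bourke2016AlgebraicWeak} already establishes a more general version, from which this proposition can essentially be read off by restricting attention to $\Cat/\Ar(\C)$.
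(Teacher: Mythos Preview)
Your proposal is correct and follows essentially the same approach as the paper: both construct the representing objects $\J\lpf$ and $\lpf\K$ explicitly as categories of arrows equipped with coherent lifting functions, with morphisms those squares that commute with the chosen lifts, and both note that this is a restriction of \cite[Proposition~15]{bourke2016AlgebraicWeak}. The paper's proof is slightly terser but otherwise identical in content.
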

\begin{proof}
The representing object of $\CLift(-, \J)$ is a category $\lpf \J \to \Ar(\C)$ of which the objects are pairs $(f, \phi_{f-})$ with $\phi_{f-} \in \CLift(f, \J)$ (where we consider $f$ to be a functor $f: 1 \to \Ar(\C)$), and in which a morphism $(f, \phi_{f-}) \to (g, \phi_{g-})$ is a square $(u, v) : f \to g$ which commutes with the lifting operations. This means that for $\j \in \J$ and $(w, x) : g \to j$ we have $\phi_{f, \j}(w.u, x.v) = \phi_{g, \j}(w, x).v$:
% https://q.uiver.app/#q=WzAsMTEsWzAsMCwiXFxidWxsZXQiXSxbMCwyLCJcXGJ1bGxldCJdLFsyLDAsIlxcYnVsbGV0Il0sWzIsMiwiXFxidWxsZXQiXSxbNCwwLCJcXGJ1bGxldCJdLFs0LDIsIlxcYnVsbGV0Il0sWzUsMSwiPSJdLFs2LDAsIlxcYnVsbGV0Il0sWzYsMiwiXFxidWxsZXQiXSxbOCwwLCJcXGJ1bGxldCJdLFs4LDIsIlxcYnVsbGV0IFxccmxhcHsgLn0iXSxbMCwxLCJmIiwyXSxbMiwzLCJnIiwyXSxbNCw1LCJqIl0sWzAsMiwidSJdLFsxLDMsInYiLDJdLFsyLDQsInciXSxbMyw1LCJ4IiwyXSxbNyw4LCJmIiwyXSxbOSwxMCwiaiJdLFs3LDksIncudSJdLFs4LDEwLCJ4LnYiLDJdLFszLDQsIlxccGhpX3tnLCBcXGp9IiwxLHsic3R5bGUiOnsiYm9keSI6eyJuYW1lIjoiZG90dGVkIn19fV0sWzgsOSwiXFxwaGlfe2YsIFxcan0iLDEseyJzdHlsZSI6eyJib2R5Ijp7Im5hbWUiOiJkb3R0ZWQifX19XV0=
\[\begin{tikzcd}[sep=small]
	\bullet && \bullet && \bullet && \bullet && \bullet \\
	&&&&& {=} \\
	\bullet && \bullet && \bullet && \bullet && {\bullet \rlap{ .}}
	\arrow["f"', from=1-1, to=3-1]
	\arrow["g"', from=1-3, to=3-3]
	\arrow["j", from=1-5, to=3-5]
	\arrow["u", from=1-1, to=1-3]
	\arrow["v"', from=3-1, to=3-3]
	\arrow["w", from=1-3, to=1-5]
	\arrow["x"', from=3-3, to=3-5]
	\arrow["f"', from=1-7, to=3-7]
	\arrow["j", from=1-9, to=3-9]
	\arrow["{w.u}", from=1-7, to=1-9]
	\arrow["{x.v}"', from=3-7, to=3-9]
	\arrow["{\smash{\phi_{g, \j}}}"{description}, dotted, from=3-3, to=1-5]
	\arrow["{\smash{\phi_{f, \j}}}"{description}, dotted, from=3-7, to=1-9]
\end{tikzcd}\]
The representing object $\J\lpf \to \Ar(\C)$ of $\CLift(\J, -)$ is defined analogously. 
\end{proof}

Birepresentability of (\ref{eq:funcliftfunctor}) gives us the following analog of~(\ref{eq:ssJpfK}):
\begin{equation}\label{eq:funcJpfK}
    \Cat/\Ar(C)(\J, \lpf \K) \cong \CLift(\J, \K) \cong \Cat/\Ar(\C)(\K, \J \lpf).
\end{equation}
In turn, this gives an analog of Definition~\ref{def:closedliftingpair} of (closed) lifting pairs. However, since this is no longer a property but additional structure---in the form of a $(\J, \K)$-lifting operation $\phi$---we will now speak of \emph{lifting structures}, following terminology of~\cite{bourke2023OrthogonalApproach}.

\begin{Defn}\label{def:funcliftingstructure}
    A triple $(\J, \phi, \K)$ of functors $\J, \K \to \Ar(\C)$ and a $(\J, \K)$-lifting operation $\phi$ is called a \emph{lifting structure}. A lifting structure $(\J, \phi, \K)$ is called \emph{closed} when the functors $\phi_l : \J \to \lpf \K$ and $\phi_r : \K \to \J \lpf$ induced by (\ref{eq:funcJpfK}) are invertible. 
\end{Defn} 
\begin{Ex}
    Let $\J \to \Ar(\C)$; applying~(\ref{eq:funcJpfK}) to the identities on $\lpf \J$ and $\J \lpf$ yields lifting operations $\lcan \in \CLift(\lpf \J, \J)$ and $\rcan \in \CLift(\J, \J\lpf)$, and so we get, as we did in Example~\ref{ex:sscanliftingpairs}, two canonical lifting structures $(\lpf \J, \lcan, \J)$ and $(\J, \rcan, \J\lpf)$. Again, this yields lifting structures $(\lpf \J, \rcan, (\lpf \J)\lpf)$ and $(\lpf(\J \lpf), \lcan, \J\lpf)$ which are said to be fibrantly generated and cofibrantly generated by $\J$. Whether these are closed in general seems to be an open question; see~\mbox{\cite[Remark 10]{bourke2023OrthogonalApproach}}.
\end{Ex}

\begin{Ex} \label{ex:liftingstrforrari}
	The functors $\SplRef \to \Ar(\Cat)$ and $\SplFib \to \Ar(\Cat)$ form part of a lifting structure. Indeed, suppose
	\begin{displaymath}
		\begin{tikzcd}
			\ct{C} \ar[d, "R"'] \ar[r, "X"] & \ct{A} \ar[d, "P"] \\
			\ct{D} \ar[r, "Y"] \ar[ur, dotted, "\phi"] & \ct{B}
		\end{tikzcd}
	\end{displaymath}
	is a commutative diagram of categories in which $P$ is a split fibration and $R$ is a split reflection with left adjoint $L \ladj R$ and unit $\theta: 1 \to R.L$. If $d$ is an object in \ct{D}, then we can consider the image of the unit $\theta_d: d \to RLd$ under $Y$. Since $YRLd = PXLd$ and $P$ is split fibration, we find a cartesian lift of $P\theta_d$ as in
	\begin{equation}\label{eq:liftingforraris}
		\begin{tikzcd}
		\phi(d) & XLd \\
		Yd & {YRLd \rlap{ ,}}
		\arrow["P\theta_d", from=2-1, to=2-2]
		\arrow["{\underline{P\theta_d}}", from=1-1, to=1-2]
		\arrow[dashed, no head, from=1-1, to=2-1]
		\arrow[dashed, no head, from=1-2, to=2-2]
	\end{tikzcd}
	\end{equation}
	whose domain we define to be $\phi(d)$. This definition of $\phi$ can be extended to morphisms $d \to d'$ using that $\underline{P\theta_{d'}}$ is cartesian. Then, $P.\phi = Y$ by construction, while $\phi. R = X$ follows from $\theta_{Rc} = 1_{Rc}$ and the fact that the splitting of $P$ lifts identities to identities. It is readily verified that this way of constructing the lift $\phi$ respects the morphisms in both $\SplRef$ and $\SplFib$; for more details, the reader is referred to \cite[Examples 4 (ii)]{bourke2023OrthogonalApproach}. Note that this lifting structure is not closed, as can be seen from the fact that split fibration are not closed under retracts.
\end{Ex}

\subsection{Base change}

As with classes of maps, lifting structures can be phrased relative to adjunctions of categories. Following the approach in~\cite[Section~6.4]{bourke2016AlgebraicWeak}, we do so by extending the adjunction~(\mbox{\ref{eq:functor:galoisconnection}}). Let $\Cat_{\text{ladj}}$ denote the category of small categories with fully specified adjunctions between them, pointing in the direction of the left adjoint; $\Cat_{\text{radj}}$ is defined dually, i.e.\ $\Cat_{\text{radj}} = \Cat_{\text{ladj}}\op$. These come with functors $\Catladj, \Catradj \to \Cat$ forgetting the right and left adjoints, respectively. We now define $\cladj$ by the following pullback: 
% https://q.uiver.app/#q=WzAsNSxbMSwyLCJcXENhdCBcXHJsYXB7IC59Il0sWzEsMCwiXFxDYXRfXFx0bGFkaiJdLFswLDIsIlxcQXIoXFxDYXQpIl0sWzAsMCwiXFxjbGFkaiJdLFsxLDEsIlxcQ2F0Il0sWzIsMCwiXFxjb2QiXSxbMywyXSxbMywxXSxbMywwLCIiLDEseyJzdHlsZSI6eyJuYW1lIjoiY29ybmVyIn19XSxbNCwwLCJcXEFyIl0sWzEsNF1d
\[\begin{tikzcd}[row sep=scriptsize]
	\cladj & {\Cat_\tladj} \\
	{\Ar(\Cat)} & {\Cat \rlap{ .}}
	\arrow["\cod", from=2-1, to=2-2]
	\arrow[from=1-1, to=2-1]
	\arrow[from=1-1, to=1-2]
	\arrow["\lrcorner"{anchor=center, pos=0.125}, draw=none, from=1-1, to=2-2]
	\arrow[from=1-2, to=2-2]
\end{tikzcd}\]
The category $\cradj$ is defined similarly, using $\Cat_\tradj \to \Cat$.
\begin{Prop}\label{prop:sc:bcgaloisconnection}
    The adjunction of Proposition~\ref{prop:functor:galoisconnection} extends to:
    \begin{equation}\label{eq:function:bcgaloisconnection}
	% https://q.uiver.app/#q=WzAsMixbMSwwLCJcXFNldC8oLV9cXHRsYWRqKV8xIFxccmxhcHsgLn0iXSxbMCwwLCIoXFxTZXQvKC1fXFx0bGFkailfMSlcXG9wIl0sWzAsMSwiXFxscGYoLSkiLDIseyJvZmZzZXQiOjJ9XSxbMSwwLCIoLSlcXGxwZiIsMix7Im9mZnNldCI6Mn1dLFsyLDMsIiIsMCx7ImxldmVsIjoxLCJzdHlsZSI6eyJuYW1lIjoiYWRqdW5jdGlvbiJ9fV1d
	\begin{tikzcd}
		{(\cladj)\op} & {\cradj \rlap{ .}}
		\arrow[""{name=0, anchor=center, inner sep=0}, "{\lpf(-)}"', shift right=2, from=1-2, to=1-1]
		\arrow[""{name=1, anchor=center, inner sep=0}, "{(-)\lpf}" {below, overlay}, shift right=2, from=1-1, to=1-2]
		\arrow["\dashv"{anchor=center, rotate=-90}, draw=none, from=0, to=1]
\end{tikzcd}
    \end{equation}
\end{Prop}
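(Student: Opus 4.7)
The plan is to establish the adjunction by producing a natural isomorphism of hom-sets $\cladj(\J, \lpf\K) \cong \cradj(\K, \J\lpf)$; the actions of $\lpf(-)$ and $(-)\lpf$ on morphisms are then forced by Yoneda, and functoriality and the triangle identities become automatic consequences.

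First I would unpack what a morphism in these categories looks like. A morphism in $\cladj$ from $(\J \to \Ar(\C_1))$ to $(\K \to \Ar(\C_2))$ consists of a specified adjunction $F \dashv G$ with $F : \C_1 \to \C_2$ and $G : \C_2 \to \C_1$, together with a functor $V : \J \to \K$ making the square with lower edge $\Ar(F)$ commute. Writing $\Ar(F)_!\J$ for the composite $\J \to \Ar(\C_1) \xrightarrow{\Ar(F)} \Ar(\C_2)$, this is the same datum as a morphism $\Ar(F)_!\J \to \K$ in $\Cat/\Ar(\C_2)$. Dually, a morphism in $\cradj$ over the same adjunction $F \dashv G$ is a morphism $\Ar(G)_!\K \to \J$ in $\Cat/\Ar(\C_1)$.

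The main step is, for each fixed adjunction $F \dashv G$, to exhibit a natural bijection of fibres
\[\cladj(\J, \lpf\K)_{F \dashv G} \cong \cradj(\K, \J\lpf)_{F \dashv G}.\]
Applying Proposition~\ref{prop:functor:galoisconnection} in the slice categories over $\Ar(\C_2)$ and $\Ar(\C_1)$ respectively rewrites each side as a set of lifting operations: the left as $\CLift(\Ar(F)_!\J, \K)$, the right as $\CLift(\J, \Ar(G)_!\K)$. The bijection between these is defined by transposition along $F \dashv G$ exactly as in Proposition~\ref{prop:sc:basechange}: given $\phi$ on the left, $\mathbf{f} \in \J$ with underlying $f$ in $\C_1$, $\mathbf{k} \in \K$ with underlying $k$ in $\C_2$, and a square $(u, v) : f \to Gk$ in $\Ar(\C_1)$, transpose to $(\bar u, \bar v) : Ff \to k$, apply $\phi_{\mathbf{f}, \mathbf{k}}$ to produce a diagonal in $\C_2$, and transpose back to obtain a diagonal $\cod f \to G(\dom k)$. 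The inverse is entirely dual.

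The main obstacle is verifying that this enhances Proposition~\ref{prop:sc:basechange} correctly: one must check that the transposed $\psi$ is a genuine $(\J, \Ar(G)_!\K)$-lifting operation, i.e.\ that it is coherent under morphisms $\mathbf{wx} : \mathbf{f}' \to \mathbf{f}$ in $\J$ and $\mathbf{yz} : \mathbf{k} \to \mathbf{k}'$ in $\K$. This reduces to naturality of the transposition bijection in the domain and codomain of the transposed arrow together with the identity $\Ar(G)(\mathbf{yz}) = (Gy, Gz)$, whose post-composition with $(\bar u, \bar v)$ is the transpose of $(y \circ u, z \circ v)$; the coherence condition on $\phi$ in $\Ar(F)_!\J$ then converts to the required coherence for $\psi$ in $\J$, and symmetrically for $\K$. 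Once this calculation and its dual are performed, naturality of the hom-iso in $\J$ and $\K$ follows from Proposition~\ref{prop:functor:galoisconnection}, while naturality in the base adjunction follows from the mates correspondence: composition of adjunctions in $\Cat_\tladj$ (resp.\ $\Cat_\tradj$) is compatible with iterated transposition. This produces the extended adjunction~\eqref{eq:function:bcgaloisconnection}.
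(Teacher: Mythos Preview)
Your proposal is correct and follows essentially the same route as the paper: the paper records that the functors act on objects as in Proposition~\ref{prop:functor:galoisconnection} and on arrows via the transposition of Proposition~\ref{prop:sc:basechange}, deferring the remaining verifications to \cite[Proposition~21]{bourke2016AlgebraicWeak}. Your argument is a faithful unpacking of that citation, organising the same transposition as a natural bijection $\CLift(\Ar(F)_!\J,\K)\cong\CLift(\J,\Ar(G)_!\K)$ and extracting the functor actions and adjunction from birepresentability.
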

\begin{proof}
	The functors $\lpf(-)$ and $(-)\lpf$ act as before on objects, and on arrows by the construction in Proposition~\ref{prop:sc:basechange}. See~\cite[Proposition 21]{bourke2016AlgebraicWeak} for further details.
\end{proof}

The Frobenius equivalence relies essentially on the fact that~(\ref{eq:function:bcgaloisconnection}) is an adjunction, so it is worth stating this property separately; it is the categorical analog of Proposition~\ref{prop:sc:basechange}. 

\begin{Cor}\label{cor:function:basechange}
    Let $F \dashv G : \D \to \C$ be an adjunction, and $\J \to \Ar(\C)$, $\K \to \Ar(\D)$ be functors; then there is a bijection between lifts $\F$ and $\G$ as in the diagrams below:
% https://q.uiver.app/#q=WzAsOCxbMCwxLCJcXEFyKFxcQykiXSxbMSwxLCJcXEFyKFxcRCkgXFxybGFweyAsfSJdLFswLDAsIlxcSiJdLFsxLDAsIlxcbHBmIFxcSyJdLFsyLDAsIlxcSyJdLFsyLDEsIlxcQXIoXFxEKSJdLFszLDEsIlxcQXIoXFxDKSBcXHJsYXB7IC59Il0sWzMsMCwiXFxKIFxcbHBmIl0sWzIsMywiXFxGIiwwLHsic3R5bGUiOnsiYm9keSI6eyJuYW1lIjoiZG90dGVkIn19fV0sWzMsMV0sWzIsMF0sWzAsMSwiXFxBcihGKSJdLFs0LDcsIlxcRyIsMCx7InN0eWxlIjp7ImJvZHkiOnsibmFtZSI6ImRvdHRlZCJ9fX1dLFs0LDVdLFs3LDZdLFs1LDYsIlxcQXIoRykiXV0=
\[\begin{tikzcd}[ampersand replacement=\&]
	\J \& {\lpf \K} \& \K \& {\J \lpf} \\
	{\Ar(\C)} \& {\Ar(\D) \rlap{ ,}} \& {\Ar(\D)} \& {\Ar(\C) \rlap{ .}}
	\arrow["\F", dotted, from=1-1, to=1-2]
	\arrow[from=1-2, to=2-2]
	\arrow[from=1-1, to=2-1]
	\arrow["{\Ar(F)}", from=2-1, to=2-2]
	\arrow["\G", dotted, from=1-3, to=1-4]
	\arrow[from=1-3, to=2-3]
	\arrow[from=1-4, to=2-4]
	\arrow["{\Ar(G)}", from=2-3, to=2-4]
\end{tikzcd}\]
\end{Cor}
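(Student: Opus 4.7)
The plan is to derive this corollary directly from the hom-bijection of the adjunction \eqref{eq:function:bcgaloisconnection} in Proposition~\ref{prop:sc:bcgaloisconnection}. Namely, the functor $\J \to \Ar(\C)$ together with the left adjoint $F$ exhibits $\J$ as an object of $\cladj$, and $\K \to \Ar(\D)$ together with the right adjoint $G$ as an object of $\cradj$. A lift $\F$ of the kind described is, unfolding the definition of $\cladj$, exactly a morphism $\J \to \lpf\K$ in $\cladj$, and likewise a lift $\G$ is a morphism $\K \to \J\lpf$ in $\cradj$. The asserted bijection is then the hom-set isomorphism of the adjunction $\lpf(-) \dashv (-)\lpf$.

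For transparency, however, I would rather give the direct proof, which retraces what is happening under the hood. By the birepresentability of $\CLift$ from Proposition~\ref{prop:functor:galoisconnection} (cf.\ \eqref{eq:funcJpfK}), a lift $\F : \J \to \lpf\K$ over $\Ar(F)$ is equivalent to a $(\Ar(F)\circ\J, \K)$-lifting operation $\phi$, i.e.\ a coherent assignment, to each $\f \in \J$, $\k \in \K$, and square $(u, v) : Ff \to k$ in $\D$, of a diagonal filler $\phi_{\f, \k}(u, v)$. Dually, a lift $\G : \K \to \J\lpf$ over $\Ar(G)$ corresponds to a $(\J, \Ar(G)\circ\K)$-lifting operation $\psi$, assigning a filler $\psi_{\f, \k}(u, v)$ to each square $(u, v) : f \to Gk$ in $\C$.

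Given $\phi$, I would define $\psi_{\f, \k}(u, v) = \overline{\phi_{\f, \k}(\bar u, \bar v)}$, where the overlines denote transposition along $F \dashv G$, exactly as in the proof of Proposition~\ref{prop:sc:basechange}: a square $f \to Gk$ in $\C$ transposes to a square $Ff \to k$ in $\D$, and a filler of one transposes to a filler of the other. Reversing the procedure is dual, and gives the inverse bijection pointwise.

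The step I expect to require the most attention is verifying that this pointwise transposition preserves the \emph{naturality} that makes $\phi$ and $\psi$ lifting operations: for $\textbf{wx}: \f' \to \f$ in $\J$ and $\textbf{yz}: \k \to \k'$ in $\K$, the condition $y.\phi_{\f,\k}(u,v).x = \phi_{\f',\k'}(y.u.w, z.v.x)$ must match the corresponding condition on $\psi$. This is, however, an immediate consequence of the naturality of the adjunction bijection in both arguments, together with the fact that transposition commutes with the functorial actions $\Ar(F)$ and $\Ar(G)$ on morphisms of $\J$ and $\K$. Once this compatibility is in place, the bijection on lifting operations lifts via birepresentability to the desired bijection on lifts $\F \leftrightarrow \G$.
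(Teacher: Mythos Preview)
Your proposal is correct, and your first paragraph is exactly the paper's approach: the corollary is stated immediately after Proposition~\ref{prop:sc:bcgaloisconnection} with no further argument, precisely because it is just the hom-set bijection of the adjunction~(\ref{eq:function:bcgaloisconnection}) spelled out at the pair $(\J,\K)$. The direct unfolding you give in the remaining paragraphs---passing through $(\Ar(F)\circ\J,\K)$- and $(\J,\Ar(G)\circ\K)$-lifting operations and transposing fillers along $F\dashv G$---is a correct and more explicit account of what that adjunction does under the hood, but the paper does not carry it out.
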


\subsection{Slicing}\label{sec:function:slicing}

We now proceed to define an analog of slicing for functors $\J \to \Ar(\C)$. 

\begin{Defn}\label{def:funcslicing}
    Given a functor $\J \to \Ar(\C)$ and an object $A \in \C$ we define the \emph{slice} functor $\J/A \to \Ar(\C/A)$ as the pullback of $\J$ along $\Ar(\dom) : \Ar(\C/A) \to \Ar(\C)$: 
    % https://q.uiver.app/#q=WzAsNCxbMSwwLCJKIl0sWzEsMSwiXFxDXzEgXFxybGFweyAufSJdLFswLDEsIihcXEMvQSlfMSJdLFswLDAsIkovQSJdLFswLDFdLFsyLDEsIlxcZG9tXzEiXSxbMywyXSxbMywwXSxbMywxLCIiLDEseyJzdHlsZSI6eyJuYW1lIjoiY29ybmVyIn19XV0=
\[\begin{tikzcd}
	{\J/A} & \J \\
	{\Ar(\C/A)} & {\Ar(\C) \rlap{ .}}
	\arrow[from=1-2, to=2-2]
	\arrow["{\Ar(\dom)}", from=2-1, to=2-2]
	\arrow[from=1-1, to=2-1]
	\arrow[from=1-1, to=1-2]
	\arrow["\lrcorner"{anchor=center, pos=0.125}, draw=none, from=1-1, to=2-2]
\end{tikzcd}\]
Here we denote by $\dom$ the forgetful functor $\C/A \to \C : a \mapsto \dom a$. 
\end{Defn}

Spelling out the meaning of Definition~\ref{def:funcslicing}, we find that an object of $\J/A$ is equivalently a pair $(\f, a)$ with $\f \in \J$ and $(f, a)$ a morphism in $\C/A$. The associated functor to $\Ar(\C/A)$ forgets this $\J$-structure, i.e.\ $\J/A \to \Ar(\C/A): (\f, a) \mapsto (f, a)$. 

The goal will now be to prove that this slicing operation preserves closed lifting structures. We begin by adapting the construction of Lemma~\ref{lem:slicelift}.
\begin{Prop}\label{prop:function:slicels}
    Let $(\J, \phi, \K)$ be a lifting structure on $\C$ and $A \in \C$ some object, then there is a lifting structure $(\J/A, \phi/A, \K/A)$ on $\C/A$. 
\end{Prop}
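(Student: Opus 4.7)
The plan is to construct $\phi/A$ by reusing the original lifting operation $\phi$ and packaging the output as a morphism in $\C/A$ via the construction already carried out in Lemma~\ref{lem:slicelift}. Concretely, given $(\f,a) \in \J/A$, $(\g,b) \in \K/A$, and a lifting problem $((u,c),(v,d)) : (f,a) \to (g,b)$ in $\C/A$, I would set
\[(\phi/A)_{(\f,a),(\g,b)}\bigl((u,c),(v,d)\bigr) \,=\, \bigl(\phi_{\f,\g}(u,v),\; b.g\bigr).\]
The verification that this is a valid diagonal filler in $\C/A$ is exactly the content of Lemma~\ref{lem:slicelift}: the map $\phi_{\f,\g}(u,v)$ solves the underlying lifting problem $(u,v)$ in $\C$, and the extension $b.g$ is automatically such that both triangles of the slice lifting square commute.

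The main work is then checking the naturality/coherence condition for $\phi/A$. Unwinding the pullback defining $\J/A$ in Definition~\ref{def:funcslicing}, a morphism $(\f',a') \to (\f,a)$ in $\J/A$ is the same data as a $\J$-morphism $\textbf{wx}$ whose underlying square $(w,x)$ is compatible with the chosen extensions, and similarly for $\K/A$. Given such morphisms $(\textbf{wx},x): (\f',a') \to (\f,a)$ in $\J/A$ and $(\textbf{yz},z): (\g,b) \to (\g',b')$ in $\K/A$ together with a lifting problem $((u,c),(v,d)): (f,a) \to (g,b)$, the naturality equation for $\phi/A$ decomposes into two components. The first (the diagonal in $\C$) reduces immediately to the naturality of $\phi$ applied to the squares $(w,x)$, $(u,v)$, $(y,z)$; the second (the extension to $A$) is forced, since both sides of the equation produce $b'.g'.y$ as the extension of the composite diagonal, using only the fact that morphisms in $\C/A$ respect extensions.

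I expect no serious obstacle in this proof: it is essentially a repackaging of Lemma~\ref{lem:slicelift} at the level of functors, together with a mechanical naturality verification. The only point requiring a little care is correctly identifying morphisms in $\J/A$ and $\K/A$ as $\J$- and $\K$-morphisms equipped with compatible extensions, so that the naturality condition for $\phi/A$ genuinely reduces to that of $\phi$. Once that identification is made, the construction defines a lifting structure $(\J/A,\phi/A,\K/A)$ on $\C/A$ as required.
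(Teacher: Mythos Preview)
Your proposal is correct and takes exactly the same approach as the paper: you define $\phi/A$ by the same formula $(\phi/A)_{(\f,a),(\g,b)}((u,c),(v,d)) = (\phi_{\f,\g}(u,v), b.g)$ and then verify the required conditions. The paper's proof is in fact terser than yours---it simply records this formula and leaves the verifications implicit---so your more explicit treatment of the naturality check is a welcome elaboration. One small slip: the extension of the composite diagonal is $b'.g'$ (the object $\dom g'$ in $\C/A$), not $b'.g'.y$; but this does not affect the argument.
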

\begin{proof}
	We define the $(\J/A, \K/A)$-lifting operation $\phi/A$ by:
    \[\phi/A_{(\f, a), (\g, b)}((u, c), (v, d)) = (\phi_{\f, \g}(u, v), b.g). \qedhere\]
\end{proof}
\begin{Prop}\label{prop:function:slicecls}
    If $(\J, \phi, \K)$ is a closed lifting structure then so is $(\J/A, \phi/A, \K/A)$.
\end{Prop}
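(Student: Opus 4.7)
The plan is to follow the outline of the proof of Proposition~\ref{prop:sc:sliceclp}, lifting every step from classes of maps to the categorical (functorial) setting. Closedness of $(\J/A, \phi/A, \K/A)$ amounts to the two induced functors $(\phi/A)_l : \J/A \to \lpf(\K/A)$ and $(\phi/A)_r : \K/A \to (\J/A)\lpf$ being isomorphisms in $\Cat/\Ar(\C/A)$. Since $(\J, \phi, \K)$ is closed, $\phi_l : \J \to \lpf \K$ and $\phi_r : \K \to \J\lpf$ are isos, and slicing (being a pullback) preserves isomorphisms, so $(\phi_l)/A$ and $(\phi_r)/A$ are isos. It therefore suffices to produce natural isomorphisms $(\J\lpf)/A \cong (\J/A)\lpf$ and $(\lpf\K)/A \cong \lpf(\K/A)$ over $\Ar(\C/A)$, and to check that, composed with the sliced comparison functors, they recover $(\phi/A)_l$ and $(\phi/A)_r$.

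First I would establish the categorical analog of Corollary~\ref{cor:inclcomll}. The forward functor $(\J\lpf)/A \to (\J/A)\lpf$ sends a pair $(\f, a)$ with $\f \in \J\lpf$ to the same $(f, a)$ equipped with the sliced lifting operation from Proposition~\ref{prop:function:slicels}; this is exactly the slice construction from Lemma~\ref{lem:slicelift}, now carried out functorially. The inverse functor is supplied by the argument of Lemma~\ref{lem:inclslicingcomrl}: given $(\f, a) \in (\J/A)\lpf$ and $\j \in \J$ with a square $(u, v) : j \to f$, one endows $j$ with the extension $a.v$ to $A$, forming a lifting problem in $\C/A$ whose solution becomes the $\J\lpf$-lifting operation on $f$. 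Functoriality in the morphisms of $\J$ and naturality of this assignment in $(\f, a)$ follow by diagram chasing, and both constructions are clearly inverse over $\Ar(\C/A)$.

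Second I would prove the analog of Corollary~\ref{cor:sscomslicerl}. Since $(\J, \phi, \K)$ is closed we may assume $\K = \J\lpf$, and hence $\K$ is closed under pullbacks in the sense of Lemma~\ref{lem:sc:pb}; crucially, this closure is itself functorial because the pullback of a $\K$-element along a square inherits a $\K$-structure by the universal property. With this in hand, the proof of Lemma~\ref{lem:inclslicingcomrlagain} lifts verbatim to give a functor $\lpf(\K/A) \to (\lpf \K)/A$ inverse to the forward map, by pulling back the element of $\K$ appearing in a lifting problem along the data of the extension.

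The main obstacle is keeping track of all the functorial data: each construction must respect not only morphisms in $\J$ and $\K$ but also the triangles witnessing the extensions to $A$, and the resulting comparison functors must be natural in the morphisms of $\Cat/\Ar(\C/A)$. This is essentially a bookkeeping exercise, but the content is substantive, because the morphisms in $\J/A$ encode strictly more data than those in $\J$. Once the two natural isomorphisms are in place, composing them with $(\phi_l)/A$ and $(\phi_r)/A$ gives the required inverses to $(\phi/A)_l$ and $(\phi/A)_r$, and one checks by unfolding definitions that the resulting lifting operation recovered on either side is precisely $\phi/A$.
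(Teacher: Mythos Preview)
Your proposal is correct and follows essentially the same route as the paper: establish the two commutation isomorphisms $(\J\lpf)/A \cong (\J/A)\lpf$ and $(\lpf\K)/A \cong \lpf(\K/A)$, the second one using that $\K \cong \J\lpf$ is closed under pullback in a functorial sense, and then compose with the sliced comparison maps. The only packaging difference is that the paper isolates the ``functorial pullback closure'' condition you describe as the definition of a \emph{comprehension category} (namely, $\cod.\K$ is a Grothendieck fibration and $\K$ is cartesian over $\cod$), proves that $\J\lpf$ is always one, and then runs the analogue of Lemma~\ref{lem:inclslicingcomrlagain} using cartesian lifts rather than bare pullbacks; this abstraction pays off later when the argument is replayed for double categories.
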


To prove Proposition~\ref{prop:function:slicecls}, we use the strategy from Section~\ref{sec:ssslicing} in showing that the operations defined in Proposition~\ref{prop:functor:galoisconnection} commute with the slicing operation, which is to say that $(\J\lpf)/A \cong (\J/A)\lpf$ and $(\lpf \J)/A \cong \lpf(\J/A)$ as functors over $\Ar(\C/A)$. Again, only the first of these isomorphisms exists in general. For the second desired isomorphism we do always obtain a functor $(\lpf \J)/A \to \lpf(\J/A)$, but for its inverse to exist we require that $\J$, in some suitable sense, is closed under pullbacks. 
\begin{Prop}\label{prop:function:sc}
    Given a functor $\J \to \Ar(\C)$ and an object $A \in \C$ there are functors $(\lambda/A)_l : (\lpf \J)/A \to \lpf(\J/A)$ and $(\rho / A)_r : (\J\lpf)/A \to (\J/A)\lpf$.
\end{Prop}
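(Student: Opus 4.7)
The plan is to derive both functors by combining the slicing construction for lifting structures (Proposition~\ref{prop:function:slicels}) with the birepresentability of the lifting-operation functor (Proposition~\ref{prop:functor:galoisconnection}), applied within the slice category $\C/A$. This mirrors the strategy that was used in Section~\ref{sec:ssslicing}, where Lemma~\ref{lem:subclass:sc} was deduced from Lemma~\ref{lem:pf/A} by applying it to the canonical lifting pairs; the same idea lifts cleanly to the structured setting.

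First, I would invoke the canonical lifting structures $(\lpf\J, \lcan, \J)$ and $(\J, \rcan, \J\lpf)$ obtained by transposing the identities on $\lpf\J$ and $\J\lpf$ through~(\ref{eq:functor:galoisconnection}). Applying Proposition~\ref{prop:function:slicels} to each yields lifting structures $((\lpf\J)/A,\, \lcan/A,\, \J/A)$ and $(\J/A,\, \rcan/A,\, (\J\lpf)/A)$ over $\C/A$; equivalently, we have distinguished elements $\lcan/A \in \CLift((\lpf\J)/A, \J/A)$ and $\rcan/A \in \CLift(\J/A, (\J\lpf)/A)$.

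Next, I would apply Proposition~\ref{prop:functor:galoisconnection} within $\C/A$: the representability of $\CLift(-, \J/A)$ transposes $\lcan/A$ into a morphism $(\lpf\J)/A \to \lpf(\J/A)$ in $\Cat/\Ar(\C/A)$, which is the required $(\lambda/A)_l$. Dually, the representability of $\CLift(\J/A, -)$ transposes $\rcan/A$ into a morphism $(\J\lpf)/A \to (\J/A)\lpf$, giving $(\rho/A)_r$.

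The real content of this proposition is thus already packaged in Proposition~\ref{prop:function:slicels}, so I anticipate no technical obstacle; the only subtlety is the conceptual one of observing that slicing sends the canonical lifting structures on $\C$ to lifting structures on $\C/A$, and that their transposes under the Galois correspondence furnish precisely the sought-after comparison functors. A more hands-on alternative would spell out $(\lambda/A)_l$ and $(\rho/A)_r$ explicitly on objects and morphisms by adapting the constructions of Lemmas~\ref{lem:subclass:sc} and~\ref{lem:inclslicingcomrl} to the structured setting, but the route via representability avoids having to check the coherence conditions of the induced lifting operations by hand.
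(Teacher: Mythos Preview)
Your proposal is correct and follows exactly the paper's own argument: apply Proposition~\ref{prop:function:slicels} to the canonical lifting structures $(\lpf\J,\lcan,\J)$ and $(\J,\rcan,\J\lpf)$, then transpose via~(\ref{eq:funcJpfK}). The paper compresses this into a single sentence, but your unpacking of the transposition step is faithful to what the notation $(\lambda/A)_l$ and $(\rho/A)_r$ already signals.
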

\begin{proof}
	Apply Proposition~\ref{prop:function:slicels} to the canonical lifting structures induced by $\J$.
\end{proof}
\begin{Prop}\label{prop:function:commrl}
 The functor $(\rho / A)_r$ of Proposition~\ref{prop:function:sc} is an isomorphism. 
\end{Prop}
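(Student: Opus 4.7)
The plan is to exhibit an explicit inverse functor $\Theta : (\J/A)\lpf \to (\J\lpf)/A$ over $\Ar(\C/A)$, mimicking in a structured way the construction of Lemma~\ref{lem:inclslicingcomrl}. Unpacking the definitions, an object of $(\J\lpf)/A$ is a triple $(\f, a)$ where $\f = (f, \phi)$ carries a $(\J, f)$-lifting operation and $a$ is an extension of $\cod f$ to $A$; the functor $(\rho/A)_r$ sends this to $((f, a), \phi|_{\C/A})$ where $\phi|_{\C/A}$ restricts $\phi$ to only those squares into $(f,a)$ that lie in $\C/A$. An object of $(\J/A)\lpf$ is a pair $((g, a), \psi)$ with $\psi$ a $(\J/A, (g, a))$-lifting operation. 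I would define $\Theta((g,a),\psi) = (g, \hat\psi, a)$, where for $\j \in \J$ and $(u, v) : j \to g$ in $\Ar(\C)$,
\[
\hat\psi_{\j}(u, v) \;:=\; \text{underlying map of}\ \psi_{(\j,\, a.v)}\bigl((u,\, a.g),\, (v,\, a)\bigr),
\]
exactly the extension trick of Lemma~\ref{lem:inclslicingcomrl}: we view any lifting problem in $\C$ as a lifting problem in $\C/A$ by extending $j$ via $a.v$ and $g$ via $a$.

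The main work is to verify that $\hat\psi$ is genuinely a lifting operation, i.e.\ natural in both $\J$ and the singleton $\{f\}$. For a morphism $\textbf{wx} : \j' \to \j$ in $\J$ and a square $(u,v) : j \to g$ in $\Ar(\C)$, one must check $\hat\psi_{\j'}(u.w, v.x) = \hat\psi_{\j}(u, v).x$; this follows by applying naturality of $\psi$ to the morphism $(w, x) : (j', a.v.x) \to (j, a.v)$ in $\J/A$, whose very existence relies on the fact that $a.v.x$ is the canonical extension of $j'$ compatible with $x$ and $a.v$. Similarly, functoriality of $\Theta$ on morphisms reduces to the observation that a square $(u,v)$ commuting with $\psi_1$ and $\psi_2$ in the sense of Proposition~\ref{prop:functor:galoisconnection} automatically commutes with $\hat\psi_1$ and $\hat\psi_2$, because every test square $(w, x) : (j, c) \to (g_1, a_1)$ in $\C$ can be lifted to a morphism in $\C/A$ using the canonical extensions.

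Finally, the two compositions $\Theta \circ (\rho/A)_r$ and $(\rho/A)_r \circ \Theta$ are seen to be identities. In one direction, given $(\f, a) = ((f,\phi), a)$, the lifting operation $\widehat{\phi|_{\C/A}}$ evaluated at $(u,v) : j \to f$ unfolds to $\phi_{\j}(u, v)$ because the extension square $((u, a.f), (v,a))$ on which $\phi|_{\C/A}$ is applied was constructed precisely so that its underlying square in $\C$ is the original $(u,v)$. In the other direction, every square $((u, c), (v, d)) : (\j, c) \to (g, a)$ in $\C/A$ has $c = a.v$ and $d = a$ (forced by commutation with the extensions to $A$), so $\hat\psi|_{\C/A}$ recovers $\psi$ on the nose.

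The hard part is the naturality check in the first paragraph above: one has to track the extensions carefully to see that precomposition with $x$ in $\J$ corresponds to the $\C/A$-morphism $(x, 1_A) : (j', a.v.x) \to (j, a.v)$, and similarly for squares like $(w, 1)$ on the codomain side. Once this is in hand, the rest is bookkeeping, and together with Proposition~\ref{prop:function:sc} this establishes that $(\rho/A)_r$ and $\Theta$ are mutually inverse functors over $\Ar(\C/A)$.
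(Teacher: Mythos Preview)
Your proof is correct and follows essentially the same approach as the paper: both construct the inverse by extending a $(\J/A,(g,a))$-lifting operation $\psi$ to a $(\J,g)$-lifting operation via the formula $\hat\psi_{\j}(u,v) = \psi_{(\j,\,a.v)}((u,a.g),(v,a))$, precisely the trick of Lemma~\ref{lem:inclslicingcomrl}. The paper's proof is terser, appealing to the universal property of the pullback defining $(\J\lpf)/A$ rather than writing out the naturality and inverse checks explicitly, but the content is the same.
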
 
\begin{proof}
We make a functor $(\J/A)\lpf \to \J\lpf$ using the construction in Lemma~\ref{lem:inclslicingcomrl}; given $(f,a)$ with a $(\J/A, (f, a))$-lifting operation $\smash{\phi_{-(f,a)}}$ we define a $(\J, f)$-lifting operation $\theta_{-f}$ by $\theta_{\j, f}(u, v) = \phi_{(\j, a.v), (f, a)}((u, a.f), (v, a))$. By the universal property of $(\J\lpf)/A$ this induces the desired inverse $(\rho / A)_r^{-1} : (\J/A)\lpf \to (\J\lpf)/A$. 
\end{proof}

\begin{Defn}\label{defn:function:comprcat}
    A functor $\J \to \Ar(\C)$ is a \emph{comprehension category} if $\cod.\J$ is a Grothendieck fibration making $\J$ into a cartesian functor:
	\begin{displaymath}
		\begin{tikzcd}
			\J \ar[rr, "\J"] \ar[dr, "{\cod.\J}"'] & & \Ar(\C) \ar[dl, "{\cod}"] \\
			& \C \rlap{ .}
		\end{tikzcd}
	\end{displaymath}
\end{Defn}
\begin{Prop}
    Let $\J \to \Ar(\C)$; then $\J\lpf \to \Ar(\C)$ is a comprehension category.
\end{Prop}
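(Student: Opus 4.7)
The plan is to exhibit, for each $(f, \phi_{-f}) \in \J\lpf$ with $\cod f = B$ and each morphism $v : A \to B$ in $\C$, a cartesian lift in $\J\lpf$ over $v$ whose underlying square in $\Ar(\C)$ is the pullback square. Concretely, I would form the pullback
\[\begin{tikzcd}
P \ar[r, "v'"] \ar[d, "v^*f"'] \ar[dr, phantom, "\lrcorner" very near start] & X \ar[d, "f"] \\
A \ar[r, "v"'] & B\rlap{ ,}
\end{tikzcd}\]
and equip $v^*f$ with a $\J$-lifting operation $\psi_{-,v^*f}$ as follows: for each $\j \in \J$ and each square $(u, x) : j \to v^*f$, let $\psi_{\j, v^*f}(u, x)$ be the unique arrow $\cod j \to P$ satisfying $v'.\psi_{\j, v^*f}(u, x) = \phi_{\j, f}(v'.u, v.x)$ and $v^*f.\psi_{\j, v^*f}(u, x) = x$. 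This is well-defined by the universal property of the pullback, using that $f.\phi_{\j, f}(v'.u, v.x) = v.x$ (the lower triangle of a diagonal lift).

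I would then verify three coherence conditions, each of which reduces to uniqueness in the pullback. First, naturality of $\psi_{-,v^*f}$ in $\j$, so that $(v^*f, \psi_{-,v^*f}) \in \J\lpf$; this follows by comparing $\psi_{\j', v^*f}(u.k, x.l)$ with $\psi_{\j, v^*f}(u, x).l$ after postcomposing with $v'$ and with $v^*f$, invoking naturality of $\phi_{-,f}$. Second, that $(v', v)$ is a morphism $(v^*f, \psi_{-,v^*f}) \to (f, \phi_{-f})$ in $\J\lpf$, which is immediate from the defining equation of $\psi$. Third, the cartesian universal property: given $(g, \theta_{-g}) \in \J\lpf$ with $\cod g = A$ and a morphism $(a, v) : (g, \theta_{-g}) \to (f, \phi_{-f})$ in $\J\lpf$, the pullback produces a unique arrow $w : \dom g \to P$ with $v'.w = a$ and $v^*f.w = g$, and I would show that $(w, 1_A)$ respects the lifting structures by comparing $\psi_{\j, v^*f}(w.u, x)$ with $w.\theta_{\j, g}(u, x)$ after $v'$ (where they agree because $(a, v)$ respects lifting, giving $\phi_{\j, f}(a.u, v.x) = a.\theta_{\j, g}(u, x)$) and after $v^*f$ (where both equal $x$).

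Since the cartesian morphisms for $\cod : \Ar(\C) \to \C$ are precisely the pullback squares, this single construction simultaneously shows that $\cod \cdot \J\lpf$ is a Grothendieck fibration and that $\J\lpf \to \Ar(\C)$ sends cartesian morphisms to cartesian squares in $\Ar(\C)$, so $\J\lpf$ is a comprehension category. The only step requiring genuine care is the verification that the mediating arrow $(w, 1_A)$ automatically respects the lifting structures; but this is a purely formal consequence of uniqueness in the pullback combined with the hypothesis that $(a, v)$ already respects them, so no real obstacle arises.
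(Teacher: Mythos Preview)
Your proposal is correct and follows the same approach as the paper: form the pullback, equip it with a $(\J,-)$-lifting operation via the pullback's universal property (the paper simply cites this construction from its earlier lemma that $J^\pitchfork$ is closed under pullbacks), and then observe the resulting square is cartesian. One small remark: your verification of the cartesian universal property only treats comparison morphisms lying over $1_A$, which is pre-cartesianness rather than cartesianness; but since you produce such lifts for every $v$ this already forces the lifts to be cartesian, and in any case your argument extends verbatim to an arbitrary $w : C \to A$.
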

\begin{proof}
    Assume we have maps $g: B \to A$ and $y: C \to A$ and $\phi_{-g}$ is a $(\J, g)$-lifting operation. Since \ct{C} has pullbacks, there is a pullback square $(x, y) : g' \to g$, inducing a $(\J, g')$-lifting operation $\psi_{-g'}$ for $g'$ by the construction of Lemma~\ref{lem:sc:pb}. Now $(x,y): (g', \psi_{g'-}) \to (g, \phi_{g-})$ is a cartesian morphism in $\J\lpf$.
\end{proof}
The following is~\cite[Proposition~5.4]{gambino2017FrobeniusCondition}. 
\begin{Prop}\label{prop:function:commll}
    Let $\J \to \Ar(\C)$ be a comprehension category, and $A \in \C$ be some object; then the functor $(\lambda / A)_l$ of Proposition~\ref{prop:function:sc} is an isomorphism.
\end{Prop}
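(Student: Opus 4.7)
The plan is to extend the construction of Lemma~\ref{lem:inclslicingcomrlagain} from the setting of classes of maps to categories of maps, using the comprehension category hypothesis on $\J$ to replace the closure-under-pullbacks condition. I will construct an inverse functor $\lpf(\J/A) \to (\lpf \J)/A$ to $(\lambda/A)_l$ and verify the round trips.

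On objects, given an object $((f, a), \psi_{(f,a),-})$ of $\lpf(\J/A)$, I produce a $(f, \J)$-lifting operation $\phi_{f,-}$ as follows. Fix a cleavage for the Grothendieck fibration $\cod.\J$, so that each morphism $v : \cod f \to \cod j$ with $\j \in \J$ gives a cartesian morphism $\mathbf{vw} : v^*\j \to \j$ whose image under $\J$ is a pullback square $(\varepsilon_v, v) : v^*j \to j$. For any lifting problem $(u,v) : f \to j$, factor $u = \varepsilon_v \circ \alpha$ through the pullback. The extension $a : \cod f \to A$ makes $(v^*\j, a)$ an object of $\J/A$, and $((\alpha, a.v^*j),(1_{\cod f}, a)) : (f, a) \to (v^*\j, a)$ is a lifting problem in $\C/A$; let $\beta$ be the first component of $\psi_{(f,a),(v^*\j,a)}$ applied to it, and set $\phi_{f, \j}(u,v) := \varepsilon_v \circ \beta$.

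The main technical step is verifying that $\phi_{f,-}$ is actually a $(f, \J)$-lifting operation, i.e.\ respects morphisms in $\J$, since the definition involves non-canonical cartesian lifts. Given $\mathbf{yz}: \j \to \j'$ in $\J$, one compares the cartesian lifts of $v$ along $\j$ and of $z \circ v$ along $\j'$ via the universal property of cartesian morphisms: the composite $\mathbf{yz} \circ \mathbf{vw} : v^*\j \to \j'$ factors uniquely through the chosen cartesian lift of $z.v$. Combining this with the respect of $\psi$ for morphisms in $\J/A$ (using that the factoring cartesian morphism lifts to $\J/A$ because both sides carry extension $a$) yields $\phi_{f,\j'}(y.u, z.v) = y \circ \phi_{f,\j}(u,v)$. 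The analogous verification for morphisms in $\lpf(\J/A)$ shows that the assignment extends to a functor over $\Ar(\C/A)$.

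Finally, I check that the two constructions are mutually inverse. Starting from $\phi$, passing through $(\lambda/A)_l$ gives the lifting operation $((u,c),(v,d)) \mapsto (\phi_{f,\j}(u,v), b.j)$, and feeding this into the inverse produces $\varepsilon_v \circ \phi_{f, v^*\j}(\alpha, 1_{\cod f}) = \phi_{f, \j}(\varepsilon_v \circ \alpha, v \circ 1) = \phi_{f,\j}(u,v)$, using that $\phi$ respects the cartesian morphism $(\varepsilon_v, v)$. Conversely, starting from $\psi$, the output of the inverse is $\varepsilon_v \circ \beta$ where $\beta$ comes from $\psi$ applied to the problem over $(v^*\j, a)$; applying $(\lambda/A)_l$ and comparing to the original $\psi$ uses that $(\varepsilon_v, v) : (v^*\j, a) \to (\j, b)$ is a morphism in $\J/A$ (which holds because $b \circ v = a$ by commutativity of the original square over $A$) and that $\psi$ respects it. The main obstacle is the bookkeeping around cartesian lifts and the compatibility of the extensions; once one adopts the convention of a fixed cleavage the verifications are routine but notationally dense.
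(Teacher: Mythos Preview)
Your proposal is correct and follows essentially the same approach as the paper: both construct the inverse by pulling back $\j$ along $v$ using the comprehension-category structure, extending the object $v^*\j$ by $a$, and composing the lift supplied by $\psi$ with the projection of the pullback square. The paper additionally remarks that the construction is independent of the choice of cartesian lift (rather than fixing a cleavage), and it spells out the functoriality check with respect to morphisms of $\J$ in more detail, while you add explicit verification of the two round trips; neither difference is substantive.
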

\begin{proof}
	Let $((g, a), \phi_{(g, a)-}) \in {^\pf}(\J/A)$ and consider $\f \in \J$ with $(u, v) : g \to f$. By assumption there is a cartesian morphism $v^*\f \to \f \in \J$ over $v$ and a pullback square $(v^+, v) : v^*f \to f$, and so a lift $\phi_{(g, a), (v^*\f, a)}(\alpha, 1)$ of the induced morphism $\alpha$:
	\begin{equation}\label{eq:liftop}
% https://q.uiver.app/#q=WzAsNixbMCwwLCJcXGJ1bGxldCJdLFswLDEsIlxcYnVsbGV0Il0sWzIsMCwiXFxidWxsZXQiXSxbMiwxLCJcXGJ1bGxldCBcXHJsYXB7IC59Il0sWzEsMSwiXFxidWxsZXQiXSxbMSwwLCJcXGJ1bGxldCJdLFswLDEsImciLDJdLFsyLDMsImYiXSxbMSw0LCIxIiwyXSxbNCwzLCJ2IiwyXSxbNSwyLCJ2XioiXSxbNSw0LCJoIiwyXSxbMCw1LCJcXGFscGhhIl0sWzEsNSwiXFxwaGkiLDAseyJzdHlsZSI6eyJib2R5Ijp7Im5hbWUiOiJkb3R0ZWQifX19XSxbNSwzLCIiLDAseyJzdHlsZSI6eyJuYW1lIjoiY29ybmVyIn19XV0=
\begin{tikzcd}[ampersand replacement=\&]
	\bullet \& \bullet \& \bullet \\
	\bullet \& \bullet \& {\bullet \rlap{ .}}
	\arrow["g"', from=1-1, to=2-1]
	\arrow["f", from=1-3, to=2-3]
	\arrow["1"', from=2-1, to=2-2]
	\arrow["v"', from=2-2, to=2-3]
	\arrow["{v^+}", from=1-2, to=1-3]
	\arrow["v^*f", from=1-2, to=2-2]
	\arrow["\alpha", from=1-1, to=1-2]
	\arrow["\phi", dotted, from=2-1, to=1-2]
	\arrow["\lrcorner"{anchor=center, pos=0.125}, draw=none, from=1-2, to=2-3]
\end{tikzcd}
	\end{equation}
	In other words, following the construction in Lemma~\ref{lem:sc:pb}, we define a $(g, \J)$-lifting operation $\psi_{g-}$ on $(u, v) : g \to f$ for $\f\in \J$ by
	\begin{equation}\label{eq:sliceliftop}
		\psi_{g, \f}(u, v) = v^+.\phi_{(g, a), (v^*\f, a)}(\alpha, 1).
	\end{equation}
	 This morphism is independent of the choice of cartesian lift $v^*\f \to \f$, by an argument similar to the one which now follows.
	
	We should check that the lifting function $\psi_{g-}$ defined this way satisfies the functoriality condition of~(\ref{eq:horizontalcond}). To this end, consider $\mathbf{w}\mathbf{x} : \f \to \f'$ in $\J$ and $(u, v) : g \to f$. We get lifts $\phi(\alpha) = \phi_{(g, a), (v^*\f, a)}(\alpha,1)$ and $\phi(\beta) = \phi_{(g,a), (v^*\f',1)}(\beta,1)$, as on the left and middle below, with $y = x.v$, $v^*\f \to \f$ cartesian over $v$, and $v^*\f' \to \f'$ cartesian over $y$:
	\[\begin{tikzcd}[ampersand replacement=\&]
		\bullet \& \bullet \& \bullet \& \bullet \& \bullet \& \bullet \& \bullet \& \bullet \& \bullet \& \bullet \\
		\bullet \& \bullet \& {\bullet } \& {\bullet \rlap{ ,}} \& \bullet \& \bullet \& {\bullet \rlap{ ,}} \& \bullet \& \bullet \& {\bullet\rlap{ .}}
		\arrow["g"', from=1-1, to=2-1]
		\arrow["v^*f", from=1-2, to=2-2]
		\arrow["f", from=1-3, to=2-3]
		\arrow["v"', from=2-2, to=2-3]
		\arrow["1"', from=2-1, to=2-2]
		\arrow["\alpha", from=1-1, to=1-2]
		\arrow["{v^+}", from=1-2, to=1-3]
		\arrow["{\phi(\alpha)}"{description}, from=2-1, to=1-2]
		\arrow["g"', from=1-5, to=2-5]
		\arrow["{v^*f'}", from=1-6, to=2-6]
		\arrow["{f'}", from=1-7, to=2-7]
		\arrow["{y^+}", from=1-6, to=1-7]
		\arrow["y"', from=2-6, to=2-7]
		\arrow["\beta", from=1-5, to=1-6]
		\arrow["1"', from=2-5, to=2-6]
		\arrow["{\phi(\beta)}"{description}, from=2-5, to=1-6]
		\arrow["g"', from=1-8, to=2-8]
		\arrow["\alpha", from=1-8, to=1-9]
		\arrow["1"', from=2-8, to=2-9]
		\arrow["\gamma", from=1-9, to=1-10]
		\arrow["1"', from=2-9, to=2-10]
		\arrow["{v^*f'}", from=1-10, to=2-10]
		\arrow["v^*f"', from=1-9, to=2-9]
		\arrow["{\phi(\alpha)}"{outer sep=-2pt, pos=0.5}, from=2-8, to=1-9]
		\arrow["{\phi(\beta)}"'{outer sep=-2pt, pos=0.7}, curve={height=5pt}, from=2-8, to=1-10]
		\arrow["{f'}", from=1-4, to=2-4]
		\arrow["w", from=1-3, to=1-4]
		\arrow["x"', from=2-3, to=2-4]
	\end{tikzcd}\]
	It follows from the universal property of the cartesian lift $v^*\f' \to \f'$ that there is a $\J$-morphism $\bm{\gamma} : v^*\f \to v^*\f'$ over $(\gamma,1)$, as on the right above, and so we get $\gamma.\phi(\alpha) = \phi(\gamma.\alpha) = \phi(\beta)$; hence $w.v^+.\phi(\alpha) = y^+.\gamma.\phi(\alpha) = y^+.\phi(\beta)$ as desired.
\end{proof}

\subsection{Frobenius equivalence}\label{sec:function:frobenius}

It is now straightforward to define and prove a Frobenius equivalence for categories of maps.

\begin{Defn}\label{def:function:frobstruc}
    Let $f: A \to B$ be a map in $\C$, and $(\J, \K)$ a pair of functors over $\Ar(\C)$. A \emph{Frobenius structure} $\f^*$ on $f$ for $(\J, \K)$ is a filler as on the left below: 
% https://q.uiver.app/#q=WzAsOCxbMCwxLCJcXEFyKFxcQy9CKSJdLFsxLDEsIlxcQXIoXFxDL0EpIFxccmxhcHsgLH0iXSxbMCwwLCJcXEovQiJdLFsxLDAsIlxcSi9BIl0sWzIsMCwiXFxLL0EiXSxbMywwLCJcXEsvQiJdLFsyLDEsIlxcQXIoXFxDL0EpIl0sWzMsMSwiXFxBcihcXEMvQikgXFxybGFweyAufSJdLFswLDEsIlxcQXIoZl4qKSJdLFsyLDBdLFszLDFdLFsyLDMsIlxcZl4qIiwwLHsic3R5bGUiOnsiYm9keSI6eyJuYW1lIjoiZG90dGVkIn19fV0sWzYsNywiXFxBcihmXyopIl0sWzQsNl0sWzUsN10sWzQsNSwiXFxmXyoiLDAseyJzdHlsZSI6eyJib2R5Ijp7Im5hbWUiOiJkb3R0ZWQifX19XV0=
\[\begin{tikzcd}[ampersand replacement=\&]
	{\J/B} \& {\J/A} \& {\K/A} \& {\K/B} \\
	{\Ar(\C/B)} \& {\Ar(\C/A) \rlap{ ,}} \& {\Ar(\C/A)} \& {\Ar(\C/B) \rlap{ .}}
	\arrow["{\Ar(f^*)}", from=2-1, to=2-2]
	\arrow[from=1-1, to=2-1]
	\arrow[from=1-2, to=2-2]
	\arrow["{\f^*}", dotted, from=1-1, to=1-2]
	\arrow["{\Ar(f_*)}", from=2-3, to=2-4]
	\arrow[from=1-3, to=2-3]
	\arrow[from=1-4, to=2-4]
	\arrow["{\f_*}", dotted, from=1-3, to=1-4]
\end{tikzcd}\]
Similarly, a \emph{pushforward structure} $\f_*$ on $f$ is a filler as on the right above. 
\end{Defn}
\begin{Thm}
	For a map $f : A \to B$ in $\C$ and a closed lifting structure $(\J, \phi, \K)$ on $C$, there is a bijection between Frobenius and pushforward structures on $f$ for $(\J, \K)$.
\end{Thm}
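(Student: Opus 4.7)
The plan is to mimic the strategy used in the proof of Theorem~\ref{thm:sc:frobequiv}, but now applied to the categorical Galois connection of Proposition~\ref{prop:sc:bcgaloisconnection} and the slicing operation of Section~\ref{sec:function:slicing}. The two key ingredients are (i) that slicing preserves closed lifting structures and (ii) that lifts into $\lpf(-)$ and $(-)\lpf$ functors correspond to one another across adjunctions.

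First I would apply Proposition~\ref{prop:function:slicecls} to the closed lifting structure $(\J, \phi, \K)$ to obtain closed lifting structures $(\J/A, \phi/A, \K/A)$ on $\C/A$ and $(\J/B, \phi/B, \K/B)$ on $\C/B$. Closedness means that the induced functors $(\phi/A)_l : \J/A \to \lpf(\K/A)$ and $(\phi/B)_r : \K/B \to (\J/B)\lpf$ are isomorphisms over the respective arrow categories. Consequently, giving a lift $\f^* : \J/B \to \J/A$ of $\Ar(f^*)$ is the same as giving a lift $\J/B \to \lpf(\K/A)$ of $\Ar(f^*)$, and giving a lift $\f_* : \K/A \to \K/B$ of $\Ar(f_*)$ is the same as giving a lift $\K/A \to (\J/B)\lpf$ of $\Ar(f_*)$.

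Next, I would invoke the change-of-base result, Corollary~\ref{cor:function:basechange}, for the adjunction $f^* \dashv f_* : \C/A \to \C/B$ applied to the functors $\J/B \to \Ar(\C/B)$ and $\K/A \to \Ar(\C/A)$. This yields a bijection between lifts $\J/B \to \lpf(\K/A)$ of $\Ar(f^*)$ and lifts $\K/A \to (\J/B)\lpf$ of $\Ar(f_*)$. Composing this bijection with the two isomorphisms provided by closedness gives the desired bijection between Frobenius structures $\f^*$ and pushforward structures $\f_*$.

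No step should be seriously obstructive, since both components have already been developed: Proposition~\ref{prop:function:slicecls} (based on Propositions~\ref{prop:function:commrl} and~\ref{prop:function:commll}) and Corollary~\ref{cor:function:basechange} do the real work. The only subtlety worth confirming is that the isomorphisms $(\phi/A)_l$ and $(\phi/B)_r$ are genuinely isomorphisms of objects of $\cradj$ and $\cladj$ respectively, so that composing them with the base-change bijection yields a well-defined correspondence at the level of lifts over the arrow functors $\Ar(f^*)$ and $\Ar(f_*)$; this is immediate from the fact that all the functors involved commute with the forgetful functors to $\Ar(\C/A)$ and $\Ar(\C/B)$ by construction.
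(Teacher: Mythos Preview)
Your argument is correct and follows essentially the same route as the paper: use Proposition~\ref{prop:function:slicecls} to identify $\J/A \cong \lpf(\K/A)$ and $\K/B \cong (\J/B)\lpf$, and then apply the base-change bijection of Corollary~\ref{cor:function:basechange} (equivalently, the adjunction of Proposition~\ref{prop:sc:bcgaloisconnection}) for $f^* \dashv f_*$ to pass between lifts of $\Ar(f^*)$ and lifts of $\Ar(f_*)$. The paper records this as a chain of isomorphisms of hom-sets in $\cladj$ and $\cradj$, but the content is the same.
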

\begin{proof}
	The previous results give us the following string of isomorphisms:
	\begin{align*}
		\cladj(\J/B, \J/A) &\cong \cladj(\J/B, \lpf(\K/A)) \\
		&\cong \cradj(\K/A, (\J/B)\lpf) \\
		&\cong \cradj(\K/A, \K/B),
	\end{align*}
	where we consider morphisms in $\cladj$ and $\cradj$ w.r.t.\ $f^* \ladj f_*$.
\end{proof}

\begin{Defn}\label{def:function:JKfrobstruc}
    A \emph{Frobenius structure} for a pair $(\J, \K)$ of functors over $\Ar(C)$ is a choice of Frobenius structure $\f^*$ for every $\f \in K$; likewise, a \emph{pushforward structure} for $(\J, \K)$ is a choice of pushforward structure $\f_*$ for every $\f \in K$. 
\end{Defn}

\subsection{An example of split reflections and split opfibrations}

We now give an example of a Frobenius structure in the sense of Definition~\ref{def:function:JKfrobstruc}, building on~\cite[Proposition~5.3]{gambino2023ModelsMartinLof}. Recall from Examples~\ref{ex:function:lali}~and~\ref{ex:function:fib} the functors $\SplRef \to \Ar(\Cat)$ and $\SplOpFib \to \Ar(\Cat)$. Given a split opfibration $P : \A \to \B$, an object $a \in \A$ and a morphism $Pa \to b$ in $\B$ we write $\smash{\underline{f} : a \to f_!a}$ for the choice of cocartesian lift of $f$ along $a$:
% https://q.uiver.app/#q=WzAsNCxbMCwxLCJQYSJdLFsxLDEsImIgXFxybGFweyAufSJdLFswLDAsImEiXSxbMSwwLCJmXyFhIl0sWzAsMSwiZiJdLFsyLDMsIlxcYmFye2Z9Il0sWzIsMCwiIiwxLHsic3R5bGUiOnsiYm9keSI6eyJuYW1lIjoiZGFzaGVkIn0sImhlYWQiOnsibmFtZSI6Im5vbmUifX19XSxbMywxLCIiLDEseyJzdHlsZSI6eyJib2R5Ijp7Im5hbWUiOiJkYXNoZWQifSwiaGVhZCI6eyJuYW1lIjoibm9uZSJ9fX1dXQ==
\[\begin{tikzcd}
	a & {f_!a} \\
	Pa & {b \rlap{ .}}
	\arrow["f", from=2-1, to=2-2]
	\arrow["{\underline{f}}", from=1-1, to=1-2]
	\arrow[dashed, no head, from=1-1, to=2-1]
	\arrow[dashed, no head, from=1-2, to=2-2]
\end{tikzcd}\] 
Note that this notation is imprecise, since the lift $\smash{\underline{f}}$ also depends on $a$. Each morphism $f: b \to b' \in \B$ thus induces a functor $f_! : \A(b) \to \A(b')$, where $\A(b)$ denotes the preimage category under $P$, sending an element $a \in \A(b)$ to the codomain of $\smash{\underline{f}}$. 

\begin{Prop}\label{prop:functor:frobstruc}
	The pair $(\SplRef, \SplOpFib)$ admits a Frobenius structure. 
\end{Prop}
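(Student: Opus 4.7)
The plan is to construct, for each split opfibration $\p \in \SplOpFib$ with underlying functor $p: A \to B$, a functor $\p^*: \SplRef/B \to \SplRef/A$ providing the filler required by Definition~\ref{def:function:frobstruc}; the construction will be dual to the lifting operation of Example~\ref{ex:liftingstrforrari}, with cocartesian lifts along $\p$ replacing the cartesian ones and the unit of the split reflection playing the role of $\theta$. Fix $(\mathbf{r}, b) \in \SplRef/B$, where $\mathbf{r} = (i, r)$ with $i: \C \to \D$, $r \dashv i$, identity counit, and unit $\eta: 1_\D \to ir$, and write $p^*\C$ and $p^*\D$ for the pullback categories whose objects are pairs $(c, a)$ with $b(ic) = pa$ and $(d, a)$ with $bd = pa$ respectively; the pullback $p^*i$ then sends $(c, a)$ to $(ic, a)$. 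Define the retraction on objects by
\[p^*r(d, a) = (rd, (b\eta_d)_! a),\]
using the chosen cocartesian lift $\underline{b\eta_d}: a \to (b\eta_d)_! a$ of $b\eta_d: bd \to bird$ at $a$, which is well-typed since $p((b\eta_d)_! a) = bird$. On a morphism $(f, g): (d, a) \to (d', a')$, naturality of $\eta$ yields the commuting identity $b(irf) \cdot b\eta_d = b\eta_{d'} \cdot pg$ in $B$, and the cocartesian property of $\underline{b\eta_d}$ then produces a unique $h: (b\eta_d)_! a \to (b\eta_{d'})_! a'$ satisfying $h \cdot \underline{b\eta_d} = \underline{b\eta_{d'}} \cdot g$ and $ph = b(irf)$; set $p^*r(f, g) = (rf, h)$. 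Functoriality of $p^*r$ is then inherited from this uniqueness.

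To equip $(p^*i, p^*r)$ with a split reflection structure, take the unit to be $\eta^*_{(d, a)} = (\eta_d, \underline{b\eta_d})$, with naturality holding by the defining property of $h$ above. Strictness of the retraction $p^*r \cdot p^*i = 1_{p^*\C}$ and the triangle identity $\eta^* p^*i = 1$ both reduce to $\eta i = 1_i$ (one of the triangle identities for $\mathbf{r}$) combined with the unit law $\underline{1} = 1$ of the splitting, because $\eta_{ic} = 1$ forces $\underline{b\eta_{ic}} = 1_a$ and $(b\eta_{ic})_! a = a$. The remaining triangle $p^*r \eta^* = 1$ additionally invokes $r\eta = 1_r$ together with the uniqueness clause of the cocartesian property to conclude $h = 1$ in the relevant instance.

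Finally, for functoriality of $\p^*$ in a morphism $(U, V): (\mathbf{r}, b) \to (\mathbf{r}', b')$ of $\SplRef/B$ (satisfying $Vi = i'U$, $Ur = r'V$, and $b'V = b$), define $\p^*(U, V) = (U^*, V^*)$ pointwise by $U^*(c, a) = (Uc, a)$ and $V^*(d, a) = (Vd, a)$. The nontrivial check is the commutation $U^* \cdot p^*r = p^*r' \cdot V^*$, which reduces to the equality $(b\eta_d)_! a = (b'\eta'_{Vd})_! a$; this holds because, as noted in Example~\ref{ex:function:lali}, morphisms of split reflections automatically commute with units ($V\eta = \eta' V$), so $b\eta_d = b'V\eta_d = b'\eta'_{Vd}$ and the two chosen cocartesian lifts literally coincide. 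Compatibility with the forgetful functor $\SplRef/A \to \Ar(\Cat/A)$ is immediate by construction. The main technical obstacle is maintaining strict equalities throughout—landing in $\SplRef$ on the nose rather than merely up to isomorphism—but this is handled by the strict triangle identities of a split reflection combined with the uniqueness parts of the cocartesian universal property.
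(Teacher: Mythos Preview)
Your proof is correct and follows essentially the same approach as the paper: define the left adjoint to $p^*i$ by $(d,a) \mapsto (rd, (b\eta_d)_! a)$ using cocartesian transport along the image of the unit, take the new unit to be $(\eta_d, \underline{b\eta_d})$, and verify the retraction and both triangle identities using $\eta i = 1$, $r\eta = 1$, and the identity-preservation of the splitting. The functoriality check via $b\eta_d = b'\eta'_{Vd}$ is likewise the same argument as in the paper, which phrases it as a consequence of $L.Y = X.L'$ implying unit-compatibility.
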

\begin{proof}
	We construct a lift $\P^* : \SplRef/\B \to \SplRef/\A$ for an opfibration $P : \A \to \B$:
	% https://q.uiver.app/#q=WzAsNCxbMCwxLCJcXEFyKFxcQ2F0L1xcQikiXSxbMSwxLCJcXEFyKFxcQ2F0L1xcQSkgXFxybGFweyAufSJdLFswLDAsIlxcTGFsaS9cXEIiXSxbMSwwLCJcXExhbGkvXFxBIl0sWzAsMSwiXFxBcihQXiopIl0sWzMsMV0sWzIsMF0sWzIsMywiXFxQXioiXV0=
\[\begin{tikzcd}
	{\SplRef/\B} & {\SplRef/\A} \\
	{\Ar(\Cat/\B)} & {\Ar(\Cat/\A) \rlap{ .}}
	\arrow["{\Ar(P^*)}", from=2-1, to=2-2]
	\arrow[from=1-2, to=2-2]
	\arrow[from=1-1, to=2-1]
	\arrow["{\P^*}", from=1-1, to=1-2]
\end{tikzcd}\]
Consider a situation as drawn on the left below, where $L \dashv R$ is a split reflection with unit $\eta$; the goal is to construct a split reflection $G \dashv P^*R$ as on the left of:
\begin{equation}\label{eq:laliconstr}
	\begin{tikzcd}
		{\A\times_\B\D} & \D \\
		{\A\times_\B\C} & \C \\
		\A & {\B \rlap{ ,}}
		\arrow["P"', from=3-1, to=3-2]
		\arrow["U", from=2-2, to=3-2]
		\arrow["R", shift left, from=1-2, to=2-2]
		\arrow["{P^*U}", shift left, from=2-1, to=3-1]
		\arrow["F"', dotted, shift right, from=2-1, to=3-1]
		\arrow["{P^*R}", shift left, from=1-1, to=2-1]
		\arrow["G", shift left, dotted, from=2-1, to=1-1]
		\arrow["L", shift left, from=2-2, to=1-2]
		\arrow[from=1-1, to=1-2]
		\arrow["{\pi_\C}"', from=2-1, to=2-2]
	\end{tikzcd}
	\qquad\qquad\quad
	\begin{tikzcd}
		a & {(U\eta_c)_!a} \\
		Uc & {URLc \rlap{ .}}
		\arrow["{U\eta_c}", from=2-1, to=2-2]
		\arrow[dashed, no head, from=1-1, to=2-1]
		\arrow["\underline{U\eta_c}", from=1-1, to=1-2]
		\arrow[dashed, no head, from=1-2, to=2-2]
	\end{tikzcd}
\end{equation}
To achieve this we first define an auxiliary functor $F : \A \times_\B \C \to \A : (a, c) \mapsto (U\eta_c)_! a$, as illustrated on the right above. The action of $F$ on a morphism $(a_1, c_1) \to (a_2, c_2)$ is defined using the universal property of $\smash{\underline{U\eta_{\smash{c_1}}}}$. Now $G$ is defined by $(a, c) \mapsto (F(a, c), Lc)$, i.e.\ $G = F \times_\B L.\pi_\C$. Lastly, we need a unit $\theta : 1 \To P^*R.G$, for which we take \[\theta_{(a,c)} = (\underline{U\eta_c},\eta_c) : (a, c) \to (F(a, c), RLc).\]

We have to show that $G$ is a left inverse of $P^*R$, and that the triangle identities $\theta.P^*R = 1$ and $G.\theta = 1$ hold. To start, we note that because $\eta.R = 1$, and the splitting of $P$ preserves units, we have for $(a, d) \in \A \times_\B \D$ that 
\begin{equation}\label{eq:star}
\underline{U\eta_{Rd}} = \underline{U1_{Rd}} = \underline{1_{URd}} = 1_a.
\end{equation}
Therefore, $F(a, Rd) = (U\eta_{Rd})_!a = (1_a)_!a = a$, and so \[G(P^*R(a,d)) = G(a, Rd) = (F(a, Rd), LRd) = (a, d),\] which is to say that $G$ is a left inverse of $P^*R$. Similarly, from~(\ref{eq:star}) we get \[\theta_{P^*R(a,d)} = \theta_{(a, Rd)} = (\smash{\underline{U\eta_{Rd}}}, \eta_{Rd}) = (1_a, 1_{Rd}).\] Lastly, we check if $G\theta_{(a,c)} = (1_{F(a, c)}, 1_{Lc})$. As $L.\eta = 1$ by assumption, this comes down to verifying that $F(\smash{\underline{U\eta_c}}, \eta_c) = 1_{F(a, c)} : F(a, c) \to F(F(a, c), RLc)$. Substituting $Lc$ for $d$ in~(\ref{eq:star}) we find that $\smash{\underline{U\eta_{RLc}}} = 1_{F(a, c)}$ and so $F(F(a, c), RLc) = F(a,c)$. Therefore, the desired result follows from the universal property of the lift $\smash{\underline{U\eta_c}}$. 

It remains to check that $\P^*$ preserves morphisms of split reflections; to this end, we consider a morphism of split reflections $(X, Y) : (L', R') \to (L, R)$ as drawn below:
% https://q.uiver.app/#q=WzAsMTAsWzEsMiwiXFxBIl0sWzUsMiwiXFxCIFxccmxhcHsgLn0iXSxbNiwxLCJcXEMiXSxbNiwwLCJcXEQiXSxbNCwxLCJcXEUiXSxbNCwwLCJcXG1hdGhjYWx7Rn0iXSxbMiwxLCJcXEFcXHRpbWVzX1xcQlxcQyJdLFsyLDAsIlxcQVxcdGltZXNfXFxCXFxEIl0sWzAsMSwiXFxBXFx0aW1lc19cXEJcXEUiXSxbMCwwLCJcXEFcXHRpbWVzX1xcQlxcbWF0aGNhbHtGfSJdLFswLDEsIlAiXSxbMywyLCJSIiwwLHsib2Zmc2V0IjotMX1dLFsyLDMsIkwiLDAseyJvZmZzZXQiOi0xfV0sWzUsNCwiUiciLDAseyJvZmZzZXQiOi0xfV0sWzQsNSwiTCciLDAseyJvZmZzZXQiOi0xfV0sWzUsMywiWCJdLFs0LDIsIlkiXSxbMiwxLCJVIl0sWzQsMSwiVSciLDJdLFs4LDYsIlBeKlkiXSxbOSw3LCJQXipYIl0sWzksOCwiUF4qUiciLDAseyJvZmZzZXQiOi0xfV0sWzgsOSwiRyciLDAseyJvZmZzZXQiOi0xfV0sWzYsNywiRyIsMCx7Im9mZnNldCI6LTF9XSxbNyw2LCJQXipSIiwwLHsib2Zmc2V0IjotMX1dLFs4LDAsIlBeKlUnIiwyXSxbNiwwLCJQXipVIl1d
\[\begin{tikzcd}[column sep=small]
	{\A\times_\B\mathcal{F}} &[-0.2cm]&[-0.2cm] {\A\times_\B\D} && {\mathcal{F}} && \D \\
	{\A\times_\B\E} && {\A\times_\B\C} && \E && \C \\
	& \A &&&& {\B \rlap{ .}}
	\arrow["P", from=3-2, to=3-6]
	\arrow["R", shift left, from=1-7, to=2-7]
	\arrow["L", shift left, from=2-7, to=1-7]
	\arrow["{R'}", shift left, from=1-5, to=2-5]
	\arrow["{L'}", shift left, from=2-5, to=1-5]
	\arrow["X", from=1-5, to=1-7]
	\arrow["Y", from=2-5, to=2-7]
	\arrow["U", from=2-7, to=3-6]
	\arrow["{U'}"', from=2-5, to=3-6]
	\arrow["{P^*Y}", from=2-1, to=2-3]
	\arrow["{P^*X}", from=1-1, to=1-3]
	\arrow["{P^*R'}", shift left, from=1-1, to=2-1]
	\arrow["{G'}", shift left, from=2-1, to=1-1]
	\arrow["G", shift left, from=2-3, to=1-3]
	\arrow["{P^*R}", shift left, from=1-3, to=2-3]
	\arrow["{P^*U'}"', from=2-1, to=3-2]
	\arrow["{P^*U}", from=2-3, to=3-2]
\end{tikzcd}\]
We want that $G.P^*Y = P^*X.G'$, i.e.\ that $(F(a, Ye), LYe) = (F'(a, e), XL'e)$ for $(a, e) \in \A\times_\B\E$. In fact, both equalities are simple consequences of the assumption that $L.Y = X.L'$ (which implies $\eta \circ Y = Y \circ \eta'$). 
\end{proof}

\section{Algebraic Weak Factorisation Systems}

An \awfs\ is a significantly more structured object than a \wfs, and at first sight their definitions appear quite distinct, because \awfs\ are usually defined in terms of an interacting monad-comonad pair, making no explicit mention of liftings as in diagram~(\ref{eq:phi}). However, it was recently shown in~\cite{bourke2023OrthogonalApproach} that \awfs\ admit an equivalent definition---there called \emph{lifting} \awfs\ for distinction---very much akin to Definition~\ref{def:wfs} of \wfs. In this definition of \awfs, the role of the subclasses $L, R \subseteq \C_1$ in the definition of a \wfs\ are played by \emph{double functors} $\bL, \bR \to \Sq(\C)$ over the double category $\Sq(\C)$ of squares in $\C$. To explain what that means, let us first briefly recall what we mean by double categories.

\subsection{Double categories}

A double category $\bC$ can be succinctly defined as an internal category in $\Cat$. This amounts to the following data, satisfying the usual axioms of a category:
\[\begin{tikzcd}
	{\bC_0} & {\bC_1} & {\bC_1 \times_{\bC_0} \bC_1 \rlap{ .}}
	\arrow["c"', from=1-3, to=1-2]
	\arrow["{\text{id}}"{description}, from=1-1, to=1-2]
	\arrow["s"', shift right=2, from=1-2, to=1-1]
	\arrow["t", shift left=2, from=1-2, to=1-1]
\end{tikzcd}\]
More specifically, a double category $\bC$ consists of a category $\bC_0$ of objects and a category $\bC_1$ of arrows, with operations $\text{id}, s, t, c$ for identity arrow assignment, source, target, and composition operations, respectively. A double category has two types of morphisms: there are the morphisms of $\bC_0$, called the \emph{horizontal} morphisms of $\bC$; and the objects of $\bC_1$, called the \emph{vertical} morphisms of $\bC$. The morphisms of $\bC_1$ are called \emph{squares} of $\bC$. There is an associated notion of double functor, which yields the category $\Dbl$ of double categories. There is a functor $\Sq : \Cat \to \Dbl$ sending a category $\C$ to its \emph{double category of squares} $\Sq(\C)$. The object category of $\Sq(\C)$ is $\C$ itself, and the arrow category of $\Sq(\C)$ is the arrow category $\Ar(\C)$. 

We saw in Section~\ref{sec:functions} that a notion of structured morphisms in the ambient category $\C$ can be given as a functor $\J \to \Ar(\C)$. If we wish to impose in addition a condition saying that the morphisms in $\J$ are closed under composition, the natural way to capture this is as a double functor over $\Sq(\C)$, i.e.\ morphisms $\DJ \to \Sq(\C)$ in $\Dbl$. Note, however, that in principle a double functor $\DJ \to \Sq(\C)$ endows $\C$ with more structure than just that on its arrows; for example, it also imposes structures on its objects through the functor $\DJ_0 \to \C$. In many examples of interest, no additional structure is imposed except that on the arrows. Such double categories are called concrete in~\cite{bourke2016AlgebraicWeak}. 
\begin{Defn}
	A double functor $\DJ \to \Sq(\C)$ is \emph{concrete} if $\DJ_0$ is an isomorphism and $\DJ_1$ is faithful.  
\end{Defn}

By internalising the definition of natural transformations the category $\Dbl$ can be made a 2-category. Spelling this out, a \emph{double} natural transformation $\alpha : F \To G$ between double functors $F, G : \DC \to \DD$ is a functor $\alpha : \DC_0 \to \DD_1$ subject to various conditions, such as $s_\DD.\alpha = F_0$. This means that, by virtue of how we defined horizontal and vertical morphisms, a double natural transformation would assign to each object of $\DC$ a \emph{vertical} morphism of $\DD$. However, a second candidate for the 2-cells of $\Dbl$ is given by first \emph{transposing} the double categories and functors involved. This is a type of duality for double categories in which the sets of vertical and horizontal arrows are switched. We thus obtain a notion of double natural transformation which assigns to each object of $\DC$ a \emph{horizontal} morphism of $\DD$, again subject to various conditions. Following~\cite{bourke2016AlgebraicWeak}, we will consider $\Dbl$ as a 2-category by taking the latter of these two options for the 2-cells. 

\begin{Ex}
	Given adjunctions $L \dashv R : \D \to \C$ and $L' \dashv R' : \E \to \D$ there is a composite adjunction $L'L \dashv RR' : \E \to \C$, given by pasting the units and counits: 
\[\begin{tikzcd}
	\C & \C & \C && \C & \D & \E \\
	\D & \D & \C && \D & \D & \E \\
	\E & \D & {\C \rlap{ ,}} && \E & \E & {\E \rlap{ .}}
	\arrow["L"', from=1-1, to=2-1]
	\arrow["{L'}"', from=2-1, to=3-1]
	\arrow["L"', from=1-2, to=2-2]
	\arrow["R"', from=2-2, to=2-3]
	\arrow["1", from=2-3, to=3-3]
	\arrow["1", from=1-3, to=2-3]
	\arrow["1", from=1-1, to=1-2]
	\arrow["1", from=1-2, to=1-3]
	\arrow["{R'}"', from=3-1, to=3-2]
	\arrow["R"', from=3-2, to=3-3]
	\arrow["1", from=2-1, to=2-2]
	\arrow["1", from=2-2, to=3-2]
	\arrow["{\eta'}", shorten <=8pt, shorten >=8pt, Rightarrow, from=2-2, to=3-1]
	\arrow["\eta", shorten <=8pt, shorten >=8pt, Rightarrow, from=1-3, to=2-2]
	\arrow["L"', from=1-5, to=2-5]
	\arrow["{L'}"', from=2-5, to=3-5]
	\arrow["R"', from=1-6, to=1-5]
	\arrow["{R'}"', from=1-7, to=1-6]
	\arrow["1", from=1-7, to=2-7]
	\arrow["1", from=2-7, to=3-7]
	\arrow["1", from=3-7, to=3-6]
	\arrow["1", from=3-6, to=3-5]
	\arrow["{R'}"', from=2-7, to=2-6]
	\arrow["1", from=2-6, to=2-5]
	\arrow["1", from=1-6, to=2-6]
	\arrow["{L'}"', from=2-6, to=3-6]
	\arrow["\varepsilon"', shorten <=8pt, shorten >=8pt, Rightarrow, from=1-5, to=2-6]
	\arrow["{\varepsilon'}"', shorten <=8pt, shorten >=8pt, Rightarrow, from=2-6, to=3-7]
	\arrow[shorten <=8pt, shorten >=8pt, Rightarrow, no head, from=1-2, to=2-1]
	\arrow[shorten <=8pt, shorten >=8pt, Rightarrow, no head, from=2-3, to=3-2]
	\arrow[shorten <=8pt, shorten >=8pt, Rightarrow, no head, from=2-5, to=3-6]
	\arrow[shorten <=8pt, shorten >=8pt, Rightarrow, no head, from=1-6, to=2-7]
\end{tikzcd}\]
More explicitly, the unit of the composed adjunction is given by $(R\circ \eta' \circ L).\eta$, where $\circ$ denotes horizontal composition. Split reflections are closed under composition, and so the functor $\SplRef \to \Ar(\Cat)$ forms the arrow part of a double functor $\DSplRef \to \Sq(\Cat)$. 
\end{Ex}
\begin{Ex}
	Given split opfibrations $P : \A \to \B$ and $Q : \B \to \D$ the composition $Q.P$ is made into a split opfibration by first lifting along $Q$ and then along $P$. We can thus extend $\SplOpFib \to \Ar(\Cat)$ to a double functor $\DSplOpFib \to \Sq(\Cat)$. 
\end{Ex}

\subsection{Lifting}
A lifting operation for a pair of double functors $\DJ, \DK \to \Sq(\C)$ is one for their underlying functors over $\Ar(\C)$, satisfying a further coherence condition with respect to composition in $\DJ$ and $\DK$.  

\begin{Defn}
	Let $\DJ, \DK \to \Sq(\C)$; a $(\DJ, \DK)$-\emph{lifting operation} $\phi$ is a $(\DJ_1, \DK_1)$-lifting operation which respects composition in $\DJ$ and $\DK$, in the sense that given vertical composable morphisms $\f, \f'$ in $\DJ$, and $\g, \g'$ in $\DK$, as well as a lifting problem $(u, v) : f.f' \to g.g'$, we have $\phi_{\f, \g'}(\phi_{\f', \g.\g'}(u, v.f), \phi_{\f.\f', \g}(g'.u, v)) = \phi_{\f.\f', \g.\g'}(u, v)$:
\begin{equation}\label{eq:dc:verticalcondition}
	\begin{tikzcd}[column sep=scriptsize]
		\bullet && \bullet && \bullet && \bullet \\
		\bullet && \bullet & {=} \\
		\bullet && \bullet && \bullet && {\bullet \rlap{ .}}
		\arrow["{f'}"', from=1-1, to=2-1]
		\arrow["f"', from=2-1, to=3-1]
		\arrow["{g'}", from=1-3, to=2-3]
		\arrow["g", from=2-3, to=3-3]
		\arrow["{f.f'}"', from=1-5, to=3-5]
		\arrow["{g.g'}", from=1-7, to=3-7]
		\arrow["u", from=1-1, to=1-3]
		\arrow["v"', from=3-1, to=3-3]
		\arrow["u", from=1-5, to=1-7]
		\arrow["v"', from=3-5, to=3-7]
		\arrow["{\phi_{\f.\f', \g.\g'}}"{description}, dotted, from=3-5, to=1-7]
		\arrow[dotted, from=2-1, to=1-3]
		\arrow[dotted, from=3-1, to=2-3]
		\arrow["{\phi_{\f, \g'}}"{description}, dotted, from=3-1, to=1-3]
	\end{tikzcd}
\end{equation}
\end{Defn}

The assignment $(\DJ, \DK) \mapsto \CLift(\DJ, \DK)$ of double functors $\DJ, \DK$ to the class $\CLift(\DJ, \DK)$ of $(\DJ, \DK)$-lifting operations can again be made the object part of a functor 
\begin{equation}\label{eq:doublefuncliftfunctor}
    \CLift : (\Dbl/\Sq(\C))\op \times (\Dbl/\Sq(\C))\op \to \Set,
\end{equation}
which is again representable in both arguments, as shown in~\cite[Proposition~18]{bourke2016AlgebraicWeak}.
\begin{Prop}\label{prop:doublefunctor:galoisconnection}
    $\CLift$ is representable in both arguments, inducing an adjunction:
    \begin{equation}\label{eq:doublefunctor:galoisconnection}
	% https://q.uiver.app/#q=WzAsMixbMCwwLCIoXFxDYXQvXFxBcihcXEMpKVxcb3AiXSxbMSwwLCJcXENhdC9cXEFyKFxcQykgXFxybGFweyAufSJdLFsxLDAsIlxcbHBmKC0pIiwyLHsib2Zmc2V0IjoyfV0sWzAsMSwiKC0pXFxscGYiLDIseyJvZmZzZXQiOjJ9XSxbMiwzLCIiLDAseyJsZXZlbCI6MSwic3R5bGUiOnsibmFtZSI6ImFkanVuY3Rpb24ifX1dXQ==
	\begin{tikzcd}
		{(\Dbl/\Sq(\C))\op} & {\Dbl/\Sq(\C) \rlap{ .}}
		\arrow[""{name=0, anchor=center, inner sep=0}, "{\ldpf(-)}"', shift right=2, from=1-2, to=1-1]
		\arrow[""{name=1, anchor=center, inner sep=0}, "{(-)\ldpf}" {below, overlay}, shift right=2, from=1-1, to=1-2, overlay]
		\arrow["\dashv"{anchor=center, rotate=-90}, draw=none, from=0, to=1]
	\end{tikzcd}
    \end{equation}
	\vspace{-1.5ex}
\end{Prop}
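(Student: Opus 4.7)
The plan is to mirror the proof of Proposition~\ref{prop:functor:galoisconnection} (the representability for categories of maps) and then check that the extra double-categorical data — identities, vertical composition, and squares — all go through. The proof for functors over $\Ar(\C)$ already gives us the representing object at the ``arrow level'': given $\DK \to \Sq(\C)$, the category $\lpf \DK_1 \to \Ar(\C)$ represents $\CLift(-, \DK_1)$ for functors over $\Ar(\C)$. The job now is to carve out a suitable \emph{double} sub-structure and verify the universal property in $\Dbl/\Sq(\C)$.

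I would first construct $\ldpf \DK \to \Sq(\C)$ as follows. Let $(\ldpf\DK)_0 = \C$. Take $(\ldpf\DK)_1$ to be the category whose objects are pairs $(\f, \phi_{\f-})$ where $\f$ is a vertical arrow in $\Sq(\C)$ and $\phi_{\f-}$ is a $(\f, \DK)$-lifting operation additionally satisfying the \emph{vertical coherence} expressed by~(\ref{eq:dc:verticalcondition}) for every vertically composable pair $\g, \g'$ in $\DK$. Morphisms in $(\ldpf\DK)_1$ are squares $(u,v): f \to f'$ that commute with the chosen liftings, exactly as in the proof of Proposition~\ref{prop:functor:galoisconnection}. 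The identity-arrow assignment sends $A \in \C$ to $(1_A, \phi_{1_A,-})$ where $\phi_{1_A, \g}(u,v) = u$. Vertical composition $(\f, \phi_{\f-}) \cdot (\f', \phi_{\f'-})$ is defined, as \emph{forced} by~(\ref{eq:dc:verticalcondition}), by
\[
\phi_{\f.\f', \g}(u, v) = \phi_{\f, \g'}\!\bigl(\phi_{\f', \g.\g'}(u, v.f), \phi_{\f.\f', \g}(g'.u, v)\bigr),
\]
for each $\g \in \DK$, using the factorisation obtained from the identity $\g = \g \cdot 1$; more cleanly, one applies~(\ref{eq:dc:verticalcondition}) with $\g' = 1$ to extract $\phi_{\f.\f',-}$ from $\phi_{\f,-}$ and $\phi_{\f',-}$. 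One then shows this is again a $(\f.\f', \DK)$-lifting operation satisfying the vertical coherence, and that vertical composition is associative and unital.

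The key verification is that this vertical composition is well-defined and functorial, i.e.\ that the formula above is natural in $\g$ and satisfies~(\ref{eq:dc:verticalcondition}) for composites in $\DK$. This is essentially a coherence/diagram-chase on pasting diagrams of lifts; it uses nothing beyond repeated application of~(\ref{eq:dc:verticalcondition}) for $\DK$ and naturality in squares of $\DK$, but it is the main bookkeeping step and where any subtlety will hide. Once this is checked, the universal property is immediate: a double functor $\DJ \to \ldpf\DK$ over $\Sq(\C)$ picks out, for each $\f \in \DJ$, a pair $(f, \phi_{\f-})$ compatible with $\DJ$-morphisms, identities, and vertical composition, which by construction is exactly a $(\DJ, \DK)$-lifting operation; the assignment is bijective and natural in $\DJ$.

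The construction of $\DJ\ldpf$ representing $\CLift(\DJ, -)$ is dual in the obvious way, swapping the role of $\DJ$ and $\DK$ and using the same coherence condition~(\ref{eq:dc:verticalcondition}) to define vertical composition. The resulting pair of functors $\ldpf(-)$ and $(-)\ldpf$ form a Galois-style adjunction by the string of natural bijections
\[
\Dbl/\Sq(\C)(\DJ, \ldpf\DK) \cong \CLift(\DJ, \DK) \cong \Dbl/\Sq(\C)(\DK, \DJ\ldpf),
\]
which is the adjunction in~(\ref{eq:doublefunctor:galoisconnection}). The main obstacle, as indicated, is the verification of the vertical coherence for composites; everything else either descends from Proposition~\ref{prop:functor:galoisconnection} or is a routine check of double-categorical axioms.
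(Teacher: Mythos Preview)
Your approach is the same as the paper's (which simply says that vertical composition in $\ldpf\DJ$ is ``performed as in~(\ref{eq:dc:verticalcondition})'' and defers the remaining checks to~\cite[Proposition~18]{bourke2016AlgebraicWeak}), and the overall structure of your argument is correct.

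There is, however, a genuine muddle in your formula for vertical composition. As displayed, the equation is circular: $\phi_{\f.\f', \g}$ appears on both sides. Your proposed fix of setting $\g' = 1$ does not help; it collapses the equation to the tautology $\phi_{\f.\f', \g}(u,v) = \phi_{\f, 1}(\ldots, \phi_{\f.\f', \g}(u,v))$. The point is that to \emph{define} the composite lifting operation one does not decompose $\g$ at all: for composable $(\f', \phi_{\f'-})$ and $(\f, \phi_{\f-})$ in $\ldpf\DK$ and a single $\g \in \DK$ with problem $(u,v) : f.f' \to g$, one sets
\[
\phi_{\f.\f', \g}(u, v) \;:=\; \phi_{\f, \g}\bigl(\phi_{\f', \g}(u,\, v.f),\; v\bigr),
\]
i.e.\ first lift $f'$ against $\g$ using the square $(u, v.f)$, then lift $f$ against $\g$ using the resulting map as the top. (In the notation of~(\ref{eq:dc:verticalcondition}) this corresponds to taking the \emph{lower} right arrow $\g = 1$, not the upper $\g' = 1$.) One must then verify that this composite again satisfies the horizontal and vertical coherence conditions, using those for $\phi_{\f,-}$ and $\phi_{\f',-}$; this is the bookkeeping step you correctly flag, and it goes through.
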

\begin{proof}
The representing object of $\CLift(-, \DJ)$ is a double category $\ldpf \DJ \to \Sq(\C)$ of which the  vertical morphisms are pairs $(f, \phi_{f-})$ with $\phi_{f-} \in \CLift(f, \DJ)$ (where we consider $f$ to be a double functor $f: 1 \to \Sq(\C)$). Vertical composition in $\ldpf \DJ$ is performed as in (\ref{eq:dc:verticalcondition}), and a morphism $(f, \phi_{f-}) \to (g, \phi_{g-})$ is a square $(u, v) : f \to g$ which commutes with the lifting operations. The representing object $\DJ\ldpf \to \Sq(\C)$ of $\CLift(\DJ, -)$ is defined analogously. 
\end{proof}

We obtain the following analog of~(\ref{eq:funcJpfK}):
\begin{equation}\label{eq:doublefuncJpfK}
    \Dbl/\Sq(C)(\DJ, \ldpf \DK) \cong \CLift(\DJ, \DK) \cong \Dbl/\Sq(\C)(\DK, \DJ \ldpf).
\end{equation} 

\begin{Defn}\label{def:doublefuncliftingstructure}
    A triple $(\DJ, \phi, \DK)$ of double functors $\DJ, \DK \to \Sq(\C)$ and a $(\DJ, \DK)$-lifting operation $\phi$ is called a \emph{lifting structure}. A lifting structure $(\DJ, \phi, \DK)$ is called \emph{closed} when the functors $\phi_l : \DJ \to \ldpf \DK$ and $\phi_r : \DK \to \DJ \ldpf$ induced by (\ref{eq:doublefuncJpfK}) are invertible. 
\end{Defn} 
\begin{Ex}
    Let $\DJ \to \Sq(\C)$; applying~(\ref{eq:doublefuncJpfK}) to the identities on $\ldpf \DJ$ and $\DJ \ldpf$ yields lifting operations $\lcan \in \CLift(\ldpf \DJ, \DJ)$ and $\rcan \in \CLift(\DJ, \DJ\ldpf)$, and so we get, as we did in Example~\ref{ex:sscanliftingpairs}, two canonical lifting structures $(\ldpf \DJ, \lcan, \DJ)$ and $(\DJ, \rcan, \DJ\ldpf)$. Again, this yields lifting structure $(\ldpf \DJ, \rcan, (\ldpf \DJ)\ldpf)$ and $(\ldpf(\DJ \ldpf), \lcan, \DJ\ldpf)$ which are said to be fibrantly generated and cofibrantly generated by $\DJ$. 
\end{Ex}

The following is~\cite[Definition 3]{bourke2023OrthogonalApproach} of a lifting \awfs.
\begin{Defn}\label{def:awfs}
    An \emph{algebraic weak factorisation system} (\awfs) on a category $\C$ is a closed lifting structure $(\DJ, \phi, \DK)$ which satisfies the axiom of factorisation, requiring every morphism $f \in \C_1$ to admit a bi-universal factorisation $f = h.g$ for vertical morphisms $\g$ and $\h$ of $\bL$ and $\bR$ respectively. 
\end{Defn}

\begin{Ex}\label{ex:lawfs}
	The double functors $\DSplRef \to \Sq(\Cat)$ and $\DSplFib \to \Sq(\Cat)$ form part of an \awfs, when equipped with the lifting structure defined in Example \ref{ex:liftingstrforrari}. For the proof we refer to \cite[Examples 4 (ii)]{bourke2023OrthogonalApproach}.
\end{Ex}	

In what follows we will never need the axiom of factorisation, so we will focus on closed lifting structures instead.

\section{The Frobenius equivalence for double categories of maps}\label{sec:doublefunctors}

In this section we will finally establish a Frobenius equivalence for $\awfs$s. For this we follow the outline we sketched in Subsection \ref{sec:outlineFrobenius} and build on the work in Section \ref{sec:functions}. That is, we will establish double-categorical versions of base change and slicing, from which the Frobenius equivalence will be a direct corollary.

\subsection{Base Change} We start with base change. As noted in \cite[Proposition 21]{bourke2016AlgebraicWeak}, the adjunction in (\ref{eq:doublefunctor:galoisconnection}) can be extended to allow for change of base. This is done by considering a 2-category $\dladj$ which can be defined as the pullback:
% https://q.uiver.app/#q=WzAsNSxbMSwyLCJcXERibCBcXHJsYXB7IC59Il0sWzEsMCwiXFxDYXRfXFx0bGFkaiJdLFswLDIsIlxcQXIoXFxEYmwpIl0sWzAsMCwiXFxkbGFkaiJdLFsxLDEsIlxcQ2F0Il0sWzIsMCwiXFxjb2QiXSxbMywyXSxbMywxXSxbMywwLCIiLDEseyJzdHlsZSI6eyJuYW1lIjoiY29ybmVyIn19XSxbNCwwLCJcXFNxIl0sWzEsNF1d
\[\begin{tikzcd}[row sep=scriptsize]
	\dladj & {\Cat_\tladj} \\
	& \Cat \\
	{\Ar(\Dbl)} & {\Dbl \rlap{ .}}
	\arrow["\cod", from=3-1, to=3-2]
	\arrow[from=1-1, to=3-1]
	\arrow[from=1-1, to=1-2]
	\arrow["\lrcorner"{anchor=center, pos=0.125}, draw=none, from=1-1, to=3-2]
	\arrow["\Sq", from=2-2, to=3-2]
	\arrow[from=1-2, to=2-2]
\end{tikzcd}\] 
The 2-category $\dradj$ is defined dually. We now have an adjunction
\begin{equation}\label{eq:dcatorthogonalcob}
% https://q.uiver.app/#q=WzAsMixbMSwwLCJcXGRyYWRqIFxccmxhcHsgLn0iXSxbMCwwLCIoXFxkbGFkailee1xcdGV4dHtjb29wfX0iXSxbMCwxLCJcXGxkcGYoLSkiLDIseyJvZmZzZXQiOjJ9XSxbMSwwLCIoLSlcXGxkcGYiLDIseyJvZmZzZXQiOjJ9XSxbMiwzLCIiLDAseyJsZXZlbCI6MSwic3R5bGUiOnsibmFtZSI6ImFkanVuY3Rpb24ifX1dXQ==
\begin{tikzcd}[ampersand replacement=\&]
	{(\dladj)^{\text{coop}}} \& {\dradj \rlap{ .}}
	\arrow[""{name=0, anchor=center, inner sep=0}, "{\ldpf(-)}"', shift right=2, from=1-2, to=1-1]
	\arrow[""{name=1, anchor=center, inner sep=0}, "{(-)\ldpf}"', shift right=2, from=1-1, to=1-2]
	\arrow["\dashv"{anchor=center, rotate=-90}, draw=none, from=0, to=1]
\end{tikzcd}
\end{equation}
extending that of (\ref{eq:doublefunctor:galoisconnection}), where $(-)^\text{coop}$ denotes reversal of both the 1- and 2-cells. 

\subsection{Slicing}
We now define a double categorical analog of the slicing operation that we saw in Section \ref{sec:function:slicing}. Given a double functor $\DJ \to \Sq(\C)$ and an object $A \in \C$ we define the slice double functor $\DJ/A \to \Sq(\C/A)$ by the following pullback:
% https://q.uiver.app/#q=WzAsNCxbMCwwLCJcXERKL0EiXSxbMCwxLCJcXFNxKFxcQy9BKSJdLFsxLDAsIlxcREoiXSxbMSwxLCJcXFNxKFxcQykgXFxybGFweyAufSJdLFswLDFdLFsyLDNdLFsxLDMsIlxcU3EoXFxkb20pIl0sWzAsMl0sWzAsMywiIiwyLHsic3R5bGUiOnsibmFtZSI6ImNvcm5lciJ9fV1d
\[\begin{tikzcd}[ampersand replacement=\&]
	{\DJ/A} \& \DJ \\
	{\Sq(\C/A)} \& {\Sq(\C) \rlap{ .}}
	\arrow[from=1-1, to=2-1]
	\arrow[from=1-2, to=2-2]
	\arrow["{\Sq(\dom)}", from=2-1, to=2-2]
	\arrow[from=1-1, to=1-2]
	\arrow["\lrcorner"{anchor=center, pos=0.125}, draw=none, from=1-1, to=2-2]
\end{tikzcd}\]
Pullbacks in $\Dbl$ are taken levelwise, so $(\DJ/A)_1 = \DJ_1/A$. Vertical composition in $\DJ/A$ is essentially just vertical composition in $\DJ$; a pair of vertical morphisms $(\g, b)$ and $(\f, a)$ in $\DJ/A$ is composable if $b = a.f$, and their composition is given by $(\f.\g, a)$. 

Like before, we have:
\begin{Prop}\label{prop:doublecat:slicels}
    Let $(\DJ, \phi, \DK)$ be a lifting structure on $\C$ and $A \in \C$ some object, then there is a lifting structure $(\DJ/A, \phi/A, \DK/A)$ on $\C/A$. 
\end{Prop}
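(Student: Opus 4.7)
The plan is to leverage Proposition~\ref{prop:function:slicels} from the previous section, which already produces a functor-level lifting structure $(\DJ_1/A, \phi/A, \DK_1/A)$ out of $(\DJ_1, \phi, \DK_1)$. The slice double functor $\DJ/A \to \Sq(\C/A)$ is defined as a pullback in $\Dbl$, and since pullbacks in $\Dbl$ are computed levelwise, the arrow category of $\DJ/A$ is precisely $\DJ_1/A$ (and similarly for $\DK$). Hence Proposition~\ref{prop:function:slicels} immediately hands us a candidate $(\DJ_1/A, \DK_1/A)$-lifting operation, defined by the formula
\[
(\phi/A)_{(\f,a),(\g,b)}\bigl((u,c),(v,d)\bigr) \;=\; (\phi_{\f,\g}(u,v),\; b.g).
\]
What remains is to verify that this operation satisfies the additional double-categorical coherence axiom~(\ref{eq:dc:verticalcondition}) with respect to vertical composition in $\DJ/A$ and $\DK/A$.

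First I would spell out vertical composition in $\DJ/A$: a composable pair of vertical morphisms has the form $(\g,b)$ and $(\f,a)$ with $b = a.f$, and composes to $(\f.\g, a)$; the same applies to $\DK/A$. Thus a vertical composition in the slice is determined entirely by a vertical composition in $\DJ$ (resp.\ $\DK$), together with a single extension $a$ (resp.\ $d$) which is propagated downward by the relevant underlying morphisms. This means that a lifting problem against a vertically composed pair in the slice is literally a lifting problem against the corresponding vertically composed pair in the ambient double category, decorated with compatible extensions to $A$.

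Next I would substitute this observation into~(\ref{eq:dc:verticalcondition}). Both sides of the required equation, when their first components are extracted, reduce to the vertical-composition axiom for $\phi$ itself, which holds by hypothesis; and the second components (the extensions to $A$) agree on each side because they are forced by the extension data of the outer $\DK/A$-morphism together with the underlying morphisms of $\DJ$ and $\DK$. So the axiom holds componentwise.

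The main obstacle, such as it is, will simply be organising the bookkeeping of the extensions to $A$ in the vertically composed diagram so as to make clear that both sides of~(\ref{eq:dc:verticalcondition}) agree in their second component; once that is unpacked, the first-component equality is a direct appeal to the corresponding axiom for $\phi$. No genuinely new construction is required beyond what Proposition~\ref{prop:function:slicels} already provides.
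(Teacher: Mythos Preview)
Your proposal is correct and follows exactly the paper's approach: the paper's proof is the single sentence ``Let $\phi/A$ be the lifting operation defined in Proposition~\ref{prop:function:slicels}: it respects vertical composition whenever $\phi$ does,'' and your argument is simply a more detailed unpacking of that sentence.
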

\begin{proof}
	Let $\phi/A$ be the lifting operation defined in Proposition \ref{prop:function:slicels}: it respects vertical composition whenever $\phi$ does.
\end{proof}

We would like to extend this result to:
\begin{Prop}\label{prop:doublecat:slicecls}
    If $(\DJ, \phi, \DK)$ is a closed lifting structure then so is $(\DJ/A, \phi/A, \DK/A)$.
\end{Prop}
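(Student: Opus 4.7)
The plan is to mirror the strategy from the functor case in Section~\ref{sec:function:slicing} one level up. Specifically, I would first establish double-categorical analogues of Propositions~\ref{prop:function:commrl} and~\ref{prop:function:commll}: namely, canonical double functors $(\rho/A)_r : (\DJ\ldpf)/A \to (\DJ/A)\ldpf$ and $(\lambda/A)_l : (\ldpf\DJ)/A \to \ldpf(\DJ/A)$ over $\Sq(\C/A)$, with the first always an isomorphism and the second one under a suitable comprehension-type hypothesis on $\DJ$. Since pullbacks in $\Dbl$ are computed levelwise, the underlying functors of these comparisons coincide with those constructed in Section~\ref{sec:function:slicing}, so their invertibility on vertical morphisms and squares is already available. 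The extra work is to check that the constructions producing the inverses also respect vertical composition, so that they lift from functors to genuine double functors.

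For $(\rho/A)_r$, I would take the construction from Proposition~\ref{prop:function:commrl}, which sends a $(\DJ/A,(f,a))$-lifting operation $\phi_{-(f,a)}$ to the $(\DJ,f)$-lifting operation $\theta_{\j,f}(u,v) = \phi_{(\j,a.v),(f,a)}((u,a.f),(v,a))$, and verify that if $\phi$ respects vertical composition in $\DJ/A$ in the sense of~(\ref{eq:dc:verticalcondition}), then so does $\theta$ in $\DJ$. This is a direct bookkeeping calculation: every extension appearing in the definition of $\theta$ is obtained by precomposing $a$ with the lower leg of the given square, so a vertical composite of lifting problems in $\DJ$ transports faithfully to a composite in $\DJ/A$, and the composition axiom for $\phi$ yields the composition axiom for $\theta$. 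For $(\lambda/A)_l$, the inverse constructed in Proposition~\ref{prop:function:commll} relies on a cartesian lift $v^*\f \to \f$; the correct hypothesis here is that $\DJ$ is a \emph{comprehension double category}, i.e.\ $\cod.\DJ_1$ is a fibration whose cartesian lifts are compatible with vertical composition in $\DJ$. This is automatic for $\DJ = \DK\ldpf$ by extending the argument that $\J\lpf$ is a comprehension category: cartesian lifts there are given by pullback squares, and such squares compose vertically.

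Once $(\rho/A)_r$ and $(\lambda/A)_l$ are promoted to isomorphisms of double functors, the conclusion proceeds exactly as in Proposition~\ref{prop:function:slicecls}. Closure of $(\DJ,\phi,\DK)$ gives $\DJ\cong\ldpf\DK$ and $\DK\cong\DJ\ldpf$, and applying the commutation isomorphisms yields
\[
(\DJ/A)\ldpf \;\cong\; (\DJ\ldpf)/A \;\cong\; \DK/A \qquad\text{and}\qquad \ldpf(\DK/A) \;\cong\; (\ldpf\DK)/A \;\cong\; \DJ/A,
\]
which is closure of $(\DJ/A,\phi/A,\DK/A)$; the fact that the mediating functors are the ones induced by $\phi/A$ is forced by naturality of the adjunction~(\ref{eq:doublefunctor:galoisconnection}). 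The main obstacle is the left-lifting side: one must verify that the cartesian-lift construction producing the inverse of $(\lambda/A)_l$ is coherent with vertical composition of the target $\DJ$-morphism, so that the resulting assignment really defines a double functor rather than merely a functor of underlying arrow categories. This amounts to a uniqueness-of-cartesian-lifts argument inside $\DK\ldpf$, and it is the one place where the double-categorical setting requires genuinely new verification beyond transferring the proofs from Section~\ref{sec:function:slicing}.
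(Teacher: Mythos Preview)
Your proposal is correct and follows essentially the same route as the paper: the paper likewise reduces Proposition~\ref{prop:doublecat:slicecls} to double-categorical analogues of Propositions~\ref{prop:function:commrl} and~\ref{prop:function:commll} (these are Propositions~\ref{prop:doublecat:scandcommrl} and~\ref{prop:doublecat:commll}), introducing the notion of comprehension double category for the left-lifting side and observing (Proposition~\ref{prop:doublecat:comprehensiondouble}) that $\DJ\ldpf$ is always one because its cartesian squares are pullbacks and hence compose vertically. The one place where your write-up could be tightened is the phrase ``$\DJ = \DK\ldpf$'': in the application the comprehension hypothesis is needed on $\DK\cong\DJ\ldpf$, not on $\DK\ldpf$, but your surrounding discussion makes clear you have the right picture.
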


For this we need an analog of the results of Section \ref{sec:function:slicing}: that for $\DJ \to \Sq(\C)$ there are isomorphisms $(\DJ/A)^\dpf \cong (\DJ^\dpf)/A$ and $({^\dpf}\DJ)/A \cong {^\dpf}(\DJ/A)$. As before, only the former of these exists in general, for the latter we need an additional assumption.

\begin{Prop}\label{prop:doublecat:scandcommrl}
    For $\DJ \to \Sq(\C)$, and $A \in \C$, there are double functors \[ (\lambda/A)_l : (\ldpf \DJ)/A \to \ldpf(\DJ/A) \quad \mbox{ and } \quad (\rho / A)_r : (\DJ\ldpf)/A \to (\DJ/A)\ldpf, \] with the latter double functor $(\rho / A)_r$ being an isomorphism. 
\end{Prop}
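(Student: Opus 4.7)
The plan is to mirror the arguments of Propositions~\ref{prop:function:sc}~and~\ref{prop:function:commrl} in the double-categorical setting. The same constructions will go through, and the only new ingredient will be compatibility with vertical composition.

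For the existence of the two double functors, I would apply Proposition~\ref{prop:doublecat:slicels} to the canonical closed lifting structures $(\ldpf \DJ, \lambda, \DJ)$ and $(\DJ, \rho, \DJ \ldpf)$ on $\C$, yielding lifting structures $((\ldpf \DJ)/A, \lambda/A, \DJ/A)$ and $(\DJ/A, \rho/A, (\DJ\ldpf)/A)$ on $\C/A$. The bijections~(\ref{eq:doublefuncJpfK}) then classify these by the desired double functors $(\lambda/A)_l$ and $(\rho/A)_r$.

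For the invertibility of $(\rho/A)_r$, I would adapt the construction from Proposition~\ref{prop:function:commrl}: given a vertical morphism $((f, a), \phi_{-(f,a)})$ in $(\DJ/A)\ldpf$, define a $(\DJ, f)$-lifting operation $\theta_{-f}$ by $\theta_{\j, f}(u,v) := \phi_{(\j, a.v), (f, a)}((u, a.f), (v, a))$. Proposition~\ref{prop:function:commrl} already supplies the inverse of $(\rho/A)_r$ at the level of the underlying object and arrow categories; the task specific to the double-categorical setting is to verify that $\theta$ itself respects vertical composition, so that $((f, a), \theta_{-f})$ really lives in $(\DJ\ldpf)/A$, and that the resulting assignment is a double functor.

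The main obstacle is verifying this vertical composition axiom for $\theta$. Given vertically composable $\j'$ (top) and $\j$ (bottom) in $\DJ$ and a lifting problem $(u,v) : j.j' \to f$, the pair $(\j, a.v)$ and $(\j', a.v.j)$ is vertically composable in $\DJ/A$ with composite $(\j.\j', a.v)$, and $((u, a.f), (v, a))$ is a square from this composite to $(f, a)$. Applying axiom~(\ref{eq:dc:verticalcondition}) for $\phi$ in $\DJ/A$ to this decomposition, with the right-hand side trivially decomposed as $\mathrm{id}_{(\cod f, a)} \cdot (f, a)$, the inner lift against the identity is forced to equal the bottom map, so the formula collapses to the identity $\theta_{\j, f}(\theta_{\j', f}(u, v.j), v) = \theta_{\j.\j', f}(u, v)$ after unfolding the definition of $\theta$. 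This is exactly condition~(\ref{eq:dc:verticalcondition}) for $\theta$ when $f$ is viewed as a double functor $1 \to \Sq(\C)$, completing the verification. Functoriality of the inverse with respect to vertical composition in $(\DJ/A)\ldpf$ then follows from the same axiom applied one level up.
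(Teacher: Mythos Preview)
Your proof is correct and is precisely the elaboration that the paper's terse ``routine'' invites: obtain the two double functors by slicing the canonical lifting structures via Proposition~\ref{prop:doublecat:slicels}, and invert $(\rho/A)_r$ by reusing the formula from Proposition~\ref{prop:function:commrl} while verifying only the new vertical coherence~(\ref{eq:dc:verticalcondition}). Two minor points: the canonical lifting structures are not known to be \emph{closed} in general, though closedness is not used here; and the element you produce should be written $((f, \theta_{-f}), a) \in (\DJ\ldpf)/A$ rather than $((f, a), \theta_{-f})$.
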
 
\begin{proof}
	At this point this should be routine.
\end{proof}

\begin{Defn}
    A double functor $\DJ \to \Sq(\C)$ is a \emph{comprehension double category} if $\DJ_1 \to \Ar(\C)$ is a comprehension category as in Definition \ref{defn:function:comprcat}, and, moreover, the squares in $\DJ$ that are cartesian with respect to the Grothendieck fibration $\cod.\DJ_1$ are closed under vertical composition. That is, if we have two squares in $\DJ$ that can be composed vertically as in
	\begin{displaymath} 
		\begin{tikzcd}
			\bullet \ar[d, "{\g'}"'] \ar[r, "\u"] & \bullet \ar[d, "\g"] \\
			\bullet \ar[d, "{\h'}"'] \ar[r, "\v"] & \bullet \ar[d, "\h"] \\
			\bullet \ar[r, "\w"] & \bullet,
		\end{tikzcd}
	\end{displaymath}
	in which the top square is cartesian over $v$ and the bottom square cartesian over $w$, then the vertically composed square is cartesian over $w$.
\end{Defn}
\begin{Prop}\label{prop:doublecat:comprehensiondouble}
    $\DJ\ldpf \to \Sq(\C)$ is a comprehension double category for any $\DJ \to \Sq(\C)$.
\end{Prop}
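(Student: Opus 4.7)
The plan is to verify the two conditions of a comprehension double category in turn, building on the functor-level fact that $\J\lpf \to \Ar(\C)$ is a comprehension category for any $\J \to \Ar(\C)$. For the first condition, given $(g, \phi_{g-}) \in \DJ\ldpf$ with $\cod g = A$ and a morphism $v : A' \to A$ in $\C$, form the pullback square $(x, v) : v^*g \to g$ in $\C$ and equip $v^*g$ with the lifting operation $\psi_{v^*g-}$ produced by the construction of Lemma~\ref{lem:sc:pb}. By the functor-level result, $(x, v) : (v^*g, \psi_{v^*g-}) \to (g, \phi_{g-})$ is a cartesian morphism in $\DJ_1\ldpf$. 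The only content not immediately supplied by that result is that $\psi_{v^*g-}$ satisfies the vertical compatibility axiom~(\ref{eq:dc:verticalcondition}), so that the pair $(v^*g, \psi_{v^*g-})$ lies in the subcategory $\DJ\ldpf$.

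This verification is direct. For vertically composable $\f, \f'$ in $\DJ$ and a lifting problem $(a, b) : f.f' \to v^*g$, any two candidate diagonal lifts $\cod f \to \dom(v^*g)$ agree as soon as their composites with both pullback projections of $v^*g$ agree. The composites along $v^*g$ are both equal to $b$, since both sides of~(\ref{eq:dc:verticalcondition}) for $\psi$ are by construction lifts of $(a, b)$; the composites along $x$ are, again by construction of $\psi$, the values of $\phi_{g-}$ applied to the corresponding lifting problems against the outer rectangle $(x.a, v.b) : f.f' \to g$. Since $\phi_{g-}$ itself satisfies~(\ref{eq:dc:verticalcondition}), these latter values coincide, and so by uniqueness in the pullback the two sides of the equation for $\psi$ coincide as well.

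For the second condition, suppose we have two vertically composable cartesian squares $(u, v) : (g', \phi_{g'-}) \to (g, \phi_{g-})$ over $v$ and $(v, w) : (h', \phi_{h'-}) \to (h, \phi_{h-})$ over $w$ in $\DJ\ldpf$. Their underlying squares in $\C$ are pullbacks, so by pullback pasting the composite $(u, w) : h'.g' \to h.g$ is again a pullback in $\C$, which makes it cartesian in $\DJ_1\ldpf$. What remains is to identify the two a priori different lifting operations on $h'.g'$: the one obtained by vertical composition in $\DJ\ldpf$ of $\phi_{g'-}$ with $\phi_{h'-}$, and the one obtained by pulling back $\phi_{h.g-}$ along $w$.

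The main obstacle is this final coherence between composition and pullback of lifting operations. I expect it to be, once again, a uniqueness argument in the pullback $h'.g' = w^*(h.g)$: both candidate operations are determined by their projection via $u$ onto $h.g$, and in both cases this projection is the lift produced by $\phi_{h.g-}$, computed either directly as a pullback or as the composition of $\phi_{g-}$ with $\phi_{h-}$ via the vertical compatibility~(\ref{eq:dc:verticalcondition}) satisfied by $\phi_{h.g-}$. The two descriptions therefore agree, completing the proof.
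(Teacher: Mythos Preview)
Your argument is correct, but the paper's proof is a single sentence: a square in $\DJ\ldpf$ is cartesian precisely when its underlying square in $\C$ is a pullback. Both conditions of a comprehension double category then follow immediately—the first from existence of pullbacks, the second from pullback pasting.

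Your approach unpacks this characterisation by hand. The work in your final paragraph, identifying the composite lifting operation on $h'.g'$ with the one pulled back from $\phi_{h.g-}$, is in fact exactly the content of the paper's observation: over a pullback square there is \emph{at most one} lifting operation on the domain making the square a morphism of $\DJ\ldpf$, since its value is pinned down by the two pullback projections. With this uniqueness in hand, no separate identification is needed—the composite operation and the pulled-back one both make $(u,w)$ a morphism of $\DJ\ldpf$ over the same pullback, hence they coincide, and the composite square is cartesian simply because it lies over a pullback. Recognising this collapses your second half to one line and also retroactively justifies your bare assertion that the given cartesian squares have underlying pullbacks (any cartesian morphism is vertically isomorphic to your constructed lift, hence itself lies over a pullback).
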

\begin{proof}
    This follows from the fact that a square in $\DJ\ldpf$ is cartesian precisely when its image under $(\DJ\ldpf)_1 \to \Ar(\C)$ is a pullback. 
\end{proof}

\begin{Prop}\label{prop:doublecat:commll}
    Let $\DJ \to \Sq(\C)$ be a comprehension double category, and $A \in \C$ be some object; then the double functor $(\lambda / A)_l$ of Proposition~\ref{prop:doublecat:scandcommrl} is an isomorphism.
\end{Prop}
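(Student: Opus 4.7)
The plan is to follow the same strategy as for Proposition~\ref{prop:function:commll}, but additionally verify that the constructed inverse is a morphism in $\Dbl$, i.e.\ that it preserves vertical composition; this is where the comprehension double category assumption will be used.

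More precisely, given a vertical morphism $((g,a), \phi_{(g,a)-})$ of $\ldpf(\DJ/A)$, Proposition~\ref{prop:function:commll} already supplies, at the level of the underlying functor $(\ldpf(\DJ/A))_1 \to \Ar(\C/A)$, a $(\DJ_1, g)$-lifting operation $\psi_{g-}$ defined by
\[ \psi_{g,\f}(u,v) = v^{+}.\,\phi_{(g,a),(v^{*}\f, a)}(\alpha,1),\]
where $v^{*}\f \to \f$ is a chosen cartesian lift of $v$ in $\DJ$ and $v^{+} : v^{*}f \to f$ is its image pullback in $\C$. The same proof shows this is independent of the chosen cartesian lift and is natural in $\f$, and hence defines a functor of underlying arrow categories $(\ldpf(\DJ/A))_1 \to ((\ldpf \DJ)/A)_1$. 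I would then observe that the functor $(\lambda/A)_l$ agrees with the inverse of this on both objects and morphisms by direct inspection of the constructions of Propositions~\ref{prop:function:slicels} and~\ref{prop:function:commll}, so it remains only to upgrade this functorial inverse to a double functor.

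The main obstacle is therefore the vertical composition condition~(\ref{eq:dc:verticalcondition}) for $\psi_{g-}$: given two vertically composable morphisms $\f',\f$ in $\DJ$ and a square $(u,v) : g \to f.f'$, I must check
\[ \psi_{g, \f.\f'}(u,v) = \psi_{g,\f}\!\bigl(\psi_{g,\f'}(u, v.f), v\bigr). \]
Here is where the comprehension double category hypothesis enters. Using cartesian lifts $v^{*}(\f.\f') \to \f.\f'$ to compute the left-hand side, and cartesian lifts $(v.f')^{*}\f \to \f$ and $v^{*}\f' \to \f'$ to compute the right-hand side, the closure of cartesian squares in $\DJ$ under vertical composition (and the universal property of cartesian lifts, which makes the choice immaterial up to canonical iso) yields a cartesian square $v^{*}\f'.(v.f')^{*}\f \to \f.\f'$ over $v$, whose domain may be taken for $v^{*}(\f.\f')$. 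After this identification, the lifting condition~(\ref{eq:dc:verticalcondition}) for $\phi_{(g,a)-}$ applied to the composable pair $((v.f')^{*}\f, a.f')$ and $(v^{*}\f', a)$ in $\DJ/A$ translates directly into the required identity.

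Finally, the fact that $\psi_{g-}$ is then natural in $g$ (i.e.\ that $(\lambda/A)_l^{-1}$ is a morphism of $\DSq(\C)$-slices, not merely of $\Ar(\C)$-slices) is the straightforward extension of the $\Ar$-level naturality already checked in Proposition~\ref{prop:function:commll}, plus verifying that the lifting operation so obtained does not impose any structure on objects beyond that already carried by $\ldpf \DJ$, which is immediate from the pullback definition of $(\ldpf \DJ)/A$.
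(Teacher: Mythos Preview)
Your overall approach is correct and is exactly the paper's: reuse the formula from Proposition~\ref{prop:function:commll} and then check the vertical condition for $\psi_{g-}$, invoking the closure of cartesian squares under vertical composition to identify a cartesian lift of the composite $\DJ$-map with a vertical composite of cartesian lifts.

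That said, your displayed vertical-composition identity is garbled and does not type-check. With $\f'$ on top and $\f$ on bottom (so the composite is $f.f'$) and $(u,v):g\to f.f'$, the correct condition is
\[
\psi_{g,\f.\f'}(u,v)\;=\;\psi_{g,\f'}\bigl(u,\ \psi_{g,\f}(f'.u,\,v)\bigr),
\]
not $\psi_{g,\f}\bigl(\psi_{g,\f'}(u,v.f),v\bigr)$. Correspondingly, the cartesian lifts you need are $v^{*}\f\to\f$ over $v$ (bottom) and $(v^{+})^{*}\f'\to\f'$ over the top $v^{+}$ of the first square (top); your ``$(v.f')^{*}\f$'' and ``$v^{*}\f'$'' have the roles swapped and the base maps wrong. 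There is also one step you skip: the right-hand side of the identity above is computed using the cartesian lift of $\f'$ over the \emph{intermediate lift} $\psi_{g,\f}(f'.u,v)$, not over $v^{+}$. The paper closes this gap by using cartesianness of $(v^{+})^{*}\f'\to\f'$ once more to produce a $\DJ$-morphism from the former lift to the latter, and then invoking horizontal naturality of $\phi_{(g,a)-}$. Once these orientation issues are fixed, your sketch goes through verbatim.
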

\begin{proof}
	We show that the lifting operation defined in (\ref{eq:liftop}) satisfies the vertical condition. To this end we consider a vertical composition $\h.\g$ in $\DJ$ and a square $(u, v) : f \to h.g$.:
% https://q.uiver.app/?q=WzAsNSxbMiwwLCJcXGJ1bGxldCJdLFsyLDEsIlxcYnVsbGV0Il0sWzIsMiwiXFxidWxsZXQgXFxybGFweyAufSJdLFswLDAsIlxcYnVsbGV0Il0sWzAsMiwiXFxidWxsZXQiXSxbMCwxLCJnIl0sWzEsMiwiaCJdLFszLDQsImYiLDJdLFszLDAsInUiXSxbNCwyLCJ2IiwyXSxbNCwxLCJcXHBzaShoKSIsMix7InN0eWxlIjp7ImJvZHkiOnsibmFtZSI6ImRvdHRlZCJ9fX1dLFs0LDAsIlxccHNpKGcpIiwyLHsibGFiZWxfcG9zaXRpb24iOjYwLCJvZmZzZXQiOjEsInN0eWxlIjp7ImJvZHkiOnsibmFtZSI6ImRvdHRlZCJ9fX1dLFs0LDAsIlxccHNpKGhnKSIsMCx7ImxhYmVsX3Bvc2l0aW9uIjo2MCwib2Zmc2V0IjotMSwic3R5bGUiOnsiYm9keSI6eyJuYW1lIjoiZG90dGVkIn19fV1d
\[\begin{tikzcd}[column sep=large,row sep=huge]
	\bullet && \bullet \\
	&& \bullet \\
	\bullet && {\bullet \rlap{ .}}
	\arrow["g", from=1-3, to=2-3]
	\arrow["h", from=2-3, to=3-3]
	\arrow["f"', from=1-1, to=3-1]
	\arrow["u", from=1-1, to=1-3]
	\arrow["v"', from=3-1, to=3-3]
	\arrow["{\psi(h)}"', dotted, from=3-1, to=2-3]
	\arrow["{\psi(g)}"'{pos=0.6}, shift right=1, dotted, from=3-1, to=1-3]
	\arrow["{\psi(h.g)}"{pos=0.6}, shift left=1, dotted, from=3-1, to=1-3]
\end{tikzcd}\]
We use the abbreviations $\psi(h) = \psi_{f, \h}(g.u, v)$, $\psi(g) = \psi_{f, \g}(u, \psi(h))$, and $\psi(h.g) = \psi_{f, \h.\g}(u, v)$, so we should show $\psi(h.g) = \psi(g)$. The diagrams below respectively show how these are computed:
\[\begin{tikzcd}
	\bullet & \bullet & \bullet & \bullet & \bullet & \bullet & \bullet & \bullet & \bullet \\
	\bullet & \bullet & {\bullet \rlap{ ,}} & \bullet & \bullet & {\bullet \rlap{ ,}} & \bullet & \bullet & {\bullet \rlap{ .}}
	\arrow["f"', from=1-1, to=2-1]
	\arrow["{v^*h}", from=1-2, to=2-2]
	\arrow["{v^+}", from=1-2, to=1-3]
	\arrow["h", from=1-3, to=2-3]
	\arrow["v"', from=2-2, to=2-3]
	\arrow[no head, from=2-1, to=2-2]
	\arrow["\alpha", from=1-1, to=1-2]
	\arrow["{\phi(h)}"{description}, dotted, from=2-1, to=1-2]
	\arrow["f"', from=1-4, to=2-4]
	\arrow["{\psi(h)^*g}", from=1-5, to=2-5]
	\arrow["g", from=1-6, to=2-6]
	\arrow["f"', from=1-7, to=2-7]
	\arrow["{v^*(h.g)}", from=1-8, to=2-8]
	\arrow["{h.g}", from=1-9, to=2-9]
	\arrow["\beta", from=1-4, to=1-5]
	\arrow["{\psi(h)^+}", from=1-5, to=1-6]
	\arrow[no head, from=2-4, to=2-5]
	\arrow["\gamma", from=1-7, to=1-8]
	\arrow["{v^{++}}", from=1-8, to=1-9]
	\arrow[no head, from=2-7, to=2-8]
	\arrow["v"', from=2-8, to=2-9]
	\arrow["{\psi(h)}"', from=2-5, to=2-6]
	\arrow["{\phi(g)}"{description}, dotted, from=2-4, to=1-5]
	\arrow["{\phi(h.g)}"{description}, dotted, from=2-7, to=1-8]
\end{tikzcd}\]
So for instance $\psi(h) = v^+.\phi(h)$, where $\phi(h)$ is an abbreviation for $\phi_{(f,a),(v^*\h,a)}(\alpha, 1)$, which is the lift obtained from the $\DJ/A$ lifting operation of $(f, a)$ applied to the $\DJ/A$ map $(v^*\h, a)$. 

Our assumption that cartesian square are closed under vertical composition implies that the cartesian square $v^*(\h.\g) \to \h.\g$ can be assumed to be the vertical composition of two squares
\begin{displaymath} 
	\begin{tikzcd}
		\bullet \ar[d, "{v^*\g}"] \ar[r, "\v^{++}"] \ar[dd, bend right, shift right, "{v^*(\h.\g)}"'] & \bullet \ar[d, "\g"] \ar[dd, , shift left, bend left, "{\h.\g}"]\\
		\bullet \ar[d, "{v^*\h}"] \ar[r, "\v^{+}"'] & \bullet \ar[d, "\h"] \\
		\bullet \ar[r, "\v"'] & \bullet,
	\end{tikzcd}
\end{displaymath}
with $\v^{++}\v^+ : v^*\g \to \g$ a cartesian morphism in $\DJ$ over $v^+$. Since $v^*\g$ is a vertical map in $\DJ$, we get a lift $\phi'(g) = \phi_{(f,a), (v^*\g,a.v^*h)}(\gamma, \phi(h))$, and by the vertical condition $\phi(h.g) = \phi'(g)$. 

Furthermore, because $\v^{++}\v^+$ is cartesian, there is a square $\i\j : \psi(h)^*\g \to v^*\g$ in $\DJ$:
% https://q.uiver.app/?q=WzAsNixbMCwwLCJcXGJ1bGxldCJdLFswLDEsIlxcYnVsbGV0Il0sWzEsMCwiXFxidWxsZXQiXSxbMSwxLCJcXGJ1bGxldCJdLFsyLDAsIlxcYnVsbGV0Il0sWzIsMSwiXFxidWxsZXQgXFxybGFweyAufSJdLFswLDEsIlxccHNpKGgpXipnIiwyXSxbMiwzLCJ2XipnIl0sWzQsNSwiZyJdLFswLDIsIlxcZGVsdGEiXSxbMSwzLCJcXHBoaShoKSIsMl0sWzIsNCwidl57Kyt9Il0sWzMsNSwidl4rIiwyXV0=
\[\begin{tikzcd}
	\bullet & \bullet & \bullet \\
	\bullet & \bullet & \bullet 
	\arrow["{\psi(h)^*{\g}}"', from=1-1, to=2-1]
	\arrow["{v^*{\g}}", from=1-2, to=2-2]
	\arrow["{\g}", from=1-3, to=2-3]
	\arrow["{\i}", from=1-1, to=1-2]
	\arrow["{\j}"', from=2-1, to=2-2]
	\arrow["{\v^{++}}", from=1-2, to=1-3]
	\arrow["{\v^+}"', from=2-2, to=2-3]
\end{tikzcd}\]
with $j = \phi(h)$. Since $\gamma = i.\beta$ we get that $\phi(h.g) = \phi'(g) = i.\phi(g)$, and so \[\psi(h.g) = v^{++}.\phi(h.g) = v^{++}.i.\phi(g) = \psi(h)^+.\phi(g) = \psi(g). \qedhere\] 
\end{proof}

\subsection{Frobenius equivalence} The following is now a direct analogue of the results from Subsection \ref{sec:function:frobenius}.

\begin{Defn}\label{def:dfunctor:frobstruc}
    Let $f: A \to B$ be a map in $\C$, and $(\DJ, \DK)$ a pair of double functors over $\Sq(\C)$. A \emph{Frobenius structure} $\f^*$ on $f$ for $(\DJ, \DK)$ is a filler as on the left of: 
% https://q.uiver.app/#q=WzAsOCxbMCwxLCJcXFNxKFxcQy9CKSJdLFsxLDEsIlxcU3EoXFxDL0EpIFxccmxhcHsgLH0iXSxbMCwwLCJcXERKL0IiXSxbMSwwLCJcXERKL0EiXSxbMiwwLCJcXERLL0EiXSxbMywwLCJcXERLL0IiXSxbMiwxLCJcXFNxKFxcQy9BKSJdLFszLDEsIlxcU3EoXFxDL0IpIFxccmxhcHsgLn0iXSxbMCwxLCJcXFNxKGZeKikiXSxbMiwwXSxbMywxXSxbMiwzLCJcXGZeKiIsMCx7InN0eWxlIjp7ImJvZHkiOnsibmFtZSI6ImRvdHRlZCJ9fX1dLFs2LDcsIlxcU3EoZl8qKSJdLFs0LDZdLFs1LDddLFs0LDUsIlxcZl8qIiwwLHsic3R5bGUiOnsiYm9keSI6eyJuYW1lIjoiZG90dGVkIn19fV1d
\[\begin{tikzcd}[ampersand replacement=\&]
	{\DJ/B} \& {\DJ/A} \& {\DK/A} \& {\DK/B} \\
	{\Sq(\C/B)} \& {\Sq(\C/A) \rlap{ ,}} \& {\Sq(\C/A)} \& {\Sq(\C/B) \rlap{ .}}
	\arrow["{\Sq(f^*)}", from=2-1, to=2-2]
	\arrow[from=1-1, to=2-1]
	\arrow[from=1-2, to=2-2]
	\arrow["{\f^*}", dotted, from=1-1, to=1-2]
	\arrow["{\Sq(f_*)}", from=2-3, to=2-4]
	\arrow[from=1-3, to=2-3]
	\arrow[from=1-4, to=2-4]
	\arrow["{\f_*}", dotted, from=1-3, to=1-4]
\end{tikzcd}\]
Similarly, a \emph{pushforward structure} $\f_*$ on $f$ is a filler as on the right above. 
\end{Defn}

\begin{Thm}\label{thm:dffequiv}
	For a map $f: A \to B$ in \ct{C}, and a closed lifting structure $(\DJ, \phi, \DK)$, there is a bijection between Frobenius and pushforward structures on $f$ for $(\DJ, \DK)$.
\end{Thm}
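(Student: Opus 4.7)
The plan is to mirror the proof of the Frobenius equivalence for categories of maps (the theorem at the end of Section~\ref{sec:function:frobenius}), simply replacing $\Ar(-)$ with $\Sq(-)$ and $\cat{Cat}$ with $\Dbl$ throughout. All the double-categorical ingredients we need have just been set up: Proposition~\ref{prop:doublecat:slicecls} says that slicing preserves closed lifting structures, and the adjunction~(\ref{eq:dcatorthogonalcob}) plays the role of base change.

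More concretely, I would first apply Proposition~\ref{prop:doublecat:slicecls} to the closed lifting structure $(\DJ, \phi, \DK)$, once over $A$ and once over $B$, yielding closed lifting structures $(\DJ/A, \phi/A, \DK/A)$ on $\C/A$ and $(\DJ/B, \phi/B, \DK/B)$ on $\C/B$. Closedness is precisely what I need, since it provides the identifications $\ldpf(\DK/A) \cong \DJ/A$ in $\Dbl/\Sq(\C/A)$ and $(\DJ/B)\ldpf \cong \DK/B$ in $\Dbl/\Sq(\C/B)$. Then, unfolding Definition~\ref{def:dfunctor:frobstruc}, a Frobenius structure on $f$ for $(\DJ, \DK)$ is exactly a morphism $\DJ/B \to \DJ/A$ in $\dladj$ lying over the pullback adjunction $f^* \dashv f_*$, and a pushforward structure is exactly a morphism $\DK/A \to \DK/B$ in $\dradj$ over the same adjunction.

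Next I would apply the base change adjunction~(\ref{eq:dcatorthogonalcob}) to obtain the string of bijections
\begin{align*}
\dladj(\DJ/B,\, \DJ/A) &\cong \dladj(\DJ/B,\, \ldpf(\DK/A)) \\
&\cong \dradj(\DK/A,\, (\DJ/B)\ldpf) \\
&\cong \dradj(\DK/A,\, \DK/B),
\end{align*}
where the first and third isomorphisms come from the closed lifting structures on the two slices and the middle one is the base-change adjunction applied to $f^* \dashv f_*$. Composing these gives the required bijection between Frobenius and pushforward structures.

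The potentially tricky part is not the chain itself, which is formal once the set-up is in place, but checking that the morphisms produced by the base-change adjunction really are morphisms \emph{over} the adjunction $f^* \dashv f_*$ in the appropriate 2-category $\dladj$ or $\dradj$ — i.e.\ that the filler triangles in Definition~\ref{def:dfunctor:frobstruc} are exactly what~(\ref{eq:dcatorthogonalcob}) transports back and forth. But this is built into the construction of $\dladj$ and $\dradj$ as pullbacks along $\cod\colon \Ar(\Dbl) \to \Dbl$, so no further work is needed beyond unwinding definitions. Thus the theorem reduces to a direct application of Propositions~\ref{prop:doublecat:slicecls} and the base change adjunction~(\ref{eq:dcatorthogonalcob}), exactly paralleling the proof in the categories-of-maps setting.
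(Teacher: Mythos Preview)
Your proposal is correct and is exactly the argument the paper gives: apply Proposition~\ref{prop:doublecat:slicecls} to get closed lifting structures on the slices, then use the base-change adjunction~(\ref{eq:dcatorthogonalcob}) with respect to $f^* \dashv f_*$ to produce the same three-step chain of bijections. Your write-up is in fact more detailed than the paper's, which simply records the chain and cites ``the previous results''.
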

\begin{proof}
	Combining the previous results we get
	\begin{align*}
	\dladj(\DJ/B, \DJ/A) &\cong \dladj(\DJ/B, (\DK/A)^\dpf) \\
	&\cong \dradj(\DK/A, {^\dpf}(\DJ/B)) \\
	&\cong \dradj(\DK/A, \DK/B), 
	\end{align*}
	where we consider morphisms in $\dladj$ and $\dradj$ w.r.t.\ $f^* \dashv f_*$.
\end{proof}

\begin{Defn}\label{def:dfrobstruc}
	A \emph{Frobenius structure} for a pair $(\DJ, \DK)$ is an assignment of a Frobenius structure $\f^*$ to every vertical morphism $\f$ of $\DK$.
\end{Defn}

\begin{Rk}
	Let us interpret the meaning of a Frobenius structure on $f : A \to B$ when the double categories are given by the pair $(\DLcoalg, \DRalg)$ underlying an \awfs $(\L, \R)$. The double category $\DCoalg{\L}/A$ is then formed as on the left below:
% https://q.uiver.app/#q=WzAsOCxbMCwwLCJcXERDb2FsZ3tcXEx9L0EiXSxbMSwwLCJcXERDb2FsZ3tcXEx9Il0sWzEsMSwiXFxTcShcXEMpIFxccmxhcHsgLH0iXSxbMCwxLCJcXFNxKFxcQy9BKSJdLFsyLDAsIlxcRENvYWxne1xcTH0vQiJdLFszLDAsIlxcRENvYWxne1xcTH0vQSJdLFsyLDEsIlxcU3EoXFxDL0IpIl0sWzMsMSwiXFxTcShcXEMvQSkgXFxybGFweyAufSJdLFswLDNdLFszLDIsIlxcU3EoXFxkb20pIl0sWzAsMV0sWzEsMl0sWzAsMiwiIiwxLHsic3R5bGUiOnsibmFtZSI6ImNvcm5lciJ9fV0sWzYsNywiXFxTcShmXiopIl0sWzUsN10sWzQsNl0sWzQsNSwiXFxmXioiXV0=
\[\begin{tikzcd}[ampersand replacement=\&]
	{\DCoalg{\L}/A} \& {\DCoalg{\L}} \& {\DCoalg{\L}/B} \& {\DCoalg{\L}/A} \\
	{\Sq(\C/A)} \& {\Sq(\C) \rlap{ ,}} \& {\Sq(\C/B)} \& {\Sq(\C/A) \rlap{ .}}
	\arrow[from=1-1, to=2-1]
	\arrow["{\Sq(\dom)}", from=2-1, to=2-2]
	\arrow[from=1-1, to=1-2]
	\arrow[from=1-2, to=2-2]
	\arrow["\lrcorner"{anchor=center, pos=0.125}, draw=none, from=1-1, to=2-2]
	\arrow["{\Sq(f^*)}", from=2-3, to=2-4]
	\arrow[from=1-4, to=2-4]
	\arrow[from=1-3, to=2-3]
	\arrow["{\f^*}", from=1-3, to=1-4]
\end{tikzcd}\]
	This situation is discussed more generally in \cite[Section 4.5]{bourke2016AlgebraicWeak}, and it is shown there that the pullback along $\Sq(\dom)$ yields a \emph{slice} \awfs $(\L/A, \R/A)$ on $\C/A$ such that $(\mathsf{L}/A)\text-\mathbb C\cat{oalg} \cong \DLcoalg/A$. This means that the square of the Frobenius structure, pictured on the right above, is in the image of the semantics functor $\DCoalg{(-)}$. Therefore, as this functor is full \cite[Proposition 2]{bourke2016AlgebraicWeak}, a Frobenius structure on $f$ can be understood as making $f^*$ an oplax morphism of slice \awfs $(\L/B, \R/B) \to (\L/A, \R/A)$. Dually, a pushforward structure $\f_*$ on $f$ corresponds to a lax morphism of slice \awfs.
\end{Rk}

\subsection{An example of split reflections and split opfibrations} As an example of a pair with a Frobenius structure, we have the following adaptation of Proposition~\ref{prop:functor:frobstruc}.

\begin{Prop}\label{prop:dfrobstruc}
	The pair $(\DSplRef, \DSplOpFib)$ admits a Frobenius structure. 
\end{Prop}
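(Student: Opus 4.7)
The plan is to upgrade the Frobenius structure of Proposition~\ref{prop:functor:frobstruc} to the double-categorical setting. For each split opfibration $P : \A \to \B$, that proposition already provides a functor $\P^* : \SplRef/\B \to \SplRef/\A$ lifting $\Ar(P^*)$; since squares of $\DSplRef$ are morphisms of split reflections and $(\DSplRef/\B)_1 = \SplRef/\B$, both the action of $\P^*$ on squares and the preservation of their horizontal composition are already in hand. What remains is to verify that $\P^*$ preserves the vertical structure of $\DSplRef$, namely vertical composition and vertical identities of split reflections.

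For vertical composition, I would take two composable vertical morphisms in $\DSplRef/\B$: a split reflection $(L, R, \eta)$ with $R : \D \to \C$ and extension $U : \C \to \B$, and a split reflection $(L', R', \eta')$ with $R' : \E \to \D$ and extension $U.R : \D \to \B$. Their vertical composite is $L'L \dashv RR'$ with unit $\eta'' = (R\eta'L).\eta$. Writing $G$, $G'$ for the left adjoints produced by $\P^*$ on the two factors and $G''$ for the one produced on the composite, and setting $U' = U.R$, I would spell out
\[G'G(a,c) = \bigl((U'\eta'_{Lc})_!\,(U\eta_c)_!\,a,\; L'Lc\bigr), \qquad G''(a,c) = \bigl((U\eta''_c)_!\,a,\; L'Lc\bigr).\]
The identification $G'G = G''$ then reduces to
\[(U'\eta'_{Lc})_!\,(U\eta_c)_!\,a \;=\; (U'\eta'_{Lc}\,.\,U\eta_c)_!\,a \;=\; (U\eta''_c)_!\,a,\]
where the first equality is strict functoriality of the splitting of $P$ and the second uses $U\eta''_c = UR\eta'_{Lc}\,.\,U\eta_c = U'\eta'_{Lc}\,.\,U\eta_c$. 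The same calculation carried out on the chosen cocartesian lifts themselves, using the pointwise identity $\underline{\sigma}\,.\,\underline{\tau} = \underline{\sigma.\tau}$, identifies the pasted unit $P^*R(\theta'_G)\,.\,\theta$ of the composite of pulled-back reflections with the direct unit $\theta''$ computed by $\P^*$ on the composite. Preservation of vertical composition of squares (morphisms of split reflections) then follows by applying the same functoriality in both the top and the bottom rows of the stacked square.

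Preservation of vertical identities is the observation recorded in equation~(\ref{eq:star}): an identity split reflection has trivial unit, and the cocartesian lift of an identity is an identity, so $\P^*$ of an identity split reflection is again an identity. The only real obstacle is the bookkeeping around the identifications $U' = U.R$, $\eta'' = (R\eta'L).\eta$, and the strict functoriality of the splitting of $P$; once these are in place the result is a routine extension of Proposition~\ref{prop:functor:frobstruc}, requiring no genuinely new construction.
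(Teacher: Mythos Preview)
Your proposal is correct and follows essentially the same route as the paper: both reduce the double-categorical Frobenius structure to checking that the functor $\P^*$ of Proposition~\ref{prop:functor:frobstruc} preserves vertical composition of split reflections, and both carry this out via the key identity $(UR\eta'_{Lc})_!\,(U\eta_c)_!\,a = (U\eta''_c)_!\,a$ coming from the strict functoriality of the splitting of $P$, together with the corresponding equality of units. Your explicit mention of vertical identities and of the (automatic, by concreteness) preservation of vertical composition of squares are minor additions the paper leaves implicit.
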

\begin{proof}
	We extend the Frobenius structure for a split opfibration $P : \A \to \B$ of Proposition~\ref{prop:functor:frobstruc} to one for this pair by showing $\P^*$ preserves composition of split reflections, yielding:
	% https://q.uiver.app/#q=WzAsNCxbMCwxLCJcXFNxKFxcR3BkL1xcQikiXSxbMCwwLCJcXERMYWxpL1xcQiJdLFsxLDAsIlxcRExhbGkvXFxBIl0sWzEsMSwiXFxTcShcXEdwZC9cXEEpIFxccmxhcHsgLn0iXSxbMSwwXSxbMSwyLCJcXG1hdGhiZntQfV4qIl0sWzIsM10sWzAsMywiXFxTcShQXiopIl1d
\[\begin{tikzcd}
	{\DSplRef/\B} & {\DSplRef/\A} \\
	{\Sq(\Cat/\B)} & {\Sq(\Cat/\A) \rlap{ .}}
	\arrow[from=1-1, to=2-1]
	\arrow["{\mathbf{P}^*}", from=1-1, to=1-2]
	\arrow[from=1-2, to=2-2]
	\arrow["{\Sq(P^*)}", from=2-1, to=2-2]
\end{tikzcd}\]
To verify this, we consider the situation sketched below: 
\[\begin{tikzcd}
	{\A\times_\B\E} & \E & {\A\times_\B\E} \\
	{\A\times_\B\D} & \D \\
	{\A\times_\B\C} & \C & {\A\times_\B\C} \\
	\A & \B & {\A \rlap{ .}}
	\arrow["P"', from=4-1, to=4-2]
	\arrow["P", from=4-3, to=4-2]
	\arrow["{\pi_\C}"', from=3-3, to=3-2]
	\arrow["{\pi_\D}", from=2-1, to=2-2]
	\arrow["{\pi_\E}", from=1-1, to=1-2]
	\arrow["{\pi_\E}"', from=1-3, to=1-2]
	\arrow["U", from=3-2, to=4-2]
	\arrow["{P^*U}", shift left, from=3-1, to=4-1]
	\arrow["F"', shift right, from=3-1, to=4-1]
	\arrow["F'"', curve={height=18pt}, shift right, from=2-1, to=4-1]
	\arrow["{P^*U}", shift left, from=3-3, to=4-3]
	\arrow["{F''}"', shift right, from=3-3, to=4-3]
	\arrow["{P^*(R.R')}", shift left, from=1-3, to=3-3]
	\arrow["{G''}", shift left, from=3-3, to=1-3]
	\arrow["{P^*R}", shift left, from=2-1, to=3-1]
	\arrow["G", shift left, from=3-1, to=2-1]
	\arrow["{P^*R'}", shift left, from=1-1, to=2-1]
	\arrow["{G'}", shift left, from=2-1, to=1-1]
	\arrow["R", shift left, from=2-2, to=3-2]
	\arrow["{\pi_\C}", from=3-1, to=3-2]
	\arrow["L", shift left, from=3-2, to=2-2]
	\arrow["{R'}", shift left, from=1-2, to=2-2]
	\arrow["{L'}", shift left, from=2-2, to=1-2]
\end{tikzcd}\]
On the left we have the composition of the pullbacks $G'.G \dashv P^*R.P^*R'$, and on the right the pullback of the composition $G'' \dashv P^*(R.R')$. We should show that $G'' = G'.G$, and that the unit $\theta''$ of $G'' \dashv P^*(R'R)$ is equal to $(P^*R \circ \theta' \circ G).\theta$. 

These equalities hold precisely because $P$ is split, since this tells us that for a pair $(a, c) \in \A \times_\B \C$ we have $\smash{\underline{UR\eta'_{Lc}.U\eta_C}} = \smash{\underline{UR\eta'_{Lc}}}.\underline{U\eta_c}$:
\begin{equation}\label{eq:splitopfib}
	\begin{tikzcd}
		a & {(U\eta_c)_!c} & {(UR\eta'_{Lc})_!(U\eta_c)_!a} \\
		Uc & URLc & {URR'L'Lc \rlap{ .}}
		\arrow["{U\eta_c}"', from=2-1, to=2-2]
		\arrow[dashed, no head, from=1-1, to=2-1]
		\arrow["\underline{U\eta_c}"', from=1-1, to=1-2]
		\arrow[dashed, no head, from=1-2, to=2-2]
		\arrow["{UR\eta'_{Lc}}"', from=2-2, to=2-3]
		\arrow[dashed, no head, from=1-3, to=2-3]
		\arrow["\underline{UR\eta'_{Lc}}"', from=1-2, to=1-3]
		\arrow["\underline{UR\eta'_{Lc}.U\eta_C}", curve={height=-18pt}, from=1-1, to=1-3]
	\end{tikzcd}
\end{equation}
To begin we note that $F'(a, d) = (UR\eta'_d)_!a$, and so:
\begin{align*}
	F''(a, c) &= (U((R\circ\eta'\circ L).\eta)_c)_!a \\
	&= (UR\eta'_{Lc}.U\eta_c)_!a \\
	&= (UR\eta'_{Lc})_!(U\eta_c)_!a \\
	&= F'((U\eta_c)_!a, Lc) \\
	&= F'G(a, c);
\end{align*} 
which is to say $F'.G = F''$. Therefore, \[G'.G = (F' \times_\B L'.\pi_\D).G = F'.G \times_\B L'.\pi_\D.G = F'' \times_\B L'.L.\pi_\C = G''.\] The other equality $\theta'' = (P^*R\circ\theta'\circ G).\theta$ also follows readily from~(\ref{eq:splitopfib}):
\begin{align*}
	((P^*R\circ\theta'\circ G).\theta)_{(a, c)} &= P^*R(\theta'_{G(a, c)}).\theta_{(a,c)} \\
	&= (\smash{\underline{UR\eta'_{Lc}}}, R\eta'_{Lc}).(\smash{\underline{U\eta_c}},\eta_c) \\
	&= (\smash{\underline{UR\eta'_{Lc}}}.\smash{\underline{U\eta_c}}, R\eta'_{Lc}.\eta_c) \\
	&= (\smash{\underline{U((R \circ \eta'\circ L).\eta)_c}}, ((R \circ \eta' \circ L).\eta)_c) \\
	&= \theta''_{(a,c)}. \qedhere
\end{align*}
\end{proof}

\section{The Beck--Chevalley condition}

Ultimately we are interested in constructing models of type theory using the right adjoint method for splitting comprehension categories, as used in~\cite{gambino2023ModelsMartinLof}. In order for this method to work the right maps should satisfy a number of stability conditions, as outlined in~\cite[Chapter 2]{larrea2018ModelsDependent}. The stability condition for the $\Pi$-types is ensured by a Beck--Chevalley condition (cf.\ Proposition~\ref{prop:ptypes}) which we will now phrase. 

\subsection{The categorical case}

Consider a closed lifting structure $(\J, \phi, \K)$ with $\J$ and $\K$ being categories of maps. For the purpose of interpreting dependent product types we want this pair to have a pushforward structure. However, in order for this interpretation to satisfy the coherence axioms of the theory with respect to substitution, this pushforward structure should respect morphisms of $\K$ whose underlying square in the ambient category $\C$ is a pullback square. 

More specifically, consider $\f, \g, \h \in \K$ such that $g$ and $h$ are composable, and a morphism $\mathbf{uv} : \f \to \g$ whose underlying square in $\C$ is a pullback square:
% https://q.uiver.app/#q=WzAsOCxbMCwxLCJBIl0sWzAsMiwiQiJdLFsxLDEsIkMiXSxbMSwyLCJEIFxccmxhcHsgLH0iXSxbMSwwLCJcXGJ1bGxldCJdLFs0LDIsIkIgXFxybGFweyAufSJdLFszLDEsIlxcYnVsbGV0Il0sWzUsMSwiXFxidWxsZXQiXSxbMCwxLCJmIiwyXSxbMiwzLCJnIl0sWzQsMiwiaCJdLFswLDIsInUiXSxbMSwzLCJ2IiwyXSxbMCwzLCIiLDEseyJzdHlsZSI6eyJuYW1lIjoiY29ybmVyIn19XSxbNiw1LCJmXyp1XipoIiwyXSxbNyw1LCJ2XipnXypoIl0sWzYsNywiXFxiZXRhXnstMX1faCJdXQ==
\begin{equation}\label{eq:BCobj}
\begin{tikzcd}
	& \bullet \\
	A & C &[1cm]& \bullet &[-.5cm]&[-.5cm] \bullet \\
	B & {D \rlap{ ,}} &&& {B \rlap{ .}}
	\arrow["f"', from=2-1, to=3-1]
	\arrow["g", from=2-2, to=3-2]
	\arrow["h", from=1-2, to=2-2]
	\arrow["u", from=2-1, to=2-2]
	\arrow["v"', from=3-1, to=3-2]
	\arrow["\lrcorner"{anchor=center, pos=0.125}, draw=none, from=2-1, to=3-2]
	\arrow["{f_*u^*h}"', from=2-4, to=3-5]
	\arrow["{v^*g_*h}", from=2-6, to=3-5]
	\arrow["{\beta^{-1}_h}", from=2-4, to=2-6]
\end{tikzcd}
\end{equation}
This gives composite parallel adjunctions $u_!f^* \dashv f_*u^*, g^*v_! \dashv v^*g_* : \C/C \to \C/B$:
% https://q.uiver.app/#q=WzAsMTAsWzAsMCwiXFxDL0IiXSxbMiwwLCJcXEMvQSJdLFs0LDAsIlxcQy9DIFxccmxhcHsgLH0iXSxbMSwwLCJcXGJvdCJdLFszLDAsIlxcYm90Il0sWzYsMCwiXFxDL0IiXSxbOCwwLCJcXEMvRCJdLFsxMCwwLCJcXEMvQyBcXHJsYXB7IC59Il0sWzcsMCwiXFxib3QiXSxbOSwwLCJcXGJvdCJdLFswLDEsImZeKiIsMCx7Im9mZnNldCI6LTN9XSxbMSwwLCJmXyoiLDAseyJvZmZzZXQiOi0zfV0sWzEsMiwidV8hIiwwLHsib2Zmc2V0IjotM31dLFsyLDEsInVeKiIsMCx7Im9mZnNldCI6LTN9XSxbNSw2LCJ2XyEiLDAseyJvZmZzZXQiOi0zfV0sWzYsNSwidl4qIiwwLHsib2Zmc2V0IjotM31dLFs2LDcsImdeKiIsMCx7Im9mZnNldCI6LTN9XSxbNyw2LCJnXyoiLDAseyJvZmZzZXQiOi0zfV1d
\begin{equation}\label{eq:bcmateadjs}
	\begin{tikzcd}[column sep=.5em]
		{\C/B} & \bot & {\C/A} & \bot & {\C/C \rlap{ ,}} &[.5cm]& {\C/B} & \bot & {\C/D} & \bot & {\C/C \rlap{ .}}
		\arrow["{f^*}", shift left=3, from=1-1, to=1-3]
		\arrow["{f_*}", shift left=3, from=1-3, to=1-1]
		\arrow["{u_!}", shift left=3, from=1-3, to=1-5]
		\arrow["{u^*}", shift left=3, from=1-5, to=1-3]
		\arrow["{v_!}", shift left=3, from=1-7, to=1-9]
		\arrow["{v^*}", shift left=3, from=1-9, to=1-7]
		\arrow["{g^*}", shift left=3, from=1-9, to=1-11]
		\arrow["{g_*}", shift left=3, from=1-11, to=1-9]
	\end{tikzcd}
\end{equation}
These induce a mateship correspondence between natural transformations with signatures $u_!f^* \To g^*v_!$ and $v^*g_* \To f_*u^*$. There is a canonical map $\alpha : u_!f^* \To g^*v_!$ (itself obtained as a mate of the identity natural transformation $g_!u_! = v_!f_!$) of which the component $\alpha_w$ at an object $w$ in $\C/B$ is given as follows: 
% https://q.uiver.app/#q=WzAsNyxbMiwxLCJBIl0sWzIsMiwiQiJdLFszLDEsIkMiXSxbMywyLCJEIFxccmxhcHsgLn0iXSxbMSwyLCJcXGJ1bGxldCJdLFsxLDEsIlxcYnVsbGV0Il0sWzAsMCwiXFxidWxsZXQiXSxbMCwxLCJmIiwyXSxbMiwzLCJnIl0sWzAsMiwidSJdLFsxLDMsInYiLDJdLFswLDMsIiIsMSx7InN0eWxlIjp7Im5hbWUiOiJjb3JuZXIifX1dLFs0LDEsInciLDJdLFs1LDAsImZeKnciXSxbNSw0XSxbNSwxLCIiLDIseyJzdHlsZSI6eyJuYW1lIjoiY29ybmVyIn19XSxbNiwyLCJnXip2XyF3IiwwLHsiY3VydmUiOi0yfV0sWzYsNCwiIiwyLHsiY3VydmUiOjJ9XSxbNSw2LCJcXGFscGhhX3ciLDIseyJzdHlsZSI6eyJib2R5Ijp7Im5hbWUiOiJkYXNoZWQifX19XV0=
\[\begin{tikzcd}
	\bullet \\
	& \bullet & A & C \\
	& \bullet & B & {D \rlap{ .}}
	\arrow["f"', from=2-3, to=3-3]
	\arrow["g", from=2-4, to=3-4]
	\arrow["u", from=2-3, to=2-4]
	\arrow["v"', from=3-3, to=3-4]
	\arrow["\lrcorner"{anchor=center, pos=0.125}, draw=none, from=2-3, to=3-4]
	\arrow["w"', from=3-2, to=3-3]
	\arrow["{f^*w}", from=2-2, to=2-3]
	\arrow[from=2-2, to=3-2]
	\arrow["\lrcorner"{anchor=center, pos=0.125}, draw=none, from=2-2, to=3-3]
	\arrow["{g^*v_!w}", curve={height=-12pt}, from=1-1, to=2-4]
	\arrow[curve={height=12pt}, from=1-1, to=3-2]
	\arrow["{\alpha_w}"', dotted, from=2-2, to=1-1]
\end{tikzcd}\]
We denote the mate of $\alpha$ with signature $v^*g_* \To f_*u^*$ by $\beta$. These transformations $\alpha$ and $\beta$ exist regardless of whether $(u, v)$ is a pullback, but when it is they are both isomorphisms; see e.g.~\cite[Lemma~3.5]{hazratpour20242categoricalProof}. We call $\beta$ the Beck--Chevalley isomorphism.

In the setup of the left diagram of~(\ref{eq:BCobj}), we thus get a triangle pictured as the right diagram of~(\ref{eq:BCobj}). When $(\J, \phi, \K)$ is a closed lifting structure, the right class is always closed under pullbacks, and so if it comes with a Frobenius structure then the morphisms $f_*u^*h$ and $v^*g_*h$ have $\K$ structure. What we need for the interpretation of dependent products is that $(\beta_h^{-1}, 1)$ underlies a morphism of $\K$ maps. The goal of this section is to precisely state this condition, and to show it is equivalent to a condition that is easier to check in practice. The way we do this is similar to the statement and proof of~\cite[Proposition 6.7]{gambino2017FrobeniusCondition}. 

The first step will be to generalize to the case where $h$ is not an object but an arrow of $\C/C$, i.e.\ we consider $(\h ,w) \in \K/C$:
% https://q.uiver.app/#q=WzAsMTEsWzAsMiwiQSJdLFswLDMsIkIiXSxbMSwyLCJDIl0sWzEsMywiRCBcXHJsYXB7ICx9Il0sWzEsMSwiXFxidWxsZXQiXSxbNCwzLCJCIFxccmxhcHsgLn0iXSxbMywyLCJcXGJ1bGxldCJdLFs1LDIsIlxcYnVsbGV0Il0sWzEsMCwiXFxidWxsZXQiXSxbMywxLCJcXGJ1bGxldCJdLFs1LDEsIlxcYnVsbGV0Il0sWzAsMSwiZiIsMl0sWzIsMywiZyJdLFs0LDIsInciXSxbMCwyLCJ1Il0sWzEsMywidiIsMl0sWzAsMywiIiwxLHsic3R5bGUiOnsibmFtZSI6ImNvcm5lciJ9fV0sWzYsNSwiZl8qdV4qdyIsMl0sWzcsNSwidl4qZ18qdyJdLFs2LDcsIlxcYmV0YV57LTF9X3t3fSJdLFs4LDQsImgiXSxbOSw2LCJmXyp1XipoIiwyXSxbMTAsNywidl4qZ18qaCJdLFs5LDEwLCJcXGJldGFeey0xfV97dy5ofSJdXQ==
\begin{equation}\label{eq:BCarr}
	\begin{tikzcd}
		& \bullet \\
		& \bullet &[1cm]& \bullet &[-.5cm]&[-.5cm] \bullet \\
		A & C && \bullet && \bullet \\
		B & {D \rlap{ ,}} &&& {B \rlap{ .}}
		\arrow["f"', from=3-1, to=4-1]
		\arrow["g", from=3-2, to=4-2]
		\arrow["w", from=2-2, to=3-2]
		\arrow["u", from=3-1, to=3-2]
		\arrow["v"', from=4-1, to=4-2]
		\arrow["\lrcorner"{anchor=center, pos=0.125}, draw=none, from=3-1, to=4-2]
		\arrow["{f_*u^*w}"', from=3-4, to=4-5]
		\arrow["{v^*g_*w}", from=3-6, to=4-5]
		\arrow["{\beta^{-1}_{w}}", from=3-4, to=3-6]
		\arrow["h", from=1-2, to=2-2]
		\arrow["{f_*u^*h}"', from=2-4, to=3-4]
		\arrow["{v^*g_*h}", from=2-6, to=3-6]
		\arrow["{\beta^{-1}_{w.h}}", from=2-4, to=2-6]
	\end{tikzcd}
\end{equation}
The square $(\beta^{-1}_{w.h}, \beta^{-1}_w)$ on the right above is the component $\Ar(\beta^{-1})_{(h, w)}$ of the natural transformation $\Ar(\beta^{-1}): \Ar(f_*u^*) \To \Ar(v^*g_*)$. For a closed lifting structure $(\J, \phi, \K)$ an isomorphism in $\Ar(\C)$ underlies a morphism of $\J$ or $\K$ maps if and only if its inverse does, so we will focus on $\beta$ instead of $\beta^{-1}$. To state what it means for $\Ar(\beta)_{(h, w)}$ to underlie a morphism of $\K$ maps we use the following definition. 
\begin{Defn}\label{def:catnatlift}
	Let $\F, \G : \J \to \K$ be lifts of $F, G : \C \to \D$ for some $\J\to \Ar(\C)$ and $\K \to \Ar(\D)$, and $\mu : F \To G$ a natural transformation. We say that $\boldsymbol{\mu} : \F \To \G$ is a \emph{lift} of $\mu$ if $\K \circ \boldsymbol{\mu} = \Ar(\mu) \circ \J$, meaning its components are over those of $\mu$:
	\[\begin{tikzcd}[column sep=4em,row sep=3em]
		\J & \K \\
		{\Ar(\C)} & {\Ar(\D)} \rlap{ .}
		\arrow[""{name=0, anchor=center, inner sep=0}, "{\Ar(F)}", shift left=3, from=2-1, to=2-2]
		\arrow[""{name=1, anchor=center, inner sep=0}, "{\Ar(G)}"', shift right=3, from=2-1, to=2-2]
		\arrow[from=1-1, to=2-1]
		\arrow[from=1-2, to=2-2]
		\arrow[""{name=2, anchor=center, inner sep=0}, "{\F}", shift left=3, from=1-1, to=1-2]
		\arrow[""{name=3, anchor=center, inner sep=0}, "{\G}"', shift right=3, from=1-1, to=1-2]
		\arrow["{\,\Ar(\mu)}", shift right=4, shorten <=2pt, shorten >=2pt, Rightarrow, from=0, to=1]
		\arrow["{\,\boldsymbol{\mu}}", shift right, shorten <=2pt, shorten >=2pt, Rightarrow, from=2, to=3]
	\end{tikzcd}\]
\end{Defn}

Note that a composition functor like $u_! : \C/A \to C/C$ always lifts to a functor $\bu_! : \J/A \to \J/C: (\f, a) \mapsto (\f, u.a)$. Since $(\J,\K)$ has a Frobenius structure, we thus obtain lifts $\u_!\f^*$ and $\g^*\v_!$ as on the left of: 
\begin{equation}\label{eq:catalpha}
	\begin{tikzcd}[sep=3em]
		{\J/B} & {\J/C} & {\K/C} & {\K/B} \\
		{\Ar(\C/B)} & {\Ar(\C/C)} \rlap{ ,} & {\Ar(\C/C)} & {\Ar(\C/B)} \rlap{ .}
		\arrow[from=1-1, to=2-1]
		\arrow[from=1-2, to=2-2]
		\arrow[""{name=0, anchor=center, inner sep=0}, "{\u_!\f^*}", shift left=3, from=1-1, to=1-2]
		\arrow[""{name=1, anchor=center, inner sep=0}, "{\Ar(u_!f^*)}", shift left=3, from=2-1, to=2-2]
		\arrow[from=1-4, to=2-4]
		\arrow[""{name=2, anchor=center, inner sep=0}, "{\v^*\g_*}", shift left=3, from=1-3, to=1-4]
		\arrow[""{name=3, anchor=center, inner sep=0}, "{\Ar(v^*g_*)}", shift left=3, from=2-3, to=2-4]
		\arrow[from=1-3, to=2-3]
		\arrow[""{name=4, anchor=center, inner sep=0}, "{\Ar(g^*v_!)}"', shift right=3, from=2-1, to=2-2]
		\arrow[""{name=5, anchor=center, inner sep=0}, "{\Ar(f_*u^*)}"', shift right=3, from=2-3, to=2-4]
		\arrow[""{name=6, anchor=center, inner sep=0}, "{\f_*\u^*}"', shift right=3, from=1-3, to=1-4]
		\arrow[""{name=7, anchor=center, inner sep=0}, "{\g^*\v_!}"', shift right=3, from=1-1, to=1-2]
		\arrow["{\,\Ar(\alpha)}", shift right=4, shorten <=2pt, shorten >=2pt, Rightarrow, from=1, to=4]
		\arrow["{\,\bb}", shift right=2, shorten <=2pt, shorten >=2pt, Rightarrow, from=2, to=6]
		\arrow["{\,\Ar(\beta)}", shift right=4, shorten <=2pt, shorten >=2pt, Rightarrow, from=3, to=5]
		\arrow["{\,\ba}", shift right=2, shorten <=2pt, shorten >=2pt, Rightarrow, from=0, to=7]
	\end{tikzcd}
\end{equation}

Applying the reasoning of Proposition~\ref{prop:sc:bcgaloisconnection} with respect to the adjunctions~(\ref{eq:bcmateadjs}), we obtain corresponding lifts $\v^*\g_*$ and $\f_*\u^*$ as on the right above. The condition that the square $(\beta_{w.h}, \beta_w)$ is a morphism of $\K$ maps is now expressed by requiring the existence of a lift $\bb$ as depicted on the right above, due to the requirement that $(\K/B)\circ\bb = \Ar(\beta)\circ(\K/C)$. Note that---since the lifting structure $(\J, \phi, \K)$ is closed---the existence of this lift is not additional structure, but is instead just a property of $\beta$. In sum, we have the following definition. 
\begin{Defn}\label{def:functor:BC}
	A Frobenius structure for a closed structure $(\J, \phi, \K)$ satisfies the \emph{Beck--Chevalley condition} when for every $\mathbf{uv} : \f \to \g$ in $\K$ overlying a pullback square $(u, v)$, the Beck--Chevalley isomorphism $\beta$ lifts to a transformation $\bb: \v^*\g_* \To \f_*\u^*$. 
\end{Defn}
\begin{Prop}\label{prop:catbc}
	For a closed lifting structure $(\J, \phi, \K)$ with a Frobenius structure, the Beck--Chevalley isomorphism $\beta$ lifts in the sense of~(\ref{eq:catalpha}) iff its mate $\alpha$ does. 
\end{Prop}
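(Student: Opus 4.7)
The plan is to deduce the equivalence from a 2-categorical refinement of the base-change bijection of Corollary~\ref{cor:function:basechange}. Concretely, I would establish the following: for any two adjunctions $L \dashv R$ and $L' \dashv R'$, with lifts $\L, \L' : \J \to \lpf \K$ of the left adjoints corresponding (under Corollary~\ref{cor:function:basechange}) to lifts $\R, \R' : \K \to \J\lpf$ of the right adjoints, there is a natural bijection between natural transformations $\L \To \L'$ lifting $\mu : L \To L'$ and natural transformations $\R' \To \R$ lifting the mate $\mu^\flat : R' \To R$.

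To apply this in our situation, I first identify all the relevant lifts. The Frobenius structure provides lifts $\f^*$ and $\g^*$ of the pullback functors, while post-composition with $u$ and $v$ lifts canonically via $(\f, a) \mapsto (\f, u.a)$ and $(\f, b) \mapsto (\f, v.b)$; composing gives the lifts $\u_!\f^*$ and $\g^*\v_!$ of $u_!f^*$ and $g^*v_!$. By closedness of $(\J, \phi, \K)$ together with Proposition~\ref{prop:function:slicecls} and Corollary~\ref{cor:function:basechange}, these determine, via base change, unique lifts of the corresponding right adjoints $f_*u^*$ and $v^*g_*$; these must coincide with $\f_*\u^*$ and $\v^*\g_*$ as constructed from the Frobenius--pushforward correspondence. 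Taking $\mu = \alpha$, whose mate is $\beta$, the refined bijection yields the claimed equivalence between lifts $\ba$ of $\alpha$ and lifts $\bb$ of $\beta$.

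The main content lies in the natural-transformation version of the base-change bijection, which I expect to be the chief obstacle. The idea is a componentwise application of the hom-bijection~(\ref{eq:funcJpfK}): a lift $\boldsymbol{\mu} : \L \To \L'$ assigns to each $\mathbf{j} \in \J$ a morphism $\boldsymbol{\mu}_{\mathbf{j}}$ in $\lpf \K$ over $\mu_j$ (equivalently, a morphism in $\C$ over $\mu_j$ that is compatible with the right-lifting operations attached to $\L\mathbf{j}$ and $\L'\mathbf{j}$), and transposing componentwise across the Galois connection gives candidate components $\boldsymbol{\mu}^\flat_{\mathbf{k}}$ over the components of $\mu^\flat$. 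Naturality of $\boldsymbol{\mu}^\flat$ in $\mathbf{k}$ transfers from naturality of $\boldsymbol{\mu}$ together with the ambient mate identities, and the inversion $(\boldsymbol{\mu}^\flat)^\flat = \boldsymbol{\mu}$ follows from the analogous statement in $\Cat$. The bookkeeping is a routine but verbose elaboration of the construction in the proof of Proposition~\ref{prop:sc:basechange}; the substantive content is that closedness of the lifting structure lets us freely shuttle lifting data between source and target of the adjunction $L \dashv R$, and this shuttling is coherent enough to respect mateship.
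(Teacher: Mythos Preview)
Your proposal is correct and follows essentially the same strategy as the paper. The paper's proof simply invokes \cite[Proposition~5.8]{gambino2017FrobeniusCondition} together with the slice closedness isomorphisms $\J/C \cong {}^\pf(\K/C)$ and $\K/B \cong (\J/B)^\pf$; what you call the ``2-categorical refinement of the base-change bijection'' is precisely the content of that cited proposition, and your sketch of its proof (componentwise transposition across the Galois connection, then checking naturality via the mate identities) is the standard argument for it.
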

\begin{proof}
	By~\cite[Proposition 5.8]{gambino2017FrobeniusCondition}, using that $\J/C \cong \lpf(\K/C)$ and $\K/B \cong (\J/B)\lpf$. 
\end{proof}

For future reference we expand on what it means for $\alpha$ to lift in the sense of Proposition~\ref{prop:catbc}. Considering the pullback square $(u, v)$ in~(\ref{eq:BCarr}) and $(w, b) \in \Ar(\C/B)$, the component $\Ar(\alpha)_{(w, b)}$ is given by the square $(\alpha_{b.w}, \alpha_b) : u_!f^*(w, b) \to g^*v_!(w, b)$:
\begin{equation}\label{eq:ARalphawb}
	% https://q.uiver.app/#q=WzAsMTAsWzEsMSwiXFxidWxsZXQiXSxbMSwyLCJBIl0sWzIsMSwiXFxidWxsZXQiXSxbMiwyLCJDIl0sWzEsMCwiXFxidWxsZXQiXSxbMiwwLCJcXGJ1bGxldCJdLFswLDIsIkIiXSxbMCwxLCJcXGJ1bGxldCJdLFswLDAsIlxcYnVsbGV0Il0sWzEsMywiRCJdLFswLDEsImZeKmIiLDJdLFsyLDMsImdeKnZfIWIiXSxbMSwzLCJ1IiwyXSxbNCwwLCJmXip3IiwyXSxbMCwyLCJcXGFscGhhX2IiLDAseyJzdHlsZSI6eyJib2R5Ijp7Im5hbWUiOiJkb3R0ZWQifX19XSxbNSwyLCJnXip3Il0sWzQsNSwiXFxhbHBoYV97Yi53fSIsMCx7InN0eWxlIjp7ImJvZHkiOnsibmFtZSI6ImRvdHRlZCJ9fX1dLFsxLDYsImYiXSxbNyw2LCJiIiwyXSxbOCw3LCJ3IiwyXSxbMCw3XSxbNCw4XSxbMCwzLCJ1XyFmXipiIiwxXSxbNiw5LCJ2IiwyXSxbMyw5LCJnIl1d
	\begin{tikzcd}[sep=2.25em]
		\bullet & \bullet & \bullet \\
		\bullet & \bullet & \bullet \\
		B & A & C \\[-.5cm]
		& D \rlap{ .}
		\arrow["{f^*b}"', from=2-2, to=3-2]
		\arrow["{g^*(v.b)}", from=2-3, to=3-3]
		\arrow["u", from=3-2, to=3-3]
		\arrow["{f^*w}"', from=1-2, to=2-2]
		\arrow["{\alpha_b}", dotted, from=2-2, to=2-3]
		\arrow["{g^*w}", from=1-3, to=2-3]
		\arrow["{\alpha_{b.w}}", dotted, from=1-2, to=1-3]
		\arrow["f"', from=3-2, to=3-1]
		\arrow["b"', from=2-1, to=3-1]
		\arrow["w"', from=1-1, to=2-1]
		\arrow[from=2-2, to=2-1]
		\arrow[from=1-2, to=1-1]
		\arrow["v"', from=3-1, to=4-2]
		\arrow["g", from=3-3, to=4-2]
	\end{tikzcd}
\end{equation}
If in addition $(\w, b) \in \J/B$, then $f^*w$ and $g^*w$ underlie $\J$ maps $\f^*\w$ and $\g^*\w$, and $\ba$ should produce a $\J$ morphism $\ba_{(\w, b)} : \f^*\w \to \g^*\w$ overlying $(\alpha_{b.w}, \alpha_b)$. 

\subsection{The double categorical case} 

Next, we phrase an analogous version of Definition~\ref{def:functor:BC} for a pair of double categories. More specifically, we consider the case where (\ref{eq:BCarr}) underlies a square $\uv : \f \to \g$ in $\DK$ for some closed lifting structure $(\DJ, \phi, \DK)$ with a Frobenius structure. To start, we define lifts of double natural transformations. 
\begin{Defn}\label{def:dcatnatlift}
	Let $\F, \G : \DJ \to \DK$ be lifts of $F, G : \C \to \D$ for some $\DJ\to \Ar(\C)$ and $\DK \to \Ar(\D)$, and $\mu : F \To G$ a natural transformation. We say that $\boldsymbol{\mu} : \F \To \G$ is a \emph{lift} of $\mu$ if $\DK \circ \boldsymbol{\mu} = \Sq(\mu) \circ \DJ$:
	\[\begin{tikzcd}[column sep=4em,row sep=3em]
		\DJ & \DK \\
		{\Sq(\C)} & {\Sq(\D)} \rlap{ .}
		\arrow[""{name=0, anchor=center, inner sep=0}, "{\Sq(F)}", shift left=3, from=2-1, to=2-2]
		\arrow[""{name=1, anchor=center, inner sep=0}, "{\Sq(G)}"', shift right=3, from=2-1, to=2-2]
		\arrow[from=1-1, to=2-1]
		\arrow[from=1-2, to=2-2]
		\arrow[""{name=2, anchor=center, inner sep=0}, "{\F}", shift left=3, from=1-1, to=1-2]
		\arrow[""{name=3, anchor=center, inner sep=0}, "{\G}"', shift right=3, from=1-1, to=1-2]
		\arrow["{\,\Sq(\mu)}", shift right=4, shorten <=2pt, shorten >=2pt, Rightarrow, from=0, to=1]
		\arrow["{\,\boldsymbol{\mu}}", shift right, shorten <=2pt, shorten >=2pt, Rightarrow, from=2, to=3]
	\end{tikzcd}\]
\end{Defn}

Note that a composition functor such as $u_! : \C/A \to \C/C$ in fact lifts to a double functor $\u_! : \DJ/A \to \DJ/C$, as the assignment $(\f, a) \mapsto (\f, u.a)$ (trivially) preserves vertical composition. Using the Frobenius structure of $(\DJ, \DK)$ and the adjunction~(\ref{eq:dcatorthogonalcob}) we thus obtains lifts $\u_!\f^*$ and $\g^*\v_!$ as on the left below: 
% https://q.uiver.app/#q=WzAsOCxbMCwwLCJcXERKL0IiXSxbMSwwLCJcXERKL0MiXSxbMCwxLCJcXFNxKFxcQy9DKSJdLFsxLDEsIlxcU3EoXFxDL0IpIFxccmxhcHsgLH0iXSxbMiwwLCJcXERLL0MiXSxbMywwLCJcXERLL0IiXSxbMiwxLCJcXFNxKFxcQy9DKSJdLFszLDEsIlxcU3EoXFxDL0IpIFxccmxhcHsgLn0iXSxbMCwyXSxbMSwzXSxbMCwxLCJcXGdeKlxcdl8hIiwyLHsib2Zmc2V0IjozfV0sWzAsMSwiXFx1XyFcXGZeKiIsMCx7Im9mZnNldCI6LTN9XSxbMiwzLCJcXFNxKHVfIWZeKikiLDAseyJvZmZzZXQiOi0zfV0sWzIsMywiXFxTcShnXip2XyEpIiwyLHsib2Zmc2V0IjozfV0sWzQsNSwiXFx2XipcXGdfKiIsMCx7Im9mZnNldCI6LTN9XSxbNCw1LCJcXGZfKlxcdV4qIiwyLHsib2Zmc2V0IjozfV0sWzYsNywiXFxTcSh2XipnXyopIiwwLHsib2Zmc2V0IjotM31dLFs2LDcsIlxcU3EoZl8qdV4qKSIsMix7Im9mZnNldCI6M31dLFs0LDZdLFs1LDddLFsxNiwxNywiXFwsXFxTcShcXGJldGEpIiwwLHsib2Zmc2V0Ijo0LCJzaG9ydGVuIjp7InNvdXJjZSI6MjAsInRhcmdldCI6MjB9fV0sWzE0LDE1LCJcXCxcXGJiIiwwLHsib2Zmc2V0IjoxLCJzaG9ydGVuIjp7InNvdXJjZSI6MjAsInRhcmdldCI6MjB9fV0sWzEyLDEzLCJcXCxcXFNxKFxcYWxwaGEpIiwwLHsib2Zmc2V0Ijo0LCJzaG9ydGVuIjp7InNvdXJjZSI6MjAsInRhcmdldCI6MjB9fV0sWzExLDEwLCJcXCxcXGJhIiwwLHsib2Zmc2V0IjoxLCJzaG9ydGVuIjp7InNvdXJjZSI6MjAsInRhcmdldCI6MjB9fV1d
\begin{equation}\label{eq:dcatalpha}
	\begin{tikzcd}[ampersand replacement=\&, sep=3em]
		{\DJ/B} \& {\DJ/C} \& {\DK/C} \& {\DK/B} \\
		{\Sq(\C/C)} \& {\Sq(\C/B) \rlap{ ,}} \& {\Sq(\C/C)} \& {\Sq(\C/B) \rlap{ .}}
		\arrow[from=1-1, to=2-1]
		\arrow[from=1-2, to=2-2]
		\arrow[""{name=0, anchor=center, inner sep=0}, "{\g^*\v_!}"', shift right=3, from=1-1, to=1-2]
		\arrow[""{name=1, anchor=center, inner sep=0}, "{\u_!\f^*}", shift left=3, from=1-1, to=1-2]
		\arrow[""{name=2, anchor=center, inner sep=0}, "{\Sq(u_!f^*)}", shift left=3, from=2-1, to=2-2]
		\arrow[""{name=3, anchor=center, inner sep=0}, "{\Sq(g^*v_!)}"', shift right=3, from=2-1, to=2-2]
		\arrow[""{name=4, anchor=center, inner sep=0}, "{\v^*\g_*}", shift left=3, from=1-3, to=1-4]
		\arrow[""{name=5, anchor=center, inner sep=0}, "{\f_*\u^*}"', shift right=3, from=1-3, to=1-4]
		\arrow[""{name=6, anchor=center, inner sep=0}, "{\Sq(v^*g_*)}", shift left=3, from=2-3, to=2-4]
		\arrow[""{name=7, anchor=center, inner sep=0}, "{\Sq(f_*u^*)}"', shift right=3, from=2-3, to=2-4]
		\arrow[from=1-3, to=2-3]
		\arrow[from=1-4, to=2-4]
		\arrow["{\,\Sq(\beta)}", shift right=4, shorten <=2pt, shorten >=2pt, Rightarrow, from=6, to=7]
		\arrow["{\,\bb}", shift right, shorten <=2pt, shorten >=2pt, Rightarrow, from=4, to=5]
		\arrow["{\,\Sq(\alpha)}", shift right=4, shorten <=2pt, shorten >=2pt, Rightarrow, from=2, to=3]
		\arrow["{\,\ba}", shift right, shorten <=2pt, shorten >=2pt, Rightarrow, from=1, to=0]
	\end{tikzcd}
\end{equation}
	As $(\DJ, \DK)$ is part of a closed lifting structure, the lifts $\u_!\f^*$ and $\g^*\v_!$ can be transposed to lifts $\v^*\g_*$ and $\f_*\u^*$ using (\ref{eq:dcatorthogonalcob}). Now we can phrase the Beck--Chevalley condition for double categories completely analogously to that for categories. 
\begin{Defn}\label{def:dcatBC}
	A Frobenius structure for a pair $(\DJ, \DK)$ of double categories satisfies the \emph{Beck--Chevalley condition} when for every $\mathbf{uv} : \f \to \g$ in $\DK$ overlying a pullback square $(u, v)$, the Beck--Chevalley isomorphism $\beta$ lifts to a transformation $\bb: \v^*\g_* \To \f_*\u^*$. 
\end{Defn}
Of course, the definitions of Frobenius structure and Beck--Chevalley condition directly apply to closed lifting structures underlying \awfs.

As before, lifting $\beta$ in this way can be done by lifting $\alpha$, which is easier in practice. 
\begin{Prop}\label{prop:dcatbc}
	For a closed lifting structure $(\DJ, \phi, \DK)$ with a Frobenius structure, the Beck--Chevalley isomorphism $\beta$ lifts in the sense of~(\ref{eq:dcatalpha}) iff its mate $\alpha$ does. 
\end{Prop}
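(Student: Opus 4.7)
The plan is to mirror the proof of Proposition~\ref{prop:catbc}, replacing the 1-adjunction of Proposition~\ref{prop:sc:bcgaloisconnection} with the 2-adjunction~(\ref{eq:dcatorthogonalcob}). Since $(\DJ, \phi, \DK)$ is closed, Proposition~\ref{prop:doublecat:slicecls} yields isomorphisms $\DJ/C \cong \ldpf(\DK/C)$ and $\DK/B \cong (\DJ/B)\ldpf$ of double functors over $\Sq(\C/C)$ and $\Sq(\C/B)$, respectively. Under~(\ref{eq:dcatorthogonalcob}) the Frobenius lifts $\f^*, \g^*$ are mates of the pushforward lifts $\f_*, \g_*$, and the composition lifts $\u_!, \v_!$ on the $\DJ$-side are mates of the pullback lifts $\u^*, \v^*$ on the $\DK$-side. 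Consequently, the composites $\u_!\f^*, \g^*\v_!$ in $\dladj$ are the mates of the composites $\v^*\g_*, \f_*\u^*$ in $\dradj$.

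Because~(\ref{eq:dcatorthogonalcob}) is a 2-adjunction, I can then invoke the general mate correspondence at the level of 2-cells to get a bijection between $\ba : \u_!\f^* \To \g^*\v_!$ and $\bb : \v^*\g_* \To \f_*\u^*$. The key step is then to check that this bijection preserves the property of being a lift in the sense of Definition~\ref{def:dcatnatlift}: namely, that $\ba$ lies over $\Sq(\alpha)$ if and only if its mate $\bb$ lies over $\Sq(\beta)$. This reduces to the fact that $\alpha$ and $\beta$ are already mates in the ambient adjunctions $f^* \dashv f_*$ and $g^* \dashv g_*$, and that the 2-functor $\Sq : \Cat \to \Dbl$ sends this mateship to the corresponding $\Sq$-level mateship, so the vertical forgetful functors $\DJ/X \to \Sq(\C/X)$ and $\DK/X \to \Sq(\C/X)$ transport the lift conditions through the mate calculus.

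I expect the main obstacle to be bookkeeping the 2-cells in the pullback 2-categories $\dladj$ and $\dradj$ built from $\Cat_\tladj$ and $\Cat_\tradj$, in particular tracking how mate transformations interact with the forgetful projection to $\Sq$. Modulo this bookkeeping, the statement is essentially the double-categorical counterpart of \cite[Proposition~5.8]{gambino2017FrobeniusCondition}: one upgrades arrows to squares and verifies that preservation of vertical composition---required for $\ba$ and $\bb$ to be \emph{double} natural transformations of lifts---is automatic, since vertical composition in the slice double categories $\DJ/X$ and $\DK/X$ is inherited from $\DJ$ and $\DK$ along the defining pullbacks.
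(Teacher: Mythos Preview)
Your proposal is correct and takes essentially the same approach as the paper: invoke the 2-adjunction~(\ref{eq:dcatorthogonalcob}) together with the isomorphisms $\DJ/C \cong \ldpf(\DK/C)$ and $\DK/B \cong (\DJ/B)\ldpf$ from Proposition~\ref{prop:doublecat:slicecls}, so that the mate correspondence at the level of 2-cells transports lifts of $\alpha$ to lifts of $\beta$ and conversely. The paper's proof is terser but amounts to exactly this, citing \cite[Proposition~21]{bourke2016AlgebraicWeak} for the 2-adjunction; your additional remarks on bookkeeping and the analogy with \cite[Proposition~5.8]{gambino2017FrobeniusCondition} are accurate elaborations of the same argument.
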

\begin{proof}
	By the fact that (\ref{eq:dcatorthogonalcob}) is a 2-adjunction~\cite[Proposition 21]{bourke2016AlgebraicWeak}, in combination with the isomorphisms $\DJ/C \cong \ldpf(\DK/C)$ and $\DK/B \cong (\DJ/B)\ldpf$. 
\end{proof}

In fact, as we will now show, for concrete double functors Definitions~\ref{def:catnatlift} and~\ref{def:dcatnatlift} coincide. This means that checking whether a Frobenius structure for a closed structure $(\DJ, \phi, \DK)$ satisfies the Beck--Chevalley condition comes down to checking whether its Frobenius structure for $(\DJ_1, \DK_1)$, obtained by an application of $(-)_1 : \Dbl \to \Cat$, satisfies the Beck--Chevalley condition. 
\begin{Lemma}\label{lem:concretelift}
	Let $\DJ \to \Sq(\C)$ and $\DK \to \Sq(\D)$ be concrete double functors, and $\F, \G : \DJ \to \DK$ lifts of $F, G: \C \to \D$; then lifts of $\mu : F \to G$ to $\F \To \G$ are in bijection with its lifts to $\F_1 \To \G_1$: 
	% https://q.uiver.app/#q=WzAsOCxbMCwwLCJcXERKIl0sWzEsMCwiXFxESyJdLFswLDEsIlxcU3EoXFxDKSJdLFsxLDEsIlxcU3EoXFxEKSBcXHJsYXB7ICx9Il0sWzIsMCwiXFxESl8xIl0sWzMsMCwiXFxES18xIl0sWzIsMSwiXFxBcihcXEMpIl0sWzMsMSwiXFxBcihcXEQpIFxccmxhcHsgLn0iXSxbMCwyXSxbMSwzXSxbMCwxLCJcXEciLDIseyJvZmZzZXQiOjN9XSxbMCwxLCJcXEYiLDAseyJvZmZzZXQiOi0zfV0sWzIsMywiXFxTcShGKSIsMCx7Im9mZnNldCI6LTN9XSxbMiwzLCJcXFNxKEcpIiwyLHsib2Zmc2V0IjozfV0sWzQsNSwiXFxGXzEiLDAseyJvZmZzZXQiOi0zfV0sWzQsNSwiXFxHXzEiLDIseyJvZmZzZXQiOjN9XSxbNiw3LCJcXEFyKEYpIiwwLHsib2Zmc2V0IjotM31dLFs2LDcsIlxcQXIoRykiLDIseyJvZmZzZXQiOjN9XSxbNCw2XSxbNSw3XSxbMTYsMTcsIlxcLFxcQXIoXFxtdSkiLDAseyJvZmZzZXQiOjQsInNob3J0ZW4iOnsic291cmNlIjoyMCwidGFyZ2V0IjoyMH19XSxbMTQsMTUsIlxcLFxcYm11IiwwLHsib2Zmc2V0IjoxLCJzaG9ydGVuIjp7InNvdXJjZSI6MjAsInRhcmdldCI6MjB9fV0sWzEyLDEzLCJcXCxcXFNxKFxcbXUpIiwwLHsib2Zmc2V0Ijo0LCJzaG9ydGVuIjp7InNvdXJjZSI6MjAsInRhcmdldCI6MjB9fV0sWzExLDEwLCJcXCxcXGJtdSIsMCx7Im9mZnNldCI6MSwic2hvcnRlbiI6eyJzb3VyY2UiOjIwLCJ0YXJnZXQiOjIwfX1dXQ==
\[\begin{tikzcd}[ampersand replacement=\&, sep=3em]
	\DJ \& \DK \& {\DJ_1} \& {\DK_1} \\
	{\Sq(\C)} \& {\Sq(\D) \rlap{ ,}} \& {\Ar(\C)} \& {\Ar(\D) \rlap{ .}}
	\arrow[from=1-1, to=2-1]
	\arrow[from=1-2, to=2-2]
	\arrow[""{name=0, anchor=center, inner sep=0}, "\G"', shift right=3, from=1-1, to=1-2]
	\arrow[""{name=1, anchor=center, inner sep=0}, "\F", shift left=3, from=1-1, to=1-2]
	\arrow[""{name=2, anchor=center, inner sep=0}, "{\Sq(F)}", shift left=3, from=2-1, to=2-2]
	\arrow[""{name=3, anchor=center, inner sep=0}, "{\Sq(G)}"', shift right=3, from=2-1, to=2-2]
	\arrow[""{name=4, anchor=center, inner sep=0}, "{\F_1}", shift left=3, from=1-3, to=1-4]
	\arrow[""{name=5, anchor=center, inner sep=0}, "{\G_1}"', shift right=3, from=1-3, to=1-4]
	\arrow[""{name=6, anchor=center, inner sep=0}, "{\Ar(F)}", shift left=3, from=2-3, to=2-4]
	\arrow[""{name=7, anchor=center, inner sep=0}, "{\Ar(G)}"', shift right=3, from=2-3, to=2-4]
	\arrow[from=1-3, to=2-3]
	\arrow[from=1-4, to=2-4]
	\arrow["{\,\Ar(\mu)}", shift right=4, shorten <=2pt, shorten >=2pt, Rightarrow, from=6, to=7]
	\arrow["{\,\bmu}", shift right, shorten <=2pt, shorten >=2pt, Rightarrow, from=4, to=5]
	\arrow["{\,\Sq(\mu)}", shift right=4, shorten <=2pt, shorten >=2pt, Rightarrow, from=2, to=3]
	\arrow["{\,\bmu}", shift right, shorten <=2pt, shorten >=2pt, Rightarrow, from=1, to=0]
\end{tikzcd}\]
\end{Lemma}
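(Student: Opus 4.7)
The plan is to exhibit the bijection by applying the 2-functor $(-)_1 : \Dbl \to \Cat$ in one direction and by using concreteness to reconstruct the DNT from its arrow-part NT in the other. In the forward direction, any DNT $\boldsymbol{\mu} : \F \To \G$ yields, under $(-)_1$, a natural transformation $\boldsymbol{\mu}_1 : \F_1 \To \G_1$ whose component at a vertical morphism $\f$ of $\DJ$ (an object of $\DJ_1$) is the square component of $\boldsymbol{\mu}$ at $\f$. Since $(-)_1$ is a 2-functor, the whiskering equation $\DK \circ \boldsymbol{\mu} = \Sq(\mu) \circ \DJ$ is sent to $\DK_1 \circ \boldsymbol{\mu}_1 = \Ar(\mu) \circ \DJ_1$, which is precisely the NT lift condition.

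For the inverse, suppose $\boldsymbol{\mu}_1 : \F_1 \To \G_1$ is an NT lift. Using that $\DK_0$ is an isomorphism (by concreteness of $\DK$), define $\boldsymbol{\mu}_X := \DK_0^{-1}(\mu_X)$ for each object $X$ of $\DJ$, and declare the square component at a vertical $\f$ to be $\boldsymbol{\mu}_\f := \boldsymbol{\mu}_{1,\f}$. The NT lift condition forces the horizontal sides of this square, after applying $\DK_0$, to be $\mu_{\dom f}$ and $\mu_{\cod f}$; as $\DK_0$ is an iso, these sides must be $\boldsymbol{\mu}_{\dom\f}$ and $\boldsymbol{\mu}_{\cod\f}$, so the data assemble consistently.

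It remains to verify the DNT axioms, which split into two kinds: equations between horizontal morphisms of $\DK$ (source/target matching, and naturality with respect to horizontal morphisms of $\DJ$), which lift from the corresponding equations for $\mu$ in $\D$ via $\DK_0$ being an iso; and equations between squares of $\DK$ (identity preservation, vertical composition, naturality over squares of $\DJ$), which reduce via faithfulness of $\DK_1$ to the corresponding equations for $\Ar(\mu)$ in $\Ar(\D)$. These latter equations either hold automatically because $\Ar(\mu)$ is a natural transformation, or coincide with the naturality axiom already satisfied by $\boldsymbol{\mu}_1$. The two assignments are then mutually inverse: on squares the forward direction extracts exactly the $\boldsymbol{\mu}_\f$ that the backward direction puts back, while the horizontal components $\boldsymbol{\mu}_X$ are uniquely pinned down by the lift condition and concreteness.

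The main obstacle is bookkeeping rather than conceptual difficulty---one must carefully track what the components of a DNT look like under the transposed convention for 2-cells in $\Dbl$ adopted in this paper, and identify which axiom constrains which component---because the two concreteness hypotheses provide exactly the faithfulness on squares and the essential bijectivity on objects and horizontal morphisms needed to transport every equation between $\DK$ and $\Sq(\D)$.
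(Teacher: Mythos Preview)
Your proposal is correct and is exactly what the paper's proof amounts to: the paper's proof reads in its entirety ``By spelling out the definitions,'' and you have done precisely that. Your identification of the two roles of concreteness---$\DK_0$ being invertible to pin down the horizontal components, and $\DK_1$ being faithful to reduce the square-level equations to ones in $\Ar(\D)$---is the heart of the matter.
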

\begin{proof} 
By spelling out the definitions. 
\end{proof} 
\begin{Lemma}\label{lem:concreteslice}
	For a closed lifting structure $(\DJ, \phi, \DK)$ and $A \in \C$ the double functors $\DJ/A$ and $\DK/A$ are concrete. 
\end{Lemma}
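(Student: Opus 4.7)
The plan is to reduce the lemma to two independent observations: first, that any closed lifting structure has concrete underlying double functors, and second, that the slice construction preserves concreteness. Once these are in place the conclusion is immediate, because $\DJ/A$ and $\DK/A$ are, by definition, obtained from $\DJ$ and $\DK$ by slicing.

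For the first observation, I would use the assumption that $(\DJ, \phi, \DK)$ is closed to obtain isomorphisms $\DJ \cong \ldpf \DK$ and $\DK \cong \DJ \ldpf$ over $\Sq(\C)$ via the induced functors $\phi_l$ and $\phi_r$. It then suffices to inspect the construction of the representing objects given in Proposition~\ref{prop:doublefunctor:galoisconnection}. No additional data is placed on objects or horizontal morphisms: the vertical morphisms of $\ldpf \DK$ are pairs $(f, \phi_{f-})$ consisting of a morphism $f$ of $\C$ together with a lifting operation, and a square $(f, \phi_{f-}) \to (g, \phi_{g-})$ is an underlying square $(u,v): f \to g$ subject to a compatibility condition with the lifting operations, which is a property rather than additional data. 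Consequently the object part $(\ldpf \DK)_0 \to \C$ is an isomorphism and the arrow part $(\ldpf \DK)_1 \to \Ar(\C)$ is faithful, so $\ldpf \DK$ is concrete; the argument for $\DJ \ldpf$ is dual.

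For the second observation, recall that $\DJ/A$ is defined by a pullback in $\Dbl$ along $\Sq(\dom)\colon \Sq(\C/A) \to \Sq(\C)$. Limits in $\Dbl$ are computed levelwise, so $(\DJ/A)_0$ is the pullback of $\DJ_0 \to \C$ along $\dom\colon \C/A \to \C$, and $(\DJ/A)_1$ is the pullback of $\DJ_1 \to \Ar(\C)$ along $\Ar(\dom)$. Both isomorphisms and faithful functors are stable under pullback in $\Cat$, so concreteness of $\DJ$ transfers to concreteness of $\DJ/A$, and the same reasoning yields concreteness of $\DK/A$.

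The only point requiring genuine care is the unwinding of the representing object in Proposition~\ref{prop:doublefunctor:galoisconnection} to confirm that its object category is exactly $\C$ and that its arrow category embeds faithfully into $\Ar(\C)$; once that is granted, the rest is formal, since levelwise pullbacks and the stability of the relevant 1-categorical properties do all the work.
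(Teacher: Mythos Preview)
Your proof is correct and follows exactly the same two-step approach as the paper: first use closedness to identify $\DJ$ and $\DK$ with $\ldpf\DK$ and $\DJ\ldpf$, which are concrete by construction, and then observe that slicing (a levelwise pullback) preserves concreteness. You supply more detail than the paper's terse proof, but the structure and ideas are identical.
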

\begin{proof}
	The double functors $\DJ\ldpf$ and $\ldpf\DJ$ are always concrete for any $\DJ$, and so because the lifting structure is closed so are $\DJ$ and $\DK$. Furthermore, slicing preserves concreteness. 
\end{proof}
\begin{Cor}\label{cor:bcreduction}
	A Frobenius structure for a closed lifting structure $(\DJ, \phi, \DK)$ satisfies the Beck--Chevalley condition when its associated Frobenius structure for $(\DJ_1, \DK_1)$ does. 
\end{Cor}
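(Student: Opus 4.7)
The plan is to combine Proposition~\ref{prop:dcatbc} with Lemmas~\ref{lem:concretelift} and~\ref{lem:concreteslice} so as to transfer the existence of the required lift from the arrow setting to the double-categorical setting. Fix a square $\uv : \f \to \g$ in $\DK$ overlying a pullback square $(u, v)$ in $\C$; the goal is to produce a lift $\bb : \v^*\g_* \To \f_*\u^*$ of the Beck--Chevalley isomorphism $\beta$ in the sense of~(\ref{eq:dcatalpha}).

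First I would invoke Proposition~\ref{prop:dcatbc}, which tells us that it is equivalent to exhibit a lift $\ba : \u_!\f^* \To \g^*\v_!$ of the mate $\alpha$ between the appropriate slice double functors. Second, Lemma~\ref{lem:concreteslice} applied to the closed lifting structure $(\DJ, \phi, \DK)$ ensures that each of the four double functors $\DJ/B, \DJ/C, \DK/B, \DK/C$ is concrete. Since composition and slicing in $\Dbl$ are computed levelwise on arrow categories, the double functors $\u_!\f^*, \g^*\v_! : \DJ/B \to \DJ/C$ are lifts of $u_!f^*$ and $g^*v_!$ whose 1-components coincide with the lifts $(\u_!\f^*)_1$ and $(\g^*\v_!)_1$ constituting the associated Frobenius structure for $(\DJ_1, \DK_1)$. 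Lemma~\ref{lem:concretelift}, applied between these concrete double functors, then provides a bijection between double-natural lifts of $\alpha$ from $\u_!\f^*$ to $\g^*\v_!$ and ordinary-natural lifts of $\alpha$ from $(\u_!\f^*)_1$ to $(\g^*\v_!)_1$.

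By hypothesis, the associated Frobenius structure on $(\DJ_1, \DK_1)$ satisfies the Beck--Chevalley condition of Definition~\ref{def:functor:BC}. Invoking Proposition~\ref{prop:catbc}, this yields precisely a lift of $\alpha$ as a natural transformation $(\u_!\f^*)_1 \To (\g^*\v_!)_1$. Transferring this lift through the bijection supplied by Lemma~\ref{lem:concretelift} produces the desired double-natural lift $\ba$, and a final application of Proposition~\ref{prop:dcatbc} converts $\ba$ into the required $\bb$, completing the verification of the double-categorical Beck--Chevalley condition.

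The only genuinely delicate point is matching the data: one must check that the double functor $\u_!\f^* : \DJ/B \to \DJ/C$ arising from the double-categorical Frobenius structure restricts on arrow categories to the composite $(\u_!)_1 (\f^*)_1$ that features in the 1-categorical Beck--Chevalley condition, and similarly for $\g^*\v_!$. But this is a routine unwinding of the pullback definition of $\DJ/A$ and the levelwise nature of $(-)_1 : \Dbl \to \Cat$, so the heart of the argument really is the concreteness observation of Lemma~\ref{lem:concreteslice} together with the universal translation of Lemma~\ref{lem:concretelift}.
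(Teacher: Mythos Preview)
Your proposal is correct and follows essentially the same approach as the paper, which simply cites Lemmas~\ref{lem:concretelift} and~\ref{lem:concreteslice}. Your version is more explicit in that you route the argument through $\alpha$ via Propositions~\ref{prop:catbc} and~\ref{prop:dcatbc}, which has the advantage of making the ``matching of data'' step cleaner: the Frobenius structures $\f^*$, $\g^*$ are given directly and their $1$-components are the categorical Frobenius structures by definition, whereas applying Lemma~\ref{lem:concretelift} directly to $\beta$ would require checking that the transposed pushforward structures on the double level have the expected $1$-components.
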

\begin{proof}
	By Lemmas~\ref{lem:concretelift}~and~\ref{lem:concreteslice}. 
\end{proof}

Lastly, we show that our running example satisfies this condition. 

\begin{Prop}\label{prop:bcfrobstruc}
	The Frobenius structure for $(\DSplRef, \DSplOpFib)$ of Proposition~\ref{prop:dfrobstruc} satisfies the Beck--Chevalley condition. 
\end{Prop}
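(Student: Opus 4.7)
The plan is to reduce the statement to a concrete check about how the morphisms of $\SplOpFib$ preserve the cocartesian lifts used in Proposition~\ref{prop:functor:frobstruc}. First I would invoke Corollary~\ref{cor:bcreduction} to reduce the claim to the categorical Beck--Chevalley condition for the underlying Frobenius structure on $(\SplRef_1, \SplOpFib_1)$. Then, by Proposition~\ref{prop:catbc}, it suffices to show that the canonical transformation $\alpha : u_!f^* \To g^*v_!$ of diagram~(\ref{eq:catalpha}) lifts to a transformation $\ba : \u_!\f^* \To \g^*\v_!$, for every pullback square $\uv : \f \to \g$ in $\SplOpFib$.

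Fix such a pullback square with $f : \A \to \B$, $g : \C \to \D$, and let $(\w, b) \in \SplRef/\B$ be given by a split reflection $L \dashv R$ on $W : \mathcal{X} \to \mathcal{Y}$ with unit $\eta$ and extension $b : \mathcal{Y} \to \B$. Unfolding Proposition~\ref{prop:functor:frobstruc} on both sides, $\u_!\f^*(\w, b)$ carries the split reflection $G \dashv f^*R$ with $G(a, y) = ((b\eta_y)_!^f a,\, Ly)$ and unit $\theta_{(a,y)} = (\underline{b\eta_y}^f, \eta_y)$, while $\g^*\v_!(\w, b)$ carries the split reflection $G' \dashv g^*R$ with $G'(c, y) = ((v b\eta_y)_!^g c,\, Ly)$ and unit $\theta'_{(c,y)} = (\underline{v b\eta_y}^g, \eta_y)$; here $(-)_!^f$ and $\underline{(-)}^f$ denote the cocartesian lift chosen by the splitting of $f$, and similarly for $g$. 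By~(\ref{eq:ARalphawb}), the square $\alpha_{(w, b)}$ is induced by the two canonical comparison maps $\alpha_b(a, y) = (u(a), y)$ and $\alpha_{w.b}(x, y) = (u(x), y)$, both isomorphisms because $(u, v)$ is a pullback.

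To produce the desired lift $\ba_{(\w,b)}$ it remains to verify that $\alpha_{(w, b)}$ intertwines the left adjoints $G, G'$ and carries $\theta$ to $\theta'$. Both reduce immediately to the defining property of $\uv$ as a morphism in $\SplOpFib$, namely that for every morphism $\phi$ in $\B$ and every object $a$ over its domain one has $u(\underline{\phi}^f) = \underline{v\phi}^g$ (and hence $u(\phi_!^f a) = (v\phi)_!^g u(a)$). Applied to $\phi = b\eta_y$, this gives
\[
\alpha_{w.b}(G(a, y)) = (u((b\eta_y)_!^f a), Ly) = ((v b\eta_y)_!^g u(a), Ly) = G'(\alpha_b(a, y)),
\]
and similarly $\alpha_{w.b}(\theta_{(a,y)}) = (u(\underline{b\eta_y}^f), \eta_y) = (\underline{v b\eta_y}^g, \eta_y) = \theta'_{\alpha_b(a,y)}$. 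Naturality in $(w, b)$ and compatibility with morphisms of split reflections follow from the usual functoriality arguments, already employed in the proof of Proposition~\ref{prop:dfrobstruc}.

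The main obstacle is not any single calculation, which is short once the right identification is made, but rather matching up three layers of data: the pullback identifications in~(\ref{eq:ARalphawb}), the formulae from Proposition~\ref{prop:functor:frobstruc} for $\f^*$ and $\g^*$, and the splitting-preservation clause defining morphisms of $\SplOpFib$. Once these are lined up, the content of the Beck--Chevalley condition collapses precisely to the axiom that $(u, v)$ preserves the chosen cocartesian lifts, so no further hypothesis is needed.
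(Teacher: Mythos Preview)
Your proposal is correct and follows essentially the same route as the paper: reduce via Corollary~\ref{cor:bcreduction} and Proposition~\ref{prop:catbc} to lifting $\alpha$, then check that the comparison square is a morphism of split reflections, which boils down to the single identity $u((b\eta_y)_!^f a) = (vb\eta_y)_!^g u(a)$ coming from the splitting-preservation axiom for $(u,v)\in\SplOpFib$. The paper's proof is the same computation with different variable names ($P,Q,X,Y,U,\E$ in place of your $f,g,u,v,b,\mathcal{Y}$); your explicit check on the units is redundant since, as noted in Example~\ref{ex:function:lali}, a $\SplMono$-morphism between split reflections automatically commutes with the units once it commutes with both adjoints.
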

\begin{proof}
	Consider a pullback square $(X, Y)$ between split opfibrations $P$ and $Q$ which commutes with their splittings. By Corollary~\ref{cor:bcreduction} it suffices to show that the induced transformation $\alpha : X_!P^* \To Q^*Y_!$ has a lift $\ba$ as depicted on the left below: 
\[\begin{tikzcd}[sep=large]
	{\SplRef/\B} & {\SplRef/\C} \\
	{\Ar(\Cat/\B)} & {\Ar(\Cat/\C)} \rlap{ ,}
	\arrow[from=1-1, to=2-1]
	\arrow[from=1-2, to=2-2]
	\arrow[""{name=0, anchor=center, inner sep=0}, "{\X_!\P^*}", shift left=3, from=1-1, to=1-2]
	\arrow[""{name=1, anchor=center, inner sep=0}, "{\Q^*\Y_!}"', shift right=3, from=1-1, to=1-2]
	\arrow[""{name=2, anchor=center, inner sep=0}, "{\Ar(X_!P^*)}", shift left=3, from=2-1, to=2-2]
	\arrow[""{name=3, anchor=center, inner sep=0}, "{\Ar(Q^*Y_!)}"', shift right=3, from=2-1, to=2-2]
	\arrow["{\,\ba}", shift right, shorten <=2pt, shorten >=2pt, Rightarrow, from=0, to=1]
	\arrow["{\,\Ar(\alpha)}", shift right=4, shorten <=2pt, shorten >=2pt, Rightarrow, from=2, to=3]
\end{tikzcd}
\quad\quad
\begin{tikzcd}
	{\mathcal{F}} & {\A\times_\B\mathcal{F}} & {\C\times_\D\mathcal{F}} \\
	\E & {\A \times_\B\E} & {\C\times_\D\E} \\
	\B & \A & \C \\
	& \D \rlap{ .}
	\arrow["P"', from=3-2, to=3-1]
	\arrow["X", from=3-2, to=3-3]
	\arrow["Q", from=3-3, to=4-2]
	\arrow["Y"', from=3-1, to=4-2]
	\arrow["U"', from=2-1, to=3-1]
	\arrow["L", shift left, from=2-1, to=1-1]
	\arrow["R", shift left, from=1-1, to=2-1]
	\arrow["F"', shift right, from=2-2, to=3-2]
	\arrow["{P^*U}", shift left, from=2-2, to=3-2]
	\arrow["G", shift left, from=2-2, to=1-2]
	\arrow["{P^*R}", shift left, from=1-2, to=2-2]
	\arrow["{Q^*R}", shift left, from=1-3, to=2-3]
	\arrow["{G'}", shift left, from=2-3, to=1-3]
	\arrow["{Q^*(Y.U)}", shift left, from=2-3, to=3-3]
	\arrow["{F'}"', shift right, from=2-3, to=3-3]
	\arrow["{\alpha_U}", dotted, from=2-2, to=2-3]
	\arrow["{\alpha_{U.R}}", shift right, dotted, from=1-2, to=1-3]
	\arrow["{\pi_\mathcal{F}}"', from=1-2, to=1-1]
	\arrow["{\pi_\E}"', from=2-2, to=2-1]
\end{tikzcd}
\]
This comes down to showing that the square $(\alpha_{U.R}, \alpha_U) = (X.\pi_\A\times_\D\pi_\mathcal{F}, X.\pi_\A \times_\D\pi_\E)$ is a morphism of split reflections, meaning $(X.\pi_\A\times_\D\pi_\mathcal{F}).G = G'.(X.\pi_\A \times_\D\pi_\E)$, as on the right above. Spelling out the definitions involved we see that this comes down to showing that for any pair $(a, e) \in \A \times_\B \E$ we have $X((U\eta_e)_!a) = (YU\eta_e)_!Xa$. Since $(X, Y)$ is a morphism of opfibrations we have $\smash{\underline{YU\eta_e}} = X\smash{\underline{U\eta_e}}$, from which the desired equality follows immediately. 
\end{proof}

\section{Strong Frobenius structures} We turn to a minor digression concerning strengthened versions of the Frobenius property sometimes encountered in the literature; see e.g.~\cite[Definition~B.6.2]{larrea2018ModelsDependent} and~\cite[Proposition~5.2]{vandenberg2022EffectiveTrivial}. Recall that Definition~\ref{def:wfsfrob} of the Frobenius property for a \wfs\ $(L, R)$ states that $L$ is closed under pullback along $R$; if, in addition, $L$ comes with a notion of structure preserving squares of $L$ maps, then the Frobenius property can be strengthened to demand that the pullback squares of $L$ maps along $R$ maps are structure preserving. We argue that this strengthened Frobenius property can be understood in terms of the notion of natural transformation lifting of Definition~\ref{def:catnatlift}. 

Let $(\J, \K)$ be a pair of functors over an arrow category $\Ar(\C)$, and $f : A \to B$ a map in $\C$. A Frobenius structure for $f$ is a lift $\f^*: \J/B \to \J/A$. A pair $(\g, b)$ in $\J/B$ now gives rise to a pullback square as depicted below, where $\varepsilon$ is the counit of $f_! \ladj f^*$:
% https://q.uiver.app/#q=WzAsNixbMCwxLCJcXGJ1bGxldCJdLFswLDIsIkEiXSxbMSwxLCJcXGJ1bGxldCJdLFsxLDIsIkIgXFxybGFweyAufSJdLFsxLDAsIlxcYnVsbGV0Il0sWzAsMCwiXFxidWxsZXQiXSxbMCwxLCJmXipiIiwyXSxbMiwzLCJiIl0sWzEsMywiZiIsMl0sWzAsMiwiXFx2YXJlcHNpbG9uX2IiXSxbNCwyLCJnIl0sWzUsMCwiZl4qZyIsMl0sWzUsNCwiXFx2YXJlcHNpbG9uX3tiLmd9Il0sWzAsMywiIiwwLHsic3R5bGUiOnsibmFtZSI6ImNvcm5lciJ9fV0sWzUsMiwiIiwwLHsic3R5bGUiOnsibmFtZSI6ImNvcm5lciJ9fV1d
\[\begin{tikzcd}[ampersand replacement=\&]
	\bullet \& \bullet \\
	\bullet \& \bullet \\
	A \& {B \rlap{ .}}
	\arrow["{f^*b}"', from=2-1, to=3-1]
	\arrow["b", from=2-2, to=3-2]
	\arrow["f"', from=3-1, to=3-2]
	\arrow["{\varepsilon_b}", from=2-1, to=2-2]
	\arrow["g", from=1-2, to=2-2]
	\arrow["{f^*g}"', from=1-1, to=2-1]
	\arrow["{\varepsilon_{b.g}}", from=1-1, to=1-2]
	\arrow["\lrcorner"{anchor=center, pos=0.125}, draw=none, from=2-1, to=3-2]
	\arrow["\lrcorner"{anchor=center, pos=0.125}, draw=none, from=1-1, to=2-2]
\end{tikzcd}\]
As mentioned previously, for any such $f$ there is a lift $\f_! : \J/A \to \J/B : (\g, a) \mapsto (\g, f.a)$, and so the demand that $(\varepsilon_{b.g}, \varepsilon_b)$ is in the image of a map $\f^*\g \to \g$ in $\J$ can be phrased as the requirement that $\varepsilon$ has a lift $\be : \f_!\f^* \to 1$ in the sense of Definition~\ref{def:catnatlift}. What is called the strong Frobenius property in the literature can then be seen as the special case of this where $b = 1_B$ and $f^*b = 1_A$. 

Thinking of the strong Frobenius property in this way shows that it has an evident counterpart, which is the requirement that the unit $\eta$ of $f_! \ladj f^*$ lifts to $\bn: 1 \to \f^*\f_!$. Together, these lifts could then be further required to satisfy the triangle identities, so that $\f_! \ladj \f^*$. If $(\J, \K)$ is part of a closed lifting structure then we can transpose $\f^*$ and $\f_!$ to obtain a pushforward structure $\f_* : \K/A \to \K/B$ and (by abuse of notation) $\f^* : \K/B \to \K/A$. These can then be subjected to similar conditions, asking for a lifted adjunction $\f^* \ladj \f_*$. For the sake of discussion we will call such lifted adjunctions strong Frobenius and strong pushforward structures. 

\begin{Defn}
	A \emph{strong Frobenius structure} $(\f^*, \bn, \be)$ on a morphism $f : A \to B$ with respect to a closed pair $(\J, \K)$ is a Frobenius structure on $f$ togther with lifts $\bn$ and $\be$ of the unit and counit of $f_! \ladj f^*$, giving rise to an adjunction $\f_! \ladj \f^*$ as on the left of: 
% https://q.uiver.app/#q=WzAsOCxbMCwxLCJcXEFyKFxcQy9CKSJdLFsxLDEsIlxcQXIoXFxDL0EpIFxccmxhcHsgLH0iXSxbMCwwLCJcXEovQiJdLFsxLDAsIlxcSi9BIl0sWzIsMCwiXFxLL0EiXSxbMywwLCJcXEsvQiJdLFsyLDEsIlxcQXIoXFxDL0EpIl0sWzMsMSwiXFxBcihcXEMvQikgXFxybGFweyAufSJdLFswLDEsIlxcQXIoZl4qKSIsMCx7Im9mZnNldCI6LTJ9XSxbMiwwXSxbMywxXSxbMiwzLCJcXGZeKiIsMCx7Im9mZnNldCI6LTJ9XSxbNiw3LCJcXEFyKGZfKikiLDAseyJvZmZzZXQiOi0yfV0sWzQsNl0sWzUsN10sWzQsNSwiXFxmXyoiLDAseyJvZmZzZXQiOi0yfV0sWzMsMiwiXFxmXyEiLDAseyJvZmZzZXQiOi0yfV0sWzEsMCwiXFxBcihmXyEpIiwwLHsib2Zmc2V0IjotMn1dLFs1LDQsIlxcZl4qIiwwLHsib2Zmc2V0IjotMn1dLFs3LDYsIlxcQXIoZl4qKSIsMCx7Im9mZnNldCI6LTJ9XSxbMTYsMTEsIiIsMCx7ImxldmVsIjoxLCJzdHlsZSI6eyJuYW1lIjoiYWRqdW5jdGlvbiJ9fV0sWzE3LDgsIiIsMCx7ImxldmVsIjoxLCJzdHlsZSI6eyJuYW1lIjoiYWRqdW5jdGlvbiJ9fV0sWzE4LDE1LCIiLDAseyJsZXZlbCI6MSwic3R5bGUiOnsibmFtZSI6ImFkanVuY3Rpb24ifX1dLFsxOSwxMiwiIiwwLHsibGV2ZWwiOjEsInN0eWxlIjp7Im5hbWUiOiJhZGp1bmN0aW9uIn19XV0=
\[\begin{tikzcd}[ampersand replacement=\&, sep=3em]
	{\J/B} \& {\J/A} \& {\K/A} \& {\K/B} \\
	{\Ar(\C/B)} \& {\Ar(\C/A) \rlap{ ,}} \& {\Ar(\C/A)} \& {\Ar(\C/B) \rlap{ .}}
	\arrow[""{name=0, anchor=center, inner sep=0}, "{\Ar(f^*)}", shift left=2, from=2-1, to=2-2]
	\arrow[from=1-1, to=2-1]
	\arrow[from=1-2, to=2-2]
	\arrow[""{name=1, anchor=center, inner sep=0}, "{\f^*}", shift left=2, from=1-1, to=1-2]
	\arrow[""{name=2, anchor=center, inner sep=0}, "{\Ar(f_*)}", shift left=2, from=2-3, to=2-4]
	\arrow[from=1-3, to=2-3]
	\arrow[from=1-4, to=2-4]
	\arrow[""{name=3, anchor=center, inner sep=0}, "{\f_*}", shift left=2, from=1-3, to=1-4]
	\arrow[""{name=4, anchor=center, inner sep=0}, "{\f_!}", shift left=2, from=1-2, to=1-1]
	\arrow[""{name=5, anchor=center, inner sep=0}, "{\Ar(f_!)}", shift left=2, from=2-2, to=2-1]
	\arrow[""{name=6, anchor=center, inner sep=0}, "{\f^*}", shift left=2, from=1-4, to=1-3]
	\arrow[""{name=7, anchor=center, inner sep=0}, "{\Ar(f^*)}", shift left=2, from=2-4, to=2-3]
	\arrow["\dashv"{anchor=center, rotate=90}, draw=none, from=4, to=1]
	\arrow["\dashv"{anchor=center, rotate=90}, draw=none, from=5, to=0]
	\arrow["\dashv"{anchor=center, rotate=90}, draw=none, from=6, to=3]
	\arrow["\dashv"{anchor=center, rotate=90}, draw=none, from=7, to=2]
\end{tikzcd}\]
Similarly, a \emph{strong pushforward structure} $(\f_*, \bar{\bn}, \bar{\be})$ on $f$ is a pushforward structure on $f$ with lifts $\bar{\bn}$ and $\bar{\be}$ of the unit and counit of $f^* \ladj f_*$, such that $\f^* \ladj \f_*$. 
\end{Defn}

\begin{Prop}
	For a map $f: A \to B$ in \ct{C}, and a closed lifting structure $(\J, \phi, \K)$, there is a bijection between strong Frobenius and pushforward structures on $f$.
\end{Prop}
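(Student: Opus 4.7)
The plan is to upgrade the Frobenius equivalence of Section~\ref{sec:function:frobenius} from a bijection of 1-cells to a bijection compatible with adjunction data, by exploiting the 2-categorical naturality of the Galois connection from Proposition~\ref{prop:sc:bcgaloisconnection}.

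First, I would observe that both the canonical post-composition lift $\f_!: \J/A \to \J/B$ and the canonical comprehension pullback $\f^*: \K/B \to \K/A$ (the latter exists since $\K$ is a comprehension category, by the categorical analogue of Proposition~\ref{prop:doublecat:comprehensiondouble}) correspond to one another under the Galois connection of Proposition~\ref{prop:sc:bcgaloisconnection} applied to the adjunction $f_! \dashv f^*$. Combined with the Frobenius equivalence applied to $f^* \dashv f_*$ (matching $\f^*: \J/B \to \J/A$ with $\f_*: \K/A \to \K/B$), this pairs the strong Frobenius lift pair $(\f_!, \f^*)$ on the $\J$-side with the strong pushforward lift pair $(\f^*, \f_*)$ on the $\K$-side at the 1-cell level.

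Next, given a strong Frobenius structure $(\f^*, \bn, \be)$, the Frobenius equivalence produces $\f_*$, and the 2-cells $\bar\bn, \bar\be$ arise by transposing $\be: \f_!\f^* \Rightarrow 1$ and $\bn: 1 \Rightarrow \f^*\f_!$ across the Galois connection, guided by mate calculus for the triple adjunction $f_! \dashv f^* \dashv f_*$ in the base category. The resulting 2-cells $\bar\bn: 1_{\K/B} \Rightarrow \f_*\f^*$ and $\bar\be: \f^*\f_* \Rightarrow 1_{\K/A}$ lift $\eta'$ and $\varepsilon'$ respectively. The triangle identities for $(\bar\bn, \bar\be)$ follow from those of $(\bn, \be)$ by transporting the defining pasting diagrams across the 2-natural bijection. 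The inverse construction, from strong pushforward to strong Frobenius, is symmetric, so the two procedures are mutually inverse.

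The main technical obstacle is the mate calculus bookkeeping: verifying that the transposes of $\bn$ and $\be$ really lift the correct natural transformations $\eta'$ and $\varepsilon'$, and that the triangle identities transfer. This requires carefully tracking the isomorphisms $\J/A \cong \lpf(\K/A)$ and $\K/B \cong (\J/B)\lpf$ together with the interaction of the two constituent adjunctions $f_! \dashv f^*$ and $f^* \dashv f_*$ through their shared middle functor $f^*$.
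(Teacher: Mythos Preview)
Your approach is essentially the paper's: use the mate correspondence across the triple adjunction $f_! \dashv f^* \dashv f_*$ together with the 2-categorical Galois connection of Proposition~\ref{prop:sc:bcgaloisconnection} (via~\cite[Proposition~5.8]{gambino2017FrobeniusCondition}) to transport the lifted unit and counit from one side to the other. The pairing of 1-cells you describe, and the transposition of $\bn$ and $\be$ to obtain $\bar\bn$ and $\bar\be$, are exactly what the paper does.

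Where you diverge is in treating the triangle identities as the ``main technical obstacle'' requiring careful pasting-diagram transport. The paper sidesteps this entirely: since $(\J, \phi, \K)$ is closed, both $\J \to \Ar(\C)$ and $\K \to \Ar(\C)$ are faithful, so 2-cells between lifts are determined by their underlying natural transformations. The triangle identities for $(\bar\bn, \bar\be)$ therefore hold automatically because the underlying $\eta'$ and $\varepsilon'$ satisfy them; no bookkeeping is needed. The same remark applies in the other direction. This observation removes precisely the part you flagged as most delicate, and it is worth internalising: for closed lifting structures, verifying that a lifted 2-cell equation holds reduces to checking the equation downstairs.
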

\begin{proof}
	The units and counits of $f_! \ladj f^* \ladj f_*$ are mates~\cite[Lemma~3.1]{hazratpour20242categoricalProof}, and so---due to how the various lifts of $f_!$, $f^*$, and $f_*$ are constructed, and that $(\J, \phi, \K)$ is closed---their lifts are in bijective correspondence by \cite[Proposition 5.8]{gambino2017FrobeniusCondition}. The triangle equalities do not add any extra requirement because $\J$ and $\K$ are faithful (as the lifting structure is closed).  
\end{proof}

Of course, similar definitions and results exist for the double categorical case, and again our running example exhibits such a structure. 

\begin{Prop}
	The pair $(\mathbf{SplRefl}, \SplOpFib)$ admits a strong Frobenius structure.
\end{Prop}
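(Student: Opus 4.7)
The plan is to extend the Frobenius structure $\P^*$ of Proposition~\ref{prop:dfrobstruc} by exhibiting, for each split opfibration $P : \A \to \B$, lifts $\bn$ and $\be$ of the unit and counit of $P_! \dashv P^* : \Cat/\A \to \Cat/\B$. By closedness of the lifting structure on $(\DSplRef, \DSplOpFib)$ the double functor $\DSplRef \to \Sq(\Cat)$ is faithful (it is concrete), so once $\bn$ and $\be$ are in hand the triangle identities lift automatically from those of the underlying adjunction, yielding the desired $\P_! \dashv \P^*$ in the slice double categories.

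For the counit component at $(L \dashv R, U : \C \to \B) \in \SplRef/\B$, we have $\P_!\P^*(L, R, U) = (G \dashv P^*R, P\pi_\A)$, and the underlying square in $\Cat/\B$ is the pair of projections $(\pi_\D, \pi_\C)$. That this is a morphism of split reflections is direct: from $G(a, c) = ((U\eta_c)_! a, Lc)$ one gets $\pi_\D \circ G(a, c) = Lc = L \circ \pi_\C(a, c)$, and from $\theta_{(a, c)} = (\underline{U\eta_c}, \eta_c)$ one gets $\pi_\C \circ \theta_{(a, c)} = \eta_c$, so the square commutes both with the left adjoints and with the units of the adjunctions.

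For the unit component at $(L \dashv R, V : \C \to \A) \in \SplRef/\A$, we have $\P^*\P_!(L, R, V) = (G'' \dashv P^*R, \pi_\A)$ with $G''(a, c) = ((PV\eta_c)_! a, Lc)$, and the underlying square is $(\eta_{VR}, \eta_V)$ with $\eta_V(c) = (Vc, c)$ and $\eta_{VR}(d) = (VRd, d)$. Verifying that this is a morphism of split reflections reduces to the equality $G''(Vc, c) = (VRLc, Lc)$, together with the compatibility $\theta''_{(Vc, c)} = (V\eta_c, \eta_c) = \eta_V \circ \eta_c$ between the units.

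The main obstacle will be the first equality $(PV\eta_c)_!(Vc) = VRLc$: although $V\eta_c : Vc \to VRLc$ in $\A$ is indeed a lift of $PV\eta_c$, it need not a priori agree with the chosen cocartesian lift supplied by the splitting of $P$. I expect to handle this by leveraging the universal property of the cocartesian lift to obtain a canonical comparison morphism $(PV\eta_c)_!(Vc) \to VRLc$ over $1_{PVRLc}$, and arguing that the Frobenius structure can be refined so that this comparison is the identity. Once this is established, naturality of $\bn$ and $\be$ and verification of the triangle identities proceed via calculations analogous to those in Propositions~\ref{prop:dfrobstruc} and~\ref{prop:bcfrobstruc}, using essentially only that morphisms of split opfibrations commute with the chosen splittings.
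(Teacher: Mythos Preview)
Your approach mirrors the paper's exactly: lift the unit and counit of $P_! \dashv P^*$ to morphisms of split reflections, then use faithfulness of $\SplRef \to \Ar(\Cat)$ to obtain the triangle identities for free. Your treatment of the counit is essentially identical to the paper's.

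The genuine gap is precisely where you flag it. You need $(PV\eta_c)_!(Vc) = VRLc$, and your proposed resolution---obtaining a comparison map via the universal property and then ``refining the Frobenius structure so that this comparison is the identity''---is not an argument: you do not say what refinement you have in mind, nor why the refined $\P^*$ would still lie over $\Ar(P^*)$. The paper does not refine anything; it simply asserts the equality as an instance of a claimed ``general fact'': that for any $f : a \to b$ in $\A$ the chosen cocartesian lift $\underline{Pf}$ has codomain $b$, i.e.\ $(Pf)_! a = b$. But as your own hesitation suggests, this fails for arbitrary split opfibrations (take $\B$ terminal: then $P$ is trivially split and every $\underline{Pf}$ is an identity, regardless of $f$). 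So the paper's argument is also under-justified at exactly this point; your caution is warranted, but your proposal as written still does not close the gap.
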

\begin{proof}
	We expand on the proof of Proposition~\ref{prop:functor:frobstruc} by showing that the Frobenius structure $\P^*$ of a split opfibration $P : \A \to \B$ comes with the required lifts. Since $\mathbf{SplRefl}$ is faithful, these lifts are not additional structure but just a property of $\P^*$. To show that $\varepsilon : P_!P^* \to 1$ lifts, we consider the situation as drawn on the left below: 
	% https://q.uiver.app/#q=WzAsMTQsWzIsMiwiXFxBIl0sWzMsMSwiXFxBXFx0aW1lc19cXEJcXEMiXSxbMywwLCJcXEFcXHRpbWVzX1xcQlxcRCJdLFsyLDEsIlxcQyJdLFsyLDAsIlxcRCJdLFs0LDIsIlxcQiBcXHJsYXB7IC59Il0sWzQsMSwiXFxDIl0sWzQsMCwiXFxEIl0sWzEsMCwiXFxEIl0sWzEsMSwiXFxDIl0sWzEsMiwiXFxCXFxybGFweyAsfSJdLFswLDAsIlxcQVxcdGltZXNcXEJfXFxEIl0sWzAsMSwiXFxBXFx0aW1lc19cXEJcXEMiXSxbMCwyLCJcXEEiXSxbNCwzLCJSIiwwLHsib2Zmc2V0IjotMX1dLFszLDQsIkwiLDAseyJvZmZzZXQiOi0xfV0sWzEsMiwiRyIsMCx7Im9mZnNldCI6LTF9XSxbMiwxLCJQXipSIiwwLHsib2Zmc2V0IjotMX1dLFszLDAsIlUiLDJdLFsxLDAsIlBeKlBfIVUiLDAseyJsYWJlbF9wb3NpdGlvbiI6MzAsIm9mZnNldCI6LTF9XSxbMSwwLCJGIiwyLHsib2Zmc2V0IjoxfV0sWzAsNSwiUCIsMl0sWzcsNiwiUiIsMCx7Im9mZnNldCI6LTF9XSxbNiw3LCJMIiwwLHsib2Zmc2V0IjotMX1dLFsyLDddLFsxLDZdLFs2LDUsIlBfIVUiXSxbNCwyLCJcXGV0YV97VS5SfSJdLFszLDEsIlxcZXRhX1UiXSxbOSwxMCwiVSJdLFs4LDksIlIiLDAseyJvZmZzZXQiOi0xfV0sWzksOCwiTCIsMCx7Im9mZnNldCI6LTF9XSxbMTIsMTMsIkYiLDIseyJvZmZzZXQiOjF9XSxbMTMsMTAsIlAiLDJdLFsxMiwxMywiUF4qVSIsMCx7Im9mZnNldCI6LTF9XSxbMTEsMTIsIlBeKlIiLDAseyJvZmZzZXQiOi0xfV0sWzEyLDExLCJHIiwwLHsib2Zmc2V0IjotMX1dLFsxMSw4LCJcXHZhcmVwc2lsb25fe1UuUn0iXSxbMTIsOSwiXFx2YXJlcHNpbG9uX1UiXV0=
\[\begin{tikzcd}[ampersand replacement=\&]
	{\A\times\B_\D} \& \D \&[.5cm] \D \& {\A\times_\B\D} \& \D \\
	{\A\times_\B\C} \& \C \& \C \& {\A\times_\B\C} \& \C \\
	\A \& {\B\rlap{ ,}} \& \A \&\& {\B \rlap{ .}}
	\arrow["R", shift left, from=1-3, to=2-3]
	\arrow["L", shift left, from=2-3, to=1-3]
	\arrow["G", shift left, from=2-4, to=1-4]
	\arrow["{P^*R}", shift left, from=1-4, to=2-4]
	\arrow["U"', from=2-3, to=3-3]
	\arrow["{P^*P_!U}"{pos=0.3}, shift left, from=2-4, to=3-3]
	\arrow["F"', shift right, from=2-4, to=3-3]
	\arrow["P"', from=3-3, to=3-5]
	\arrow["R", shift left, from=1-5, to=2-5]
	\arrow["L", shift left, from=2-5, to=1-5]
	\arrow[from=1-4, to=1-5]
	\arrow[from=2-4, to=2-5]
	\arrow["{P_!U}", from=2-5, to=3-5]
	\arrow["{\eta_{U.R}}", from=1-3, to=1-4]
	\arrow["{\eta_U}", from=2-3, to=2-4]
	\arrow["U", from=2-2, to=3-2]
	\arrow["R", shift left, from=1-2, to=2-2]
	\arrow["L", shift left, from=2-2, to=1-2]
	\arrow["F"', shift right, from=2-1, to=3-1]
	\arrow["P"', from=3-1, to=3-2]
	\arrow["{P^*U}", shift left, from=2-1, to=3-1]
	\arrow["{P^*R}", shift left, from=1-1, to=2-1]
	\arrow["G", shift left, from=2-1, to=1-1]
	\arrow["{\varepsilon_{U.R}}", from=1-1, to=1-2]
	\arrow["{\varepsilon_U}", from=2-1, to=2-2]
\end{tikzcd}\]
	Indeed, $(\varepsilon_{U.R}, \varepsilon_U) = (\pi_D, \pi_\C)$ is a morphism of split reflections because for $(a, c) \in \A \times_\B \C$ we have $\pi_\D(G(a, c)) = \pi_\D(F(a, c), Lc) = Lc = L(\pi_\C(a, c))$. For $\eta: 1 \to P^*P_!$ we should show that the square $(\eta_{U.R}, \eta_U) = (U.R \times_\B 1_\D, U \times_{\B} 1_\C)$, drawn on the right above, is a morphism of split reflections, too. For $c \in \C$, the equation $G(\eta_U(c)) = \eta_{U.R}(Lc)$ boils down to the equality $URLc = (PU\eta_c)!Uc$. This is an instance of the general fact that for any $f: a \to b$ in $\A$, the cocartesian lift $\underline{Pf}$ is parallel to $f$ because $P$ is split. Lastly, these lifts satisfy the triangle identities as $\mathbf{SplRefl}$ is faithful. 
\end{proof}

\section{An algebraic model of type theory from groupoids}

Comprehension categories are one of the categorical structures used to model various forms of type theory. Substitution is modeled using pullbacks, and one difficulty with this is that pullbacks are usually only associative up to isomorphism, wheras substitution in type theory is strictly associative. So, in order to faithfully model type theory, the comprehension category in question should be split. One way to do this is to use the right adjoint of the forgetful functor $\SplFib \to \Fib$. In~\cite[Theorem~2.6]{gambino2023ModelsMartinLof}, Gambino and Larrea give a coherence theorem for this method, by identifying pseudo-stability conditions on a comprehension category that ensure that its splitting has strict interpretations of the $\Sigma$-, $\Pi$, and $\mathrm{Id}$-types of Martin-L\"of type theory. Any \awfs\ $(\DL, \DR)$ induces a comprehension category $\DL_1^{\pf} \to \Ar(\C)$, and conditions are identified in~\cite[Theorem~4.11]{gambino2023ModelsMartinLof} on $(\DL, \DR)$ that ensure that the coherence theorem can be applied to it. They then revisit the Hofmann and Streicher groupoid model of~\cite{hofmann1998GroupoidInterpretation}, by exhibiting an \awfs\ on the category $\Gpd$ of groupoids which satisfies these conditions~\cite[Theorem~5.5]{gambino2023ModelsMartinLof}. 

An \awfs\ $(\DL, \DR)$ induces a second (closely related) comprehension category, namely $\DR_1 \to \Ar(\C)$, and our goal in this section is to reproduce the aforementioned results in~\cite{gambino2023ModelsMartinLof} for this second option---that is, to identify conditions on the \awfs\ $(\DL, \DR)$ that ensure that the coherence theorem can be applied to $\DR_1 \to \Ar(\C)$. (Note that there is a functor $\DR_1 \cong (\DL\ldpf)_1 \hookrightarrow \DL_1^{\pf}$ over $\Ar(\C)$, which is generally not invertible; $\DL_1^{\pf}$ corresponds to the retract closure of $\DR_1$.) We phrase these conditions, and then show that the same \awfs\ used by Gambino and Larrea on $\Gpd$ satisfies our conditions. The majority of the work needed for this has been done in the preceeding sections, and what remains is a straightforward adaptation of the approach of~\cite{gambino2023ModelsMartinLof}. Therefore, we only outline the proofs, and the reader is referred to~\cite{gambino2023ModelsMartinLof} for the rest of the details. 

\subsection{Coherence for the comprehension category of right maps} As mentioned, the forgetful functor of the right class of an  \awfs\ is always a comprehension category.

\begin{Prop}
	For an \awfs\ $(\DL, \DR)$, $\DR_1 \to \Ar(\C)$ is a comprehension category.
\end{Prop}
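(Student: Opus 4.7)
The plan is to invoke the already-established fact that right-lifting double functors yield comprehension double categories, and then transfer the comprehension structure across the defining isomorphism of an \awfs. More precisely, since $(\DL, \DR)$ is an \awfs, it is in particular a closed lifting structure (Definition~\ref{def:doublefuncliftingstructure}), so the canonical comparison $\phi_r : \DR \to \DL\ldpf$ is an isomorphism of double functors over $\Sq(\C)$. Restricting to arrow categories gives an isomorphism $\DR_1 \cong (\DL\ldpf)_1$ over $\Ar(\C)$.

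By Proposition~\ref{prop:doublecat:comprehensiondouble}, the double functor $\DL\ldpf \to \Sq(\C)$ is a comprehension double category, and unwinding the definition this means in particular that its arrow component $(\DL\ldpf)_1 \to \Ar(\C)$ is a comprehension category. Composing with the isomorphism $\DR_1 \cong (\DL\ldpf)_1$ yields that $\cod . \DR_1 : \DR_1 \to \C$ is a Grothendieck fibration and that $\DR_1 \to \Ar(\C)$ sends cartesian morphisms to cartesian morphisms (i.e.\ to pullback squares), as required.

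There is no real obstacle here: once one observes that closedness of the lifting structure underlying an \awfs\ gives $\DR \cong \DL\ldpf$, the conclusion is immediate from Proposition~\ref{prop:doublecat:comprehensiondouble}. The only thing worth spelling out explicitly in the write-up is that a cartesian morphism in $\DR_1$ is exactly one whose underlying square in $\C$ is a pullback --- this is transported from the analogous statement for $\DL\ldpf$, which holds by the proof of Proposition~\ref{prop:doublecat:comprehensiondouble} (where the cartesian lift of $y : C \to A$ along $(g, \phi_{g-})$ was constructed as a pullback in $\C$ endowed with the induced lifting operation from Lemma~\ref{lem:sc:pb}).
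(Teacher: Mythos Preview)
Your proposal is correct and follows essentially the same approach as the paper: the paper's proof is the one-liner ``This follows from Proposition~\ref{prop:doublecat:comprehensiondouble} above,'' and you have simply spelled out the implicit step of using closedness of the lifting structure to identify $\DR$ with $\DL\ldpf$ before applying that proposition.
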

\begin{proof}
	This follows from Proposition~\ref{prop:doublecat:comprehensiondouble} above.
\end{proof}

In fact, no further requirements on the \awfs\ are needed to choose the $\Sigma$-types. 

\begin{Prop}\label{prop:stypes}
	The comprehension category $\DR_1 \to \Ar(\C)$ associated with an \awfs\ admits a pseudo-stable choice of $\Sigma$-types. 
\end{Prop}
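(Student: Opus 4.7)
The plan is to interpret $\Sigma$-types as vertical composition in the double category $\DR$. In a context $A \in \C$, a dependent type is a vertical morphism $\g : B \to A$ in $\DR$, and a dependent type over that context extension is a vertical morphism $\h : C \to B$. We set $\Sigma(\g, \h) = \h.\g : C \to A$, where the dot denotes vertical composition in $\DR$. Because $\DR$ is a double category, this composite is again a vertical morphism of $\DR$, so the operation closes within the comprehension category $\DR_1 \to \Ar(\C)$.

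The substance is pseudo-stability under substitution. Given $\g$ and $\h$ as above and an arbitrary morphism $v : A' \to A$ in $\C$, the fact that $\DR_1 \to \Ar(\C)$ is a comprehension category (Proposition~\ref{prop:doublecat:comprehensiondouble}) produces a cartesian square $\v\text{-}\g: v^*\g \to \g$ in $\DR$ over the pullback of $g$ along $v$. Pulling $\h$ back along the top edge $v^+$ of this square gives a second cartesian square $(v^+)\text{-}\h : (v^+)^*\h \to \h$ in $\DR$. The definition of a comprehension \emph{double} category requires exactly that cartesian squares are closed under vertical composition, so pasting these two squares produces a cartesian lift in $\DR$ of the pullback square for $\h.\g$ along $v$. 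By the universal property of cartesian lifts this agrees, up to a canonical vertical isomorphism in $\DR$ over an identity in $\C$, with any other chosen cartesian lift $v^*(\h.\g) \to \h.\g$; this canonical iso is the substitution isomorphism $\Sigma(v^*\g, (v^+)^*\h) \cong v^*\Sigma(\g,\h)$.

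Concretely, then, one first fixes a cleavage for the Grothendieck fibration $\cod.\DR_1$ (i.e.\ a choice of cartesian lifts), uses it to define both the substitution functor on $\DR_1$ and the $\Sigma$-former above, and finally reads off the substitution isomorphism from the universal property. Functoriality of this iso in $v$ and the cocycle condition over $v, v'$ reduce, via the uniqueness clause of cartesian lifts, to the corresponding coherence for the cleavage, exactly as in~\cite[Proposition~5.1]{gambino2023ModelsMartinLof}.

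The main obstacle is bookkeeping rather than any genuinely new idea: one must track that the comparison isomorphisms between $v^*(\h.\g)$ and the pasted composite $(v^+)^*\h.v^*\g$ compose correctly, and that the resulting $\Sigma$-interpretation is preserved by the pseudo-morphisms that arise from re-indexing. All of this is forced by the universal properties of cartesian morphisms in the fibration $\DR_1 \to \C$, so no further hypothesis on the \awfs\ beyond it being an \awfs\ is required, matching the statement of the proposition.
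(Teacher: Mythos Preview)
Your proposal is correct and follows essentially the same approach as the paper: $\Sigma$-types are given by vertical composition in $\DR$, and pseudo-stability comes from the fact that composition preserves cartesian (pullback) squares, which is exactly the comprehension-double-category property you invoke. One small slip: with $\g : B \to A$ and $\h : C \to B$, the composite $C \to A$ is $\g.\h$, not $\h.\g$; also the paper defers to \cite[Proposition~4.3]{gambino2023ModelsMartinLof} rather than Proposition~5.1.
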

\begin{proof}
	The choice of $\Sigma$-types is given simply by the composition operation of $\DR$, which is functorial and preserves pullback squares. The rest of the choices of structure are made as in~\cite[Proposition~4.3]{gambino2023ModelsMartinLof} (and see~\cite[Lemma 2.7.7]{larrea2018ModelsDependent}). 
\end{proof}

The $\Pi$-types are interpreted using a pushforward structure on the \awfs, and the Beck--Chevalley condition ensures this choice is pseudo-stable. Following terminology of~\cite{joyal2017NotesClans} we introduce a shorthand for an \awfs\ with this required structure.

\begin{Defn}
	A $\pi$-\awfs is an \awfs\ with a Frobenius structure satisfying the Beck--Chevalley condition as in Definition~\ref{def:dcatBC}. 
\end{Defn}

\begin{Prop}\label{prop:ptypes}
	The comprehension category $\DR_1 \to \Ar(\C)$ associated with a $\pi$-\awfs\ admits a pseudo-stable choice of $\Pi$-types. 
\end{Prop}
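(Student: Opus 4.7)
The plan is to interpret $\Pi$-types via the pushforward operation carried by a $\pi$-\awfs\ and to use the Beck--Chevalley condition to guarantee pseudo-stability, closely following the strategy of~\cite[Theorem~4.11]{gambino2023ModelsMartinLof}, but now applied to the comprehension category $\DR_1 \to \Ar(\C)$ rather than to $\DL_1^\pf$.

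First, I would convert the Frobenius structure of the $\pi$-\awfs\ to a pushforward structure $(\f_*)_{\f \in \DR}$ via Theorem~\ref{thm:dffequiv}. For a vertical morphism $\f : A \to B$ of $\DR$ and a dependent type $\g \in \DR_1/A$, define the dependent product to be $\Pi_\f \g := \f_* \g \in \DR_1/B$. The operations of $\lambda$-abstraction and application and the $\beta$- and $\eta$-equalities are then inherited from the unit, counit, and triangle identities of $f^* \dashv f_*$ in $\C$; since the comprehension category is the forgetful functor $\DR_1 \to \Ar(\C)$, no extra data beyond the chosen pushforward is needed to exhibit these operations.

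Second, to establish pseudo-stability, given a substitution morphism $v : B' \to B$, I would lift it to a pullback square $\uv : \f' \to \f$ in $\DR$; such a lift exists because $\DR \cong \DL\ldpf$ is a comprehension double category by Proposition~\ref{prop:doublecat:comprehensiondouble}. What is required is a coherent natural isomorphism $v^*\Pi_\f \g \cong \Pi_{\f'}(u^* \g)$ sitting inside $\DR_1/B'$. The underlying isomorphism in $\C$ is the Beck--Chevalley map $\beta^{-1}_g$, and the Beck--Chevalley condition of Definition~\ref{def:dcatBC} is precisely the statement that $\beta$ lifts to a transformation $\bb : \v^*\g_* \To \f_*\u^*$ inside $\DR$; by Corollary~\ref{cor:bcreduction} this may equivalently be checked at the level of the categories $\DR_1$ and $\DL_1$. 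Specialising $\bb$ at $\g$ provides the required isomorphism carrying $\DR_1$-structure.

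The main obstacle will be the bookkeeping of the pseudo-stability diagrams: one must check that the lifted Beck--Chevalley isomorphism is compatible with composition of substitutions, interacts correctly with $\lambda$ and application, and fits together coherently with the $\Sigma$- (and, once we add them, $\mathrm{Id}$-) type structures supplied by Proposition~\ref{prop:stypes}. My expectation is that each such coherence equation projects to an equation in $\C$ that is already known to hold by the mate calculus for the composite adjoint string $f_! \dashv f^* \dashv f_*$~\cite[Lemma~3.1]{hazratpour20242categoricalProof}, and that faithfulness of $\DR_1 \to \Ar(\C)$---which follows from the lifting structure being closed, cf.\ Lemma~\ref{lem:concreteslice}---forces the equations to hold upstairs as well. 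The remaining work is then a straightforward transcription of the proof of~\cite[Theorem~4.11]{gambino2023ModelsMartinLof} with $\DL_1^\pf$ replaced throughout by $\DR_1$.
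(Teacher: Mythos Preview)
Your proposal is correct and follows essentially the same route as the paper: obtain a pushforward structure from the Frobenius structure via Theorem~\ref{thm:dffequiv}, set $\Pi_\f\g := \f_*\g$, use the Beck--Chevalley lift $\bb$ for pseudo-stability, and defer the remaining choices to~\cite{gambino2023ModelsMartinLof}. One small point the paper makes explicit that you gloss over: the pushforward structure $\f_*$ produced by Theorem~\ref{thm:dffequiv} is in the \emph{arrow view} (it acts on arrows of the slice), whereas the $\Pi$-type requires an $\DR$-structure on the \emph{object} $f_*g$; the paper invokes the reasoning of Proposition~\ref{prop:sc:arrtoobj} to bridge this, and its pseudo-stability diagram decomposes the resulting $\DR$-square into three pieces (the $\f_*$-image of the canonical comparison $\alpha$, the Beck--Chevalley component $\beta_i$, and a cartesian square) rather than citing $\bb$ directly.
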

\begin{proof}
	Consider composable $\DR$ maps $\g$ and $\f$. As the \awfs\ has a Frobenius structure, $\f$ has a pushforward structure $\f_*$ by Theorem~\ref{thm:dffequiv}, which gives an $\DR$ map $\f_*\g$ by the reasoning in Proposition~\ref{prop:sc:arrtoobj}; this is the choice of $\Pi$-type for $\g$ and $\f$. The Beck--Chevalley condition ensures that the assignment $\Pi : (\g, \f) \mapsto \f_*\g$ is functorial:
% https://q.uiver.app/#q=WzAsMTUsWzQsMSwiXFxidWxsZXQiXSxbNCwzLCJcXGJ1bGxldCJdLFs2LDEsIlxcYnVsbGV0Il0sWzYsMywiXFxidWxsZXQiXSxbOCwxLCJcXGJ1bGxldCJdLFs4LDMsIlxcYnVsbGV0Il0sWzEwLDEsIlxcYnVsbGV0Il0sWzEwLDMsIlxcYnVsbGV0IFxccmxhcHsgLH0iXSxbMCwwLCJcXGJ1bGxldCJdLFswLDIsIlxcYnVsbGV0Il0sWzIsMCwiXFxidWxsZXQiXSxbMiwyLCJcXGJ1bGxldCJdLFswLDQsIlxcYnVsbGV0Il0sWzIsNCwiXFxidWxsZXQiXSxbMywyLCJcXG92ZXJzZXR7XFxQaX17XFxsb25nbWFwc3RvfSJdLFswLDEsIlxcZl8qXFxnIiwyXSxbMiwzLCJcXGZfKlxcdl4qXFxpIiwyXSxbNCw1LCJcXHdeKlxcaF8qXFxpIl0sWzEsMywiMSIsMl0sWzAsMiwiZl8qXFxhbHBoYSJdLFsyLDQsIlxcYmV0YV9pIl0sWzMsNSwiMSIsMl0sWzQsNiwid14rIl0sWzUsNywidyIsMl0sWzYsNywiXFxoXypcXGkiXSxbOCw5LCJcXGciLDJdLFsxMCwxMSwiXFxpIl0sWzksMTIsIlxcZiIsMl0sWzExLDEzLCJcXGgiXSxbMTIsMTMsInciLDJdLFs5LDExLCJ2Il0sWzgsMTAsInUiXSxbOSwxMywiIiwwLHsic3R5bGUiOnsibmFtZSI6ImNvcm5lciJ9fV0sWzgsMTEsIiIsMCx7InN0eWxlIjp7Im5hbWUiOiJjb3JuZXIifX1dXQ==
\[\begin{tikzcd}[ampersand replacement=\&,column sep=small,row sep=tiny]
	\bullet \&\& \bullet \\
	\&\&\&\& \bullet \&\& \bullet \&\& \bullet \&\& \bullet \\
	\bullet \&\& \bullet \& \smash{\overset{\Pi}{\longmapsto}} \\
	\&\&\&\& \bullet \&\& \bullet \&\& \bullet \&\& {\bullet \rlap{ ,}} \\
	\bullet \&\& \bullet
	\arrow["u", from=1-1, to=1-3]
	\arrow["\g"', from=1-1, to=3-1]
	\arrow["\lrcorner"{anchor=center, pos=0.125}, draw=none, from=1-1, to=3-3]
	\arrow["\i", from=1-3, to=3-3]
	\arrow["{f_*\alpha}", from=2-5, to=2-7]
	\arrow["{\f_*\g}"', from=2-5, to=4-5]
	\arrow["{\beta_i}", from=2-7, to=2-9]
	\arrow["{\f_*\v^*\i}"', from=2-7, to=4-7]
	\arrow["{w^+}", from=2-9, to=2-11]
	\arrow["{\w^*\h_*\i}", from=2-9, to=4-9]
	\arrow["{\h_*\i}", from=2-11, to=4-11]
	\arrow["v", from=3-1, to=3-3]
	\arrow["\f"', from=3-1, to=5-1]
	\arrow["\lrcorner"{anchor=center, pos=0.125}, draw=none, from=3-1, to=5-3]
	\arrow["\h", from=3-3, to=5-3]
	\arrow["1"', from=4-5, to=4-7]
	\arrow["1"', from=4-7, to=4-9]
	\arrow["w"', from=4-9, to=4-11]
	\arrow["w"', from=5-1, to=5-3]
\end{tikzcd}\]
the outer rectangle on the right above is a pullback, and is a square of $\DR$ because all three of the squares comprising it are. The rest of the choices of structure are made as in~\cite[Proposition~4.6]{gambino2023ModelsMartinLof} (and see~\cite[Lemma 2.7.8]{larrea2018ModelsDependent}). 
\end{proof}

The idea for interpreting identity types using \wfs\ originates from~\cite{awodey2009HomotopyTheoretic} and was further developed in~\cite{vandenberg2012TopologicalSimplicial} by the introduction of the notion of stable functorial choice of path objects. We adapt this definition to the right maps of an \awfs\ as follows. 

\begin{Defn}
	A \emph{functorial factorisation of the diagonal} on a category $\C$ is a functor $P = (r, \rho) : \Ar(\C) \to \Ar(\C) \times_{\C} \Ar(\C)$ such that $\rho f.rf = \delta_f$ for any map $f : A \to B$ in $\C$, where $\delta_f : A \to A \times_B A$ is the diagonal morphism. Such a factorisation is called \emph{stable} if its right leg $\rho$ preserves pullback squares. 
\end{Defn}
\begin{Defn}
	A \emph{stable functorial choice of path objects} (\sfpo) on an \awfs\ $(\DL, \DR)$ is a lift $\P$ of a stable functorial factorisation of the diagonal $P$:
	\[
	\begin{tikzcd}
		\DR_1 \ar[r, "\P"] \ar[d] & \DL_1 \times_\C \DR_1 \ar[d] \\
		\Ar(\C) \ar[r, "P"] & \Ar(\C) \times_\C \Ar(\C) \rlap{ .}
	\end{tikzcd}
	\] 
\end{Defn}

\begin{Prop}\label{prop:idtypes}
	The comprehension category $\DR_1 \to \Ar(\C)$ associated with an \awfs\ with an \textsc{sfpo} admits a pseudo-stable choice of $\mathrm{Id}$-types. 
\end{Prop}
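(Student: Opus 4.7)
The plan is to adapt~\cite[Proposition~4.9]{gambino2023ModelsMartinLof}, which proves the analogous statement for the comprehension category $\DL_1^{\pf} \to \Ar(\C)$, to our setting of $\DR_1 \to \Ar(\C)$. This translation is largely routine because, by definition, the lift $\P$ already takes values in $\DL_1 \times_\C \DR_1$: the right leg lies in $\DR_1$ from the outset rather than merely in $\DL_1^{\pf}$, so the structure produced in~\cite{gambino2023ModelsMartinLof} refines to our setting.

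For $\f \in \DR_1$ with underlying $f : A \to B$, I would interpret the right leg of $\P\f$ as the $\mathrm{Id}$-type (a vertical map in $\DR$) and the left leg as the reflexivity term (a vertical map in $\DL$). The $\mathrm{J}$-eliminator is then obtained as follows: given a further type family $\g \in \DR_1$ over the $\mathrm{Id}$-type together with a section along the reflexivity, the resulting lifting problem is solved by the closed lifting operation $\phi$ of the \awfs, and the computation rule holds because $\phi$ produces a genuine diagonal filler, not one merely up to isomorphism. Functoriality of this assignment in $\f$ follows from $\P$ being a functor together with the naturality of $\phi$ built into the definition of a lifting structure.

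The technical heart is pseudo-stability under substitution. For a morphism $w : C \to B$ in $\C$, I would exhibit a coherent isomorphism between the $\mathrm{Id}$-structure associated to $w^*\f$ and the pullback along $w$ of the $\mathrm{Id}$-structure on $\f$. On the level of underlying maps, the isomorphism is furnished by stability of the right leg $\rho$ in the \sfpo, combined with the fact that $\DR$ is closed under pullback (Proposition~\ref{prop:doublecat:comprehensiondouble}). The comparison of $\mathrm{J}$-eliminators then reduces to compatibility of $\phi$ with slicing, which follows from the isomorphism $(\DJ\ldpf)/A \cong (\DJ/A)\ldpf$ of Proposition~\ref{prop:doublecat:scandcommrl} applied to the closed lifting structure underlying the \awfs.

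The main obstacle will be the coherence bookkeeping in this last step, where three distinct layers of structure---the \sfpo, the lifting operation $\phi$, and the slicing isomorphisms---must be tracked simultaneously to verify that the canonical comparison maps assemble into a pseudo-stable choice in the sense of~\cite[Theorem~2.6]{gambino2023ModelsMartinLof}. None of this is genuinely new however; it is a direct transcription of the argument of~\cite[Proposition~4.9]{gambino2023ModelsMartinLof}, with the minor simplification afforded by having the right leg of $\P$ already in $\DR_1$. The remaining auxiliary choices (coherence data and side conditions) are made exactly as there.
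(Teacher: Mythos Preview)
Your proposal is correct and takes essentially the same approach as the paper: both identify the $\mathrm{Id}$-type as the right leg $\bm{\rho}\f$ of the \sfpo, note that functoriality and preservation of cartesian morphisms hold by definition of an \sfpo, and defer the remaining choices of structure to~\cite[Proposition~4.9]{gambino2023ModelsMartinLof}. Your elaboration on the $\mathrm{J}$-eliminator and the coherence bookkeeping is more detailed than the paper's terse proof, but it is in the same spirit and does not diverge from it.
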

\begin{proof}
	The choice of $\mathrm{Id}$-type for an $\DR$ map $\f$ is given by the right leg $\bm{\rho}\f$ of the \sfpo\ $\P = (\bm{r}, \bm{\rho})$; this assignment is functorial and preserves cartesian morphisms by definition. The rest of the choices of structure are made as in~\cite[Proposition~4.9]{gambino2023ModelsMartinLof}.
\end{proof}

We thus obtain the desired analog of \cite[Theorem~4.11]{gambino2023ModelsMartinLof}. 

\begin{Thm}\label{thm:stablechoices}
	A $\pi$-\awfs\ with an \sfpo\ induces a comprehension category with strictly stable choices of $\Sigma$-, $\Pi$-, and $\mathrm{Id}$-types. 
\end{Thm}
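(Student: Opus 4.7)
The plan is to assemble Propositions~\ref{prop:stypes}, \ref{prop:ptypes}, and \ref{prop:idtypes} into a single comprehension category equipped with pseudo-stable choices of all three type formers, and then to invoke the coherence theorem of Gambino and Larrea \cite[Theorem~2.6]{gambino2023ModelsMartinLof} in order to upgrade the pseudo-stable choices to strictly stable ones.

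First, given a $\pi$-\awfs\ $(\DL, \DR)$ with an \sfpo, the forgetful functor $\DR_1 \to \Ar(\C)$ is a comprehension category by Proposition~\ref{prop:doublecat:comprehensiondouble}. Proposition~\ref{prop:stypes} supplies a pseudo-stable choice of $\Sigma$-types using only the vertical composition of $\DR$, which requires no extra hypotheses on the \awfs. Proposition~\ref{prop:ptypes} supplies a pseudo-stable choice of $\Pi$-types from the Frobenius-plus-Beck--Chevalley structure that is built into the definition of a $\pi$-\awfs: the pushforward structure gives the $\Pi$-type itself and the Beck--Chevalley condition guarantees the functoriality needed for pseudo-stability. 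Proposition~\ref{prop:idtypes} supplies a pseudo-stable choice of $\mathrm{Id}$-types from the \sfpo, using its right leg as the interpretation of the identity type and the stability of the factorisation to obtain preservation of cartesian morphisms.

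Combining these three propositions, $\DR_1 \to \Ar(\C)$ carries pseudo-stable choices of $\Sigma$-, $\Pi$- and $\mathrm{Id}$-types. Now apply the coherence theorem \cite[Theorem~2.6]{gambino2023ModelsMartinLof}, which asserts that the right-adjoint splitting of a comprehension category satisfying pseudo-stability for a given collection of type formers produces strictly stable choices for those same formers. This yields the desired comprehension category with strictly stable choices of $\Sigma$-, $\Pi$- and $\mathrm{Id}$-types.

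The only genuine obstacle is verifying that the pseudo-stability conditions of \cite{gambino2023ModelsMartinLof} are precisely what the three propositions above deliver, but this is transparent once one unwinds the definitions: for $\Sigma$ it is functoriality plus preservation of pullback squares of the composition operation, for $\Pi$ it is exactly the Beck--Chevalley-type coherence already built into Definition~\ref{def:dcatBC}, and for $\mathrm{Id}$ it is the stability clause in the definition of an \sfpo. No further work is required beyond this bookkeeping and an application of the coherence theorem.
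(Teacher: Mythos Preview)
Your proof is correct and follows exactly the same approach as the paper: invoke Propositions~\ref{prop:stypes}, \ref{prop:ptypes}, and \ref{prop:idtypes} to obtain pseudo-stable choices of $\Sigma$-, $\Pi$-, and $\mathrm{Id}$-types on $\DR_1 \to \Ar(\C)$, and then apply the coherence theorem \cite[Theorem~2.6]{gambino2023ModelsMartinLof} to pass to strictly stable choices via the right adjoint splitting. The paper's version is simply the one-sentence summary of what you wrote.
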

\begin{proof}
	By Propositions~\ref{prop:stypes}, \ref{prop:ptypes}, and \ref{prop:idtypes} the comprehension category associated with the \awfs\ has pseudo stable choices of these types, and so its right adjoint splitting has strictly stable choices by~\cite[Theorem~2.6]{gambino2023ModelsMartinLof}.
\end{proof}

\subsection{The groupoid model}

Recall from Example~\ref{ex:lawfs} that the double categories $(\DSplRef(\Cat), \DSplFib(\Cat))$ on $\Cat$ of split reflections and split fibrations of categories form an \awfs. This pair does not readily admit a Frobenius structure satisfying the Beck--Chevalley condition---for that we need the right class to consist of split \emph{op}fibrations, as shown in Proposition~\ref{prop:bcfrobstruc}. To remedy this, we lift this \awfs\ on $\Cat$ along the inclusion $\Gpd \to \Cat$ to one on $\Gpd$, because for groupoids the notions of fibration and opfibration coincide. 

\begin{Prop}\label{prop:gpdpawfs}
	The pair of double categories $(\DSplRef(\Gpd), \DSplFib(\Gpd))$ over $\Sq(\Gpd)$, together with the lifting operation of Example~\ref{ex:liftingstrforrari}, is a $\pi$-\awfs on $\Gpd$.
\end{Prop}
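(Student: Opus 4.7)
The plan is to obtain the $\pi$-\awfs\ on $\Gpd$ by restricting the \awfs\ of Example \ref{ex:lawfs} along the inclusion $\Gpd \hookrightarrow \Cat$, and then invoking the Frobenius structure of Proposition \ref{prop:dfrobstruc} and the Beck--Chevalley condition of Proposition \ref{prop:bcfrobstruc} via the key identification $\DSplFib(\Gpd) = \DSplOpFib(\Gpd)$. This identification holds because in a groupoid every morphism is invertible, so a chosen cartesian lift of $f$ is equivalently a chosen cocartesian lift of $f^{-1}$; the coherence axioms (preservation of identities and of composition) translate between the two formulations, and the two double functors over $\Sq(\Gpd)$ literally coincide.

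First, I would verify that the \awfs\ on $\Cat$ from Example \ref{ex:lawfs} restricts to one on $\Gpd$. The functorial factorisation underlying this \awfs\ is built from a comma-type construction whose middle object is a groupoid whenever its input is a functor between groupoids, because the relevant morphisms are tuples involving isomorphisms of groupoid objects. The closed lifting structure, whose lifting operation was spelled out in Example \ref{ex:liftingstrforrari} as a cartesian lift of the image of a reflection unit, also restricts: the domain of the chosen lift automatically lies inside a groupoid. Since composition and identities of split reflections and split fibrations between groupoids stay within $\Gpd$, the full lifting-\awfs\ structure transports.

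Next, I would transfer the Frobenius structure and its Beck--Chevalley property from $\Cat$ to $\Gpd$. Interpreting a split fibration $P\colon \A \to \B$ between groupoids as a split opfibration, the construction of Proposition \ref{prop:functor:frobstruc}---the auxiliary functor $F(a,c) = (U\eta_c)_! a$ and the left adjoint $G \dashv P^*R$ built from cocartesian lifts of images of reflection units---remains inside $\Gpd$, because pullbacks such as $\A \times_\B \C$ of groupoids are groupoids, and cocartesian lifts in a split opfibration between groupoids stay in $\Gpd$. The extension to a double-categorical Frobenius structure of Proposition \ref{prop:dfrobstruc} and the Beck--Chevalley lift $\ba$ of Proposition \ref{prop:bcfrobstruc} then specialise verbatim.

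The main obstacle I anticipate is verifying that pullback and pushforward along maps in $\Gpd$ behave compatibly with those in $\Cat$, so that the Frobenius and Beck--Chevalley constructions really restrict. Pullbacks cause no trouble since $\Gpd \hookrightarrow \Cat$ preserves limits. For the pushforward and the various adjunctions $f_! \dashv f^* \dashv f_*$ on slices, one has to check that the pushforward of a groupoid map along a groupoid map computed in $\Cat$ is already a groupoid---equivalently, that $\Gpd$ is locally cartesian closed and the inclusion is compatible with the slice adjunctions. Once this is done, the explicit formulae for $F$, $G$, the units/counits, and the mate $\beta$ from Proposition \ref{prop:bcfrobstruc} transfer with no further work, producing the required $\pi$-\awfs\ structure on $(\DSplRef(\Gpd), \DSplFib(\Gpd))$.
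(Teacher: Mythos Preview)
Your proposal is correct and follows essentially the same route as the paper: restrict the \awfs\ on $\Cat$ to $\Gpd$, use the identification $\DSplFib(\Gpd)\cong\DSplOpFib(\Gpd)$, and then invoke Propositions~\ref{prop:dfrobstruc} and~\ref{prop:bcfrobstruc}. The paper handles the restriction step by citing the general projective/injective lifting of an \awfs\ along a functor from Bourke--Garner, whereas you sketch the verification by hand; both are fine. Your worry about pushforward is slightly misplaced: once $\Gpd$ is known to be locally cartesian closed, the Frobenius structure (which only involves pullbacks) transfers directly, and the pushforward structure is then obtained from it via Theorem~\ref{thm:dffequiv} internally to $\Gpd$, so no comparison with pushforward in $\Cat$ is needed.
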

\begin{proof}
	This is the projective/injective lift of the \awfs\ $(\DSplRef(\Cat), 
	\DSplFib(\Cat))$ along the inclusion $\Gpd \to \Cat$, see \cite[Section~4.5]{bourke2016AlgebraicWeak}. Fibrations of groupoids are also opfibrations, so there is an isomorphism $\DSplFib(\Gpd) \cong \DSplOpFib(\Gpd)$. Hence, this \awfs\ gets its Frobenius structure from Propositions~\ref{prop:dfrobstruc}, which satisfies the Beck--Chevalley condition as per Proposition~\ref{prop:bcfrobstruc}. 
\end{proof}

\begin{Prop}\label{prop:gpdawfssfpo}
	The \awfs\ $(\DSplRef(\Gpd), \DSplFib(\Gpd))$ admits an \sfpo. 
\end{Prop}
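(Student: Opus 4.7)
The plan is to exhibit, for each functor $f : A \to B$ of groupoids, an explicit \emph{vertical path groupoid}
\[
	P_f A := \{(a_1, a_2, \alpha) \mid fa_1 = fa_2,\ \alpha : a_1 \to a_2 \text{ in } A,\ f\alpha = 1_{fa_1}\},
\]
whose morphisms $(a_1, a_2, \alpha) \to (a'_1, a'_2, \alpha')$ are pairs $(\gamma_1, \gamma_2)$ of arrows of $A$ satisfying $f\gamma_1 = f\gamma_2$ and $\gamma_2 . \alpha = \alpha' . \gamma_1$. Setting $r_f(a) = (a, a, 1_a)$ and $\rho_f(a_1, a_2, \alpha) = (a_1, a_2)$ defines a functorial factorisation $P = (r, \rho)$ of the diagonal on $\Ar(\Gpd)$, with $\rho_f . r_f = \delta_f$ holding on the nose.

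I would then equip $r_f$ with its split reflection structure via the retraction $L_f : P_f A \to A$, $(a_1, a_2, \alpha) \mapsto a_1$ (so $L_f . r_f = 1_A$), together with the unit of $L_f \dashv r_f$ defined pointwise by $\eta_{(a_1, a_2, \alpha)} = (1_{a_1}, \alpha^{-1})$; this is well defined precisely because $A$ is a groupoid. Dually, $\rho_f$ carries a split cleavage that lifts $(\gamma_1, \gamma_2) : (a'_1, a'_2) \to (a_1, a_2)$ at $(a_1, a_2, \alpha)$ to the morphism with third component $\gamma_2^{-1} \alpha \gamma_1$; splitness follows by direct formula-level computation. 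Both structures depend only on $A$ being a groupoid, not on $f$ being a fibration, and a morphism $\uv : \f \to \f'$ in $\DR_1$ induces the functor $P_f A \to P_{f'} A'$, $(a_1, a_2, \alpha) \mapsto (u a_1, u a_2, u \alpha)$, which preserves the retraction, the unit formula, and the chosen cleavage because $u$ respects composition and inverses in groupoids. These constructions therefore assemble into the required lift $\P : \DR_1 \to \DL_1 \times_\Gpd \DR_1$.

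For stability, consider a pullback square $(u, v) : f' \to f$ with $A' \cong A \times_B B'$. An arrow $\alpha'$ in $A'$ with $f'\alpha' = 1$ is precisely a pair $(\alpha, 1_{b'})$ where $\alpha$ is an arrow of $A$ with $f\alpha = 1_{vb'}$, so objects of $P_{f'} A'$ are in bijective correspondence with tuples $(a_1, a_2, \alpha, b')$ satisfying $fa_1 = fa_2 = vb'$ and $f\alpha = 1$. These are exactly the objects of the pullback $P_f A \times_{A \times_B A} (A' \times_{B'} A')$; the analogous check on morphisms shows the canonical comparison functor is an isomorphism of groupoids, so $\rho$ preserves pullback squares.

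The main obstacle I anticipate is not a single deep step but the coherence-level bookkeeping of packaging the above data on $r_f$ and $\rho_f$ into a double-categorical lift $\P$ on the whole of $\DR_1$, and in particular verifying that the chosen cleavage on $\rho_f$ transports correctly along every morphism of $\DR_1$. Since the unit and the cleavage are both given by fully explicit formulas in $\alpha$ and the $\gamma_i$, this reduces to routine verification once the notation is pinned down.
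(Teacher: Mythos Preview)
Your construction is correct and is precisely the standard vertical-path-groupoid factorisation that the paper intends: the paper's own proof is a one-line pointer to \cite[Proposition~5.3]{gambino2023ModelsMartinLof}, and what you have written is the expected unpacking of that reference, with the split reflection on $r_f$ and the split cleavage on $\rho_f$ given by the usual formulas. Your checks (naturality of $\eta$, splitness of the cleavage, preservation under $(u,v)$, and the pullback comparison for stability) all go through as you describe, so there is no gap.
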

\begin{proof}
	A straightforward adaptation of~\cite[Proposition~3.5]{gambino2023ModelsMartinLof}.
\end{proof}

This gives us the following analog of~\cite[Theorem~5.5]{gambino2023ModelsMartinLof}.

\begin{Thm}
	The right adjoint splitting of $\SplRef(\Gpd) \to \Ar(\Gpd)$ yields a model of dependent type theory with $\Sigma$-, $\Pi$-, and $\mathrm{Id}$-types. 
\end{Thm}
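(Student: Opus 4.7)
The plan is to prove this theorem as an essentially immediate corollary of the results assembled in the preceding subsections. The two key inputs are Proposition~\ref{prop:gpdpawfs}, which equips $(\DSplRef(\Gpd), \DSplFib(\Gpd))$ with the structure of a $\pi$-\awfs on $\Gpd$, and Proposition~\ref{prop:gpdawfssfpo}, which supplies an \sfpo\ for this \awfs. Together they verify the hypotheses of Theorem~\ref{thm:stablechoices}.

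Applying Theorem~\ref{thm:stablechoices} to the $\pi$-\awfs\ on $\Gpd$ with its chosen \sfpo\ produces the comprehension category $\SplFib(\Gpd) \to \Ar(\Gpd)$ together with strictly stable choices of $\Sigma$-, $\Pi$-, and $\mathrm{Id}$-types. The passage from ``pseudo-stable'' to ``strictly stable'' is exactly what the right adjoint splitting accomplishes, per \cite[Theorem~2.6]{gambino2023ModelsMartinLof}, so we may invoke that coherence result as a black box. No additional verification is needed at this stage, since Propositions~\ref{prop:stypes}, \ref{prop:ptypes}, and \ref{prop:idtypes} have already identified precisely which pseudo-stable structures on the comprehension category $\DR_1 \to \Ar(\C)$ correspond to the three type formers, and Theorem~\ref{thm:stablechoices} has assembled them into a single statement.

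Finally, a comprehension category equipped with strictly stable choices of $\Sigma$-, $\Pi$-, and $\mathrm{Id}$-types is by construction what it means to have a model of Martin-L\"of type theory with these type formers; so one concludes the proof by citing this standard dictionary between categorical and syntactic structure (as recalled in \cite{gambino2023ModelsMartinLof}).

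There is really no obstacle here: the entire difficulty of the paper has been to isolate the right abstract conditions on an \awfs\ (namely being a $\pi$-\awfs\ and admitting an \sfpo) so that its right class comprehension category behaves well, and to verify in Propositions~\ref{prop:gpdpawfs} and~\ref{prop:gpdawfssfpo} that the groupoid example meets these conditions. Once this is done, the theorem is merely the observation that the machinery applies. If one wished to elaborate, the only mildly delicate point would be to spell out how the identification $\DSplFib(\Gpd) \cong \DSplOpFib(\Gpd)$, used to inherit the Frobenius and Beck--Chevalley structure from Propositions~\ref{prop:dfrobstruc} and~\ref{prop:bcfrobstruc}, is compatible with the \sfpo\ chosen in Proposition~\ref{prop:gpdawfssfpo}; but this is essentially transport along an isomorphism of double categories.
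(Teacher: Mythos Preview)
Your proposal is correct and follows exactly the same route as the paper: invoke Propositions~\ref{prop:gpdpawfs} and~\ref{prop:gpdawfssfpo} to verify the hypotheses of Theorem~\ref{thm:stablechoices}, then conclude. Your additional remarks (about the coherence theorem from \cite{gambino2023ModelsMartinLof} and the compatibility of the $\DSplFib(\Gpd) \cong \DSplOpFib(\Gpd)$ identification with the \sfpo) are harmless elaboration, but the paper's proof is a one-line citation of those same three results and nothing more.
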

\begin{proof}
By Propositions~\ref{prop:gpdpawfs}, \ref{prop:gpdawfssfpo}, and Theorem~\ref{thm:stablechoices}.
\end{proof}
\section{Conclusion}

In this paper we have obtained a Frobenius equivalence and formulated a suitable version of the Beck--Chevalley condition for algebraic weak factorisation systems. We have shown how these notions can be used to obtain models of type theory with dependent function types, as illustrated by split fibrations of groupoids, the basis for the groupoid model of Hofmann and Streicher \cite{hofmann1998GroupoidInterpretation}.

In this way our work is similar to~\cite{larrea2018ModelsDependent,gambino2023ModelsMartinLof}, where the authors also utilize an analogical statement of the Frobenius equivalence for {\awfs}s, which is stated and proven in~\cite{gambino2017FrobeniusCondition}. However, their version differs from the one we propose here in several ways. First of all, they develop a version of the Frobenius equivalence for {\awfs}s which are cofibrantly generated by a category; we make no such assumption. In addition, their works take the ``object view'' rather than the ``arrow view'', in that they primarily view pullback and pushforward as operations on objects rather than arrows of slice categories. We hope to have demonstrated here that it is conceptually simpler to take the arrow view. Finally, the works of~\cite{gambino2017FrobeniusCondition, larrea2018ModelsDependent,gambino2023ModelsMartinLof} use an interpretation of type theory in which the dependent types are interpreted using the algebras of the pointed endofunctor associated with an \awfs, rather than the algebras of the monad associated with an \awfs. In that way their framework targets a different class of examples and does not cover examples like split fibrations of groupoids.

In future work we hope to find more examples. In particular, we would like to show how our ideas apply to the \emph{effective Kan fibrations} from \cite{vandenberg2022EffectiveTrivial}. These are intended to be a good constructive analogue of the Kan fibrations from simplicial homotopy theory and to lead to a constructive account of Voevodsky's model of homotopy type theory in simplicial sets \cite{kapulkin2021SimplicialModel}. In \cite{vandenberg2024ExamplesCofibrant} the second author has shown together with Freek Geerligs how the effective Kan fibrations appear as the right class in an \awfs, so our current framework is the appropriate one for these maps.

% \bibliographystyle{unsrtnat}
% \bibliography{zrefs}

\printbibliography

\appendix

\end{document}